 \theoremstyle{plain}
 \newtheorem{theorem}{Theorem}[chapter]
\newtheorem{example}[theorem]{Example}
\newtheorem{corollary}[theorem]{Corollary}
\newtheorem{conjecture}[theorem]{Conjecture}
\newtheorem{lemma}[theorem]{Lemma}
\newtheorem{proposition}[theorem]{Proposition}
\theoremstyle{definition}
\newtheorem{definition}[theorem]{Definition}
\theoremstyle{remark}
 \DeclareMathOperator{\Ext}{Ext}
\DeclareMathOperator{\MExt}{MExt}
\DeclareMathOperator{\AExt}{AExt}
\DeclareMathOperator{\Ker}{Ker}
\DeclareMathOperator{\Coker}{Coker}
\DeclareMathOperator{\Image}{Im}
\DeclareMathOperator{\Hom}{Hom}
\DeclareMathOperator{\Der}{Der}
\begin{document}

\title{\huge \bf{The cohomology of $\lambda$-rings and $\Psi$-rings}}       \vspace{1.5in}
\author{ {\large Thesis submitted for the degree of}\\\\
      {\large Doctor of Philosophy}\\
      {\large at the University of Leicester}\\\\\\
        \vspace{.5in}
      {\large by}\\
        \vspace{.5in}
      \LARGE Michael Robinson \\
\Large Department of Mathematics \\ University of Leicester}  \vspace{1.5in}
\date{
March 2010} 
\maketitle
\newpage

\pagenumbering{roman} \setcounter{page}{2}
\addcontentsline{toc}{chapter}{Abstract}
\begin{abstract}
In this thesis we develop the cohomology of diagrams of algebras and then apply this to the cases of the $\lambda$-rings and the $\Psi$-rings. A diagram of algebras is a functor from a small category to some category of algebras. For an appropriate category of algebras we get a diagram of groups, a diagram of Lie algebras, a diagram of commutative rings, etc.

We define the cohomology of diagrams of algebras using comonads. The cohomology of diagrams of algebras classifies extensions in the category of functors. Our main result is that there is a spectral sequence connecting the cohomology of the diagram of algebras to the cohomology of the members of the diagram.

$\Psi$-rings can be thought of as functors from the category with one object associated to the multiplicative monoid of the natural numbers to the category of commutative rings. So we can apply the theory we developed for the diagrams of algebras to the case of $\Psi$-rings. Our main result tells us that there is a spectral sequence connecting the cohomology of the $\Psi$-ring to the Andr\'{e}-Quillen cohomology of the underlying commutative ring.

The main example of a $\lambda$-ring or a $\Psi$-ring is the $K$-theory of a topological space. We look at the example of the $K$-theory of spheres and use its cohomology to give a proof of the classical result of Adams. We show that there are natural transformations connecting the cohomology of the $K$-theory of spheres to the homotopy groups of spheres. There is a very close connection between the cohomology of the $K$-theory of the $4n$-dimensional spheres and the homotopy groups of the $(4n-1)$-dimensional spheres.
\end{abstract}

\addcontentsline{toc}{chapter}{Acknowledgments}

\section*{Acknowledgments}
Firstly, I would like to thank my supervisor, Dr Teimuraz
Pirashvili, for all of his useful advice and support throughout this
project and for introducing me to the subject of $\lambda$-rings.
His passion for maths has been truly inspirational and encouraging.

I would also like to thank my first supervisor, Prof. Dietrich
Notbohm, who is now in Amsterdam. I appreciate the opportunity he
gave me to carry out research, for introducing me to the subject of
homological algebra, and also for introducing me to Teimuraz.

I would like to thank all of the staff in the Department of
Mathematics at the University of Leicester. I would also like to
thank the other research students that I met. I am especially
grateful to Dr. Mark Parsons for his beneficial advice during the
first year.

I am grateful for all the support received from my family; in
particular my Mum, Dad, and Brothers. I would like to say a big
thank you to Frieda Mann and Dorothy Kinnear for being like
Grandmothers to me. A special thanks to John Fergus Burns and John Burn for their
invaluable support, and for allowing me to escape Leicester from time to time.

I would like to acknowledge the EPSRC and the University of
Leicester for all of the financial support they have provided me.

\newpage

\pdfbookmark[0]{Contents}{toc}
\tableofcontents

\pagenumbering{arabic}
\setcounter{page}{5}
\chapter{Introduction}
\label{Chapter1}

$\lambda$-rings were first introduced in an algebraic-geometry
setting by Grothendieck in 1958, then later used in group theory by
Atiyah and Tall. A $\lambda$-ring $R$ is a commutative ring with identity, together
with operations $\lambda^{i}:R \rightarrow R$, for $i \geq 0$. We
require that $\lambda^{0}(r)=1$ and $\lambda^{1}(r) = r$ for all $r
\in R$. There are more complicated axioms describing
$\lambda^{i}(r_{1}+r_{2})$, $\lambda^{i}(r_{1}r_{2})$ and
$\lambda^{i}(\lambda^{j}(r))$. The $\lambda$-operations behave like
exterior powers. The more complicated axioms are difficult to work
with, and given a $\lambda$-ring $R$, it is difficult to prove that
it is actually a $\lambda$-ring.

In 1962 Adams introduced the operations $\Psi^{i}$ to study vector fields of spheres. These operations give us another type of ring, the $\Psi$-rings, which are related
to the $\lambda$-rings by the following formula.

\[
\Psi^{i}(r) -
\lambda^{1}(r)\Psi^{i-1}(r)+...+(-1)^{i-1}\lambda^{i-1}(r)\Psi^{1}(r)
+ (-1)^{i}i\lambda^{i}(r) = 0.
\]

A $\Psi$-ring is a commutative ring $R$, together with ring homomorphisms $\Psi^{i}: R
\rightarrow R$, for $i \geq 1$. We only require that $\Psi^{1}(r) =
r$ and $\Psi^{i}(\Psi^{j}(r))=\Psi^{ij}(r)$ for all $r \in R$. The
$\Psi$-rings are much easier to work with, and in several places we
will need to pass to $\Psi$-rings to be able to carry out
computations for $\lambda$-rings.

Homological algebra is a relatively young discipline, which arose
from algebraic topology in the early $20^{th}$ century. In 1956,
Cartan and Eilenberg published their book entitled ``Homological
Algebra" \cite{CE}, which was the first book on homological algebra
and still remains a standard book of reference today. They found
that the cohomology theories for groups, associative algebras and
Lie algebras could all be described by derived functors, defined by
means of projective and injective resolutions of modules. However
the method they used was not enough to define the cohomology of
commutative algebras. To overcome this problem, simplicial
techniques were developed in homological algebra.

In the 1950's Moore showed that every simplicial group $K$ is a Kan complex whose homotopy groups are the homology of a chain complex called the Moore complex of $K$.
Dold and Kan independently found that there is an equivalence between the category of simplicial abelian
groups and the category of non-negative chain complexes of abelian groups given by the Moore complex.
Using simplicial methods Dold and Puppe showed that one can define the derived functors of a non-additive functor, since simplicial homotopy doesn't involve addition.

The notion of a monad on a category traces back to R. Godement \cite{Godement}. Around 1965, Barr and Beck used comonads to define a resolution as a way to compute nonabelian derived functors.
In 1967, Andr\'{e} and Quillen independently developed what we now call Andr\'{e}-Quillen cohomology. The Andr\'{e}-Quillen cohomology is defined in general for algebras, using comonads. The $\lambda$-rings and $\Psi$-rings are particular examples which are included in this scheme, so the Andr\'{e}-Quillen cohomology is well defined for both $\lambda$-rings and $\Psi$-rings. The main difficulty is that the Andr\'{e}-Quillen cohomology is complicated and difficult to compute. Harrison had described a cohomology for commutative algebras in 1962 using a subcomplex of the Hochschild complex. The Harrison cohomology coincides with the Andr\'{e}-Quillen cohomology over a field of characteristic zero up to a dimension shift. Our aim is to develop tools which aid computation.

In 2004, Yau \cite{Yau} defined a cohomology for $\lambda$-rings in order to study deformations of the associated $\Psi$-operations. However, Yau's cohomology for $\lambda$-rings is different from the Andr\'{e}-Quillen cohomology.

\section{Outline of the thesis}
In Chapter 2, we give a short overview of some of the fundamental
concepts of homological algebra. We can trace the roots of these
concepts back to Cartan and Eilenberg in the 1950's. We provide the
definitions of additive categories, abelian categories and short
exact sequences in abelian categories. We outline the construction
of the right derived functor $\Ext^{i}$ using projective and
injective resolutions. The main references for this part of the
chapter are \cite{Weibel} and \cite{Mac}. We sketch the construction
of the cohomology of algebras in general using comonads \cite{Weibel}
and we give the example of the Andr\'{e}-Quillen cohomology for commutative rings which are the
right derived functors of the derivations functor \cite{Quillen}. We
provide an overview of the Harrison cohomology of commutative algebras \cite{Harr} and the Baues-Wirsching cohomology of a small
category with coefficients in a natural system \cite{BW}.

Chapter 2 only provides well known background material which will be
required later. It does not contain any original work. The original
research can be found in the remaining chapters of the thesis.

In Chapter 3, we turn our attention to $\Psi$-rings, which are
related to $\lambda$-rings via the Adam's operations. The first
section introduces the basic concept of a $\Psi$-ring which can be
found in \cite{Knut}. We then develop the $\Psi$-analogue of modules
and the semidirect product. These are then used to develop the
$\Psi$-analogue of derivations and extensions. The results from this
chapter are needed in chapter 4 to prove similar results for
$\lambda$-rings.

In 2005, Donald Yau published a paper entitled, ``Cohomology of
$\lambda$-rings" \cite{Yau}. In the paper he develops a cohomology of $\lambda$-rings in order to study the
deformations of the $\Psi$-ring structure. Yau's cohomology is different from the Andr\'{e}-Quillen cohomology. In the
last section of Chapter 3 I provide a definition of the deformation of a $\Psi$-ring which is different to Yau's definition. This alternative definition is
related to the Andr\'{e}-Quillen cohomology of $\Psi$-rings.

In Chapter 4, we introduce $\lambda$-rings. The first section
introduces the basic notions of $\lambda$-rings which can be found
in \cite{Knut}. We then develop the $\lambda$-analogue of modules
and the semidirect product. We then use these to develop the
$\lambda$-analogue of derivations and extensions. The last section
of Chapter 4 provides an overview of Yau's cohomology for
$\lambda$-rings.

In Chapter 5, we extend the Harrison cochain complex of a
commutative algebra to get a bicomplex whose cohomology we define to
be the Harrison cohomology of a diagram of a commutative algebra. We
then apply this theory to the case of $\Psi$-rings.

In Chapter 6, we develop a cohomology for diagrams of algebras in
general, using comonads. First, we fix a small category $I$. A
diagram of algebras is a functor $I \rightarrow
\mathfrak{Alg}(T)$, where $T$ is a monad on sets. For appropriate
$T$, we get a diagram of groups, a diagram of Lie algebras, a
diagram of commutative rings, etc. The adjoint pair
$\xymatrix{\mathfrak{Alg}(T) \ar@<0.5ex>[r] & \ar@<0.5ex>[l]
\mathfrak{Sets} }$ yields a comonad which we denote by $\mathbb{G}$.
We can also consider the category $I_{0}$, which has the same
objects as $I$, but only the identity morphisms. The inclusion
$I_{0} \subset I$ yields the functor $\mathfrak{Sets}^{I}
\rightarrow \mathfrak{Sets}^{I_{0}}$ which has a left adjoint given
by the left Kan extension. We also have the pair of adjoint functors
$\xymatrix{\mathfrak{Alg}(T)^{I} \ar@<0.5ex>[r] & \ar@<0.5ex>[l]
\mathfrak{Sets}^{I} }$ which comes from the adjoint pair
$\xymatrix{\mathfrak{Alg}(T) \ar@<0.5ex>[r] & \ar@<0.5ex>[l]
\mathfrak{Sets} }$. By putting these pairs together, we get another
adjoint pair $$\xymatrix{\mathfrak{Alg}(T)^{I} \ar@<0.5ex>[r] &
\ar@<0.5ex>[l] \mathfrak{Sets}^{I_{0}}}.$$ This adjoint pair yields
a comonad which we denote by $\mathbb{G}_{I}$. We can then take the
cohomology associated to the comonad $\mathbb{G}_{I}$. Now we have
both a global cohomology, $H^{*}_{\mathbb{G}_{I}}(A,M)$, and a local
cohomology, $H^{*}_{\mathbb{G}}(A(i),M(i))$. Our main result is that there exists
a local to global spectral sequence connecting the two: \[E^{pq}_{2}
= H^{p}_{BW}(I,\mathcal{H}^{q}(A,M)) \Rightarrow
H^{p+q}_{\mathbb{G}_{I}}(A,M),\] where $H^{p}_{BW}(I,\mathcal{H}^{q}(A,M))$ denotes the Baues-Wirsching cohomology of the small category $I$ with coefficients in the natural system $\mathcal{H}^{q}(A,M)$ on $I$ whose value on $(\alpha: i \rightarrow j)$
is given by \\$H^{q}_{\mathbb{G}}(A(i),\alpha^{*}M(j))$.

In Chapter 7, we apply our theory from Chapter 6 to the case of
$\Psi$-rings. A $\Psi$-ring can be considered as a diagram of a
commutative ring, so we can apply our results to get a cohomology
for $\Psi$-rings. We also define the cohomology of $\lambda$-rings using comonads. We
note that there are homomorphisms connecting the cohomology of
$\lambda$-rings, the cohomology of the associated $\Psi$-rings and
the Andr\'{e}-Quillen cohomology of the underlying commutative
rings.

The last Chapter looks at applications of the earlier developed
theory. Our main application is in algebraic topology.  For any topological space $X$
such that $K^{1}(X)=0$, there exists a homomorphism natural in $X$, $\tau :
\pi_{2n-1}(X) \rightarrow Ext_{\Psi}(K(X),\widetilde{K}(S^{2n}))$. We show that
the cohomology of $\lambda$-rings and $\Psi$-rings can be used to
prove the classical result of Adams. We also show that the
$\Psi$-ring cohomology of $K(S^{2n})$ is related to the stable homotopy groups
of spheres via the natural transformation $\tau$.

\chapter{Homological algebra}
\label{Chapter2}

\section{Category theory}
\subsection{Abelian categories}
The material in this section can be found in many textbooks,
including \cite{MacCW} and \cite{Weibel}. Before we introduce
abelian categories, we start by defining the notion of an additive
category.

An \emph{additive category} $\mathfrak{A}$ is a category such that
the following holds:
\begin{enumerate}
    \item for every pair of objects $X$ and $Y$ in $\mathfrak{A}$, the hom-set $\Hom_{\mathfrak{A}}(X,Y)$ has the
    structure of an abelian group such that morphism composition distributes over addition.
    \item $\mathfrak{A}$ has a zero object (an object which is both initial and terminal).
    \item for every pair of objects $X$ and $Y$ in $\mathfrak{A}$, their product $X \times Y$ exists.
\end{enumerate}

An abelian category is defined in terms of kernels and cokernels, so
first we will recall some other basic definitions from category
theory.

In a category $\mathfrak{C}$, a morphism $m: X \rightarrow Y$ is
called a \textit{monomorphism} if for all morphisms $f_{1}, f_{2}: V
\rightarrow X$ where $m \circ f_{1} = m \circ f_{2}$ we have
$f_{1}=f_{2}$. A morphism $e: Y \rightarrow X$ is called an
\textit{epimorphism} if for all morphisms $g_{1},g_{2}: X
\rightarrow V$ where $g_{1}\circ e = g_{2}\circ e$ we have
$g_{1}=g_{2}$.

In an additive category $\mathfrak{A}$, a \textit{kernel} of a
morphism $f:X \rightarrow Y$ is defined to be a map $i: X'
\rightarrow X$ such that $f \circ i =0$ and for any morphism $g: Z
\rightarrow X$ such that $f \circ g = 0$ there exists a unique
morphism $g': Z \rightarrow X'$ such that $i \circ g' = g$.
\[\xymatrix{&Z \ar@{.>}_{g'}[dl] \ar^{g}[d] \ar^{0}[dr] & \\ X' \ar_{i}[r] & X \ar_{f}[r] & Y}\]

Dually, in an additive category $\mathfrak{A}$, a \textit{cokernel}
of a morphism $f:X \rightarrow Y$ is defined to be a map $e: Y
\rightarrow Y'$ such that $e \circ f =0$ and for any morphism $g: Y
\rightarrow Z$ such that $g \circ f = 0$ there exists a unique
morphism $g': Y' \rightarrow Z$ such that $g' \circ e = g$.
\[\xymatrix{&Z   & \\ X \ar^{0}[ur] \ar_{f}[r] & Y \ar^{g}[u] \ar_{e}[r] & Y' \ar@{.>}_{g'}[ul]}\]

An \emph{abelian category} $\mathfrak{A}$ is an additive category
such that the following holds:
\begin{enumerate}
    \item every morphism in $\mathfrak{A}$ has a kernel and
    cokernel.
    \item every monomorphism in $\mathfrak{A}$ is the kernel of its
    cokernel.
    \item every epimorphism in $\mathfrak{A}$ is the cokernel of its
    kernel.
\end{enumerate}

The basic example of an abelian category is the category of abelian
groups, denoted by $\mathfrak{Ab}$. In the category $\mathfrak{Ab}$,
the objects are Abelian groups, and the morphisms are abelian group
homomorphisms. In general, module categories which appear throughout
algebra, are abelian categories.

If $\mathcal{I}$ is a small category and $\mathfrak{A}$ is an abelian category then the category of functors $\mathfrak{A}^{\mathcal{I}}$ as also an abelian category. The category of sets $\mathfrak{Sets}$ and the category of groups $\mathfrak{Grp}$ are not abelian categories. However, if $G$ is a group then the category of left (or right) $G$-modules, denoted by $G-\mathfrak{mod}$, is an abelian category. If $R$ is a ring then the category of left (or right) $R$-modules, denoted by $R-\mathfrak{mod}$, is an abelian category. If $R$ is a $\Psi$-ring then the category of $\Psi$-modules over $R$, denoted by $R-\mathfrak{mod}_{\Psi}$, is an abelian category. If $R$ is a $\lambda$-ring then the category of $\lambda$-modules over $R$, denoted by $R-\mathfrak{mod}_{\lambda}$, is an abelian category.

In an abelian category $\mathfrak{A}$, a \textit{short exact sequence} is a sequence
\[\xymatrix{0 \ar[r] & A \ar^{\alpha}[r] & B \ar^{\beta}[r] &C \ar[r] & 0 }\]
in which $\alpha$ is a monomorphism, $\beta$ is an epimorphism and $\Ker(\beta)=\Image(\alpha).$

In an abelian category $\mathfrak{A}$, a sequence
\[\xymatrix{\ldots \ar[r] & X^{n-1} \ar^{f^{n-1}}[r] & X^{n} \ar^{f^{n}}[r] &X^{n+1} \ar[r] & \ldots }\]
is said to be  \textit{exact at} $X^{n}$ if
$\Ker(f^{n})=\Image(f^{n-1}).$ The sequence is said to be
\textit{exact} if it is exact at $X^{n}$ for all $n \in \mathbb{Z}.$

\subsection{Modules}
Let $\mathfrak{C}$ be a (not necessarily abelian) category with finite limits, and $1$ denote a terminal object in $\mathfrak{C}$. An \textit{abelian group object} of $\mathfrak{C}$ is an object $A$ together with arrows $m: A \times A \rightarrow A$, $i:A \rightarrow A$ and $z:1 \rightarrow A$ such that the following diagrams commute.
\\(associativity of multiplication)
\[\xymatrix{A \times A \times A \ar_{id_{A} \times m}[dd] \ar^{m\times id_{A}}[rr] && A\times A \ar^{m}[dd] \\ && \\ A\times A \ar_{m}[rr] && A }\]
(left and right units)
\[\xymatrix{A \times 1 \ar_{\cong}[dr] \ar^{id_{A} \times z}[r] & A \times A \ar^{m}[d] & 1\times A \ar_{z\times id_{A}}[l] \ar^{\cong}[dl] \\ & A &  }\]
(left and right inverses)
\[\xymatrix{A  \ar^-{(i,id_{A})}[r] \ar[d]  & A \times A  \ar^{m}[d] &  A \ar[d] \ar_-{(id_{A},i)}[l] \\ 1 \ar_{z}[r] & A  & 1 \ar^{z}[l] }\]
(commutativity)
\[\xymatrix{A \times A  \ar^-{(p_{2},p_{1})}[rr] \ar_{m}[dr] && A \times A \ar^{m}[dl]\\ & A & }\]
These diagrams say that the arrows satisfy the equations of an abelian group.

Let $A,i,m,z$ and $A',m',i',z'$ be abelian group objects of $\mathfrak{C}$, a \textit{morphism of abelian group objects} is an arrow $f:A \rightarrow A'$ such that the following diagram commutes.
\[\xymatrix{A \times A \ar^-{m}[rr] \ar_-{f \times f}[dd] && A \ar^-{f}[dd] \\ && \\ A' \times A' \ar_-{m'}[rr] && A' }\]

We denote the category of abelian group objects of $\mathfrak{C}$ by $Ab(\mathfrak{C})$.

Let $A$ be any object of the category $\mathfrak{C}$. The
\textit{slice category} of objects of $\mathfrak{C}$ over $A$,
denoted by $\mathfrak{C}/A$, has as objects the arrows of
$\mathfrak{C}$ with target $A$. Given two objects $f:B \rightarrow
A$ and $g:C \rightarrow A$ of $\mathfrak{C}/A$, an arrow of
$\mathfrak{C}/A$ from $f$ to $g$ is an arrow $h:B \rightarrow C$
which makes the following diagram commute.
\[\xymatrix{B \ar^{h}[rr] \ar_{f}[dr] && C \ar^{g}[dl] \\ & A &}\]

\begin{definition}
Let $A$ be an object in a category $\mathfrak{C}$. An
$A$-\textit{module} is defined to be an abelian group object in the
category $\mathfrak{C} / A$,
\[A-\mathfrak{mod} := Ab(\mathfrak{C} / A).\]
\end{definition}
The category $A-\mathfrak{mod}$ is usually an abelian category.

\begin{definition}\label{pderivation}
Let $p:Y \rightarrow A$ be an object and $q:Z \rightarrow A$ be an
abelian group object of $\mathfrak{C} / A$, then we define the
abelian group of \textit{$p$-derivations}, denoted $\Der(Y,Z)$, to
be
\[\Der(Y,Z) := \Hom_{\mathfrak{C} / A}(p, q).\]
\end{definition}

\section{Cohomology}
The concepts of complexes and (co)homology began in algebraic topology
with simplicial and singular (co)homology. The methods of algebraic
topology have been applied extensively throughout pure algebra, and
have initiated many developments. Complexes are the basic tools of
homological algebra and provide us with a way of computing (co)homology.
The following definitions can be found in \cite{Mac} and \cite{CE}.

A \textit{cochain complex} $(C,\delta)$ of objects of an abelian category $\mathfrak{A}$ is a family $\{C^{n},\delta^{n}\}_{n \in \mathbb{Z}}$ of objects $C^{n} \in obj(\mathfrak{A})$ and morphisms (called the \textit{coboundary maps} or \textit{differential maps}) $\delta^{n}: C^{n} \rightarrow C^{n+1}$ such that $\delta^{n+1}\circ \delta^{n}=0$ for all $n \in \mathbb{Z}$.
\[\xymatrix{ \cdots \ar[r] & C^{n-2} \ar^{\delta^{n-2}}[r] & C^{n-1} \ar^{\delta^{n-1}}[r] & C^{n} \ar^{\delta^{n}}[r] & C^{n+1} \ar^{\delta^{n+1}}[r]& C^{n+2} \ar[r] &
\cdots}\]

The last condition is equivalent to saying that $\Image(\delta^{n})
\subseteq \Ker(\delta^{n+1})  $ for all $n \in \mathbb{Z}$. Hence,
one can define the \textit{cohomology} of $C$ denoted by $H^{*}(C)$,
\[H^{*}(C) = \{H^{n}(C)\}_{n\in \mathbb{Z}} \qquad \textrm{ where }
H^{n}(C) = \frac{\Ker(\delta^{n})}{\Image(\delta^{n-1})}.\]

$H^{n}(C)$ is called the $n^{th}$-cohomology of $C$. An
\textit{$n$-coboundary} is an element of $\Image(\delta^{n-1})$. An
\textit{$n$-cocycle} is an element of $\Ker(\delta^{n})$.

Let $(C,\delta)$ and $(C_{\diamond},\delta_{\diamond})$ be two cochain complexes of an abelian category $\mathfrak{A}$. A \textit{cochain map} $f: (C,\delta) \rightarrow (C_{\diamond},\delta_{\diamond})$ is a family of morphisms $\{f^{n}:C^{n} \rightarrow C_{\diamond}^{n}\}_{n \in \mathbb{Z}}$ such that $\delta_{\diamond}^{n}\circ f^{n} = f^{n+1}\circ \delta^{n}$ for all $n \in \mathbb{Z}$. The last condition is equivalent to saying the following diagram commutes.
\[\xymatrix{ \cdots \ar[r] & C^{n-2} \ar^{f^{n-2}}[d] \ar^{\delta^{n-2}}[r] & C^{n-1} \ar^{f^{n-1}}[d] \ar^{\delta^{n-1}}[r] & C^{n} \ar^{f^{n}}[d] \ar^{\delta^{n}}[r] & C^{n+1} \ar^{f^{n+1}}[d] \ar^{\delta^{n+1}}[r]& C^{n+2} \ar^{f^{n+2}}[d] \ar[r] & \cdots \\ \cdots \ar[r] & C_{\diamond}^{n-2} \ar_{\delta_{\diamond}^{n-2}}[r] & C_{\diamond}^{n-1} \ar_{\delta_{\diamond}^{n-1}}[r] & C_{\diamond}^{n} \ar_{\delta_{\diamond}^{n}}[r] & C_{\diamond}^{n+1} \ar_{\delta_{\diamond}^{n+1}}[r]& C_{\diamond}^{n+2} \ar[r] &
\cdots}\]

A cochain map $f: (C,\delta) \rightarrow
(C_{\diamond},\delta_{\diamond})$ induces homomorphisms $H^{n}(f):
H^{n}(C) \rightarrow H^{n}(C_{\diamond})$. This makes each $H^{n}$
into a functor.

A \textit{cochain bicomplex} of objects of an abelian category
$\mathfrak{A}$ is a family \\$\{C^{p,q},\delta^{p,q},\partial^{p,q}\}_{p,q \in \mathbb{Z}}$ of
objects $C^{p,q} \in obj(\mathfrak{A})$ and morphisms $\delta^{p,q}:
C^{p,q} \rightarrow C^{p+1,q}$ and $\partial^{p,q}: C^{p,q}
\rightarrow C^{p,q+1}$ such that $\delta^{p+1,q} \circ \delta^{p,q}
= 0$ and $\partial^{p,q+1} \circ \partial^{p,q} = 0$ and also
$\partial^{p+1,q}\delta^{p,q} + \delta^{p,q+1}\partial^{p,q} = 0$
for all $p,q \in \mathbb{Z}$.

It is useful to visualise a cochain bicomplex as a lattice
\[\xymatrix{& \vdots  & \vdots  & \vdots  &
\\ \ldots \ar[r] & C^{p-1,q+1}  \ar[u] \ar_{\delta^{p-1,q+1}}[r] & C^{p,q+1} \ar[u]   \ar_{\delta^{p,q+1}}[r] & C^{p+1,q+1} \ar[u]  \ar[r] & \ldots
\\ \ldots \ar[r]& C^{p-1,q} \ar_{\partial^{p-1,q}}[u] \ar_{\delta^{p-1,q}}[r] & C^{p,q} \ar_{\partial^{p,q}}[u] \ar_{\delta^{p,q}}[r] & C^{p+1,q} \ar_{\partial^{p+1,q}}[u] \ar[r] &  \ldots
\\  \ldots \ar[r] & C^{p-1,q-1} \ar_{\partial^{p-1,q-1}}[u] \ar_{\delta^{p-1,q-1}}[r] & C^{p,q-1} \ar_{\partial^{p,q-1}}[u] \ar_{\delta^{p,q-1}}[r] & C^{p+1,q-1} \ar_{\partial^{p+1,q-1}}[u] \ar[r] &  \ldots
\\ & \vdots \ar[u] & \vdots \ar[u] & \vdots \ar[u] &}\]
where each row $(C^{*,q},\delta^{*,q})$ and each column
$(C^{p,*},\partial^{p,*})$ is a cochain complex and each square
anticommutes.

The \textit{total complexes} $Tot(C)=Tot^{\prod}(C)$ and
$Tot^{\bigoplus}(C)$ of a cochain bicomplex $C$ are given by
\[ Tot^{\prod}(C)^{n} = \prod_{p+q=n}C^{p,q} \qquad \textrm{ and }
\qquad Tot^{\bigoplus}(C)^{n}=\bigoplus_{p+q=n}C^{p,q}.\]

The coboundary maps are given by $d = \delta + \partial$.
We note that $Tot^{\prod}(C) = Tot^{\bigoplus}(C)$ if $C$ is
bounded, especially if $C$ is a first quadrant bicomplex.

\begin{proposition}\label{genss}
If $C$ is a first quadrant bicomplex then we have the following convergent spectral sequence
\[
E_{2}^{p,q} = H_{h}^{p}H_{v}^{q}(C) \Rightarrow H^{p+q}(Tot(C)),
\]
where $H^{*}_{h}$ denotes the horizontal cohomology, and $H^{*}_{v}$ denotes the vertical cohomology.
\end{proposition}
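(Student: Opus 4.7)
The plan is to construct the standard spectral sequence of a filtered cochain complex. First, I would define the column filtration on the total complex by setting
\[F^p Tot(C)^n = \bigoplus_{i \geq p,\, i+j = n} C^{i,j}.\]
Because $C$ is a first-quadrant bicomplex, this filtration is bounded on each degree: $F^0 Tot(C)^n = Tot(C)^n$ and $F^{n+1} Tot(C)^n = 0$. The total differential $d = \delta + \partial$ respects $F$, since $\delta$ raises the $p$-index by one and $\partial$ preserves it.

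Next I would appeal to the standard machinery associating a spectral sequence to a filtered cochain complex (see \cite{Weibel} or \cite{Mac}). On the $E_0$ page one computes
\[E_0^{p,q} = F^p Tot(C)^{p+q}/F^{p+1}Tot(C)^{p+q} \cong C^{p,q},\]
and the induced $d_0$ differential is precisely the vertical coboundary $\partial^{p,q}$, since the horizontal part $\delta$ lands in $F^{p+1}$ and is killed in the quotient. Hence $E_1^{p,q} = H_v^q(C^{p,*})$. Unwinding the definition of the induced $d_1$ (lift a class to $F^p$, apply $d$, project to $F^{p+1}/F^{p+2}$), one sees it is induced by $\delta$ acting on vertical cohomology classes, giving $E_2^{p,q} = H_h^p H_v^q(C)$ as claimed.

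For convergence, the first-quadrant hypothesis is decisive: for each fixed $n$ the filtration on $Tot(C)^n$ is finite, so only finitely many differentials $d_r$ involving $E_r^{p,q}$ can be non-zero, and every $E_r^{p,q}$ stabilizes. The filtration on $H^{p+q}(Tot(C))$ induced by $F$ is therefore finite, and its associated graded pieces are the $E_\infty^{p,q}$ terms, yielding the desired strong convergence. The only substantive care needed throughout is with signs: one must check that, after passing to subquotients, the induced differentials agree with $\partial$ and $\delta$ rather than their negatives, which is where the anticommutativity $\partial^{p+1,q}\delta^{p,q} + \delta^{p,q+1}\partial^{p,q} = 0$ enters. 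This is the main bookkeeping obstacle, but it does not affect the cohomology groups themselves.
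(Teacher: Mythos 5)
Your argument is correct: the column filtration of the total complex, together with the standard machinery of the spectral sequence of a bounded filtered cochain complex, gives exactly $E_0^{p,q}=C^{p,q}$ with $d_0=\partial$, $E_1^{p,q}=H_v^q$ with $d_1$ induced by $\delta$, hence $E_2^{p,q}=H_h^pH_v^q(C)$, and the first-quadrant hypothesis gives a finite filtration in each total degree and thus convergence to $H^{p+q}(Tot(C))$. The paper states this proposition as known background and gives no proof of its own (deferring to the cited references \cite{Weibel} and \cite{Mac}), and your proof is precisely the standard argument from those sources, so there is nothing further to reconcile; your closing remark that the sign discrepancies arising from the anticommuting squares only change the induced differentials by a sign, and hence do not affect the $E_2$ terms, is the right way to dispose of the one genuine bookkeeping issue.
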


\section{Classical derived functors}
A standard method of computing classical derived functors between abelian categories is to take a resolution, apply the functor, then take the (co)homology of the resulting complex. The following material can be found in \cite{CE}, \cite{Weibel} and \cite{Mac}.

\subsection{Projective and injective objects}
An object $P$ of an abelian category $\mathfrak{A}$ is \textit{projective} if for any epimorphism $e:A \twoheadrightarrow B$ and any morphism $f:P \rightarrow B$ there exists a morphism $g:P \rightarrow A$ such that $f = e \circ g$, in other words, if the following diagram commutes.
\[\xymatrix{ & P \ar^{f}[d] \ar@{.>}_{g}[dl] &\\ A \ar@{->>}_{e}[r] & B \ar[r] &
0}\]

An object $Q$ of an abelian category $\mathfrak{A}$ is \textit{injective} if for any monomorphism $m:A \hookrightarrow B$ and any morphism $f:A \rightarrow Q$ there exists a morphism $g:B \rightarrow Q$ such that $f = g \circ m$, in other words, if the following diagram commutes.
\[\xymatrix{ 0 \ar[r] & A \ar_{f}[d] \ar@{^{(}->}^{m}[r] & B \ar@{.>}^{g}[dl] \\   & Q
&}\]

An object $P$ is projective if and only if
$\Hom_{\mathfrak{A}}(P,-): \mathfrak{A} \rightarrow \mathfrak{Ab}$
is an exact functor. In other words, if and only if for any exact
sequence $0 \rightarrow A \rightarrow B \rightarrow C \rightarrow 0$
in $\mathfrak{A}$ it follows that the following sequence of groups
\[\xymatrix{0 \ar[r] & \Hom_{\mathfrak{A}}(P,A) \ar[r] &
\Hom_{\mathfrak{A}}(P,B) \ar[r] & \Hom_{\mathfrak{A}}(P,C) \ar[r] &
0}\] is also exact.

An object $Q$ is injective if and only if
$\Hom_{\mathfrak{A}}(-,Q):\mathfrak{A} \rightarrow \mathfrak{Ab}$ is
an exact functor. In other words, if and only if for any exact
sequence $0 \rightarrow A \rightarrow B \rightarrow C \rightarrow 0$
in $\mathfrak{A}$ it follows that the following sequence of groups
\[\xymatrix{0 \ar[r] & \Hom_{\mathfrak{A}}(C,Q) \ar[r] &
\Hom_{\mathfrak{A}}(B,Q) \ar[r] & \Hom_{\mathfrak{A}}(A,Q) \ar[r] &
0}\] is also exact.

\subsection{Projective and injective resolutions}
Let $A$ be an object of an abelian category $\mathfrak{A}$. A \textit{projective resolution} of $A$ is a complex $P$, where $P_{i} = 0$ for all $i<0$ and $P_{j}$ is projective for all $j \geq 0$, together with a morphism $\epsilon : P_{0} \rightarrow A$ called the \textit{augmentation} such that the augmented complex
\[\xymatrix{\ldots \ar[r] & P_{2}\ar^{\partial}[r] & P_{1}\ar^{\partial}[r]& P_{0} \ar^{\epsilon}[r] & A \ar[r]&
0}\] is exact.

Let $A$ be an object of an abelian category $\mathfrak{A}$. An \textit{injective resolution} of $A$ is a complex $Q$, where $Q_{i} = 0$ for all $i<0$ and $Q_{j}$ is injective for all $j \geq 0$, together with a morphism $\epsilon : A \rightarrow Q_{0}$ called the \textit{augmentation} such that the augmented complex
\[\xymatrix{ 0 \ar[r] & A\ar^{\epsilon}[r] & Q_{0}\ar^{\delta}[r]& Q_{1} \ar^{\delta}[r] & Q_{2} \ar[r]&
\ldots}\] is exact.

An abelian category $\mathfrak{A}$ is said to have \textit{enough projectives} if for every object $A$ of $\mathfrak{A}$, there exists a projective object $P$ of $\mathfrak{A}$ and an epimorphism $e:P \rightarrow A$.

An abelian category $\mathfrak{A}$ is said to have \textit{enough injectives} if for every object $A$ of $\mathfrak{A}$, there exists an injective object $Q$ of $\mathfrak{A}$ and a monomorphism $m:A \rightarrow Q$.

\subsection{Right derived functors}
Let $\mathfrak{A}, \mathfrak{B}$ be abelian categories, where
$\mathfrak{A}$ has enough injectives. If  $F: \mathfrak{A} \rightarrow
\mathfrak{B}$ is a covariant left exact functor, then we can
construct the \textit{right derived functors} of $F$, denoted by
$R^{n}F: \mathfrak{A} \rightarrow
\mathfrak{B}$ for $n \geq 0$. If $A$ is an object of $\mathfrak{A}$, and
$Q$ is an injective resolution of $A$, we define \[R^{n}F(A) :=
H^{n}(F(Q)).\]

Let $\mathfrak{A}, \mathfrak{B}$ be abelian categories, where
$\mathfrak{A}$ has enough projectives. If  $G: \mathfrak{A}
\rightarrow \mathfrak{B}$ is a contravariant left exact functor, then
we can construct the \textit{right derived functors} of $G$, denoted
by $R^{n}G: \mathfrak{A} \rightarrow
\mathfrak{B}$ for $n \geq 0$. If $A$ is an object of $\mathfrak{A}$,
and $P$ is a projective resolution of $A$, we define \[R^{n}G(A) :=
H^{n}(G(P)).\]

It is known that the functors $R^{n}F(A)$ and $R^{n}G(A)$ are
independent of the choice of projective/injective resolution chosen,
hence it only depends on $A$. We always get $R^{0}F(A) \cong F(A)$
and $R^{0}G(A) \cong G(A)$. Furthermore, if $A$ is injective then
$R^{n}F(A) = 0$ for $n>0$, and if $A$ is projective then $R^{n}G(A) =0 $
for $n>0$.

Given a covariant left exact functor $F: \mathfrak{A} \rightarrow
\mathfrak{B}$ between the abelian categories $\mathfrak{A}$ and $\mathfrak{B}$ and given a short exact sequence \[0 \rightarrow A_{1}
\rightarrow A_{2} \rightarrow A_{3} \rightarrow 0\] in $\mathfrak{A}$, then
there exists the following long exact sequence.
\[\xymatrix{0 \ar[r]& R^{0}F(A_{1}) \ar[r]&
R^{0}F(A_{2}) \ar[r]& R^{0}F(A_{3}) \ar[r]& R^{1}F(A_{1}) \ar[r] & \ldots
\\ \ldots \ar[r]&  R^{n}F(A_{1}) \ar[r]& R^{n}F(A_{2}) \ar[r]&
R^{n}F(A_{3}) \ar[r]& R^{n+1}F(A_{1}) \ar[r]& \ldots}\]

\subsection{Ext}
The main example of right derived functors are the functors $\Ext^{n}$.

Let $R$ be a ring, and let $M,N$ be left $R$-modules. The functor
$F(-) = \Hom_{R}(M,-): R-\mathfrak{mod} \rightarrow \mathfrak{Ab}$ is a covariant additive left exact functor,
so we can define its right derived functors \[\Ext^{n}_{R}(M,-) =
R^{n}\Hom_{R}(M,-): R-\mathfrak{mod} \rightarrow \mathfrak{Ab}.\]

Given a left $R$-module $M$ and a short exact sequence of left
$R$-modules $0 \rightarrow N' \rightarrow N \rightarrow N''
\rightarrow 0$ we obtain the following long exact sequence.
\[0 \rightarrow  \Hom_{R}(M,N') \rightarrow
\Hom_{R}(M,N) \rightarrow  \Hom_{R}(M,N'') \rightarrow
\Ext^{1}_{R}(M,N') \rightarrow   \ldots
\] \[ \ldots \rightarrow  \Ext_{R}^{n}(M,N') \rightarrow  \Ext_{R}^{n}(M,N) \rightarrow
\Ext_{R}^{n}(M,N'') \rightarrow  \Ext_{R}^{n+1}(M,N') \rightarrow
\ldots\]

Similarly $\Hom_{R}(-,N) : R-\mathfrak{mod} \rightarrow \mathfrak{Ab}$ is also a contravariant additive left
exact functor, so we can define its right derived functors
$\Ext^{n}_{R}(-,N)=R^{n}\Hom_{R}(-,N): R-\mathfrak{mod} \rightarrow \mathfrak{Ab}.$

Given a short exact sequence of left $R$-modules $0 \rightarrow M'
\rightarrow M \rightarrow M'' \rightarrow 0$ and a left $R$-module
$N$ we obtain the following long exact sequence.
\[0 \rightarrow  \Hom_{R}(M'',N) \rightarrow
\Hom_{R}(M,N) \rightarrow  \Hom_{R}(M',N) \rightarrow
\Ext^{1}_{R}(M'',N) \rightarrow   \ldots
\] \[ \ldots \rightarrow  \Ext_{R}^{n}(M'',N) \rightarrow  \Ext_{R}^{n}(M,N) \rightarrow
\Ext_{R}^{n}(M',N) \rightarrow  \Ext_{R}^{n+1}(M'',N) \rightarrow
\ldots\]

\section{Comonad cohomology}
Cartan and Eilenberg unified the cohomology theories of groups, Lie algebras and associative algebras by describing them as $\Ext$ groups in the appropriate abelian categories. Unfortunately, this approach does not work in all categories, for example in the category of commutative algebras. The right approach is the comonad cohomology using simplicial methods. This material can be found in \cite{Barr} and \cite{Weibel}.

\subsection{Monads and comonads}
A \textit{monad}  $\mathbb{T}=(T,\eta,\mu)$ in any category
$\mathfrak{C}$ consists of an endofunctor $T:\mathfrak{C}
\rightarrow \mathfrak{C}$ together with two natural transformations:
$\eta: Id_{\mathfrak{C}} \rightarrow T$, $\mu: T \circ T = T^{2}
\rightarrow T$ such that the following diagrams commute.

\[
\xymatrix{
T^{3} \ar[rr]^{T\mu} \ar[dd]_{\mu T} && T^{2} \ar[dd]^{\mu} \\
\\
T^{2} \ar[rr]^{\mu} && T }
\]

\[
\xymatrix{
Id_{\mathfrak{C}} T \ar[rr]^{\eta T} \ar@{=}[ddrr] && T^{2} \ar[dd]^{\mu} && TId_{\mathfrak{C}} \ar[ll]_{T\eta} \ar@{=}[ddll]\\
\\
 && T }
\]
The natural transformation $\eta$ is called the unit, and the
natural transformation $\mu$ is called the multiplication. The
diagrams are called the associativity, left unit and right unit
laws.

A \textit{comonad} $\mathbb{G}=(G,\varepsilon,\delta)$ in any category
$\mathfrak{C}$ consists of an endofunctor $G:\mathfrak{C}
\rightarrow \mathfrak{C}$ together with two natural transformations:
$\varepsilon: G \rightarrow Id_{\mathfrak{C}}$, $\delta: G
\rightarrow G^{2}$ such that the following diagrams commute.

\[
\xymatrix{
G \ar[rr]^{\delta} \ar[dd]_{\delta} && G^{2} \ar[dd]^{G \delta} \\
\\
G^{2} \ar[rr]^{\delta G} && G^{3}}
\]

\[
\xymatrix{ && G \ar[dd]^{\delta}\\
\\
Id_{\mathfrak{C}}G  \ar@{=}[uurr] && G^{2} \ar[ll]^{\varepsilon G}
\ar[rr]_{G\varepsilon} && GId_{\mathfrak{C}} \ar@{=}[uull] }
\]

A pair of functors $L: \mathfrak{C} \rightarrow \mathfrak{B}$ and
$R: \mathfrak{B} \rightarrow \mathfrak{C}$ are \textit{adjoint} if for all objects $A$ in
$\mathfrak{C}$ and $B$ in $\mathfrak{B}$ there exists a natural bijection
\[\Hom_{\mathfrak{B}}(L(A),B) \cong
\Hom_{\mathfrak{C}}(A,R(B)).\] Natural means that for all $f:A
\rightarrow A'$ in $\mathfrak{C}$ and $g:B \rightarrow B'$ in
$\mathfrak{B}$ the following diagram commutes.
\[\xymatrix{\Hom_{\mathfrak{B}}(L(A'),B) \ar^{\cong}[d] \ar^{Lf^{*}}[r] & \Hom_{\mathfrak{B}}(L(A),B) \ar^{\cong}[d] \ar^{g_{*}}[r] & \Hom_{\mathfrak{B}}(L(A),B') \ar^{\cong}[d]
\\ \Hom_{\mathfrak{C}}(A',R(B)) \ar^{f^{*}}[r] & \Hom_{\mathfrak{C}}(A,R(B)) \ar^{Rg_{*}}[r] & \Hom_{\mathfrak{C}}(A,R(B')) }\]

We say that $L$ is the \textit{left adjoint} of $R$, and $R$ is the
\textit{right adjoint} of $L$.

Let $ \xymatrix{ \mathfrak{C} \ar@<1ex>[r]^{L} & \mathfrak{B}
\ar@<1ex>[l]^{R}} $ be an adjoint pair of functors with adjunction
morphisms $\eta:Id \rightarrow RL$ and $\mu: LR \rightarrow Id$.
Then $\mathbb{T}=(RL,\eta,R\mu L)$ is a monad on $\mathfrak{C}$ and
$\mathbb{G}=(LR,\mu,L\eta R)$ is a comonad on $\mathfrak{B}$.

\begin{example}
Let $U:\mathfrak{Grp} \rightarrow \mathfrak{Sets}$ take a group
to the set of its elements forgetting the group structure, and take
group morphisms to functions between sets. The left adjoint functor to $U$, is the functor
$L:\mathfrak{Sets} \rightarrow \mathfrak{Grp}$ taking a set to the
free group generated by the set. The functor $T= UL: \mathfrak{Sets} \rightarrow \mathfrak{Sets}$ gives rise to a
monad and the functor $G = LU: \mathfrak{Grp} \rightarrow \mathfrak{Grp}$ gives
rise to a comonad.
\end{example}

Let $\mathbb{G}$ be a comonad on $\mathfrak{C}$. A morphism $f: X \rightarrow Y$ in $\mathfrak{C}$ is called a
\emph{$\mathbb{G}$-epimorphism} if the map $\Hom_{\mathfrak{C}}(G(Z),X) \rightarrow
\Hom_{\mathfrak{C}}(G(Z),Y)$ is surjective for all $Z$. We require
the following useful lemma.

\begin{lemma} For all objects $X$ in $\mathfrak{C}$ the morphism
$ \xymatrix{ GX \ar[r]^{\varepsilon_{X}} & X}$\\ is a
$\mathbb{G}$-epimorphism.
\end{lemma}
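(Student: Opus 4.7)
The plan is to prove surjectivity by explicitly constructing, for each morphism $f \colon GZ \to X$, a preimage $h \colon GZ \to GX$ under postcomposition with $\varepsilon_X$. The natural candidate, built only from the structure data of the comonad, is
\[
h \;=\; Gf \circ \delta_Z \colon GZ \xrightarrow{\delta_Z} G^2 Z \xrightarrow{Gf} GX.
\]
Everything else is diagram chasing.

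First I would verify that $\varepsilon_X \circ h = f$. Applying naturality of the counit $\varepsilon \colon G \to \mathrm{Id}_{\mathfrak{C}}$ to $f \colon GZ \to X$ gives the commuting square $\varepsilon_X \circ Gf = f \circ \varepsilon_{GZ}$, so
\[
\varepsilon_X \circ h \;=\; \varepsilon_X \circ Gf \circ \delta_Z \;=\; f \circ \varepsilon_{GZ} \circ \delta_Z.
\]
Now I would invoke the counit law of the comonad, namely the triangle $(\varepsilon G) \circ \delta = \mathrm{id}_G$ in the second diagram defining $\mathbb{G}$. Evaluated at $Z$ this reads $\varepsilon_{GZ} \circ \delta_Z = \mathrm{id}_{GZ}$, whence $\varepsilon_X \circ h = f$, as required.

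Since $Z$ was arbitrary and the construction $f \mapsto Gf \circ \delta_Z$ works for any $f \in \Hom_{\mathfrak{C}}(GZ, X)$, the induced map $\Hom_{\mathfrak{C}}(GZ, GX) \to \Hom_{\mathfrak{C}}(GZ, X)$ is surjective for every object $Z$; by definition this means $\varepsilon_X$ is a $\mathbb{G}$-epimorphism. There is no real obstacle here: the entire content is recognising that the comonad axioms are precisely engineered to make $Gf \circ \delta_Z$ a section-like lift over $\varepsilon_X$ in the $\Hom(G(-), ?)$ sense. If anything, the only subtlety is keeping the two counit laws straight and applying the correct one ($\varepsilon G \circ \delta = \mathrm{id}$, not $G\varepsilon \circ \delta = \mathrm{id}$), which is what the naturality-of-$\varepsilon$ step forces us into.
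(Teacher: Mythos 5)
Your proposal is correct and follows essentially the same route as the paper: you construct the lift $Gf\circ\delta_{Z}$ (the paper's $G(h)\circ\delta(Z)$, with the roles of the letters $f$ and $h$ swapped) and verify it using naturality of $\varepsilon$ together with the counit identity $\varepsilon_{GZ}\circ\delta_{Z}=\mathrm{id}_{GZ}$, exactly as in the paper's diagram chase.
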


\begin{proof} For any map $h: GZ \rightarrow X$, we wish to find a map $f: GZ \rightarrow GX$ such that $f\varepsilon_{X} = h$.
We define $f$ via the following commuting diagram.

\[ \xymatrix{ G(GZ) \ar[rr]^{G(h)} && G(X) \ar[rr]^{\varepsilon_{X}} && X
\\
\\
GZ \ar[uu]^{\delta(Z)} \ar[uurr]^{f}}\]

Now we need to check that $\varepsilon_{X} \circ f= h$. By the naturality
of $\varepsilon$, the following diagram commutes.
\[\xymatrix{ GX \ar[rr]^{\varepsilon_{X}} && X
\\ \\
G(GZ) \ar[uu]_{G(h)} \ar[rr]_{\varepsilon_{GZ}} && GZ \ar[uu]_{h}
\\
\\
GZ \ar[uu]_{\delta(Z)} \ar[uurr]_{id_{GZ}}
\ar@<2ex>@/^1pc/[uuuu]^{f}}\] So $\varepsilon_{X}$ is a
$\mathbb{G}$-epimorphism.
\end{proof}

An object $P$ of $\mathfrak{C}$ is called
\emph{$\mathbb{G}$-projective} if for any $\mathbb{G}$-epimorphism
$f:X \rightarrow Y$, the map $\Hom_{\mathfrak{C}}(P,X) \rightarrow
\Hom_{\mathfrak{C}}(P,Y)$ is surjective.

\begin{example}
For any object $X$ in $\mathfrak{C}$ the object $G(X)$ is $\mathbb{G}$-projective.
\end{example}

\begin{lemma}
The coproduct of $\mathbb{G}$-projective objects is
$\mathbb{G}$-projective.
\end{lemma}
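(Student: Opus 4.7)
The plan is to reduce the question to surjectivity componentwise via the universal property of the coproduct. Let $\{P_i\}_{i\in I}$ be a family of $\mathbb{G}$-projective objects of $\mathfrak{C}$, and suppose their coproduct $P = \coprod_{i\in I} P_i$ exists. I want to show that for every $\mathbb{G}$-epimorphism $f: X \to Y$, the induced map $f_*: \Hom_{\mathfrak{C}}(P,X) \to \Hom_{\mathfrak{C}}(P,Y)$ is surjective.

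First I would invoke the defining adjunction of the coproduct, namely the natural isomorphism
\[
\Hom_{\mathfrak{C}}\!\left(\coprod_{i\in I} P_i,\, Z\right) \;\cong\; \prod_{i\in I} \Hom_{\mathfrak{C}}(P_i,\, Z),
\]
applied with $Z = X$ and $Z = Y$. This translates the map $f_*$ into the product map $\prod_i (f_*)_i : \prod_i \Hom_{\mathfrak{C}}(P_i,X) \to \prod_i \Hom_{\mathfrak{C}}(P_i,Y)$, where each component $(f_*)_i$ is post-composition with $f$ on $\Hom_{\mathfrak{C}}(P_i,-)$. It is routine to check naturality in $Z$ so that the isomorphisms intertwine $f_*$ with the product map.

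Next, since each $P_i$ is $\mathbb{G}$-projective and $f$ is a $\mathbb{G}$-epimorphism, the component map $(f_*)_i : \Hom_{\mathfrak{C}}(P_i,X) \to \Hom_{\mathfrak{C}}(P_i,Y)$ is surjective by the definition of $\mathbb{G}$-projectivity. The product of a family of surjective set maps is again surjective (choose a preimage in each coordinate), so $\prod_i (f_*)_i$ is surjective, and therefore $f_*$ itself is surjective. This shows $P$ is $\mathbb{G}$-projective.

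There is essentially no obstacle here; the argument is formal once one has the coproduct adjunction in hand. The only subtlety worth flagging is that one must be willing to choose preimages simultaneously in all coordinates, so a mild form of choice is implicit in the last step, but this is standard and unavoidable whenever one works with arbitrary indexed coproducts.
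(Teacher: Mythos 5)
Your argument is correct and is essentially identical to the paper's: both translate $f_*$ through the natural isomorphism $\Hom_{\mathfrak{C}}(\coprod_i P_i, Z) \cong \prod_i \Hom_{\mathfrak{C}}(P_i, Z)$, use $\mathbb{G}$-projectivity of each $P_i$ to get surjectivity componentwise, and conclude since a product of surjections is surjective. Your remark about the implicit use of choice is a fair extra observation but changes nothing in substance.
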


\begin{proof}
Let $P = \coprod_{i} P_{i}$ where $P_{i}$ is $\mathbb{G}$-projective
for all $i$. For a map \\$f: X \rightarrow Y$, one applies the
functors $\Hom_{\mathfrak{C}}(P,-)$ and $\Hom_{\mathfrak{C}}(P_{i},-)$
to get the maps $f_{*}: \Hom_{\mathfrak{C}}(P,X) \rightarrow
\Hom_{\mathfrak{C}}(P,Y)$ and $f_{i*}: \Hom_{\mathfrak{C}}(P_{i},X)
\rightarrow \Hom_{\mathfrak{C}}(P_{i},Y)$. If $f$ is a
$\mathbb{G}$-epimorphism then $f_{i*}$ is surjective for all $i$.
Using the well-known lemma $\Hom_{\mathfrak{C}}(\coprod_{i}P_{i},Z)
\cong \prod_{i}\Hom_{\mathfrak{C}}(P_{i},Z)$ one gets that if $f$ is
a $\mathbb{G}$-epimorphism then $f_{*} \cong \prod_{i} f_{i*}$ is
surjective. Hence $P$ is $\mathbb{G}$-projective if $P_{i}$ is
$\mathbb{G}$-projective for all $i$.\end{proof}

\begin{lemma}\label{lemma28}
An object $P$ is $\mathbb{G}$-projective if and only if $P$ is a
retract of an object of the form $G(Z)$.
\end{lemma}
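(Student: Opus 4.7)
The plan is to prove both directions using the $\mathbb{G}$-epimorphism $\varepsilon_X : G(X) \to X$ established in the previous lemma, combined with the definitions of $\mathbb{G}$-projectivity and $\mathbb{G}$-epimorphism.

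For the easier direction ($\Leftarrow$), I would first observe that an object of the form $G(Z)$ is automatically $\mathbb{G}$-projective: unwinding the definitions, $G(Z)$ being $\mathbb{G}$-projective means that for every $\mathbb{G}$-epimorphism $f : X \to Y$ the map $\Hom_{\mathfrak{C}}(G(Z),X) \to \Hom_{\mathfrak{C}}(G(Z),Y)$ is surjective, which is precisely what it means for $f$ to be a $\mathbb{G}$-epimorphism. Next I would check that $\mathbb{G}$-projectivity is inherited by retracts. Suppose $P$ is a retract of $G(Z)$ via $i : P \to G(Z)$ and $r : G(Z) \to P$ with $r \circ i = \id_{P}$, and let $f : X \to Y$ be a $\mathbb{G}$-epimorphism. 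Given $h : P \to Y$, apply the surjectivity of $f_{*}$ on $\Hom_{\mathfrak{C}}(G(Z),-)$ to the composite $h \circ r : G(Z) \to Y$ to obtain $g : G(Z) \to X$ with $f \circ g = h \circ r$; then $\tilde{h} := g \circ i : P \to X$ satisfies $f \circ \tilde{h} = h \circ r \circ i = h$, giving the required lift.

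For the forward direction ($\Rightarrow$), suppose $P$ is $\mathbb{G}$-projective. By the preceding lemma, the counit $\varepsilon_{P} : G(P) \to P$ is a $\mathbb{G}$-epimorphism. Applying the defining property of $\mathbb{G}$-projectivity with $X = G(P)$, $Y = P$ and the identity morphism $\id_{P} \in \Hom_{\mathfrak{C}}(P,P)$ yields a morphism $s : P \to G(P)$ such that $\varepsilon_{P} \circ s = \id_{P}$. Thus $P$ is a retract of $G(P)$, which is of the required form with $Z = P$.

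No serious obstacle is expected here, since both implications are purely formal manipulations with the adjunction-style definitions. The only subtlety worth flagging is that the proof of retract-closure uses no additional structure beyond the definitions, so it applies in any category equipped with a comonad; this is essentially the comonadic analogue of the classical fact that retracts of projectives are projective.
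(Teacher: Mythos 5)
Your proof is correct and follows essentially the same route as the paper: your backward direction is exactly the paper's one-line argument that a retract of a surjection is surjective (made explicit via the lift $g\circ i$), together with the observation that $G(Z)$ is $\mathbb{G}$-projective by the very definition of a $\mathbb{G}$-epimorphism. For the forward direction you correctly invoke the preceding lemma that $\varepsilon_{P}:G(P)\rightarrow P$ is a $\mathbb{G}$-epimorphism and lift $id_{P}$ to a section $s$ with $\varepsilon_{P}\circ s = id_{P}$, exhibiting $P$ as a retract of $G(P)$ --- a step the paper's terse proof leaves implicit, so your write-up is in fact slightly more complete than the original.
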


\begin{proof}
A retract of a surjective map is surjective, so it is sufficient to
consider the case $P=G(Z)$, which is clear from the definition of a
$\mathbb{G}$-epimorphism.
\end{proof}

\subsection{Simplicial methods}

\begin{definition}
A \textit{simplicial object} in a category $\mathfrak{C}$ is a sequence of objects $X_{0}, X_{1}, \ldots, X_{n}, \ldots$
together with two double-indexed families of arrows in $\mathfrak{C}$. The \textit{face operators} are arrows $d^{i}_{n}:X_{n} \rightarrow X_{n-1}$ for $0 \leq i \leq n$ and $1 \leq n < \infty$. The \textit{degeneracy operators} are arrows $s^{i}_{n}:X_{n} \rightarrow X_{n+1}$ for $0 \leq i \leq n$ and $0 \leq n < \infty$. The face operators and degeneracy operators satisfy the following conditions:
\begin{align*}
  d^{i}_{n} \circ d^{j}_{n+1} =& d^{j-1}_{n}\circ d^{i}_{n+1} \qquad \hbox{if $0 \leq i < j \leq n+1$}
  \\ s^{j}_{n} \circ s^{i}_{n-1} =& s^{i}_{n}\circ s^{j-1}_{n-1} \qquad \hbox{if $0 \leq i < j \leq n$}
  \\ d^{i}_{n+1} \circ s^{j}_{n} =& \left\{
                                      \begin{array}{ll}
                                        s^{j-1}_{n-1} \circ d^{i}_{n}, & \hbox{if $0 \leq i < j \leq n$;} \\
                                        1, & \hbox{if $0 \leq i = j \leq n$ or $0 \leq i-1 = j < n$;} \\
                                        s^{j}_{n-1}\circ d^{i-1}_{n}, & \hbox{if $0 < j < i-1 \leq n$.}
                                      \end{array}
                                    \right.
\end{align*}
\end{definition}

An \textit{augmented simplicial object} in the category $\mathfrak{C}$ is a simplicial object $X_{*}$ together with another object
$X_{-1}$ and an arrow $\epsilon:X_{0} \rightarrow X_{-1}$ such that $\epsilon \circ d^{0}_{1} = \epsilon \circ d^{1}_{1}.$

An augmented simplicial object $X_{*} \rightarrow X_{-1}$ is called \textit{contractible} if for each $n \geq -1$ there exists a map
$s_{n}: X_{n} \rightarrow X_{n+1}$ such that $d^{0} \circ s = 1$ and $d^{i} \circ s = s \circ d^{i-1}$ for $0 < i \leq n$ and $s^{0} \circ s = s \circ s$ and $s^{i} \circ s = s \circ s^{i-1}$ for $0 < i \leq n+1.$

Let $X_{*}$ be a simplicial object in an additive category $\mathfrak{B}$. The associated chain complex to $X_{*}$, denoted by $C(X_{*})$, is the
complex \[
\xymatrix{ \ldots \ar[r] & X_{n+1} \ar^{d}[r] & X_{n} \ar^{d}[r] &  X_{n-1} \ar^{d}[r] & \ldots \ar^{d}[r] & X_{0} \ar[r] & 0}
\]
where the boundary maps $d= \sum_{i=0}^{n}(-1)^{i}d^{i}:X_{n} \rightarrow X_{n-1}.$

\begin{proposition}
If $X_{*} \rightarrow X_{-1}$ is a contractible augmented simplicial object in an abelian category $\mathfrak{A}$, then the associated chain complex $C(X_{*})$ is contractible.
\end{proposition}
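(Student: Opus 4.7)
The plan is to write down an explicit chain contraction on $C(X_{*})$ built directly from the contracting homotopy $\{s_{n}\}$, and then verify the chain homotopy identity by a bookkeeping calculation using the contraction relations between the $d^{i}$ and $s$.

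First I would set $h_{n} := s_{n} : X_{n} \to X_{n+1}$ for each $n \geq -1$ (one could also consider $\pm s_{n}$ according to sign convention, but no sign is needed here). This is a morphism in the additive category $\mathfrak{A}$, and I claim that $h$ is a chain contraction of $C(X_{*})$ (viewed as augmented by $\epsilon : X_{0} \to X_{-1}$), meaning $d_{n+1} \circ h_{n} + h_{n-1} \circ d_{n} = 1_{X_{n}}$ for all $n \geq 0$, together with $\epsilon \circ h_{-1} = 1_{X_{-1}}$. The latter is immediate from the axiom $d^{0} \circ s = 1$ in degree $-1$, since $d^{0}_{0} = \epsilon$.

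The core of the proof is the key computation, carried out on $X_{n}$ for $n \geq 0$. Expanding the boundary as an alternating sum of face maps gives
\[
d_{n+1} \circ s_{n} \; = \; \sum_{i=0}^{n+1} (-1)^{i} \, d^{i}_{n+1} \circ s_{n} \; = \; d^{0}_{n+1} \circ s_{n} + \sum_{i=1}^{n+1} (-1)^{i} \, d^{i}_{n+1} \circ s_{n}.
\]
The first term is $1_{X_{n}}$ by the relation $d^{0} \circ s = 1$, and for each $i \geq 1$ the relation $d^{i} \circ s = s \circ d^{i-1}$ rewrites the $i$-th term as $(-1)^{i} s_{n-1} \circ d^{i-1}_{n}$. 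Reindexing $j = i-1$ converts the remaining sum into $-\sum_{j=0}^{n}(-1)^{j} s_{n-1} \circ d^{j}_{n} = -\, s_{n-1} \circ d_{n}$. Rearranging yields exactly $d_{n+1} \circ h_{n} + h_{n-1} \circ d_{n} = 1_{X_{n}}$, so $h$ is a chain contraction and $C(X_{*})$ is contractible.

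The main obstacle, if any, is purely notational: one must keep track of the indexing carefully, since the axiom $d^{i} \circ s = s \circ d^{i-1}$ must be applied in the full range $0 < i \leq n+1$ when acting on $X_{n}$ (so that the $i=n+1$ term is also absorbed), and the boundary case $n=-1$ of the chain contraction is precisely the augmentation identity $\epsilon \circ s_{-1} = 1$. No use of the degeneracy relations on $s^{i} \circ s$ is required, as these play no role in the $C(X_{*})$ differential; they only become relevant if one wants a normalized or Moore-complex version of the statement.
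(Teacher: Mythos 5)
Your argument is correct and is the standard one: taking $h_{n}=s_{n}$ and expanding $d_{n+1}s_{n}=d^{0}s_{n}+\sum_{i=1}^{n+1}(-1)^{i}d^{i}s_{n}=1-s_{n-1}d_{n}$ gives the chain contraction at once, with the $n=-1$ case being $\epsilon s_{-1}=1$. The paper states this proposition without proof, but the identical calculation is what it performs inside the proof of Lemma \ref{exacttt} (there with the contraction attached to the top face, $\varphi_{n+1}s_{n}=\mathrm{id}$, instead of $d^{0}$), so your route is essentially the intended one. Two of your side remarks are worth keeping: the statement must indeed be read as contractibility of the augmented complex $\cdots\rightarrow X_{1}\rightarrow X_{0}\rightarrow X_{-1}\rightarrow 0$, since the unaugmented $C(X_{*})$ has $H_{0}\cong X_{-1}$ and is not contractible in general; and the face identity $d^{i}\circ s=s\circ d^{i-1}$ is genuinely needed in the full range $0<i\leq n+1$ (including $d^{1}_{1}s_{0}=s_{-1}\epsilon$ in degree zero), which silently corrects the range $0<i\leq n$ as printed in the paper's definition of a contractible augmented simplicial object.
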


\subsection{Comonad cohomology}
Let $\mathbb{G}$ be a comonad on a category $\mathfrak{C}$. For any object $A$ in $\mathfrak{C}$, we get a functorial augmented simplicial object which
we denote by $\mathbb{G}(A)_{*} \rightarrow A$. The object of $\mathbb{G}_{*}(A)$
in degree $n$ is $G^{n+1}(A)$. We define the face and degeneracy operators by \begin{align*}\varphi_{i} &= G^{i}\varepsilon G^{n-i}: G^{n+1}(A) \rightarrow G^{n}(A),
\\ \sigma_{i} &= G^{i}\delta G^{n-i}: G^{n+1}(A) \rightarrow
G^{n+2}(A), \end{align*}  for $0 \leq i \leq n$. The augmenting map is given by $\varepsilon$.
\[
\xymatrix{ \ldots \ar@<1.5ex>[r] \ar@<-1.5ex>[r]^-{\vdots} & G^{n}A
\ar@<1.5ex>[r] \ar@<-1.5ex>[r]^-{\vdots} & G^{n-1}A \ar@<1.5ex>[r]
\ar@<-1.5ex>[r]^-{\vdots} & \ldots \ar@<.5ex>[r]^{G \varepsilon}
\ar@<-.5ex>[r]_{\varepsilon G} & GA \ar[r]^{\varepsilon} & A}
\]
We call $\mathbb{G}(A)_{*}$ the $\mathbb{G}$ \textit{comonad resolution of} $A$.

Let $E: \mathfrak{C} \rightarrow \mathfrak{M}$ be a contravariant functor where $\mathfrak{M}$ is an abelian category. The comonad cohomology of an object $A$ with coefficients in $E$ is $H^{*}_{\mathbb{G}}(A, E)$ where \[H^{n}_{\mathbb{G}}(A, E) := H^{n}(C(E(\mathbb{G}_{*}(A)))).\]
By definition, $H^{*}_{\mathbb{G}}(A, E)$ is the cohomology of the associated cochain complex
\[
\xymatrix{ 0 \ar[r] & E(G(A)) \ar[r] & E(G^{2}(A))
 \ar[r] & \ldots}
\]

If $M \in A$-$\mathfrak{mod}$, then we define the cohomology of $A$
with coefficients in $M$ to be the comonad cohomology of $A$ with
coefficients in $\Der(-,M): \mathfrak{C} \rightarrow \mathfrak{Ab}$.
\[H^{n}_{\mathbb{G}}(A, M) := H^{n}_{\mathbb{G}}(A, \Der(-,M)).\]

\begin{lemma}\label{Gderiv}
$H^{0}_{\mathbb{G}}(A,M) \cong \Der(A,M)$ for all $A$.
\end{lemma}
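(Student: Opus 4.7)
The plan is to identify $H^{0}_{\mathbb{G}}(A,M)$ as the equalizer of the first two coface maps and to realize this equalizer as $\Der(A,M)$ via precomposition with the augmentation $\varepsilon_{A}:GA\to A$.

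First, I would unwind the definition. The cochain complex $C(\Der(\mathbb{G}_{*}(A),M))$ begins in degree $0$ with $\Der(GA,M)$ and has differential $d^{0}f = f\circ\varepsilon_{GA} - f\circ G\varepsilon_{A}$. Since the complex has no term in negative degree, $H^{0}_{\mathbb{G}}(A,M) = \ker(d^{0})$: explicitly, the abelian group of derivations $f:GA\to M$ (morphisms in $\mathfrak{C}/A$ from $\varepsilon_{A}:GA\to A$ to $q:M\to A$) satisfying the cocycle identity $f\circ\varepsilon_{GA} = f\circ G\varepsilon_{A}$.

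Next, I would define a comparison map $\Phi:\Der(A,M)\to H^{0}_{\mathbb{G}}(A,M)$ by $\Phi(g)=g\circ\varepsilon_{A}$. Naturality of the counit applied to the morphism $\varepsilon_{A}:GA\to A$ yields $\varepsilon_{A}\circ\varepsilon_{GA}=\varepsilon_{A}\circ G\varepsilon_{A}$, so $\Phi(g)$ is automatically a $0$-cocycle, and $\Phi$ is a well-defined group homomorphism. Injectivity of $\Phi$ follows because $\varepsilon_{A}$ is an epimorphism, which in the algebra-over-sets setting is immediate from the fact that the counit of the free/forgetful adjunction underlying $\mathbb{G}$ is surjective on underlying sets.

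For surjectivity, I would invoke the standard fact that in this setup the augmentation $\varepsilon_{A}:GA\to A$ is a reflexive coequalizer of the pair $(\varepsilon_{GA},G\varepsilon_{A}):G^{2}A\rightrightarrows GA$ (a consequence of Beck's monadicity theorem applied to the forgetful functor $\mathfrak{Alg}(T)\to\mathfrak{Sets}$, with the common section furnished by $\delta_{A}$). Given a cocycle $f$, the universal property produces a unique $\tilde{g}:A\to M$ in $\mathfrak{C}$ with $\tilde{g}\circ\varepsilon_{A}=f$. To check that $\tilde{g}$ is a derivation, one verifies $q\circ\tilde{g}=1_{A}$: postcomposition with $\varepsilon_{A}$ gives $q\circ\tilde{g}\circ\varepsilon_{A}=q\circ f=\varepsilon_{A}=1_{A}\circ\varepsilon_{A}$, and cancelling the epimorphism $\varepsilon_{A}$ yields the claim. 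This inverts $\Phi$.

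The main obstacle is the coequalizer property of $\varepsilon_{A}$, which is the only non-formal input and is where the algebra structure (equivalently, monadicity of the adjunction generating $\mathbb{G}$) really intervenes; the splittings $\varepsilon_{GA}\circ\delta_{A}=1_{GA}=G\varepsilon_{A}\circ\delta_{A}$ from the comonad axioms make the pair reflexive and $U$-split, delivering the coequalizer. Everything else is a pure diagram chase using naturality of $\varepsilon$ and universal properties.
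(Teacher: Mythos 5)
Your proof is correct, and it supplies an argument where the thesis gives none: Lemma \ref{Gderiv} is stated in the text without proof, so there is no "paper approach" to compare against. Your route is the standard one, and you correctly isolate the only non-formal input, namely that $\varepsilon_{A}\colon GA\to A$ is the coequalizer of $(\varepsilon_{GA},G\varepsilon_{A})$; since $\Der(-,M)=\Hom_{\mathfrak{C}/A}(-,q)$ is a contravariant hom-type functor, it turns that coequalizer into the equalizer computing $\ker(d^{0})=H^{0}_{\mathbb{G}}(A,M)$, and your epimorphism argument then upgrades the factorization $\tilde{g}$ to a genuine derivation ($q\tilde{g}=1_{A}$).

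Two small caveats worth recording. First, the lemma as printed is phrased for an arbitrary comonad $\mathbb{G}$ on an arbitrary category $\mathfrak{C}$, and for such generality the statement is not purely formal: your surjectivity and injectivity steps genuinely use that $\mathbb{G}$ arises from the (monadic) free--forgetful adjunction with $\mathfrak{Sets}$, so that $\varepsilon_{A}$ is a regular epimorphism presenting $A$ canonically. That is exactly the setting in which the thesis applies the lemma (commutative rings, $\Psi$-rings, $\lambda$-rings, diagrams of algebras), so your proof is adequate for its use, but it proves the lemma in that monadic context rather than for abstract comonads. Second, a minor bookkeeping point: the comonad identities $\varepsilon_{GA}\circ\delta_{A}=1_{GA}=G\varepsilon_{A}\circ\delta_{A}$ do make the pair reflexive, but the $U$-splitting of the fork $UG^{2}A\rightrightarrows UGA\to UA$ is furnished by the unit $\eta$ of the adjunction (via $\eta_{UA}$ and $\eta_{UGA}$ and the triangle identities), not by $\delta_{A}=F(\eta_{UA})$ itself; the conclusion via Beck's theorem (equivalently, the canonical presentation of an algebra) is unaffected.
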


\begin{lemma}\label{exacttt}
If $A$ is $\mathbb{G}$-projective then $H^{n}_{\mathbb{G}}(A,M)=0$ for $n>0$.
\end{lemma}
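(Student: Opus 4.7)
The plan is to reduce to the case $A = G(Z)$ and then exhibit an explicit contraction of the augmented simplicial object $\mathbb{G}_*(G(Z)) \to G(Z)$; the vanishing of $H^n_{\mathbb{G}}(A,M)$ for $n > 0$ will follow by applying the contravariant functor $\Der(-,M)$ and invoking the preceding proposition on contractibility.

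For the reduction, by Lemma~\ref{lemma28} a $\mathbb{G}$-projective object $A$ is a retract of some $G(Z)$, i.e.\ there are morphisms $i: A \to G(Z)$ and $r: G(Z) \to A$ with $r \circ i = \mathrm{id}_A$. Functoriality of the comonad resolution produces simplicial maps $\mathbb{G}_*(i)$ and $\mathbb{G}_*(r)$ satisfying $\mathbb{G}_*(r) \circ \mathbb{G}_*(i) = \mathrm{id}$, so that $\mathbb{G}_*(A)$ is a retract of $\mathbb{G}_*(G(Z))$ in simplicial objects of $\mathfrak{C}$. Applying $\Der(-,M)$ levelwise and passing to cohomology preserves retracts, so $H^n_{\mathbb{G}}(A,M)$ sits as a retract of $H^n_{\mathbb{G}}(G(Z),M)$. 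It therefore suffices to prove the vanishing when $A = G(Z)$.

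The key point in that case is that the augmentation $\varepsilon_A : G(A) \to A$ admits the natural section $\delta_Z : G(Z) \to G^2(Z) = G(A)$, since $\varepsilon_{G(Z)} \circ \delta_Z = \mathrm{id}_{G(Z)}$ by one of the counit laws. I would take $s_{-1} = \delta_Z$ as the base of a contracting homotopy and construct the higher $s_n : G^{n+1}(A) \to G^{n+2}(A)$ by inserting a further copy of $\delta$ at the appropriate position; the required simplicial identities (the relations $d^{0} \circ s = 1$ and $d^{i} \circ s = s \circ d^{i-1}$, and the analogous ones involving the degeneracies $\sigma_i = G^i \delta G^{n-i}$) then reduce to the coassociativity of $\delta$, the counit laws, and the naturality of $\varepsilon$ and $\delta$. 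This shows that $\mathbb{G}_*(G(Z)) \to G(Z)$ is contractible as an augmented simplicial object.

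Since contractibility is a purely categorical notion, it is preserved by the contravariant functor $\Der(-,M)$, producing a contractible augmented cosimplicial abelian group whose associated augmented cochain complex is null-homotopic by the dual of the preceding proposition. The cohomology of the un-augmented complex is therefore $\Der(G(Z),M)$ in degree $0$, consistent with Lemma~\ref{Gderiv}, and zero in every positive degree. The main technical obstacle is selecting the correct placement of $\delta$ in the formula for $s_n$ so that the resulting identities match the convention used for the face operators $\varphi_i = G^i \varepsilon G^{n-i}$; once the right choice is fixed, the verifications are a routine manipulation of the comonad axioms.
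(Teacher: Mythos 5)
Your proposal is correct and follows essentially the same route as the paper: reduce to $A = G(Z)$ via Lemma \ref{lemma28}, contract the comonad resolution using the comultiplication (the paper's explicit formula is $s_{n} = G^{n+1}\delta$, which is exactly the ``insert $\delta$ at the innermost spot'' choice you describe), and then apply $\Der(-,M)$ to the contractible augmented simplicial object. You merely spell out the retract step and the preservation of contractibility under the functor, which the paper leaves implicit.
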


\begin{proof}
From lemma \ref{lemma28}, it is sufficient to check the case
where $A = G(Z)$. There exists a
contracting homotopy $s_{n}: G^{n+2} \rightarrow
G^{n+3}$ for $n \geq -1$ given by \[s_{n} = G^{n+1}\delta.\]
We get that $\epsilon s_{-1} = id$,
$\varphi_{n+1}s_{n} = id$, $\varphi_{0}s_{0} = s_{-1}\epsilon$, and
$\varphi_{i}s_{n} = s_{n-1}\varphi_{i}$ for all $0 \leq i \leq n$.
It follows that $H^{n}_{\mathbb{G}}(G(Z),M)=0$, for $n>0$.
\end{proof}

\subsection{Andr\'{e}-Quillen cohomology}
In 1967, M. Andr\'{e} and D. Quillen \cite{Quillen} independently introduced a (co)homology theory for commutative algebras. This theory now goes by the name of Andr\'{e}-Quillen cohomology.

Fix a commutative ring $k$ and consider the category $\mathfrak{Commalg}$ of commutative $k$-algebras.

The forgetful functor $U: \mathfrak{Commalg} \rightarrow \mathfrak{Sets}$ has a left adjoint which takes a set $X$ to the polynomial algebra $k[X]$ on $X$. This adjoint pair gives us a comonad $\mathbb{G}$ on $\mathfrak{Commalg}$.

Let $R$ be a commutative $k$-algebras, and $M \in R-\mathfrak{mod}$. We define the Andr\'{e}-Quillen cohomology of $R$ with coefficients in $M$ to be comonad cohomology of $R$ with coefficients in $\Der_{k}(-,M)$,
\[H^{n}_{AQ}(R/k,M) := H^{n}_{\mathbb{G}}(R,\Der_{k}(-,M)).\]
Note that $\Der_{k}(-,M))$ is a functor from the category of commutative $k$-algebras $\mathfrak{Commalg}$ to the category of abelian groups $\mathfrak{Ab}$.

An \textit{extension} of $R$ by $M$ is an exact sequence
\[ \xymatrix{0 \ar[r] & M \ar^{\alpha}[r] & X \ar^{\beta}[r] & R \ar[r] & 0} \]
where $X$ is a commutative $k$-algebra, the map $\beta$ is a commutative $k$-algebra homomorphism, the map $\alpha$ is a $k$-module homomorphism and
\[x \alpha(m) = \alpha(\beta(x)m)\]
for all $x \in X$ and all $m \in M$. The map $\alpha$ identifies $M$ with an ideal of square-zero in $X$.

Two extensions $X,X'$ with $R$ and $M$ fixed are \textit{equivalent} if there exists a $k$-algebra homomorphism $\phi: X \rightarrow \overline{X}$ such that the following diagram commutes.
\[\xymatrix{0 \ar[r] & M \ar@{=}[d] \ar[r] & X \ar[r] \ar[d]^{\phi} & R \ar[r] \ar@{=}[d] & 0 \\ 0 \ar[r] & M \ar[r] & \overline{X} \ar[r] & R \ar[r] & 0}\]
We denote the set of equivalence classes of extensions of $R$ by $M$ by $Extalg_{k}(R,M)$.

\begin{proposition}\label{AQprop1}
\begin{enumerate}
  \item $H^{0}_{AQ}(R/k,M) \cong \Der_{k}(R,M).$
  \item If $R$ is a free commutative algebra then $H^{n}_{AQ}(R/k,M) =0$ for $n>0$.
  \item $H^{1}_{AQ}(R/k,M) \cong Extalg_{k}(R,M).$
\end{enumerate}
\end{proposition}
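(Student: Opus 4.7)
Part (1) should be immediate from the definitions and Lemma \ref{Gderiv}. By the definition of Andr\'e--Quillen cohomology, $H^{0}_{AQ}(R/k,M) = H^{0}_{\mathbb{G}}(R,\Der_{k}(-,M))$, which by Lemma \ref{Gderiv} is $\Der_{k}(R,M)$. The only subtle point is to observe that the notation $\Der(A,M)$ in Lemma \ref{Gderiv} specializes to $\Der_{k}(R,M)$ in the commutative algebra setting, since an $R$-module in the sense of Definition \ref{pderivation} is precisely a square-zero extension, and derivations over it are $k$-linear derivations in the classical sense.

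For Part (2), the plan is to show that a free commutative $k$-algebra $R = k[X]$ is $\mathbb{G}$-projective and then invoke Lemma \ref{exacttt}. Writing $L \dashv U$ for the adjunction defining $\mathbb{G}$, so that $L(X) = k[X]$ and $G = LU$, we have $G(R) = LUL(X)$. The triangle identity for the adjunction gives $\varepsilon_{L(X)} \circ L\eta_{X} = \mathrm{id}_{L(X)}$, exhibiting $R = L(X)$ as a retract of $G(R) = LUL(X)$. By Lemma \ref{lemma28}, $R$ is $\mathbb{G}$-projective, and Lemma \ref{exacttt} finishes the argument.

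Part (3) is the substantive step. I would build a map $\Phi : Extalg_{k}(R,M) \to H^{1}_{AQ}(R/k,M)$ as follows. Given an extension $0 \to M \to X \xrightarrow{\beta} R \to 0$, use the fact that $G(R) = k[R]$ is free to lift the counit $\varepsilon : G(R) \to R$ through $\beta$ to a $k$-algebra map $\tilde{\varepsilon} : G(R) \to X$. The two face maps $\varphi_{0},\varphi_{1} : G^{2}(R) \to G(R)$ both satisfy $\varepsilon\varphi_{0} = \varepsilon\varphi_{1}$, so $D := \tilde{\varepsilon}\varphi_{0} - \tilde{\varepsilon}\varphi_{1}$ lands in $M \subset X$. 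Because $M$ is a square-zero ideal, a standard manipulation shows that the difference of two $k$-algebra maps into $X$ whose difference is $M$-valued is a $k$-derivation $G^{2}(R) \to M$, with the $G^{2}(R)$-module structure on $M$ given by restriction along $\varepsilon \circ \varphi_{0} = \varepsilon \circ \varphi_{1}$. One then checks that $D$ is a 1-cocycle in $\Der_{k}(\mathbb{G}_{*}(R),M)$, that changing the lift $\tilde{\varepsilon}$ changes $D$ by a coboundary, and that equivalent extensions give the same class, so $\Phi$ is well defined.

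For the inverse $\Psi$, start with a 1-cocycle $D : G^{2}(R) \to M$ and build an extension using the pushout construction. Concretely, let $I = \ker(\varepsilon : G(R) \to R)$, view $I/I^{2}$ as an $R$-module, and define $X$ as the pushout of $G(R)/I^{2}$ along a suitable map determined by $D$; equivalently, one can realize $X$ inside a quotient of $G(R) \oplus M$ using $D$ evaluated on the relations for $R$. The main obstacle I expect is this inverse construction: one must verify that a cocycle really does produce a well-defined extension (independence of choices requires using the cocycle identity $d^{1}D = 0$), that coboundaries give extensions equivalent to the trivial one $R \oplus M$, and that $\Phi$ and $\Psi$ are mutually inverse. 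These verifications are essentially the same as in the classical Harrison/Andr\'e--Quillen arguments and boil down to careful bookkeeping with the face maps $\varphi_{0}, \varphi_{1}, \varphi_{2} : G^{3}(R) \to G^{2}(R)$.
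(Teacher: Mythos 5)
The paper never proves Proposition \ref{AQprop1}: Chapter \ref{Chapter2} is explicitly background, and this statement is quoted as known from \cite{Quillen}, so there is no in-paper argument to compare with line by line. Your proposal is correct and is the standard argument; it is also exactly the strategy the thesis itself uses later for its original analogues (the proofs of $H^{1}_{\mathbb{G}_{I}}(A,M)\cong\mathfrak{Ext}(A,M)$ in Chapter \ref{Chapter6} and of the corresponding statements for $\Psi$- and $\lambda$-rings in Chapter \ref{Chapter7}): lift the augmentation of a free resolution through the extension, take the difference of the two faces to get a derivation $1$-cocycle, and conversely rebuild $X$ as a quotient of $P_{0}\oplus M$ by the image of $P_{1}$. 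Parts (1) and (2) are complete as written: (1) is Lemma \ref{Gderiv} once abelian group objects of $\mathfrak{Commalg}/R$ are identified with square-zero extensions, as you note, and in (2) the triangle identity $\varepsilon_{L(X)}\circ L\eta_{X}=\mathrm{id}_{L(X)}$ together with Lemmas \ref{lemma28} and \ref{exacttt} is exactly right. In (3) the forward map is solid: $D=\tilde{\varepsilon}\varphi_{0}-\tilde{\varepsilon}\varphi_{1}$ is $M$-valued, is a derivation because $M$ is square-zero, and the cocycle identity follows from the simplicial identities $\varphi_{0}\varphi_{1}=\varphi_{0}\varphi_{0}$, $\varphi_{0}\varphi_{2}=\varphi_{1}\varphi_{0}$, $\varphi_{1}\varphi_{2}=\varphi_{1}\varphi_{1}$, just as you say.

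The one thin spot is the inverse. The phrase ``a suitable map determined by $D$'' from $I/I^{2}$ to $M$ hides the real work: for $u\in I$ the polynomial generator of $G^{2}(R)$ labelled by $u$ is \emph{not} killed by $\varphi_{1}=G(\varepsilon)$ (it is sent to the generator labelled by $0\in R$, which is nonzero in $G(R)$), so one must normalize (e.g.\ evaluate $D$ on $x_{u}-\delta_{R}(u)$, using $\varphi_{0}\delta=\mathrm{id}=\varphi_{1}\delta$) before $D$ induces a well-defined $R$-linear map on $I/I^{2}$, and the cocycle condition is what makes coboundaries land in the image of $\Der_{k}(G(R),M)$. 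Equivalently, in your second formulation ($X$ a quotient of $G(R)\rtimes M$ by the ideal generated by the elements $(\varphi_{0}(x)-\varphi_{1}(x),D(x))$) the point needing proof is that $M$ injects into the quotient; this uses the cocycle identity essentially and is more than face-map bookkeeping, though it is the classical verification and the thesis glosses the same point in its own Chapter \ref{Chapter6} proof. I would not call this a gap in a blind sketch, but it is the step you should write out in full.
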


\section{Harrison cohomology of commutative algebras}
In 1962, D.K. Harrison introduced a cohomology of commutative algebras. The Harrison complex is a subcomplex of the Hochschild complex in the case of commutative algebras. The Harrison complex consists of the linear functions which vanish on the shuffles. The Harrison cohomology is isomorphic to the comonad cohomology for a commutative algebra over a field of characteristic 0, however, there is a shift of one dimension. The following material can be found in \cite{Loday}.

\subsection{Hochschild cohomology}
Let $k$ be a ring, $R$ be an associative $k$-algebra and  $M$ be an $R-R$-bimodule. All the tensor products in this section are over the ground ring $k$. The Hochschild cochain complex of $R$ with coefficients in $M$ is given by
\[C^{n}_{HH}(R,M) =   \Hom_{R^{e}}(R^{\otimes n},M), \]
for $n \geq 0$ and $R^{e}= R \otimes R^{op}$. The coboundary maps $\delta^{n}: C^{n}_{HH}(R,M) \rightarrow C_{HH}^{n+1}(R,M)$ are given by
\begin{align*}
  \delta^{n}(f)(r_{0},\ldots,r_{n}) =& r_{0}f(r_{1},\ldots,r_{n}) \\ & + \sum_{i=0}^{n-1}(-1)^{i+1}f(r_{0},\ldots,r_{i}r_{i+1},\ldots,r_{n}) \\& + (-1)^{n+1}f(r_{0},\ldots,r_{n-1})r_{n}.
\end{align*}

We can then take the cohomology of the resulting complex to get the Hochschild cohomology which we denote by $HH^{n}(R,M)$. We get that
\[ HH^{n}(R,M) \cong  R^{n  }\Hom_{R^{e}}(R,M) \cong \Ext_{R^{e}}^{n}(R,M).\]

\subsection{Harrison Cohomology}
Let $S_{n}$ be the symmetric group which acts on the set
$\{1,\ldots,n\}$. A \textit{(p,q)-shuffle} is a permutation $\sigma$ in $S_{p+q}$
such that:
\[\sigma (1) < \sigma (2) < \ldots < \sigma (p) \textrm{ and } \sigma (p+1) < \sigma (2) < \ldots < \sigma (p+q).\]
For any $k$-algebra $A$ and $M \in A-\mathfrak{mod}$, we let $S_{n}$
act on the left on $C_{n}^{HH}(A,M) = M \otimes A^{\otimes n}$ by:
\[\sigma \cdot (m,a_{1},\ldots, a_{n}) = (m,a_{\sigma^{-1}(1)}\ldots, a_{\sigma^{-1}(n)}). \]

Let $A'$ be another $k$-algebra, $M' \in A'-\mathfrak{mod}$. The
\textit{shuffle product}:
\[- \times - = sh_{pq}: C^{HH}_{p}(A,M) \otimes C^{HH}_{q}(A',M') \rightarrow C^{HH}_{p+q}(A \otimes A',M \otimes M'),\]
is defined by the following formula:
\begin{align*}(m,a_{1},\ldots, a_{p}) \times &(m',a'_{1},\ldots, a'_{q}) \\ =& \sum_{\sigma}sgn(\sigma)\sigma\cdot (m\otimes m',a_{1}\otimes 1,\ldots,a_{p}\otimes 1,1\otimes a'_{1},\ldots,1\otimes a'_{q}).\end{align*}

\begin{proposition}
The Hochschild boundary $b$ is a graded derivation for the shuffle
product
\[ b(x \times y) = b(x) \times y  + (-1)^{|x|}x \times b(y), \qquad x \in C^{HH}_{p}(A,M), y \in C^{HH}_{q}(A',M').\]
where the Hochschild boundary $b: C^{HH}_{n}(A,M) \rightarrow C^{HH}_{n-1}(A,M)$ is given by:
\begin{align*}b(m,a_{1},\ldots,a_{n}) = & (ma_{1},a_{2},\ldots,a_{n}) + \sum_{i=1}^{n-1}(-1)^{i}(m,a_{1},\ldots,a_{i}a_{i+1},\ldots, a_{n}) \\ &+ (-1)^{n}(a_{n}m,a_{1},\ldots,a_{n-1}).
\end{align*}
\end{proposition}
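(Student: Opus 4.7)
The plan is to verify the identity on generators $x = (m, a_1, \ldots, a_p) \in C^{HH}_p(A,M)$ and $y = (m', a'_1, \ldots, a'_q) \in C^{HH}_q(A', M')$; the general case follows by $k$-linearity. Setting $u_i = a_i \otimes 1$ and $v_j = 1 \otimes a'_j$, every summand of $x \times y$ has the form $sgn(\sigma)\, (m \otimes m', w_{\sigma^{-1}(1)}, \ldots, w_{\sigma^{-1}(p+q)})$, where $(w_1, \ldots, w_{p+q}) = (u_1, \ldots, u_p, v_1, \ldots, v_q)$ and $\sigma$ ranges over $(p,q)$-shuffles. Applying $b$ produces outer multiplication terms on $m \otimes m'$ together with a collection of internal product terms indexed by pairs of adjacent positions within each shuffle.

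I would classify the internal contributions as \emph{unmixed} (two adjacent entries both of $u$-type, or both of $v$-type) or \emph{mixed} (one of each type). For the unmixed contributions on the $u$-side, $(p,q)$-shuffles in which $u_i$ sits immediately before $u_{i+1}$ are in a sign-preserving bijection with $(p-1,q)$-shuffles of the collapsed alphabet obtained by replacing $u_i u_{i+1}$ with the single letter $a_i a_{i+1} \otimes 1$; summing over $i$ reconstructs exactly the interior of $b(x) \times y$. The analogous bijection on the $v$-side yields the interior of $(-1)^p\, x \times b(y)$, the sign $(-1)^p$ arising because the first $v$-letter starts at position $p+1$ in the unshuffled word. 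The outer multiplication terms from $b$ split according to whether the first (respectively last) letter of each shuffle is a $u$ or a $v$, and produce the leading and trailing terms of $b(x) \times y$ and $(-1)^p\, x \times b(y)$.

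For the mixed contributions, I would pair each $(p,q)$-shuffle $\sigma$ in which $u_i$ is immediately followed by $v_j$ with the shuffle $\tau$ obtained by transposing those two adjacent entries. A short check confirms that $\tau$ is again a $(p,q)$-shuffle with $sgn(\tau) = -sgn(\sigma)$, and since the product $u_i v_j = v_j u_i = a_i \otimes a'_j$ is symmetric while both shuffles give the same positional sign $(-1)^k$ from the Hochschild differential, the two contributions cancel. The main obstacle, and the real content of the proof, is the sign bookkeeping: one must verify that the shuffle sign, the positional sign $(-1)^k$ coming from $b$, and the relabelling used in the unmixed bijections all combine consistently, and that no term is double-counted at the boundary positions $1$ and $p+q$. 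Once these signs are tracked carefully the graded Leibniz identity $b(x \times y) = b(x) \times y + (-1)^{|x|} x \times b(y)$ follows.
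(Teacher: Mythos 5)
The paper never proves this proposition at all --- it is quoted as background from Loday's book --- so there is no in-text argument to compare against; your proposal has to stand on its own, and in outline it does. The structure is the standard direct verification: the mixed adjacencies cancel under the sign-reversing involution that transposes the adjacent $u$/$v$ pair (the transposed word is again a $(p,q)$-shuffle, the Hochschild positional sign $(-1)^{k}$ is unchanged, and $(a_{i}\otimes 1)(1\otimes a'_{j})=(1\otimes a'_{j})(a_{i}\otimes 1)$), while the unmixed and outer terms reassemble into $b(x)\times y+(-1)^{p}\,x\times b(y)$.

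One caveat: the collapsing bijection you invoke is not literally sign-preserving on shuffle signs, so as stated that step is slightly off. If $u_{i}u_{i+1}$ occupy positions $k,k+1$ with $c$ letters of $v$-type in front of them, then $k=i+c$ and the collapsed $(p-1,q)$-shuffle has exactly $c$ fewer inversions, so its sign is $sgn(\sigma)(-1)^{c}$; the term on the left carries total sign $sgn(\sigma)(-1)^{i+c}$, and this matches the right-hand term $(-1)^{i}\cdot sgn(\sigma)(-1)^{c}$ only after the positional and shuffle signs are combined. The analogous count on the $v$-side removes $p-d$ inversions ($d$ the number of $u$-letters in front of the pair), and it is precisely this count, together with the boundary cases where the first or last letter of the shuffle is of $v$-type, that produces the global factor $(-1)^{p}=(-1)^{|x|}$ --- your heuristic about the $v$-letters starting at position $p+1$ is pointing at the right mechanism but is not itself the verification. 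Since you explicitly defer this bookkeeping and it does close up correctly, the plan is sound; just replace ``sign-preserving bijection'' by the combined statement that shuffle sign times positional sign is preserved under collapsing.
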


Assume that $A$ is commutative and $M$ is symmetric (symmetric means that $am = ma$ for all $a \in A$ and $m \in M$). The product map $\mu: A \otimes A
\rightarrow A$ is a $k$-algebra homomorphism, and $\mu': A \otimes M
\rightarrow M$ is a homomorphism of bimodules. Composition of the
shuffle map with $\mu \otimes \mu'$ allows us to define the
\textit{inner shuffle map}
\[- \times - = sh_{pq}: C^{HH}_{p}(A,A) \otimes C^{HH}_{q}(A,M) \rightarrow C^{HH}_{p+q}(A,M),\]
given by the formula
\[(a_{0},a_{1},\ldots, a_{p}) \times (m,a_{p+1},\ldots, a_{p+q}) = \sum_{\sigma=(p,q)-shuffle}sgn(\sigma)\sigma\cdot (a_{0}m,a_{1},\ldots,a_{p+q}).\]

We let $J$ denote $\bigoplus_{n>0}C_{n}^{HH}(A,A)$. Note that $J
\subset C^{HH}_{*}(A,A)$. We define the Harrison chain complex to be
the quotient $C^{Harr}_{*}(A,M) = C^{HH}_{*}(A,M) /
J.C^{HH}_{*}(A,M)$.

Note that
\begin{align*}C^{n}_{HH}(A,M) =& \Hom_{A^{e}}(A^{\otimes n}, M) \cong \Hom_{A \otimes A^{e}}(A \otimes A^{\otimes n}, M) \\=& \Hom_{A \otimes A^{e}}(C^{HH}_{n}(A,A),M).\end{align*}

We define the Harrison cochain complex by taking
\[C^{*}_{Harr}(A,M) := \Hom_{A \otimes A^{e}}(C_{*}^{Harr}(A,A),M).\]

For example
\begin{align*}C^{0}_{Harr}(A,M) =& M, \\ C^{1}_{Harr}(A,M) =& C^{1}_{HH}(A,M), \\ C^{2}_{Harr}(A,M) =& \{f \in C^{2}_{HH}(A,M) | f(x,y)=f(y,x) \},
 \\ C^{3}_{Harr}(A,M) =& \{f \in C^{3}_{HH}(A,M) | f(x,y,z)-f(y,x,z)+f(y,z,x)=0. \} \end{align*}

We define the Harrison cohomology of $A$ with coefficients in $M$ to be the cohomology of the Harrison cochain complex.
\[Harr^{n}(A,M) := H^{n}(C^{*}_{Harr}(A,M)).\]

\begin{lemma}
$Harr^{1}(A,M) \cong \Der(A,M).$
\end{lemma}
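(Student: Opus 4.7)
The plan is to observe that in degree one the Harrison and Hochschild cochain complexes coincide, and then to compute $Z^{1}$ and $B^{1}$ directly from the Hochschild coboundary formula, using the hypothesis that $A$ is commutative and $M$ is symmetric.

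First I would unravel what $C^{1}_{Harr}(A,M)$ is. A $(p,q)$-shuffle requires $p+q \geq 2$ to be nontrivial, so the shuffle subspace of $C^{HH}_{1}(A,A) = A$ is zero. Hence $C^{Harr}_{1}(A,A) = C^{HH}_{1}(A,A)$, and dualizing via $\Hom_{A\otimes A^{e}}(-,M)$ gives $C^{1}_{Harr}(A,M) = C^{1}_{HH}(A,M)$, which is just $k$-linear maps $A \to M$. The Harrison differentials $\delta^{0}$ and $\delta^{1}$ acting at this degree are therefore literally the Hochschild differentials restricted from $M \to C^{1}_{HH}(A,M) \to C^{2}_{HH}(A,M)$.

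Next I would identify the $1$-cocycles. Applying the formula for the Hochschild coboundary with $n=1$ to a $k$-linear $f:A \to M$ yields
\[\delta^{1}(f)(a,b) = a f(b) - f(ab) + f(a) b,\]
so $\delta^{1}(f) = 0$ is equivalent to the Leibniz rule $f(ab) = a f(b) + f(a) b$. Thus $Z^{1}_{Harr}(A,M) = \Der(A,M)$. For the $1$-coboundaries, note that $C^{0}_{Harr}(A,M) = M$ and $\delta^{0}(m)(a) = am - ma$, which vanishes because $A$ is commutative and $M$ is a symmetric $A$-bimodule. Therefore $B^{1}_{Harr}(A,M) = 0$, and $Harr^{1}(A,M) = Z^{1}_{Harr}(A,M)/B^{1}_{Harr}(A,M) \cong \Der(A,M)$.

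There is no real obstacle in this proof; the only point that could slip by is the identification $C^{1}_{Harr}(A,M) = C^{1}_{HH}(A,M)$, which I would justify explicitly by the shuffle-length argument above, and the mild check that symmetry of $M$ kills $\delta^{0}$ so that no spurious coboundaries shrink $\Der(A,M)$.
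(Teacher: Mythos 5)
Your proof is correct. The paper itself states this lemma without proof (it is quoted as background from Loday), and your argument is the standard direct computation: in degree one there are no proper $(p,q)$-shuffles with $p,q\geq 1$, so $C^{1}_{Harr}(A,M)=C^{1}_{HH}(A,M)$ (exactly as the paper lists), the Hochschild formula at $n=1$ identifies $1$-cocycles with derivations, and commutativity of $A$ together with symmetry of $M$ makes $\delta^{0}$ vanish, so there are no nonzero $1$-coboundaries. One small slip worth fixing: in the paper's conventions $C^{HH}_{1}(A,A)=A\otimes A$ (since $C^{HH}_{n}(A,M)=M\otimes A^{\otimes n}$), not $A$; this does not affect your argument, whose substantive point is only that no proper shuffle products land in chain degree one.
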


An \textit{additively split extension} of $A$ by $M$ is an extension of $A$ by $M$
\[\xymatrix{0 \ar[r] & M \ar^{q}[r] & X \ar^{p}[r] & A \ar[r] & 0}\]
where there exists $s:A \rightarrow X$ which is an additive
section of $p$.

Two additively split extensions $(X),(\overline{X})$ with $A,M$
fixed are said to be \textit{equivalent} if there exists a
homomorphism of commutative algebras
 $\phi: X \rightarrow \overline{X}$ such that the
following diagram commutes.
\[\xymatrix{0 \ar[r] & M \ar@{=}[d] \ar[r] & X \ar[r] \ar[d]^{\phi} & A \ar[r] \ar@{=}[d] & 0 \\ 0 \ar[r] & M \ar[r] & \overline{X} \ar[r] & A \ar[r] & 0}\]

We denote the set of equivalence classes of additively split
extensions of $A$ by $M$ by $\AExt(A,M)$.

\begin{lemma}
$Harr^{2}(A,M) \cong \AExt(A,M).$
\end{lemma}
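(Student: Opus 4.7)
The plan is to construct mutually inverse maps between $\AExt(A,M)$ and $Harr^{2}(A,M)$, following the classical pattern of relating degree-2 cohomology classes to square-zero extensions. The required Harrison symmetry $f(a,b)=f(b,a)$ will match up exactly with the commutativity of the algebras $X$ appearing in the extensions, while the Hochschild cocycle condition will match up with associativity of $X$. The two ingredients that make the bijection work are that $M$ has square-zero in $X$ and that $X$ admits a $k$-linear section $s:A\to X$ of $p$.

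First I would construct the map $\AExt(A,M)\to Harr^{2}(A,M)$. Given an additively split extension with section $s$, I would identify $M$ with $\ker(p)\subseteq X$ and define $f_{s}(a,b) = s(a)s(b) - s(ab)$. Since $p(s(a)s(b)-s(ab))=ab-ab=0$, this element lies in $M$, giving a $k$-linear map $A\otimes A\to M$, i.e. a Hochschild $2$-cochain. Because $X$ is commutative, $f_{s}(a,b)=f_{s}(b,a)$, so $f_{s}\in C^{2}_{Harr}(A,M)$. Expanding $\bigl(s(a)s(b)\bigr)s(c) = s(a)\bigl(s(b)s(c)\bigr)$ and using the bimodule action of $A$ on $M$ (via $x\cdot m=p(x)m$) yields the Hochschild cocycle identity $af_{s}(b,c)-f_{s}(ab,c)+f_{s}(a,bc)-f_{s}(a,b)c=0$. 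If $s'$ is another section, write $s'(a)=s(a)+g(a)$ for a $k$-linear $g:A\to M$; expanding $f_{s'}(a,b)$ and using that $g(a)g(b)=0$ in the square-zero ideal gives $f_{s'}-f_{s}=\delta g$, so the cohomology class is independent of $s$. A direct diagram chase shows equivalent extensions produce cohomologous cocycles.

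Next I would construct the inverse map $Harr^{2}(A,M)\to\AExt(A,M)$. Given a Harrison $2$-cocycle $f$, put $X_{f}=A\oplus M$ as a $k$-module with multiplication $(a,m)(b,n) = (ab,\, an+bm+f(a,b))$. Commutativity of $X_{f}$ follows from the symmetry $f(a,b)=f(b,a)$ together with commutativity of $A$ and symmetry of $M$; associativity follows from a short calculation that reduces precisely to $af(b,c)-f(ab,c)+f(a,bc)-f(a,b)c=0$, i.e.\ to $f$ being a Hochschild cocycle. The obvious maps $M\hookrightarrow X_{f}\twoheadrightarrow A$ give an extension, and the $k$-linear map $s(a)=(a,0)$ is an additive section, so $X_{f}\in\AExt(A,M)$. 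Cohomologous cocycles $f$ and $f+\delta g$ yield equivalent extensions via the $k$-algebra map $\phi:X_{f}\to X_{f+\delta g}$ defined by $\phi(a,m)=(a,m+g(a))$, whose compatibility with multiplication is exactly the relation $(f+\delta g)(a,b)-f(a,b)=ag(b)+g(a)b-g(ab)$.

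Finally I would verify the two constructions are mutually inverse: starting from $(X,s)$, forming $f_{s}$, and then $X_{f_{s}}$, the map $(a,m)\mapsto s(a)+m$ is a $k$-algebra isomorphism over $A$ extending the identity of $M$; starting from $f$, the canonical section of $X_{f}$ recovers $f$ on the nose. The main obstacle I anticipate is bookkeeping rather than conceptual: namely, verifying carefully that the Harrison quotient (dividing out by the shuffle relations in the Hochschild chain complex) is not merely a convenience for the cocycle condition but is actually \emph{forced} by working with commutative $X$, so that no non-symmetric Hochschild class can come from a genuine commutative additively split extension. This is ensured by the commutativity of $X$ used in deriving $f_{s}(a,b)=f_{s}(b,a)$, and by the fact that $X_{f}$ is genuinely commutative only when $f$ is symmetric, so nothing extraneous is produced in either direction.
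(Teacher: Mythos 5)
Your proposal is correct and follows essentially the same route as the paper: a section $s$ of an additively split extension yields the symmetric $2$-cocycle $f(a,a')=s(a)s(a')-s(aa')$, and conversely a Harrison $2$-cocycle defines the extension $M\oplus A$ with the twisted multiplication, with cohomologous cocycles corresponding to equivalent extensions. Your write-up merely spells out a few verifications (the cocycle identity from associativity, independence of the section, and the mutual-inverse check) that the paper leaves implicit.
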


\begin{proof}
Given an additively split extension of $A$ by $M$
\[\xymatrix{0 \ar[r] & M \ar^{q}[r] & X \ar^{p}[r] &  A \ar[r] & 0}\]
there is an additive homomorphism $s:A \rightarrow X$ which is a
section of $p$. The section induces an additive isomorphism $X \approx A \oplus M$ where multiplication in $X$ is given by $(a,m)(a',m') = (aa', ma' + am'
+ f(a,a'))$ where the bilinear map $f:A\times A \rightarrow M$ is given by
\[f(a,a') = s(a)s(a')-s(aa').\] The map $f$ is a 2-cocycle. Given two additively split
extensions which are equivalent, then the two 2-cocycles we get
differ by a 2-coboundary.

A 2-cocycle is a map $f:A \times A \rightarrow M$. We get an
additively split extension of $A$ by $M$ given by taking the exact
sequence
\[ \xymatrix{0 \ar[r] & M \ar[r] & M\oplus A \ar[r] &  A \ar[r] & 0} \]
where addition in $M\oplus A$ is given by $(m,a)+(m',a') = (m+m',
a+a')$ and multiplication is given by  \[(m,a)(m',a') = (a'm + am' +
f(a,a'), aa').\] Given two 2-cocycles which differ by a 2-coboundary,
then the two additively split extensions we get are equivalent.
\end{proof}

A \textit{crossed module} consists of a commutative algebra $C_{0}$,
a $C_{0}$-module  $C_{1}$ and a  module homomorphism
\[\xymatrix{C_{1} \ar^{\rho}[r] & C_{0}},\]
which satisfies the property
\[\rho(c)c' = c \rho(c'),\]
for $c,c' \in C_{1}$. In other words, a crossed module is a chain
algebra which is non-trivial only in dimensions 0 and 1. Since
$C_{2}=0$ the condition $\rho(c)c' = c \rho(c')$ is
equivalent to the Leibnitz relation \[0  = \rho (cc') =
\rho(c)c' - c \rho(c'). \] We can define a product by \[ c *
c' := \rho(c) c', \] for $c,c' \in C_{1}$. This gives us a
commutative algebra structure on $C_{1}$ and $\rho: C_{1}
\rightarrow C_{0}$ is an algebra homomorphism.

Let $\rho : C_{1} \rightarrow C_{0}$ be a crossed module. We let
$M= \Ker (\rho)$ and $A= \Coker (\rho)$. Then the image
$\Image (\rho)$ is an ideal of $C_{0}$, $M C_{1} = C_{1}M = 0$
and $M$ has a well-defined $A$-module structure. We say such a crossed module is a crossed module over $A$ with kernel $M$.

A \textit{crossed extension} of $A$ by $M$ is an exact
sequence
\[\xymatrix{0 \ar[r] & M \ar[r]^{\alpha} & C_{1} \ar[r]^{\rho} & C_{0} \ar[r]^{\gamma} & A \ar[r] & 0}\]
where $\rho : C_{1} \rightarrow C_{0}$ is a crossed module,
$\gamma$ is an algebra homomorphism, and the module structure on $M$
coincides with the one induced from the crossed module.

A morphism between two crossed extensions consists of commutative algebra homomorphisms $h_{0}: C_{0} \rightarrow C_{0}$ and $h_{1}: C_{1} \rightarrow C'_{1}$ such that the following diagram commutes:
\[\xymatrix{0 \ar[r] & M \ar@{=}[d] \ar[r]^{\alpha} & C_{1} \ar^{h_{1}}[d] \ar[r]^{\rho} & C_{0} \ar^{h_{0}}[d] \ar[r]^{\gamma} & A \ar@{=}[d] \ar[r] & 0  \\ 0 \ar[r] & M \ar[r]^{\alpha'} & C'_{1} \ar[r]^{\rho'} & C'_{0} \ar[r]^{\gamma'} & A \ar[r] & 0 \\}\]

Let $Cross(A,M)$ denote the category of crossed modules over $A$ with kernel $M$, and let $\pi_{0} Cross(A,M)$ denote the connected components of $Cross(A,M)$.

\begin{definition}
An \textit{additively split crossed extension} of $A$ by $M$ is a
crossed extension of $A$ by \begin{equation}\label{cesasc} \xymatrix{0
\ar[r] & M \ar^{\alpha}[r] & C_{1} \ar^{\rho}[r] & C_{0}
\ar^{\gamma}[r] &  A \ar[r] & 0}\end{equation} such that all the arrows
in the exact sequence $\ref{cesasc}$ are additively split.
\end{definition}

We denote the connected components of the category of additively split crossed
extensions over $A$ with kernel $M$ by $\pi_{0} ACross(A,M)$.

\begin{lemma} If $\gamma: C_{0} \rightarrow A$ is $k$-algebra homomorphism  then
\[Harr^{2}(\gamma: C_{0} \rightarrow A,M) \cong \pi_{0} ACross(\gamma: C_{0} \rightarrow A ,M),\]
where $Harr^{*}(\gamma: C_{0} \rightarrow A,M)$ and $\pi_{0} ACross(\gamma: C_{0} \rightarrow A ,M)$ are defined as follows. Consider the following short exact sequence of cochain complexes:
\[\xymatrix{0 \ar[r] & C^{*}_{Harr}(A,M) \ar[r]^{\gamma^{*}}   \ar[r] & C^{*}_{Harr}(C_{0},M) \ar[r] \ar[r]^{\kappa^{*}} & \Coker(\gamma^{*}) \ar[r] & 0.}\] We define the cochain complex $C^{*}_{Harr}(\gamma: C_{0} \rightarrow A,M) := \Coker(\gamma^{*}).$ This allows us to define the relative Harrison cohomology \[Harr^{*}(\gamma: C_{0} \rightarrow A, M) := H^{*}(C^{*}_{Harr}(\gamma: C_{0} \rightarrow A, M)).\]

We let $ACross(\gamma: C_{0} \rightarrow A ,M)$ denote the category whose objects are the additively split crossed extensions of $A$ by $M$ \[\xymatrix{0 \ar[r] & M \ar^{\alpha}[r] & C_{1} \ar^{\rho}[r] & C_{0} \ar^{\gamma}[r] &  A \ar[r] & 0}\] with $\gamma: C_{0} \rightarrow A$ fixed. A morphisms between two of these crossed extensions consists of a morphism of crossed extensions with the map $h_{0}:C_{0} \rightarrow C_{0}$ being the identity.
\[\xymatrix{0 \ar[r] & M \ar@{=}[d] \ar[r]^{\alpha} & C_{1} \ar^{h_{1}}[d] \ar[r]^{\rho} & C_{0} \ar@{=}[d] \ar[r]^{\gamma} & A \ar@{=}[d] \ar[r] & 0  \\ 0 \ar[r] & M \ar[r]^{\alpha'} & C'_{1} \ar[r]^{\rho'} & C_{0} \ar[r]^{\gamma} & A \ar[r] & 0 \\}\]

Note that $ACross(\gamma: C_{0} \rightarrow A ,M)$ is a groupoid.
\end{lemma}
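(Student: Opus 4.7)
The plan is to construct inverse maps $\Phi: Harr^2(\gamma,M) \to \pi_0 ACross(\gamma,M)$ and $\Psi$ in the reverse direction, in close analogy with the proof of the preceding lemma $Harr^2(A,M)\cong \AExt(A,M)$. A relative 2-cocycle is represented by a symmetric 2-cochain $f\in C^2_{Harr}(C_0,M)$ whose coboundary $\delta f=\gamma^* h$ lies in the image of $\gamma^*$ for some $h\in C^3_{Harr}(A,M)$. To build $\Phi([f])$, I set $C_1 := \ker(\gamma)\oplus M$ with $\alpha(m):=(0,m)$, $\rho(x,m):=x$, and $C_0$-action $c\cdot(x,m):=(cx,\gamma(c)m+f(c,x))$. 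Associativity of this action reduces to $\delta f(c,c',x) = 0$ for $x\in\ker\gamma$, which holds because $\gamma^* h(c,c',x) = h(\gamma c,\gamma c',0) = 0$ by $k$-multilinearity of $h$. The Peiffer identity is immediate from commutativity of $C_0$ together with symmetry of $f$, and the exact sequence $0\to M\to C_1\to C_0\to A\to 0$ is additively split by construction. A change of representative $f\mapsto f+\delta e+\gamma^* g$ alters $f(c,x)$ on $\ker\gamma$ only by $\delta e(c,x)$ (the $\gamma^* g$ term vanishes there), and the map $(x,m)\mapsto(x,m-e(x))$ supplies an isomorphism in $ACross$, so $\Phi$ descends to cohomology.

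For $\Psi$, given an additively split crossed extension $0\to M\xrightarrow{\alpha}C_1\xrightarrow{\rho}C_0\xrightarrow{\gamma}A\to 0$, I pick additive sections $s:A\to C_0$ of $\gamma$ and $u:\ker\gamma\to C_1$ of $\rho$. The curvature map $\mu:C_0\otimes\ker\gamma\to M$ defined by $\alpha\mu(c,x):=cu(x)-u(cx)$ is symmetric on $\ker\gamma\otimes\ker\gamma$ by the Peiffer identity, and satisfies the module-associativity relation $\mu(cc',x)=\gamma(c)\mu(c',x)+\mu(c,c'x)$ on $C_0\otimes C_0\otimes\ker\gamma$. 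With $e:=\mathrm{id}_{C_0}-s\gamma$ the additive projection onto $\ker\gamma$, I extend $\mu$ to
\[f(c,c'):=\mu(c,e(c'))+\mu(c',e(c))-\mu(e(c),e(c')),\]
which is symmetric (using symmetry of $\mu$ on $\ker\gamma^{\otimes 2}$) and restricts to $\mu$ on $C_0\otimes\ker\gamma$. A direct calculation then shows that $\delta f(c_0,c_1,c_2)$ vanishes whenever any $c_i\in\ker\gamma$, so $\delta f=\gamma^* h$ for a unique Harrison 3-cochain $h$ on $A$, giving a class $[f]\in Harr^2(\gamma,M)$. Changes in the choices of $s,u$ alter $f$ by a coboundary in the relative complex, so $\Psi$ is well defined.

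For the inverse identities, $\Phi\Psi$ produces a crossed extension canonically isomorphic to the original via $(x,m)\mapsto u(x)+\alpha(m)$, while $\Psi\Phi$ evaluated on $f$ with the tautological section $u(x):=(x,0)$ yields $\mu=f|_{C_0\otimes\ker\gamma}$ and hence returns $f$ modulo the image of $\gamma^*$. The main technical obstacle is the explicit verification that the symmetric extension $f$ of $\mu$ is a relative cocycle, namely that $\delta f$ vanishes on every tensor with an argument in $\ker\gamma$. Substituting each of the three possible positions for the kernel argument reduces this vanishing to the module-associativity and Peiffer relations together with commutativity in $C_0$; the computation is mechanical but somewhat lengthy and constitutes the heart of the argument.
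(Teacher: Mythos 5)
Your plan is correct, and it shares the paper's overall two-way strategy; in particular your map $\Phi$ (a relative $2$-cocycle $f$ gives $C_1=\ker\gamma\oplus M$ with $\rho(x,m)=x$ and action $c\cdot(x,m)=(cx,\gamma(c)m+f(c,x))$, associativity being exactly $\delta f(c,c',x)=\gamma^*h(c,c',x)=0$ and the Peiffer identity being symmetry of $f$) is literally the paper's inverse construction. Where you genuinely diverge is the forward direction. The paper fixes sections $s$ of $\gamma$ and $\sigma$ of $\rho$, forms $g(a,b)=\sigma(s(a)s(b)-s(ab))$ and $\omega(c)=\sigma(c-s\gamma(c))$, writes down an explicit $f\in C^{2}_{Harr}(C_0,M)$ together with an explicit $\varpi\in C^{3}_{Harr}(A,M)$, and checks $\delta f=\gamma^{*}\varpi$ by direct computation. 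You instead extract only the curvature $\mu(c,x)$ of the section $u$ of $\rho$ on $C_0\otimes\ker\gamma$, extend it symmetrically via $e=\mathrm{id}-s\gamma$, and establish the relative cocycle condition structurally, by showing $\delta f$ vanishes on any tensor with an entry in $\ker\gamma$ and hence descends to $A$ without ever exhibiting the $3$-cochain. Your three vanishing cases do close, using exactly the ingredients you name (the relation $\mu(cc',x)=\gamma(c)\mu(c',x)+\mu(c,c'x)$, Peiffer symmetry of $\mu$ on $\ker\gamma\otimes\ker\gamma$, symmetry of $f$, and commutativity), so the "heart" you defer is sound; you also make the mutual inverseness explicit ($\Phi\Psi\cong\mathrm{id}$ via $(x,m)\mapsto u(x)+\alpha(m)$, and $\Psi\Phi$ returns $f$ up to $\gamma^{*}$ of $g(a,b)=f(s(a),s(b))$), a point the paper leaves implicit. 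What the paper's formulation buys in exchange is an explicit representative of the obstruction on $A$ itself, which is convenient when it passes to the long exact sequence in the next lemma.

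Two small points to tighten. First, take the sections $s$ and $u$ to be $k$-linear (as the paper does), not merely additive, so that $\mu$ and $f$ genuinely lie in the Harrison complex of $k$-multilinear cochains. Second, for $\Psi$ to descend to $\pi_{0}\,ACross(\gamma: C_{0}\rightarrow A,M)$ you need invariance not only under changes of $s,u$ but also under morphisms $h_{1}$ of crossed extensions over the identity of $C_{0}$; this is immediate by transporting the section (replace $u$ by $h_{1}u$, which yields the same $\mu$), but it should be stated.
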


\begin{proof}
This proof is very similiar to a proof given in \cite{KL} for the crossed modules of Lie algebras.
Given any additively split crossed module of $A$ by $M$,
\[\xymatrix{0 \ar[r] & M \ar^{\alpha}[r] & C_{1} \ar^{\rho}[r] & C_{0} \ar^{\gamma}[r] &  A \ar[r] & 0,}\]
we let $V= \Ker \gamma = \Image \rho$. There are $k$-linear sections
$s: A \rightarrow C_{0}$ of $\gamma$ and $\sigma :V \rightarrow
C_{1}$ of $\rho : C_{1} \rightarrow V$. We define the map $g: A
\otimes A \rightarrow C_{1}$ by:
\[g(a,b)= \sigma(s(a)s(b)-s(ab)).\]
We also define the map $\omega : C_{0} \rightarrow C_{1}$ by:
\[\omega (c) = \sigma(c- s\gamma (c)).\]
By identifying $M$ with $\Ker \delta$, we define the map $f: C_{0} \otimes C_{0} \rightarrow M$ by:
\[f(c,c') = g(\gamma (c), \gamma (c')) + c' \omega(c) + c \omega(c') - \omega(c) * \omega(c') - \omega(cc').\]
Since $g(c,c') = g(c',c)$, it follows that $f(c,c') = f(c',c)$ and
so $f \in C^{2}_{Harr}(C_{0},M)$. We define the map $\varpi \in
C^{3}_{Harr}(A,M)$ by:
\[\varpi(x,y,z) = s(x)g(y,z) - g(xy,z) + g(x,yz) - g(y,x)s(z).\]
Note that $\varpi$ vanishes on the shuffles since $g(x,y)=g(y,x)$.

Consider the following commuting diagram.
\[\xymatrix{0 \ar[r] & C^{2}_{Harr}(A,M) \ar^{\gamma^{*}}[r] \ar[d] \ar & C^{2}_{Harr}(C_{0},M) \ar^-{\kappa^{*}}[r] \ar^{\delta}[d] & \ar[r] \ar^{\delta}[d] C^{2}_{Harr}(\gamma: C_{0} \rightarrow A, M)  & 0 \\ 0 \ar[r] & C^{3}_{Harr}(A,M) \ar^{\gamma^{*}}[r]  & C^{3}_{Harr}(C_{0},M) \ar^-{\kappa^{*}}[r]  & \ar[r]  C^{3}_{Harr}(\gamma: C_{0} \rightarrow A, M)  & 0 }\]

A direct calculation shows that $\delta f = \gamma^{*}\varpi \in
C^{3}(C_{0},M)$. We also have that $\delta \kappa^{*}f = \kappa^{*}
\delta f = \kappa^{*} \gamma^{*} \varpi =0$, this tells us that
$\kappa^{*} f$ is a cocycle. If we have two equivalent additively
split crossed modules then we can choose sections in such a way that
the associated cocycles are the same. Therefore we have a
well-defined map:
\[ \xymatrix{ACross(\gamma: C_{0} \rightarrow A ,M) \ar[r] & H^{3}_{Harr}(\gamma: C_{0} \rightarrow A ,M).}\]

Inversely, assume we have a cocycle in $C^{2}_{Harr}(\gamma: C_{0} \rightarrow A, M)$ which we lift to a cochain $f \in C^{2}_{Harr}(C_{0},M)$. Let $V = \Ker \gamma $. We define $C_{1} = M \times V$ as a module over $k$ with the following action of $C_{0}$ on $C_{1}$:
\[c(m,v) := (cm + f(c,v), cv).\]
It is easy to check using the properties of $f$ that this action is well defined and together with the map $\rho: C_{0} \rightarrow C_{1}$ given by $\rho (m,v) = v$, we have an additively split crossed module of $A$ by $M$.
\end{proof}

\begin{lemma}
If $k$ is a field of characteristic $0$ then \[Harr^{3}(A,M) \cong
\pi_{0} ACross(A,M).\]
\end{lemma}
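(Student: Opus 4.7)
The plan is to connect $Harr^{3}(A,M)$ to $\pi_{0}ACross(A,M)$ by routing through the relative constructions of the previous lemma, applied to a free presentation of $A$.

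First, I would choose a surjective $k$-algebra homomorphism $\gamma : C_{0} \twoheadrightarrow A$ with $C_{0}$ a free commutative $k$-algebra (for instance the polynomial algebra on a generating set of $A$). Since $k$ has characteristic $0$, Harrison cohomology agrees with Andr\'{e}-Quillen cohomology up to a dimension shift, so Proposition \ref{AQprop1}(2) yields the vanishing $Harr^{n}(C_{0},M)=0$ for all $n \geq 2$. Feeding this into the long exact sequence in Harrison cohomology produced by the short exact sequence of cochain complexes
\[\xymatrix{0 \ar[r] & C^{*}_{Harr}(A,M) \ar^{\gamma^{*}}[r] & C^{*}_{Harr}(C_{0},M) \ar^-{\kappa^{*}}[r] & C^{*}_{Harr}(\gamma : C_{0} \rightarrow A, M) \ar[r] & 0}\]
gives a natural isomorphism $Harr^{3}(A,M) \cong Harr^{2}(\gamma : C_{0} \rightarrow A, M)$, and the previous lemma identifies the right-hand side with $\pi_{0}ACross(\gamma : C_{0} \rightarrow A, M)$.

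It then remains to produce a natural bijection $\pi_{0}ACross(\gamma : C_{0} \rightarrow A, M) \cong \pi_{0}ACross(A,M)$. The forgetful functor, which merely forgets that the surjection is required to be exactly $\gamma$, gives a map from left to right. In the other direction, given an additively split crossed extension $0 \rightarrow M \rightarrow D_{1} \rightarrow D_{0} \rightarrow A \rightarrow 0$, I would use freeness of $C_{0}$ to lift $\gamma$ to a $k$-algebra map $\tilde{\gamma} : C_{0} \rightarrow D_{0}$ and then form the pullback $C_{1} := D_{1} \times_{D_{0}} C_{0}$. One verifies that this gives an additively split crossed extension with top map $\gamma$, together with an explicit morphism of crossed extensions to the original, exhibiting the two as equivalent in $ACross(A,M)$; this proves surjectivity on $\pi_{0}$.

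The main obstacle will be injectivity: I must show that if two crossed extensions with fixed top arrow $\gamma : C_{0} \rightarrow A$ are connected by a zigzag of morphisms in $ACross(A,M)$, then the zigzag can be rectified to one living inside $ACross(\gamma : C_{0} \rightarrow A, M)$, where the component $h_{0} : C_{0} \rightarrow C_{0}$ is the identity at every step. The idea is to apply the pullback construction at each intermediate node, using freeness of $C_{0}$ to lift the intermediate top algebras back to $C_{0}$ compatibly, and to check that the associated Harrison cocycle built in the proof of the previous lemma is unaffected by such rectification. Stringing the three isomorphisms together then yields $Harr^{3}(A,M) \cong \pi_{0}ACross(A,M)$.
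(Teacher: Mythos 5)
Your first two steps coincide with the paper's: the short exact sequence of complexes gives a long exact sequence, the characteristic-zero comparison with Andr\'{e}-Quillen cohomology kills $Harr^{2}(C_{0},M)$ and $Harr^{3}(C_{0},M)$ for a polynomial algebra $C_{0}$, so $Harr^{3}(A,M) \cong Harr^{2}(\gamma: C_{0}\rightarrow A,M) \cong \pi_{0}ACross(\gamma: C_{0}\rightarrow A,M)$; and your surjectivity argument (lift $\gamma$ through the surjection $D_{0}\twoheadrightarrow A$ using freeness and pull back $D_{1}$) is exactly the paper's lift-and-pullback construction. The deviation, and the genuine gap, is your injectivity step for the forgetful map $\pi_{0}ACross(\gamma: C_{0}\rightarrow A,M) \rightarrow \pi_{0}ACross(A,M)$. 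The proposed ``rectification of the zigzag'' is not an argument as it stands: the morphisms in a zigzag point in both directions, so lifts $C_{0}\rightarrow D_{0}^{(i)}$ at the intermediate nodes cannot in general be chosen compatibly with the top-algebra maps $h_{0}$ (you cannot transport a lift backwards along $h_{0}$); moreover, even at a single node two different lifts of $\gamma$ produce two pullback extensions over $C_{0}$ whose connectedness in $ACross(\gamma: C_{0}\rightarrow A,M)$ is precisely the kind of statement you are trying to prove, so ``checking the Harrison cocycle is unaffected'' is circular --- that cocycle is only defined for extensions with top arrow $\gamma$, i.e. after the rectification you are trying to justify.

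The paper avoids zigzag rectification entirely by arguing with cohomology classes. It shows the assignment of a class in $Harr^{3}(A,M)$ to a crossed extension depends only on its component in $ACross(A,M)$, and then proves injectivity of $\pi_{0}ACross(A,M)\rightarrow Harr^{3}(A,M)$ as follows: given two crossed extensions with the same class, pull both back over one polynomial presentation $P_{0}\rightarrow A$; the exactness of
\[
\xymatrix{0 \ar[r] & Harr^{2}(\gamma: P_{0}\rightarrow A,M) \ar[r] & Harr^{3}(A,M) \ar[r] & 0}
\]
forces the two pulled-back extensions to have equal classes already in $Harr^{2}(\gamma: P_{0}\rightarrow A,M)$, and the previous lemma together with the fact that $ACross(\gamma: P_{0}\rightarrow A,M)$ is a groupoid then supplies a connecting morphism, so the original extensions lie in one component. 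To repair your route you should do the same thing: instead of rectifying the whole zigzag, prove once that a single morphism of crossed extensions (arbitrary $h_{0}$) leaves the associated class in $Harr^{3}(A,M)$ unchanged --- this is a section-transport computation with the 3-cochain $\varpi$ of the previous lemma --- so the class factors through $\pi_{0}ACross(A,M)$; injectivity of your forgetful map then follows immediately from the isomorphisms $\pi_{0}ACross(\gamma: C_{0}\rightarrow A,M)\cong Harr^{2}(\gamma: C_{0}\rightarrow A,M)\cong Harr^{3}(A,M)$ you have already established.
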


\begin{proof}
From the definition of $C^{*}_{Harr}(\gamma: C_{0} \rightarrow A,M)$ we get the long exact sequence:
\begin{equation}
\xymatrix{ \ldots \ar[r] & Harr^{2}(A,M) \ar[r] & Harr^{2}(C_{0},M)
\ar[r] &  \\ & & Harr^{2}(\gamma: C_{0} \rightarrow A, M) \ar[r] &
Harr^{3}(A,M) \ar[r] & \ldots}\end{equation} Given any additively
split crossed module in $\pi_{0} ACross(A,M)$,
\[\xymatrix{0 \ar[r] & M \ar[r]^{\alpha} & C_{1} \ar[r]^{\rho} & C_{0} \ar[r]^{\gamma} & A \ar[r] & 0}\]
we can lift $\gamma$ to get a map $P_{0} \rightarrow A$ where
$P_{0}$ is a polynomial algebra. We can then use a pullback to
construct $P_{1}$ to get a crossed module where the following
diagram commutes:
\[\xymatrix{0 \ar[r] & M \ar@{=}[d] \ar[r]^{\alpha} & C_{1}  \ar[r]^{\rho} & C_{0}  \ar[r]^{\gamma} & A \ar@{=}[d] \ar[r] & 0 \\ 0 \ar[r] & M \ar[r] & P_{1} \ar[u] \ar[r] & P_{0} \ar[u] \ar[r] & A \ar[r] & 0. }\]
Note that these two crossed modules are in the same connected
component of $\pi_{0} ACross(A,M)$. By considering the second
crossed module in the long exact sequence, we replace $C_{0}$ by
$P_{0}$ to get the new exact sequence:
\begin{equation}\label{lesrh}
\xymatrix{ 0 \ar[r] &   Harr^{2}(\gamma: P_{0} \rightarrow A, M)
\ar[r] &  Harr^{3}(A,M) \ar[r] & 0  }\end{equation} since
$Harr^{2}(P_{0},M)=0$ and $Harr^{3}(P_{0},M) =0$.

The exact sequence \ref{lesrh} tells us that every element in
$Harr^{3}(A,M)$ comes from an element in $Harr^{2}(\gamma: P_{0}
\rightarrow A, M)$ and the previous lemma tells us that this comes
from a crossed module in $\pi_{0} ACross(A,M)$. Therefore the map
$\pi_{0} ACross(A,M) \rightarrow Harr^{3}(A,M)$ is surjective.

Assuming we have two crossed modules which go to the same element  in $Harr^{3}(A,M)$,
\begin{equation}\label{Hxm1} \xymatrix{0 \ar[r] & M \ar[r]^{\alpha} & C_{1} \ar[r]^{\rho} & C_{0} \ar[r]^{\gamma} & A \ar[r] & 0,}\end{equation}
\begin{equation}\label{Hxm2} \xymatrix{0 \ar[r] & M \ar[r]^{\alpha '} & C'_{1} \ar[r]^{\rho '} & C'_{0} \ar[r]^{\gamma '} & A \ar[r] & 0.}\end{equation}
There exist morphisms
\[\xymatrix{0 \ar[r] & M \ar@{=}[d] \ar[r]^{\alpha} & C_{1}  \ar[r]^{\rho} & C_{0}  \ar[r]^{\gamma} & A \ar@{=}[d] \ar[r] & 0
\\ 0 \ar[r] & M  \ar@{=}[d] \ar[r] & P_{1} \ar[u] \ar[r] & P_{0} \ar@{=}[d] \ar[u] \ar[r] & A \ar@{=}[d] \ar[r] & 0
\\ 0 \ar[r] & M \ar[r] & P_{2} \ar[d] \ar[r] & P_{0} \ar[d] \ar[r] & A
\ar[r] & 0 \\ 0 \ar[r] & M \ar@{=}[u] \ar[r]^{\alpha '} & C'_{1}
\ar[r]^{\rho '} & C'_{0}  \ar[r]^{\gamma '} & A \ar@{=}[u] \ar[r] &
0
 }\] where $P_{0}$ is a polynomial algebra and $P_{1},P_{2}$ are constructed via pullbacks. These give us
two elements in $Harr^{2}(\gamma: P_{0}\rightarrow A,M)$ which go to
the same element in $Harr^{3}(A,M)$. However the exact sequence
\ref{lesrh} tells us that the two crossed modules \ref{Hxm1} and
\ref{Hxm2} have to go to the same element in $Harr^{2}(\gamma:
P_{0}\rightarrow A,M)$. The previous lemma tells us that the two
crossed modules \ref{Hxm1} and \ref{Hxm2} go to the same element in
$ACross(\gamma: C_{0} \rightarrow A ,M)$ which is a groupoid, so
there is a map $P_{2} \rightarrow P_{1}$ which makes the following
diagram commute:
\[\xymatrix{0 \ar[r] & M \ar@{=}[d] \ar[r]^{\alpha} & C_{1}  \ar[r]^{\rho} & C_{0}  \ar[r]^{\gamma} & A \ar@{=}[d] \ar[r] & 0
\\ 0 \ar[r] & M  \ar@{=}[d] \ar[r] & P_{1} \ar[u] \ar[r] & P_{0} \ar@{=}[d] \ar[u] \ar[r] & A \ar@{=}[d] \ar[r] & 0
\\ 0 \ar[r] & M \ar[r] & P_{2} \ar[u] \ar[d] \ar[r] & P_{0} \ar[d] \ar[r] & A
\ar[r] & 0 \\ 0 \ar[r] & M \ar@{=}[u] \ar[r]^{\alpha '} & C'_{1}
\ar[r]^{\rho '} & C'_{0}  \ar[r]^{\gamma '} & A \ar@{=}[u] \ar[r] &
0
 }\]
Therefore the two crossed modules \ref{Hxm1} and \ref{Hxm2} are in
the same connected component of $\pi_{0} ACross(A,M)$ and the map
$\pi_{0} ACross(A,M) \rightarrow Harr^{3}(A,M)$ is injective.
\end{proof}

\section{Baues-Wirsching cohomology}
The following material can be found in \cite{BW}. A category $\mathcal{I}$ is said to be \textit{small} if the collection of morphisms is a set. Consider a small category
$\mathcal{I}$. The \emph{category of factorizations} in
$\mathcal{I}$, denoted by $\mathcal{FI}$, is the category whose
objects are the morphisms $f,g,...$ in $\mathcal{I}$, and morphisms
$f\rightarrow g$ are pairs $(\alpha, \beta)$ of morphisms in
$\mathcal{I}$ such that the following diagram commutes.
\[
\xymatrix{ B \ar[r]^{\alpha}  & B'
\\
A \ar[u]^{f} & A' \ar[l]^{\beta} \ar[u]_{g} }
\]
Composition in $\mathcal{FI}$ is given by
$(\alpha',\beta')(\alpha,\beta) = (\alpha' \alpha, \beta \beta')$. A
\emph{natural system} of abelian groups on $\mathcal{I}$ is a
functor \[D: \mathcal{FI} \rightarrow \mathfrak{Ab}.\]
There exists a canonical functor $\mathcal{FI} \rightarrow \mathcal{I}^{op} \times \mathcal{I}$ which takes $f:A \rightarrow B$ to the pair $(A, B)$. This functor allows us to consider any bifunctor $D: \mathcal{I}^{op}\times \mathcal{I} \rightarrow \mathfrak{Ab}$ as a natural system. Similarly, the projection $\mathcal{I}^{op}\times \mathcal{I} \rightarrow \mathcal{I}$ gives us the functor $\mathcal{FI} \rightarrow \mathcal{I}$ which takes $f:A \rightarrow B$ to $B$. This allows us to consider any functor $D: \mathcal{I} \rightarrow \mathfrak{Ab}$ as a natural system.

Following Baues-Wirsching \cite{BW}, we
define the cohomology $H_{BW}^{*}(\mathcal{I},D)$ of $\mathcal{I}$
with coefficients in the natural system $D$ as the cohomology of the
cochain complex $C^{*}_{BW}(\mathcal{I},D)$ given by
\[
C_{BW}^{n}(\mathcal{I},D) = \prod_{ \alpha_{1}\ldots\alpha_{n}:i_{n}
\rightarrow \ldots \rightarrow i_{0}} D(\alpha_{1}\ldots\alpha_{n}),
\]
where the product is indexed over $n$-tuples of composable morphisms and the coboundary map \[d:C_{BW}^{n}(\mathcal{I},D) \rightarrow
C_{BW}^{n+1}(\mathcal{I},D),\] is given by
\begin{align*}
(df)(\alpha_{1}\ldots\alpha_{n+1}) =&
(\alpha_{1})_{*}f(\alpha_{2},\ldots,\alpha_{n+1}) \\  &+
\sum_{j=1}^{n}(-1)^{j}f(\alpha_{1},\ldots,\alpha_{j}\alpha_{j+1},\ldots,\alpha_{n+1})
\\  &+ (-1)^{n+1}(\alpha_{n+1})^{*}f(\alpha_{1},\ldots,\alpha_{n}).
\end{align*}

\begin{lemma}\label{natsyslma}
Let $i_{0} \in \mathcal{I}$ be an initial object and $F: \mathcal{I} \rightarrow \mathfrak{Ab}$ a functor then
\[H_{BW}^{n}(\mathcal{I},F) = \left\{ \begin{array}{cc}
                     F(i_{0}) & \textrm{for } n=0 \\
                       0 & \textrm{for } n>0. \\
                      \end{array} \right.\]
\end{lemma}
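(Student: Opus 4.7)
The plan is to exhibit an explicit contracting homotopy of the Baues--Wirsching cochain complex in positive degrees, built from the unique morphisms out of the initial object. To avoid collision with the generic target index $i_0$ appearing in the cochain formula, write $\star := i_0$ for the initial object, and let $\iota_j : \star \to j$ denote the unique morphism for each object $j$. Since $F: \mathcal{I} \to \mathfrak{Ab}$ is regarded as a natural system via the target projection $\mathcal{FI} \to \mathcal{I}$, the covariant action $(\alpha_1)_*$ in the coboundary reduces to $F(\alpha_1)$, while the contravariant action $(\alpha_{n+1})^*$ is the identity, because only the target of a morphism contributes to the value of $F$.

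For each $n \geq 1$, I would define $s: C^n_{BW}(\mathcal{I}, F) \to C^{n-1}_{BW}(\mathcal{I}, F)$ by
\[
(sf)(\alpha_1, \ldots, \alpha_{n-1}) := (-1)^n f(\alpha_1, \ldots, \alpha_{n-1}, \iota_j),
\]
where $j$ is the source of $\alpha_{n-1}$, so that $(\alpha_1, \ldots, \alpha_{n-1}, \iota_j)$ is a composable chain ending at $\star$. The central step is to verify the homotopy identity $sd + ds = \mathrm{id}$ on $C^n_{BW}$ for every $n \geq 1$. Expanding $(sdf)(\alpha_1, \ldots, \alpha_n)$ and $(dsf)(\alpha_1, \ldots, \alpha_n)$ by the coboundary formula, the covariant first term and the interior ``merge'' terms $f(\ldots, \alpha_j \alpha_{j+1}, \ldots, \iota_{i_n})$ appear in both expressions with opposite outer signs $(-1)^{n+1}$ and $(-1)^n$, and therefore cancel pairwise. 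The penultimate term of $(sdf)$ simplifies via $\alpha_n \iota_{i_n} = \iota_{i_{n-1}}$ (by the universal property of $\star$) and then cancels the last term of $(dsf)$, using that $(\alpha_n)^* = \mathrm{id}$. The only surviving contribution is $(-1)^{n+1}(\iota_{i_n})^* f(\alpha_1, \ldots, \alpha_n) = (-1)^{n+1} f(\alpha_1, \ldots, \alpha_n)$ from $(sdf)$, which when multiplied by the outer sign $(-1)^{n+1}$ yields precisely $f(\alpha_1, \ldots, \alpha_n)$. This proves $H^n_{BW}(\mathcal{I}, F) = 0$ for all $n \geq 1$.

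For $n = 0$ the computation is direct. A $0$-cocycle is a family $(f_i \in F(i))_{i \in \mathcal{I}}$ with $F(\alpha)(f_{\mathrm{source}(\alpha)}) = f_{\mathrm{target}(\alpha)}$ for every morphism $\alpha$. Evaluating this condition at $\alpha = \iota_i$ forces $f_i = F(\iota_i)(f_\star)$, so the cocycle is entirely determined by $f_\star \in F(\star)$; conversely, for any $x \in F(\star)$ the formula $f_i := F(\iota_i)(x)$ defines a cocycle, using the uniqueness of morphisms from $\star$. Hence $H^0_{BW}(\mathcal{I}, F) \cong F(i_0)$.

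The main obstacle is the sign bookkeeping in the homotopy identity: the sign $(-1)^n$ in the definition of $s$ is precisely what makes the covariant, interior, and penultimate terms of $sdf$ and $dsf$ cancel in pairs while leaving $f$ itself as the sole residue, and any other sign convention scrambles the cancellation. Once this is in place, the remaining verification consists of routine expansions.
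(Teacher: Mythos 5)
Your contracting-homotopy argument is correct, and I checked the signs: with the paper's conventions ($\alpha_j\colon i_j\to i_{j-1}$, coefficients $F$ regarded as a natural system through the target projection $\mathcal{FI}\to\mathcal{I}$, so $(\alpha_1)_*=F(\alpha_1)$ and $(\alpha_{n+1})^*=\mathrm{id}$), your $s$ with the factor $(-1)^n$ does satisfy $sd+ds=\mathrm{id}$ on $C^n_{BW}$ for all $n\geq 1$, the key points being $\alpha_n\iota_{i_n}=\iota_{i_{n-1}}$ and the vanishing of the contravariant action; the degree-zero identification $H^0_{BW}(\mathcal{I},F)\cong F(i_0)$ is also right. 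Note, however, that the thesis gives no proof of this lemma at all — it is quoted as background from Baues--Wirsching — so there is no argument in the paper to compare against. The route one would normally cite is structural rather than computational: for coefficients coming from a functor via the target projection, $H^*_{BW}(\mathcal{I},F)$ is the cohomology of the category $\mathcal{I}$ with coefficients in $F$, i.e.\ the derived functors $\varprojlim^{*}F$; when $\mathcal{I}$ has an initial object $i_0$, $\varprojlim$ is evaluation at $i_0$, which is exact, so the higher terms vanish. Your explicit homotopy proves the same thing in an elementary, self-contained way, at the cost of the sign bookkeeping you describe. One cosmetic point: for $n=1$ the phrase ``$j$ is the source of $\alpha_{n-1}$'' has no literal referent, since a $0$-cochain is indexed by an object $i$ rather than a morphism; you should state the convention $(sf)(i)=-f(\iota_i)$ explicitly (the identity $sd+ds=\mathrm{id}$ still holds there, as is easily checked directly).
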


\chapter{$\Psi$-rings}
\label{Chapter3}

\section{Introduction}
In this chapter, only the material in this section is already known and everything from section \ref{psimodsect} onwards is new and original material. Note that in all of the cited material, including \cite{AA}, \cite{Knut} and \cite{Yau}, what the authors call a $\Psi$-ring is what we call a special $\Psi$-ring. Also note that in our notation $\mathbb{N}$ does not include $0$.

$\lambda$-rings are complicated, and given a $\lambda$-ring it is
often difficult to prove it satisfies the $\lambda$-ring axioms. We
start by introducing another kind of ring, the $\Psi$-rings, which
are closely related to the $\lambda$-rings by the Adams operations.
The axioms for the $\Psi$-rings are a lot simpler than those for the
$\lambda$-rings.

\begin{definition}\label{psi1}
A \textit{$\Psi$-ring} is a commutative ring with identity, $R$, together
with a sequence of ring homomorphisms $\Psi^{i}: R \rightarrow R$,
for $i \in \mathbb{N}$, satisfying
\begin{enumerate}
    \item $\Psi^{1}(r) = r$,
    \item $\Psi^{i}( \Psi^{j}(r)) = \Psi^{ij}(r)$,
\end{enumerate}
for all $r \in R$, and $i,j \in \mathbb{N}$.
\end{definition}
We say that a $\Psi$-ring $R$ is \textit{special} if it also satisfies the property
\[\Psi^{p}(r) \equiv r^{p} \textrm{\qquad mod $pR$}, \]
for all primes $p$ and $r \in R$.

\begin{example}
Any commutative ring with identity, $R$, can be given a $\Psi$-ring
structure by setting $\Psi^{i}:R \rightarrow R$ to be
$\Psi^{i}(r)=r$ for all $r \in R$ and $i \in \mathbb{N}$.
\end{example}

Let $R_{1}, R_{2}$ be $\Psi$-rings. A \textit{map of $\Psi$-rings}
is a ring homomorphism $f: R_{1} \rightarrow R_{2}$, such that
$\Psi^{i}(f(r)) = f(\Psi^{i}(r))$ for all $r \in R_{1}$ and $i \in
\mathbb{N}$. The class of all $\Psi$-rings and maps of $\Psi$-rings
form the category of $\Psi$-rings, which we denote by
$\Psi\mathfrak{-rings}$.

\section{$\Psi$-modules}\label{psimodsect}
For usual rings, the modules provide us with the
coefficients for the cohomology. In this section we define the $\Psi$-modules
for $\Psi$-rings which provide us with the coefficients for the
$\Psi$-ring cohomology. We then use this to create the $\Psi$-analogue of some of the results for rings.

\begin{definition}
We say that $M$ is a \textit{$\Psi$-module} over the $\Psi$-ring $R$ if $M$ is an
$R$-module together with a sequence of abelian group homomorphisms $\psi^{i}:M
\rightarrow M$, for $i \in \mathbb{N}$, satisfying
\begin{enumerate}
    \item $\psi^{1}(m) = m $,
    \item $\psi^{i}(rm) = \Psi^{i}(r)\psi^{i}(m)$,
    \item $\psi^{i}(\psi^{j}(m))  = \psi^{ij}(m)$,
\end{enumerate}
for all $m \in M$, $r \in R$, and $i,j \in \mathbb{N}$.

Let $M,N$ be two $\Psi$-modules over $R$. A map of $\Psi$-modules is a module homomorphism $f:M \rightarrow N$ such that $\psi^{i} f(m) =f \psi^{i}(m)$ for all $m \in M$ and $i \in \mathbb{N}$. We let $R\mathfrak{-mod}_{\Psi}$ denote the category of all $\Psi$-modules over $R$.
\end{definition}

We say that $M$ is \textit{special} if $R$ is special and
\[\psi^{p}(m) \equiv 0 \textrm{\qquad mod $pM$}, \]
for all primes $p$ and $m \in M$.

Note that any $\Psi$-ring can be considered as a $\Psi$-module over itself.
Also note that if $M$ is special, then
$\psi^{i}(m) \equiv 0$ mod $iM$ for all $i \in \mathbb{N}$ and $m \in M$.

For the rest of this chapter, we let $R$ denote a $\Psi$-ring and $M
\in R\mathfrak{-mod}_{\Psi}$. We let $\underline{R}$ denote the
underlying commutative ring of $R$, and we let $\underline{M}$
denote the underlying $\underline{R}$-module of $M$.

\begin{lemma}\label{lmmapsimd1} The set $R \rtimes M$ with
\begin{align*} (r,m) + (s,n) &= (r+s, m+n),
\\ (r,m)(s,n) &= (rs, rn + ms),
\end{align*}
together with maps $\Psi^{i}: R \rtimes M \rightarrow R \rtimes M$
for $i \in \mathbb{N}$ given by \[ \Psi^{i} (r,m) = (\Psi^{i}(r),
\psi^{i}(m) + \varepsilon^{i}(r)),\] for a sequence of maps
$\varepsilon^{i}: R \rightarrow M$ for $i \in \mathbb{N}$, is a
$\Psi$-ring if and only if
\begin{enumerate}
\item $\varepsilon^{1}(r) = 0$,
\item $\varepsilon^{i}(r + s) = \varepsilon^{i}(r) + \varepsilon^{i}(s)$,
\item $\varepsilon^{i}(r s) = \Psi^{i}(r)\varepsilon^{i}(s) +
\varepsilon^{i}(r)\Psi^{i}(s)$,
\item $\varepsilon^{ij}(r) = \psi^{i}\varepsilon^{j}(r) + \varepsilon^{i}\Psi^{j}(r)$,
\end{enumerate}
for all $r,s \in R$, and $i,j \in \mathbb{N}$.
\end{lemma}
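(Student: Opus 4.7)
The plan is to verify each $\Psi$-ring axiom for the proposed map $\Psi^i(r,m) = (\Psi^i(r), \psi^i(m) + \varepsilon^i(r))$ and check that it corresponds to exactly one of the four listed conditions on the sequence $\{\varepsilon^i\}$.

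First I would observe that the ring structure on $R \rtimes M$ does not depend on the $\varepsilon^i$ at all: it is the standard square-zero (trivial) extension of $\underline{R}$ by $\underline{M}$, with identity $(1,0)$, and it is commutative because $\underline{R}$ is commutative and $\underline{M}$ is a symmetric module. Therefore the statement reduces to showing that the maps $\Psi^i$ are (a) ring homomorphisms, (b) satisfy $\Psi^1 = \mathrm{id}$, and (c) satisfy $\Psi^i \circ \Psi^j = \Psi^{ij}$, if and only if conditions 1--4 hold.

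Next I would match each axiom with the corresponding condition by a direct expansion. For (b), $\Psi^1(r,m) = (r, m + \varepsilon^1(r))$, so this is the identity precisely when $\varepsilon^1 \equiv 0$, which is condition 1. For the additivity half of (a), expanding $\Psi^i((r,m)+(s,n))$ and $\Psi^i(r,m)+\Psi^i(s,n)$ and using additivity of $\Psi^i$ and $\psi^i$, the first coordinates agree automatically while the second coordinates agree iff $\varepsilon^i(r+s) = \varepsilon^i(r) + \varepsilon^i(s)$, which is condition 2. For multiplicativity, expanding $\Psi^i((r,m)(s,n)) = \Psi^i(rs, rn+ms)$ and using the $\Psi$-module axiom $\psi^i(rn) = \Psi^i(r)\psi^i(n)$ and $\psi^i(ms) = \Psi^i(s)\psi^i(m)$, then comparing with $\Psi^i(r,m)\Psi^i(s,n)$, the second coordinates match iff $\varepsilon^i(rs) = \Psi^i(r)\varepsilon^i(s) + \Psi^i(s)\varepsilon^i(r)$, which is condition 3; unitality $\Psi^i(1,0)=(1,0)$ follows from condition 3 by setting $r=s=1$ (giving $\varepsilon^i(1) = 2\varepsilon^i(1)$, hence $\varepsilon^i(1)=0$, assuming additivity makes $\varepsilon^i(0)=0$), so no extra condition is needed. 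Finally, for (c), a direct computation gives
\[
\Psi^i(\Psi^j(r,m)) = \bigl(\Psi^{ij}(r),\, \psi^{ij}(m) + \psi^i\varepsilon^j(r) + \varepsilon^i\Psi^j(r)\bigr),
\]
using $\psi^i\psi^j = \psi^{ij}$ from the $\Psi$-module axioms, and comparing with $\Psi^{ij}(r,m) = (\Psi^{ij}(r), \psi^{ij}(m) + \varepsilon^{ij}(r))$ yields condition 4.

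For the converse direction, exactly the same calculations, read in reverse, show that if the four conditions hold then each $\Psi$-ring axiom is satisfied. Since every implication is a straightforward coordinate-by-coordinate identification, there is really no hard step; the main thing to be careful about is to use the three $\Psi$-module axioms on $M$ in exactly the right places (axiom 1 for matching the linear part of $\Psi^1$, axiom 2 for converting $\psi^i(rn)$ and $\psi^i(ms)$ during multiplicativity, and axiom 3 to collapse $\psi^i\psi^j$ to $\psi^{ij}$ in the composition check), so that the required identities isolate cleanly to conditions 1--4 on the $\varepsilon^i$.
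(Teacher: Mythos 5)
Your proposal is correct and follows essentially the same route as the paper: reduce to the fact that $\underline{R}\rtimes\underline{M}$ is already a commutative ring with identity, then check the $\Psi$-ring axioms coordinatewise, with the identity axiom, additivity, multiplicativity, and composition $\Psi^i\Psi^j=\Psi^{ij}$ matching conditions 1--4 respectively. Your extra observation that $\varepsilon^i(1)=0$ (hence unitality of $\Psi^i$) follows from condition 3 with $r=s=1$ is a small but harmless addition beyond what the paper records.
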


\begin{proof}[Proof of lemma]
It is known that $\underline{R} \rtimes \underline{M}$ is a
commutative ring with identity. Hence it is sufficient
 to check that $\Psi^{i}: R \rtimes M \rightarrow R \rtimes M$ satisfies the $\Psi$-ring axioms.

\begin{enumerate}
    \item $\Psi^{1}(r,m) = (\Psi^{1}(r),\psi^{1}(m) +
    \varepsilon^{1}(r)) = (r,m + \varepsilon^{1}(r))$ \\Hence $\Psi^{1}(r,m)= (r,m)$ if and only if
    $\varepsilon^{1}(r)$ = 0.

    \item $\Psi^{i}((r,m)+(s,n)) = (\Psi^{i}(r)+\Psi^{i}(s),\psi^{i}(m)+\psi^{i}(n) +  \varepsilon^{i}(r+s))$
    \\ $\Psi^{i}(r,m)+ \Psi^{i}(s,n) = (\Psi^{i}(r)+\Psi^{i}(s),\psi^{i}(m)+\psi^{i}(n) +  \varepsilon^{i}(r) +  \varepsilon^{i}(s))$
    \\Hence $\Psi^{i}((r,m)+(s,n)) = \Psi^{i}(r,m)+ \Psi^{i}(s,n)$
    if and only if \\$\varepsilon^{i}(r + s) = \varepsilon^{i}(r) +
    \varepsilon^{i}(s)$.

        \item $\Psi^{i}((r,m)(s,n)) = (\Psi^{i}(rs),\psi^{i}(rn+ms) + \varepsilon^{i}(rs))
    \\ \Psi^{i}(r,m)\Psi^{i}(s,n) = (\Psi^{i}(rs),\psi^{i}(rn + ms) + \Psi^{i}(r)\varepsilon^{i}(s) +
    \varepsilon^{i}(r)\Psi^{i}(s))$
    \\Hence $\Psi^{i}((r,m)(s,n)) = \Psi^{i}(r,m)\Psi^{i}(s,n)$ if and only if \\$\varepsilon^{i}(rs) = \Psi^{i}(r)\varepsilon^{i}(s) +
    \varepsilon^{i}(r)\Psi^{i}(s)$.

        \item $\Psi^{i}\Psi^{j}(r,m) = (\Psi^{i}\Psi^{j}(r),\psi^{i}\psi^{j}(m) + \psi^{i}\varepsilon^{j}(r) + \varepsilon^{i}\Psi^{j}(r))
    \\ \Psi^{ij}(r,m) = (\Psi^{i}\Psi^{j}(r),\psi^{i}\psi^{j}(m) + \varepsilon^{ij}(r))$
    \\Hence $\Psi^{i}\Psi^{j}(r,m) = \Psi^{ij}(r,m)$ if and only if
    $\varepsilon^{ij}(r) = \psi^{i}\varepsilon^{j}(r)) +
    \varepsilon^{i}\Psi^{j}(r)$.
\end{enumerate}\end{proof}

The maps $\varepsilon^{i}: R \rightarrow M$ given by
$\varepsilon^{i}(r) = 0$, for all $r \in R$ and $i \in \mathbb{N}$, satisfy  properties
\ref{lmmapsimd1}.1-\ref{lmmapsimd1}.4 meaning that the maps $\Psi^{i}: R
\rtimes M \rightarrow R \rtimes M$ given by $\Psi^{i}(r,m) =
(\Psi^{i}(r),\psi^{i}(m))$ give us a $\Psi$-ring structure on $R
\rtimes M$. We call this the \textit{semi-direct product} of $R$ and
$M$, denoted by $R \rtimes_{\Psi} M$.

We note that if $R$ and $M$ are both special, then $R \rtimes_{\Psi}
M$ is also special.

\section{$\Psi$-derivations}
The Andr\'{e}-Quillen cohomology for commutative rings is given by
the derived functors of the derivations functor. For a
commutative ring $S$, the derivations of $S$ with values in an
$S$-module $N$ are in one-to-one correspondence with the sections of
$\xymatrix{ S \rtimes N \ar[r]^-{\pi} & S }$. We define the
$\Psi$-derivations and show that they are in one-to-one correspondence
with the sections of $\xymatrix{ R \rtimes_{\Psi} M \ar[r]^-{\pi} &
R }$.

\begin{definition}
A \textit{$\Psi$-derivation} of $R$ with values in $M$ is an additive
homomorphism $d:R \rightarrow M$ such that
\begin{enumerate}
\item $d(r s) = r d(s) + d(r)s$,
\item $\psi^{i}(d(r)) = d(\Psi^{i}(r))$,
\end{enumerate}
for all $r, s \in R,$ and $i \in \mathbb{N}$. We let
$\Der_{\Psi}(R,M)$ denote the set of all $\Psi$-derivations of $R$
with values in $M$.
\end{definition}

\begin{example}
Let $R$ and $M$ be such that $\Psi^{i}=Id = \psi^{i}$ for all $i \in \mathbb{N}$, then
\[\Der_{\Psi}(R,M)= \Der(\underline{R},\underline{M}).\]
\end{example}

\begin{theorem}
There is a one-to-one correspondence between the elements of
$\Der_{\Psi}(R,M)$ and the sections of $\xymatrix{ R \rtimes_{\Psi}
M \ar[r]^-{\pi} & R }$.
\end{theorem}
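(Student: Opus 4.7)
The plan is to exhibit the correspondence explicitly by the classical formula $d \mapsto s_d$ with $s_d(r) = (r, d(r))$, and to check that the $\Psi$-ring axioms translate precisely into the two conditions defining a $\Psi$-derivation.

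First I would observe that, as in the case of ordinary commutative rings, any set-theoretic section $s$ of the projection $\pi: R \rtimes_{\Psi} M \to R$ must be of the form $s(r) = (r, d(r))$ for some function $d: R \to M$, and conversely every such function gives a section. So the content of the theorem is that $s_d$ is a map of $\Psi$-rings if and only if $d$ is a $\Psi$-derivation.

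Next I would unwind the two conditions. Since the underlying commutative ring of $R \rtimes_{\Psi} M$ is $\underline{R} \rtimes \underline{M}$, the classical fact (which I would quote or recall in one line) gives that $s_d$ is a homomorphism of underlying commutative rings if and only if $d$ is additive and satisfies the Leibniz rule $d(rs) = r\,d(s) + d(r)\,s$, which is exactly condition (1) in the definition of $\Psi$-derivation. It remains to match the $\Psi$-equivariance of $s_d$ with condition (2). Here I use that in $R \rtimes_{\Psi} M$ the maps $\varepsilon^i$ are identically zero by definition, so
\[
\Psi^i(s_d(r)) = \Psi^i(r, d(r)) = (\Psi^i(r), \psi^i(d(r))),
\]
whereas $s_d(\Psi^i(r)) = (\Psi^i(r), d(\Psi^i(r)))$. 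Equality for all $r \in R$ and all $i \in \mathbb{N}$ is therefore equivalent to $\psi^i(d(r)) = d(\Psi^i(r))$, which is condition (2).

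I expect no serious obstacle here; the result is a direct translation of the ring-theoretic bijection to the $\Psi$-setting. The one point that requires care is verifying that the specific semidirect product in use is indeed $R \rtimes_{\Psi} M$, i.e.\ the one coming from the trivial choice $\varepsilon^i \equiv 0$ in Lemma \ref{lmmapsimd1}; any nonzero choice of $\varepsilon^i$ would change the correspondence, twisting condition (2) by a coboundary-type term. Since the statement fixes $R \rtimes_{\Psi} M$, this twist does not appear and the bijection $d \leftrightarrow s_d$ is immediate.
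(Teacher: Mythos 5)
Your proposal is correct and follows essentially the same route as the paper: write a section as $\sigma(r)=(r,d(r))$, use the underlying ring homomorphism property to recover additivity and the Leibniz rule, and match $\Psi$-equivariance (with $\varepsilon^{i}\equiv 0$ in $R\rtimes_{\Psi}M$) with the condition $\psi^{i}(d(r))=d(\Psi^{i}(r))$. No gaps; your remark about the trivial choice of $\varepsilon^{i}$ is exactly the point the paper's construction of the semidirect product already fixes.
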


\begin{proof}[Proof of theorem]
Assume we have a section of $\pi$, then we have the following \[
\xymatrix{ R \rtimes_{\Psi} M \ar@<1ex>[r]^-{\pi} & R,
\ar@<1ex>[l]^-{\sigma} }\] where $\pi \sigma  = Id_{R}$. Hence
$\sigma(r) = (r,d(r))$ for some $d:R \rightarrow M$. The properties
\begin{align*}d(r+s) =& d(r) + d(s), \\ \qquad d(rs) =& d(r)s + r
d(s),\end{align*} follow from $\sigma$ being a ring homomorphism.
However $\sigma$ also preserves the $\Psi$-ring structure, so we get
that $\Psi^{i}\sigma(r) = \sigma\Psi^{i}(r)$. We know that
\[\Psi^{i}\sigma(r) = \Psi^{i}(r, d(r)) = (\Psi^{i}(r),\psi^{i}(d(r))), $$  $$\sigma\Psi^{i}(r) = (\Psi^{i}(r),d(\Psi^{i}(r))).
\] Hence $\Psi^{i}\sigma(r) = \sigma\Psi^{i}(r)$ if and only if
$\psi^{i}d(r) = d\Psi^{i}(r)$. This tells us that if $\sigma$ is a
section of $\pi$, then we have a $\Psi$-derivation $d$.

Conversely, if we have a $\Psi$-derivation $d:R \rightarrow M$, then
$\sigma(r) = (r,d(r))$ is a section of $\pi$.
\end{proof}

\section{$\Psi$-ring extensions}
We have seen in proposition \ref{AQprop1} that the Andr\'{e}-Quillen cohomology
$H^{1}_{AQ}(\underline{R},\underline{M})$ classifies the extensions
of \underline{R} by \underline{M}. In this section, we develop the
$\Psi$-analogue of extensions.

\begin{definition}
A \textit{$\Psi$-ring extension} of $R$ by $M$ is an extension of $\underline{R}$ by $\underline{M}$
\[\xymatrix{0 \ar[r] & M \ar^{\alpha}[r] & X \ar^{\beta}[r] & R \ar[r] & 0}\]
where $X$ is a $\Psi$-ring, $\beta$ is a map of $\Psi$-rings and $\alpha \psi^{n} = \Psi^{n} \alpha$ for all $n \in \mathbb{N}$.
\end{definition}

Two $\Psi$-ring extensions $(X),(\overline{X})$
with $R,M$ fixed are said to be \textit{equivalent} if there exists a map of
$\Psi$-rings $\phi: X \rightarrow \overline{X}$ such that the following
diagram commutes.
\[\xymatrix{0 \ar[r] & M \ar@{=}[d] \ar[r] & X \ar[r] \ar[d]^{\phi} & R \ar[r] \ar@{=}[d] & 0 \\ 0 \ar[r] & M \ar[r] & \overline{X} \ar[r] & R \ar[r] & 0}\]

We denote the set of equivalence classes of $\Psi$-ring extensions
of $R$ by $M$ by $\Ext_{\Psi}(R,M)$.

The Harrison cohomology $Harr^{2}(\underline{R},\underline{M})$ classifies the additively split extensions
of $\underline{R}$ by $\underline{M}$. We can also define the $\Psi$-analogue of these types of extensions.

\begin{definition}
An \textit{additively split $\Psi$-ring extension} of $R$ by $M$ is a $\Psi$-ring extension of $R$ by $M$
\[\xymatrix{0 \ar[r] & M \ar^{\alpha}[r] & X \ar^{\beta}[r] & R \ar[r] & 0}\] where
$\beta$ has a section which is an additive homomorphism.
\end{definition}

Multiplication in $X=R\oplus M$ has the form $(r,m)(r',m') = (rr', mr' + rm'
+ f(r,r'))$, where $f:R \times R \rightarrow M$ is some bilinear
map. Associativity in $X$ gives us
\[0 = rf(r',r'')-f(rr',r'') + f(r,r'r'')-f(r,r')r''.\]
Commutativity in $X$ gives us
\[f(r,r') = f(r',r).\]

The $\Psi$-operations $\Psi^{i}:R \oplus M \rightarrow R
\oplus M$ for $i \in \mathbb{N}$ are given by $\Psi^{i}(r,m) =
(\Psi^{i}(r), \psi^{i}(m) + \varepsilon^{i}(r) )$ for a sequence of
operations $\varepsilon^{i}: R \rightarrow M$ which satisfy the following
properties
\begin{enumerate}
    \item $\varepsilon^{1}(r) = 0$,
    \item $\varepsilon^{i}(r+s) = \varepsilon^{i}(r)+\varepsilon^{i}(s)$,
    \item $\varepsilon^{i}(rs) = \Psi^{i}(s)\varepsilon^{i}(r) + \Psi^{i}(r)\varepsilon^{i}(s) + f(\Psi^{i}(r),\Psi^{i}(s))-\psi^{i}(f(r,s))$,
    \item $\varepsilon^{ij}(r) = \psi^{i}\varepsilon^{j}(r) + \varepsilon^{i}\Psi^{j}(r)$,
\end{enumerate}
for all $r,s \in R$ and $i,j \in \mathbb{N}$.

Assuming we have two $\Psi$-ring extensions
$(X,\varepsilon,f),(\overline{X},\overline{\varepsilon},\overline{f})$ which are equivalent,
together with a $\Psi$-ring map $\phi:X \rightarrow \overline{X}$ with
$\phi(r,m) = (r, m + g(r))$ for some $g: R \rightarrow M$. We have
that $\phi$ being a homomorphism tells us that
\[g(r+r') = g(r) + g(r'),\]
\[f(r,r')- \overline{f}(r,r') = rg(r') - g(rr') + g(r)r'.\]
We also have $\phi(\Psi^{i})= \overline{\Psi}^{i}(\phi)$ for all $i \in \mathbb{N}$, which
tells us that \[\varepsilon^{i}(r)-\overline{\varepsilon}^{i}(r) =
\psi^{i}(g(r)) - g(\Psi^{i}(r)).\]

We denote the set of equivalence classes of the additively split $\Psi$-ring extensions
of $R$ by $M$ by $\AExt_{\Psi}(R,M)$.

\begin{definition} \label{pmextdef}
An \textit{additively and multiplicatively split $\Psi$-ring extension} of $R$ by $M$ is a $\Psi$-ring extension of $R$ by $M$
\[\xymatrix{0 \ar[r] & M \ar^{\alpha}[r] & X \ar^{\beta}[r] & R \ar[r] & 0}\]
where $\beta$ has a section which is an additive and multiplicative homomorphism.
\end{definition}

As a commutative ring $X = R \rtimes M$, i.e. $f=0$ above. The $\Psi$-operations $\Psi^{i}:X \rightarrow X$ for $i \in \mathbb{N}$ are given by
$\Psi^{i}(r,m) = (\Psi^{i}(r), \psi^{i}(m) + \varepsilon^{i}(r) )$
for a sequence of operations $\varepsilon^{i}: R \rightarrow M$ such
that
\begin{enumerate}
    \item $\varepsilon^{1}(r) = 0$,
    \item $\varepsilon^{i}(r+s) =
    \varepsilon^{i}(r)+\varepsilon^{i}(s)$,
    \item $\varepsilon^{i}(rs) = \Psi^{i}(s)\varepsilon^{i}(r) + \Psi^{i}(r)\varepsilon^{i}(s)$,
    \item $\varepsilon^{ij}(r) = \psi^{i}\varepsilon^{j}(r) + \varepsilon^{i}\Psi^{j}(r)$,
\end{enumerate}
for all $r,s \in R$ and $i,j \in \mathbb{N}$. Note that conditions 2 and 3 tell us that $\varepsilon^{i} \in \Der(\underline{R}, \underline{M}^{i})$ where $M^{i}$ denotes the $\Psi$-module over $R$ with $M$ as an abelian group and the action of $R$ given by $(r,m) \mapsto \Psi^{i}(r)m$, for  $r \in R, m \in M$.

Assume we have two additively and multiplicatively split $\Psi$-ring extensions
$(X,\varepsilon),(\overline{X},\overline{\varepsilon})$ which are equivalent, together
with a $\Psi$-ring map $\phi:X \rightarrow \overline{X}$ with $\phi(r,m) = (r,
m + g(r))$ for some $g: R \rightarrow M$. Since $\phi$ is a ring
homomorphism we get that $g \in \Der(\underline{R},\underline{M})$.
Since $\phi$ is a map of $\Psi$-rings we get that
\[\varepsilon^{i}(r)- \overline{\varepsilon}^{i}(r) = \psi^{i}(g(r)) -
g(\Psi^{i}(r)),\] for all $i \in \mathbb{N}$.

We denote the set of equivalence classes of the additively and multiplicatively split $\Psi$-ring extensions
of $R$ by $M$ by $\MExt_{\Psi}(R,M)$.

\begin{example}
Let $R$ and $M$ be such that $\Psi^{i}=Id = \psi^{i}$ for all $i \in \mathbb{N}$, then
\[\MExt_{\Psi}(R,M) \cong \prod_{\textrm{p prime}}\Der(\underline{R},\underline{M}).\]
\end{example}

\begin{lemma}
There exist exact sequences \[\xymatrix@C=1.5pc { 0 \ar[r] &
\MExt_{\Psi}(R,M) \ar[r]^-{w} & \Ext_{\Psi}(R,M) \ar[r]^-{u} &
H^{1}_{AQ}(\underline{R},\underline{M}) \ar[r] & \frac{H^{1}_{AQ}(\underline{R},\underline{M})}{Im(u)}\ar[r] & 0 }\]
\[\xymatrix@C=1.5pc{ 0 \ar[r] &
\MExt_{\Psi}(R,M) \ar[r]^-{w} & \AExt_{\Psi}(R,M) \ar[r]^-{u} &
Harr^{2}(\underline{R},\underline{M})  \ar[r] & \frac{Harr^{2}(\underline{R},\underline{M})}{Im(u)}\ar[r] & 0 }\]
where $w$ is the inclusion, and $u$ maps the class of a $\Psi$-ring extension to the class of its underlying extension.
\end{lemma}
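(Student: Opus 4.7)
The plan is to verify the exactness of each five-term sequence by unpacking the maps directly. Both sequences share the same skeleton: $w$ is the inclusion induced by the observation that every multiplicatively and additively split $\Psi$-ring extension is in particular a $\Psi$-ring extension (respectively an additively split $\Psi$-ring extension), while $u$ forgets the $\Psi$-structure, sending the class of a $\Psi$-ring extension to the class of its underlying extension of commutative rings (identified with $H^{1}_{AQ}(\underline{R},\underline{M})$ via Proposition \ref{AQprop1}, and with $Harr^{2}(\underline{R},\underline{M})$ in the additively split case). Both maps are clearly well defined on equivalence classes, since every $\Psi$-ring morphism is a commutative ring morphism.

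Next I would check injectivity of $w$. Given two mext-split $\Psi$-extensions $(X,\varepsilon)$ and $(\overline{X},\overline{\varepsilon})$ that are equivalent in $\Ext_{\Psi}(R,M)$, their multiplicative splittings identify both with $R \rtimes M$ as commutative rings, so any $\Psi$-ring equivalence $\phi: X \to \overline{X}$ over $R$ fixing $M$ must take the form $\phi(r,m) = (r, m + g(r))$. The analysis immediately following Definition \ref{pmextdef} then forces $g$ to be a derivation satisfying $\varepsilon^{i} - \overline{\varepsilon}^{i} = \psi^{i} g - g \Psi^{i}$, which is precisely the equivalence relation defining $\MExt_{\Psi}(R,M)$. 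The inclusion $\Image(w) \subseteq \Ker(u)$ is immediate, since a mext-split extension has underlying commutative extension equal to the trivial split one, which represents $0$ in both $H^{1}_{AQ}(\underline{R},\underline{M})$ and $Harr^{2}(\underline{R},\underline{M})$.

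For the converse inclusion $\Ker(u) \subseteq \Image(w)$, I would take $[X] \in \Ext_{\Psi}(R,M)$ with $u([X]) = 0$. By Proposition \ref{AQprop1} the underlying commutative extension is equivalent to the trivial split one, so there is a commutative ring section giving an identification $X \cong R \rtimes M$. The key step is to observe that the $\Psi$-operations on $X$, constrained by $\alpha \psi^{i} = \Psi^{i} \alpha$ and by $\beta$ being a $\Psi$-ring map, are forced into the normal form $\Psi^{i}(r,m) = (\Psi^{i}(r), \psi^{i}(m) + \varepsilon^{i}(r))$ for some maps $\varepsilon^{i}: R \to M$, and a direct computation shows that the $\Psi$-ring axioms on $X$ translate into exactly the four conditions following Definition \ref{pmextdef}. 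Hence $[X] \in \Image(w)$. The argument for the second sequence is identical, with Harrison's classification of additively split commutative extensions replacing Proposition \ref{AQprop1}. The terminal exactness at the cokernel is then immediate from its definition.

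The hardest part will be the converse inclusion in the middle term: one must verify carefully that after identifying $X$ with $R \rtimes M$ via the multiplicative section obtained from the triviality of the underlying extension, no further obstruction is present and the transferred $\Psi$-structure on $X$ genuinely fits the mext-split template. Once this normalization is secured, the remaining exactness statements follow routinely by reading off the explicit $\varepsilon^{i}$ cocycle data on both sides.
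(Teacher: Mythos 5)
Your proposal is correct and takes essentially the same route as the paper's (very terse) proof: the heart of both arguments is the identification of $\Ker(u)$ with the classes whose underlying commutative extension is trivial, which are exactly the additively and multiplicatively split $\Psi$-ring extensions, so exactness at the middle term follows. You merely make explicit the routine checks (well-definedness of $w$ and $u$, injectivity of $w$, and the normal form $\Psi^{i}(r,m)=(\Psi^{i}(r),\psi^{i}(m)+\varepsilon^{i}(r))$ after choosing a multiplicative section) that the paper leaves implicit.
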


\begin{proof}
We only need to check exactness at $\Ext_{\Psi}(R,M)$ and $\AExt_{\Psi}(R,M)$. A class in $\Ext_{\Psi}(R,M)$ or $\AExt_{\Psi}(R,M)$ belongs to the kernel of $u$ if the underlying class is the trivial class. The additively and multiplicatively split extensions are precisely the $\Psi$-ring extensions whose underlying extension is trivial. Exactness follows.
\end{proof}

From the definitions, we see that $\Ext_{\Psi}(R,M) \supseteq \AExt_{\Psi}(R,M) \supseteq \MExt_{\Psi}(R,M)$.

If $R$ and $M$ are both special, then we say that a $\Psi$-ring extension
\[\xymatrix{0 \ar[r] & M \ar^{\alpha}[r] & X \ar^{\beta}[r] & R \ar[r] & 0}\]
is \textit{special} if $X$ is also special.

We denote the set of equivalence classes of the special $\Psi$-ring extensions
of $R$ by $M$ by $\Ext_{\Psi_{s}}(R,M)$. Similarly, we can define $\AExt_{\Psi_{s}}(R,M)$ and $\MExt_{\Psi_{s}}(R,M)$.

\section{Crossed $\Psi$-extensions}
A \textit{crossed $\Psi$-module} consists of a $\Psi$-ring $C_{0}$, a $\Psi$-module $C_{1}$ over $C_{0}$ and a map of $\Psi$-modules
\[\xymatrix{C_{1} \ar^{\partial}[r] & C_{0}},\]
which satisfies the property
\[\partial(c)c' = c \partial(c'),\]
for $c,c' \in C_{1}$. In other words, a crossed $\Psi$-module is a chain algebra which is non-trivial only in dimensions 0 and 1. Since $C_{2}=0$ the condition
$\partial(c)c' = c \partial(c')$ is equivalent to the Leibnitz relation \[0  = \partial (cc') = \partial(c)c' - c \partial(c'). \]
We can define a product by \[ c * c' := \partial(c) c', \]
for $c,c' \in C_{1}$. This gives us a $\Psi$-ring structure on $C_{1}$ and $\partial: C_{1} \rightarrow C_{0}$ is a map of $\Psi$-rings.

Let $\partial : C_{1} \rightarrow C_{0}$ be a crossed $\Psi$-module. We let $M= \Ker (\partial)$ and $R= \Coker (\partial)$ Then the image $\Image (\partial)$ is an ideal of $C_{0}$, $M C_{1} = C_{1}M = 0$ and $M$ has a well-defined $\Psi$-module structure over $R$.

A \textit{crossed $\Psi$-extension} of $R$ by $M$ is an exact sequence
\[\xymatrix{0 \ar[r] & M \ar[r]^{\alpha} & C_{1} \ar[r]^{\partial} & C_{0} \ar[r]^{\gamma} & R \ar[r] & 0}\]
where $\partial : C_{1} \rightarrow C_{0}$ is a crossed $\Psi$-module, $\gamma$ is a map of $\Psi$-rings, and the $\Psi$-module structure on $M$ coincides with the one induced from the crossed $\Psi$-module. We denote the category of crossed $\Psi$-extensions of $R$ by $M$ by $Cross_{\Psi}(R,M)$. We let $\pi_{0} Cross_{\Psi}(R,M)$ denote the connected components of the category $Cross_{\Psi}(R,M)$.

An \textit{additively split crossed $\Psi$-extension} of $R$ by $M$ is a
crossed $\Psi$-extension \begin{equation}\label{pcesasc} \xymatrix{0
\ar[r] & M \ar^{\omega}[r] & C_{1} \ar^{\rho}[r] & C_{0}
\ar^{\pi}[r] &  R \ar[r] & 0}\end{equation} such that all the arrows
in the exact sequence $\ref{pcesasc}$ are additively split.
We denote the connected components of the category of additively split crossed
$\Psi$-extensions of $R$ by $M$ by $\pi_{0} ACross_{\Psi}(R,M)$.

An \textit{additively and multiplicatively split crossed $\Psi$-extension} of $R$ by $M$ is a
crossed $\Psi$-extension \[ \xymatrix{0
\ar[r] & M \ar^{\omega}[r] & C_{1} \ar^{\rho}[r] & C_{0}
\ar^{\pi}[r] &  R \ar[r] & 0}\] such that $\pi$ is additively and multiplicatively split.
We denote the connected components of the category of additively and multiplicatively split crossed
$\Psi$-extensions of $R$ by $M$ by $\pi_{0} MCross_{\Psi}(R,M)$.


\section{Deformation of $\Psi$-rings}\label{Psi deformation}
In this section, we apply Gerstenhaber and Schack's definition of a deformation of a diagram of algebras \cite{Gerdiag} to the case of $\Psi$-rings.

\begin{definition}
Let
\[\alpha_{t} = \alpha_{0} + t\alpha_{1} +
t^{2}\alpha_{2} + \ldots \]
be a deformation of $\underline{R}$, i.e. be a formal power series, in which each $\alpha_{k}: R \times R \rightarrow R$ is a bilinear map, $\alpha_{0}$ is the multiplication in $R$ and $\alpha_{t}$ is associative and commutative.

For each $i \in \mathbb{N}$, let \[\Psi^{i}_{t} = \psi^{i}_{0} + t\psi^{i}_{1} +
t^{2}\psi^{i}_{2} + \ldots \] be a formal power series, in which
each $\psi^{i}_{k}$ is a function \[\psi^{i}_{k} : R
\rightarrow R,\] satisfying \begin{enumerate}
                                     \item $\psi^{i}_{0}(r) = \Psi^{i}(r)$,
                                     \item $\psi^{1}_{k}(r) = 0$,
                                     \item $\psi^{i}_{k}(r+s) = \psi^{i}_{k}(r) + \psi^{i}_{k}(s) $,
                                     \item $\sum_{h=0}^{k}\psi^{i}_{h}\alpha_{k-h}(r,s) = \sum_{h=0}^{k}\sum_{l=0}^{k-h}\alpha_{h}(\psi^{i}_{l}(r),\psi^{i}_{k-h-l}(s))$,
                                     \item $\psi^{ij}_{k}(r)=\sum_{l=0}^{k}\psi^{i}_{l}\circ\psi^{j}_{k-l}(r),$
                                   \end{enumerate}
for all $i,j,k \in \mathbb{N}$ and $r,s \in R$. We call $(\alpha_{t},\Psi^{*}_{t})$ a $\Psi$-\textit{ring deformation} of $R$.
\end{definition}

We call $(\alpha_{1},\psi^{*}_{1})$ the \textit{infinitesimal deformation} of $(\alpha_{t},\Psi^{*}_{t})$. The
infinitesimal $\Psi$-ring deformation $(\alpha_{1},\psi^{*}_{1})$ is identified with the
additively split $\Psi$-ring extensions of $R$ by $R$ by setting $f = \alpha_{1}$ and $\varepsilon^{i} =
\psi^{i}_{1}$ for all $i \in \mathbb{N}$.

\begin{definition}
We define a \textit{formal automorphism} of the $\Psi$-ring $R$ to
be a formal power series \[\Phi_{t} = \phi_{0} + t\phi_{1} +
t^{2}\phi_{2} + \ldots\] where each $\phi_{k}: R \rightarrow R$ such
that \begin{enumerate}
       \item $\phi_{0}(r)= r$,
       \item $\phi_{k}(r+s)=\phi_{k}(r)+\phi_{k}(s).$
     \end{enumerate}
Two $\Psi$-ring deformations $(\alpha_{t},\Psi^{*}_{t})$ and $(\overline{\alpha}_{t},\overline{\Psi}^{*}_{t})$ are
\textit{equivalent} if there exists a formal automorphism $\Phi_{t}$
such that $\Phi_{t} \alpha_{t}(r,s) = \overline{\alpha}_{t}(\Phi_{t}r,\Phi_{t}s) $  and $\Phi_{t}\Psi^{*}_{t} = \overline{\Psi}^{*}_{t}\Phi_{t}$.
\end{definition}

If two $\Psi$-ring deformations $(\alpha_{t},\Psi^{*}_{t})$ and $(\overline{\alpha}_{t},\overline{\Psi}^{*}_{t})$ are
equivalent, then the differences satisfy $\alpha_{1}(r,s)-\overline{\alpha}_{1}(r,s)= r\phi_{1}(s) - \phi_{1}(rs) + s\phi_{1}(r)$  and $\psi^{i}_{1} -
\overline{\psi}^{i}_{1} = \overline{\Psi^{i}}\phi_{1} -
\phi_{1}\Psi^{i}$ for all $i \in \mathbb{N}$. Hence the
equivalence classes of the infinitesimal $\Psi$-ring deformations are identified
with the equivalence classes of the additively split $\Psi$-ring extensions, $\AExt_{\Psi}(R,R)$.

Yau \cite{Yau} defined the cohomology of $\lambda$-rings in order to study
deformations with respect to the $\Psi$-operations corresponding to
the $\lambda$-ring. Here, I provide an alternative definition to Yau's definition. A deformation of the $\Psi$-operations should be a $\Psi$-ring deformation $(\alpha_{t},\Psi^{*}_{t})$ where $\alpha_{t}$ is the trivial deformation. If we let $\alpha_{k}=0$ for all $k \geq 1$ in the definition of a $\Psi$-ring deformation then we get the following definition.

\begin{definition}
For each $i \in \mathbb{N}$, let \[\Psi^{i}_{t} = \psi^{i}_{0} + t\psi^{i}_{1} +
t^{2}\psi^{i}_{2} + \ldots \] be a formal power series, in which
each $\psi^{i}_{k}$ is a function \[\psi^{i}_{k} : R
\rightarrow R,\] such that \begin{enumerate}
                                     \item $\psi^{i}_{0}(r) = \Psi^{i}(r)$,
                                     \item $\psi^{1}_{k}(r) = 0$ for
                                     $k \geq 1.$,
                                     \item $\psi^{i}_{k}(r+s) = \psi^{i}_{k}(r) + \psi^{i}_{k}(s) $,
                                     \item $\psi^{i}_{k}(rs) = \sum_{l=0}^{k}\psi^{i}_{l}(r)\psi^{i}_{k-l}(s)$,
                                     \item $\psi^{ij}_{k}(r)=\sum_{l=0}^{k}\psi^{i}_{l}\circ\psi^{j}_{k-l}(r)$,
                                   \end{enumerate}
for all $i,j,k \in \mathbb{N}$ and $r,s \in R$.                                    
We call $\Psi^{*}_{t}$ a $\Psi$-\textit{operation deformation} of $R$.
\end{definition}

The infinitesimal $\Psi$-operation deformation $\psi^{*}_{1}$ is identified with the
additively and multiplicatively split $\Psi$-ring extensions of $R$ by $R$ by setting $\varepsilon^{i} =
\psi^{i}_{1}$ for all $i \in \mathbb{N}$.

If two $\Psi$-operation deformations $\Psi^{*}_{t}$ and $\overline{\Psi}^{*}_{t}$ are
equivalent, then the difference  satisfies $\psi^{i}_{1} - \overline{\psi}^{i}_{1} = \overline{\Psi^{i}}\phi_{1} -
\phi_{1}\Psi^{i}$ for all $i \in \mathbb{N}$. Note that now $\Phi_{t}(rs)= \Phi_{t}(r)\Phi_{t}(s)$ so we get that $\phi_{1} \in \Der(\underline{R},\underline{R})$.  Hence the
equivalence classes of the infinitesimal $\Psi$-operation deformations are identified
with the equivalence classes of the additively and multiplicatively split $\Psi$-ring extensions, $\MExt_{\Psi}(R,R)$.

\chapter{$\lambda$-rings}
\label{Chapter4} 
\section{Introduction}
In this chapter, only the material in this section and section \ref{yaucosect} is already known (see \cite{AA}, \cite{Knut} and \cite{Yau}) and everything else is new and original material. Note that in our notation $\mathbb{N}_{0} = \mathbb{N} \cup \{0\}$.

In this chapter, we start by introducing the concept of a
pre-$\lambda$-ring. After giving the definition, we will look at
some examples of pre-$\lambda$-rings. Later, we introduce the
definition of $\lambda$-rings, which are pre-$\lambda$-rings which
satisfy some additional axioms. Then we will look at which of the
pre-$\lambda$-ring structures also give us $\lambda$-rings.

\begin{definition}\label{defprelam} A \textit{pre-$\lambda$-ring} is a commutative ring $R$ with identity $1$,
together with a sequence of operations $\lambda^{i}: R \rightarrow
R$, for $i \in \mathbb{N}_{0}$, satisfying
\begin{enumerate}
    \item $\lambda^{0}(r) = 1$,
    \item $\lambda^{1}(r) = r$,
    \item $\lambda^{i}(r+s)= \Sigma_{k=0}^{i} \lambda^{k}(r)
    \lambda^{i-k}(s)$,
\end{enumerate}
for all $r,s \in R$ and $i \in \mathbb{N}_{0}$.
\end{definition}

To be able to describe examples of pre-$\lambda$-rings or
$\lambda$-rings it is often useful to consider, for $r \in R$, the
formal power series in the variable $t$
\begin{align*}
\lambda_{t}(r) &= \sum_{i=0}^{\infty}\lambda^{i}(r) t^{i}\\ &=
\lambda^{0}(r) + \lambda^{1}(r)t + \lambda^{2}(r)t^{2}+ \ldots
\end{align*}

Note that \begin{align*}\lambda_{t}(r+s) &= \lambda^{0}(r+s) +
\lambda^{1}(r+s)t + \lambda^{2}(r+s)t^{2}+ \lambda^{3}(r+s)t^{3} \ldots
\\&= 1 + (r+s)t + \Sigma_{k=0}^{2} \lambda^{k}(r)
    \lambda^{2-k}(s) t^{2} + \Sigma_{k=0}^{3} \lambda^{k}(r)
    \lambda^{3-k}(s) t^{3}+\ldots \\&=(1 + rt +
\lambda^{2}(r)t^{2}+ \ldots)(1 + st + \lambda^{2}(s)t^{2}+ \ldots)\\ &=
\lambda_{t}(r)\lambda_{t}(s).\end{align*}

This gives us an equivalent definition of a pre-$\lambda$-ring.

\begin{definition}
A \textit{pre-$\lambda$-ring} is a commutative ring $R$ with
identity $1$, together with a sequence of operations $\lambda^{i}: R
\rightarrow R$, for $i \in \mathbb{N}_{0}$, satisfying
\begin{enumerate}
    \item $\lambda^{0}(r) = 1$,
    \item $\lambda^{1}(r) = r$,
    \item $\lambda_{t}(r+s)= \lambda_{t}(r) \lambda_{t}(s)$, where $\lambda_{t}(r) = \sum_{i \geq 0} \lambda^{i}(r)t^{i}$,
\end{enumerate}
for all $r,s \in R$ and $i \in \mathbb{N}_{0}$.
\end{definition}

\begin{example}\label{prelamex}
We can get a pre-$\lambda$-ring structure on $\mathbb{Z}$ by taking \[\lambda_{t}(r) = (1+t + n_{2}t^{2} + n_{3}t^{3}+ \ldots)^{r},\] where $1+ t +
n_{2}t^{2} + n_{3}t^{3}+\ldots$ is a power series with integer coefficients.

We can get a pre-$\lambda$-ring structure on $\mathbb{R}$ by taking either
\begin{enumerate}
  \item $\lambda_{t}(r) = (1+t + n_{2}t^{2} + n_{3}t^{3}+ \ldots)^{r}$, where $1+ t +
n_{2}t^{2} + n_{3}t^{3}+\ldots$ is a power series with integer coefficients, or
  \item $\lambda_{t}(r)= e^{tr}$.
\end{enumerate}
\end{example}

The $\lambda$-ring axioms involve some universal polynomials. We are now going to introduce the elementary symmetric functions in order to define these universal polynomials.

\begin{definition}
 Let $\xi_{1}, \xi_{2}, \ldots ,\xi_{q}; \eta_{1}, \eta_{2},\ldots,\eta_{r}$ be
indeterminates. Define $s_{i}$ and $\sigma_{j}$ to be the elementary
symmetric functions of the $\xi_{i}'s, \eta_{j}'s$, i.e.
\begin{displaymath} (1 + s_{1}t + s_{2}t^{2} + \ldots + ) = \Pi_{i}
(1 + \xi_{i}t),
\end{displaymath}
\begin{displaymath}
(1 + \sigma_{1}t + \sigma_{2}t^{2} + \ldots + ) = \Pi_{j} (1 +
\eta_{j}t).
\end{displaymath}
Let $P_{k}(s_{1},
s_{2},\ldots,s_{k};\sigma_{1},\sigma_{2},\ldots,\sigma_{k})$ be the
coefficient of $t^{k}$ in $\Pi_{i,j}(1 + \xi_{i}\eta_{j}t)$.
\\ Let $P_{k,l}(s_{1}, s_{2},\ldots,s_{kl})$ be the coefficient of $t^{k}$
in $\Pi_{1 \leq i_{1} < \ldots < i_{l} \leq q}(1 +
\xi_{i_{1}}\xi_{i_{2}}\ldots\xi_{i_{l}}t)$.
\end{definition}

\begin{example}See also appendix \ref{appb}.
\begin{itemize}
  \item $P_{1}(s_{1}; \sigma_{1}) = s_{1} \sigma_{1}$,
  \item $P_{2}(s_{1}, s_{2}; \sigma_{1},\sigma_{2}) = s_{1}^{2}\sigma_{2} - 2 s_{2} \sigma_{2} + s_{2} \sigma_{1}^{2}$,
  \item $P_{1,1}(s_{1}) = s_{1}$,
  \item $P_{1,2}(s_{1},s_{2}) = P_{2,1}(s_{1},s_{2}) = s_{2}$,
  \item $P_{2,2}(s_{1},s_{2},s_{3},s_{4}) = s_{1}s_{3} - s_{4}$.
\end{itemize}
\end{example}

\begin{definition}\label{deflam}
A \textit{$\lambda$-ring} is a commutative ring $R$ with identity 1, together with a sequence of operations $\lambda^{i}:R \rightarrow R$, for $i \in \mathbb{N}_{0}$, satisfying
\begin{enumerate}
    \item $R$ is a pre-$\lambda$-ring,
    \item $\lambda_{t}(1) = 1 + t$,
    \item $\lambda^{i}(rs) = P_{i}(\lambda^{1}(r),\lambda^{2}(r),\ldots,\lambda^{i}(r),\lambda^{1}(s),\ldots,\lambda^{i}(s))$,
    \item $\lambda^{i}(\lambda^{j}(r)) =
    P_{i,j}(\lambda^{1}(r),\ldots,\lambda^{ij}(r))$,
\end{enumerate}
for all $r,s \in R$ and $i,j \in \mathbb{N}_{0}$.
\end{definition}

Since $\lambda^{1}$ is the identity, it follows that $P_{k,1}(s_{1},\ldots,s_{k}) = P_{1,k}(s_{1},\ldots,s_{k}) = s_{k}$.
In general, $P_{k,j} \neq P_{j,k}$, so the $\lambda$-operations do not commute.

\begin{example}
The simplest example of a $\lambda$-ring is $\mathbb{Z}$, together
with binomial coefficients $\lambda^{i}(r) = \binom{r}{i}$. The
additional axioms for $\lambda$-rings eliminate the more exotic
pre-$\lambda$-ring structures. From \ref{prelamex}, the only
$\lambda$-rings are taking $\lambda_{t}(r)=(1+t)^{r}$, which gives
us a $\lambda$-ring structure on $\mathbb{Z}$ or $\mathbb{R}$.
\end{example}

\begin{corollary}[Some properties of $\lambda$-rings]
\begin{enumerate}
\item The characteristic of $R$ is zero.

\item $\lambda^{i}(1) = 0$ for $i \geq 2$.
\end{enumerate}
\end{corollary}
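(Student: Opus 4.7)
The plan is to handle (2) first, since it is essentially a restatement of axiom (2) in Definition \ref{deflam}, and then leverage (2) together with a computation of $\lambda_t(0)$ to prove (1).

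For part (2), I would just read off the coefficients of $t^i$ in the identity $\lambda_t(1) = 1 + t$. By definition $\lambda_t(1) = \sum_{i \geq 0} \lambda^i(1) t^i$, so comparing coefficients gives $\lambda^0(1)=1$, $\lambda^1(1)=1$, and $\lambda^i(1)=0$ for every $i \geq 2$. No further work is needed.

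For part (1), I first need the auxiliary computation $\lambda_t(0) = 1$ in the power series ring $R[[t]]$. Applying axiom 3 of Definition \ref{defprelam} with $r = s = 0$ gives $\lambda_t(0) = \lambda_t(0)^2$. Since the constant term of $\lambda_t(0)$ is $\lambda^0(0)=1$, the element $\lambda_t(0)$ is a unit in $R[[t]]$, so cancelling yields $\lambda_t(0) = 1$. Next, I compute $\lambda_t(n)$ for a positive integer $n$ viewed as $n \cdot 1 \in R$. An easy induction using axiom 3 of Definition \ref{defprelam} together with $\lambda_t(1)=1+t$ from axiom 2 of Definition \ref{deflam} gives $\lambda_t(n \cdot 1) = (1+t)^n$ in $R[[t]]$.

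Now suppose for contradiction that $R$ has characteristic $n > 0$, so $n \cdot 1 = 0$ in $R$. Then on one hand $\lambda_t(n \cdot 1) = \lambda_t(0) = 1$, and on the other hand $\lambda_t(n \cdot 1) = (1+t)^n = \sum_{i=0}^{n} \binom{n}{i} t^i$. The coefficient of $t^n$ in the left-hand expression is $0$, while on the right-hand side it is $\binom{n}{n}\cdot 1_R = 1_R$. Hence $1_R = 0$, forcing $R$ to be the zero ring, which is excluded by the standing hypothesis that $R$ has an identity distinct from $0$. I therefore conclude that the characteristic of $R$ is zero.

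There is no real obstacle in this proof; the only mildly subtle point is justifying that $\lambda_t(0)$ is invertible in $R[[t]]$ so that the identity $\lambda_t(0) = \lambda_t(0)^2$ actually collapses to $\lambda_t(0)=1$, and noting that the argument for characteristic zero uses the $\lambda$-ring axiom $\lambda_t(1)=1+t$ in an essential way — this axiom is exactly what rules out the exotic pre-$\lambda$-ring structures from Example \ref{prelamex} and pins down the additive behaviour well enough to detect the characteristic.
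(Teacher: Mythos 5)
Your proof is correct and follows essentially the same route as the paper: part (2) is read off from the axiom $\lambda_{t}(1)=1+t$, and part (1) uses $\lambda_{t}(j\cdot 1)=\lambda_{t}(1)^{j}=(1+t)^{j}$. You in fact spell out two steps the paper leaves implicit, namely that $\lambda_{t}(0)=1$ and the comparison of the coefficient of $t^{n}$ (together with the tacit assumption $1_{R}\neq 0$), which makes your write-up a more complete version of the same argument.
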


\begin{proof}[Proof of corollary]
\begin{enumerate}
\item Let $j$ be any integer. \\ $\lambda_{t}(j) = \lambda_{t}(\underbrace{1+1+
\ldots+1}_{\textrm{j times}}) = \lambda_{t}(1)^{j} = (1+ t)^{j} \neq
0$.

\item This follows from \ref{deflam}.1.
\end{enumerate}
\end{proof}

\textit{A map of $\lambda$-rings} $R_{1} \rightarrow R_{2}$, is a
ring homomorphism $f: R_{1} \rightarrow R_{2}$, such that
\\$\lambda^{i}(f(r)) = f(\lambda^{i}(r))$ for all $r \in R_{1}$ and
$i \in \mathbb{N}_{0}$. The class of all $\lambda$-rings and maps of
$\lambda$-rings form the category of $\lambda$-rings, which we
denote by $\lambda\mathfrak{-rings}$.

The $\lambda$-operations are often difficult to work with as they
are neither additive nor multiplicative. We can get ring maps from
the $\lambda$-operations, which are the Adams operations $\Psi^{i}:
R \rightarrow R$ for $i \in \mathbb{N}$, defined by the Newton
formula
\begin{displaymath}
\Psi^{i}(r) -
\lambda^{1}(r)\Psi^{i-1}(r)+\ldots+(-1)^{i-1}\lambda^{i-1}(r)\Psi^{1}(r)
+ (-1)^{i}i\lambda^{i}(r) = 0.
\end{displaymath}

\begin{example} See also appendix \ref{appa}.
\begin{align*}\Psi^{1}(r) &= \lambda^{1}(r),\\
\Psi^{2}(r) &= \Psi^{1}(r)\lambda^{1}(r) - 2\lambda^{2}(r),\\
\Psi^{3}(r) &= \Psi^{2}(r)\lambda^{1}(r) - \Psi^{1}(r)\lambda^{2}(r)
+ 3\lambda^{3}(r),\\
\Psi^{4}(r) &= \Psi^{3}(3)\lambda^{1}(r) - \Psi^{2}(r)\lambda^{2}(r)
+ \Psi^{1}(r)\lambda^{3}(r) - 4\lambda^{4}(r).
\\
& \vdots \end{align*} By rearranging and making substitutions we get
the following
\begin{align*}\Psi^{1}(r) &= \lambda^{1}(r) = r,
\\\Psi^{2}(r) &= r^{2} - 2\lambda^{2}(r), \\\Psi^{3}(r) &= r^{3} -
3r\lambda^{2}(r) + 3\lambda^{3}(r),\\
\Psi^{4}(r) &= r^{4} - 4r^{2}\lambda^{2}(r) + 4r\lambda^{3}(r) -
4\lambda^{4}(r) + 2(\lambda^{2}(r))^{2},\\
&\vdots\end{align*} It is known that in general
\[\Psi^{i}(r) = det\left(
                           \begin{array}{cccccc}
                             r & 1 & 0 & 0 & \ldots & 0 \\
                             2\lambda^{2}(r) & r & 1 & 0 &\ldots & 0 \\
                             3\lambda^{3}(r) & \lambda^{2}(r) & r & 1 & 0 & \vdots \\
                             \vdots & \vdots & \ddots & \ddots & \ddots & 0\\
                              \vdots & \vdots &  & \lambda^{2}(r) & r  & 1\\
                             i\lambda^{i}(r) & \lambda^{i-1}(r) & \ldots & \ldots & \lambda^{2}(r) & r \\
                           \end{array}
                         \right).
\]
\end{example}

\begin{theorem}
  If $R$ is a $\lambda$-ring then the Adams operations give us a
  special $\Psi$-ring structure on $R$, which we denote by $R_{\Psi}$.
\end{theorem}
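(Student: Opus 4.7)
The plan is to verify the four required properties (the identity $\Psi^1 = \mathrm{id}$, additivity, multiplicativity, composition $\Psi^i\Psi^j = \Psi^{ij}$) plus the special congruence $\Psi^p(r)\equiv r^p\pmod{pR}$ by exploiting the \emph{splitting principle} for $\lambda$-rings: every polynomial identity among $\lambda$-operations that becomes true when we formally write $r = \xi_1+\cdots+\xi_n$ as a sum of one-dimensional elements (so that $\lambda_t(r) = \prod_i(1+\xi_i t)$ and hence $\lambda^k(r) = e_k(\xi_1,\ldots,\xi_n)$, the $k$-th elementary symmetric function) is forced to hold in every $\lambda$-ring, since both sides are the same universal polynomial in the $\lambda^i$. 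The key observation is that under this formal substitution, the Newton formula defining $\Psi^i$ is exactly the classical Newton identity expressing the power sum $p_i(\xi) = \sum \xi_k^i$ in terms of the $e_k(\xi)$; so symbolically $\Psi^i(r) = \sum_k \xi_k^i$.

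First I would note $\Psi^1(r) = \lambda^1(r) = r$ directly from the Newton formula for $i=1$. Then for additivity, I would take $r\mapsto(\xi_k)$, $s\mapsto(\eta_l)$; then $r+s$ formally splits as the concatenation and $\Psi^i(r+s) = \sum_k \xi_k^i + \sum_l \eta_l^i = \Psi^i(r)+\Psi^i(s)$. For multiplicativity, $rs$ formally splits as $(\xi_k\eta_l)_{k,l}$ because axiom \ref{deflam}.3 of $\lambda$-rings says $\lambda^i(rs) = P_i(\lambda^*(r),\lambda^*(s))$ where $P_i$ is built precisely so that $e_i$ of the $\xi_k\eta_l$ is a polynomial in the $e$'s of $\xi$ and $\eta$; hence $\Psi^i(rs) = \sum_{k,l}(\xi_k\eta_l)^i = \bigl(\sum_k \xi_k^i\bigr)\bigl(\sum_l \eta_l^i\bigr) = \Psi^i(r)\Psi^i(s)$. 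For composition, axiom \ref{deflam}.4 together with the definition of $P_{i,j}$ tells us that $\lambda^j(r)$ symbolically splits as the $\binom{n}{j}$ products $\xi_{k_1}\cdots\xi_{k_j}$; but since $\Psi^i$ is a power sum we only need the simpler identity $\Psi^i(\Psi^j(r)) = \sum_k (\xi_k^j)^i = \sum_k \xi_k^{ij} = \Psi^{ij}(r)$, which again is a universal polynomial identity in the $\lambda^*(r)$.

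For the special congruence, I would argue symbolically using the freshman's dream: modulo $p$,
\[
\Psi^p(r) \;=\; \sum_k \xi_k^p \;\equiv\; \Bigl(\sum_k \xi_k\Bigr)^{\!p} \;=\; r^p \pmod{p},
\]
using that the multinomial coefficient $\binom{p}{a_1,\ldots,a_n}$ is divisible by $p$ unless some $a_i=p$. Since both sides of $\Psi^p(r)-r^p \in pR$ are once again universal $\mathbb{Z}$-polynomial expressions in the $\lambda^i(r)$, the congruence descends from its symbolic verification to any $\lambda$-ring.

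The main obstacle is the rigorous justification of the splitting principle itself. I would address this in one of two standard ways: either (i) invoke the universal $\lambda$-ring on one generator (or its enlargements where every element splits), observe that all four identities are required to hold there as polynomial relations in the $\lambda^i$, and then pull them back to an arbitrary $\lambda$-ring by the universal property; or (ii) work entirely inside the ring of symmetric functions $\Lambda$, prove the identities there (where they are classical facts about power sums and elementary symmetric polynomials), and use the fact that axioms \ref{deflam}.3--\ref{deflam}.4 are precisely what is needed to transport symmetric-function identities into $R$. Either route avoids any genuine computation; the only place where care is really needed is lining up the Newton formula with the classical Newton identities so that $\Psi^i$ is unambiguously the image of the power sum $p_i$ under the symbolic substitution.
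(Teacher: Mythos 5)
Your proposal is correct, but there is nothing in the thesis to compare it against line by line: this theorem sits in the introductory section of Chapter 4, which the author explicitly declares to be known background (citing \cite{AA} and \cite{Knut}), and no proof is given in the paper itself. Your splitting-principle argument is precisely the standard proof in those references: identify the free $\lambda$-ring on one generator with symmetric functions (indeed the thesis later records $\mathbb{Z}_{\lambda}[x]=\mathbb{Z}[x_{1},x_{2},\ldots]$ with $\lambda^{i}(x_{1})=x_{i}$), note that the Newton formula makes $\Psi^{i}$ the image of the power sum $p_{i}$, and check the ring-homomorphism, composition and congruence identities on formal sums of one-dimensional elements, using that axioms \ref{deflam}.3--\ref{deflam}.4 are exactly the statements that $e_{i}$ of the products $\xi_{k}\eta_{l}$ (resp.\ of the $j$-fold products $\xi_{k_{1}}\cdots\xi_{k_{j}}$) is the universal polynomial $P_{i}$ (resp.\ $P_{i,j}$) in the elementary symmetric functions. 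The one place to be slightly more explicit than your sketch is the special congruence: the freshman's-dream computation shows $\sum_{k}\xi_{k}^{p}-\bigl(\sum_{k}\xi_{k}\bigr)^{p}$ has all coefficients divisible by $p$, and since dividing this symmetric polynomial by $p$ again yields an integral symmetric polynomial, the difference equals $p\,Q(e_{1},\ldots,e_{p})$ for an integral polynomial $Q$; it is this integrality statement, not merely a congruence in $\mathbb{Z}[\xi]$, that lets you conclude $\Psi^{p}(r)-r^{p}\in pR$ in an arbitrary $\lambda$-ring. With that observation spelled out, your argument is complete and is the expected one.
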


We will require the following useful theorem from \cite{Knut} (p.49).

\begin{theorem} \label{Psilamring}
  Let $R$ be a torsion-free pre-$\lambda$-ring. Let $\Psi^{i}:R \rightarrow R$ be the corresponding Adams operations. If
  $R$ together with the $\Psi$-operations form a $\Psi$-ring, then $R$ is a $\lambda$-ring.
\end{theorem}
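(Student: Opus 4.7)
The plan is to reduce the three non-trivial $\lambda$-ring axioms (namely $\lambda_{t}(1)=1+t$, the product formula and the composition formula) to identities among the Adams operations, which are then automatic from the $\Psi$-ring hypothesis. The torsion-freeness is there precisely so that we may invert Newton's formula over $\mathbb{Q}$.

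First, since $R$ is torsion-free the canonical map $R\hookrightarrow R_{\mathbb{Q}}:=R\otimes\mathbb{Q}$ is injective, and both the pre-$\lambda$-ring and the $\Psi$-ring structures extend uniquely to $R_{\mathbb{Q}}$. It therefore suffices to verify each axiom in $R_{\mathbb{Q}}$. Over $\mathbb{Q}$, the defining Newton relation
\[
i\lambda^{i}(r)\;=\;\sum_{k=1}^{i}(-1)^{k-1}\lambda^{i-k}(r)\Psi^{k}(r)
\]
can be solved recursively to express $\lambda^{i}(r)$ as a polynomial with rational coefficients in $\Psi^{1}(r),\ldots,\Psi^{i}(r)$; this is just the classical change of variables between elementary symmetric functions and power sums in the universal symmetric function ring $\Lambda_{\mathbb{Q}}$.

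Second, each of the three axioms is a universal polynomial identity in $\lambda$-values with integer coefficients, so by the translation above it is equivalent to a universal polynomial identity in $\Psi$-values (with rational coefficients), which is then checked directly. For $\lambda_{t}(1)=1+t$, since every $\Psi^{i}$ is a ring homomorphism we have $\Psi^{i}(1)=1$ for all $i\in\mathbb{N}$; the universal identity expressing the $e_{k}$ in terms of the $p_{k}$ specialises, when every $p_{k}=1$, to $e_{1}=1$ and $e_{k}=0$ for $k\ge 2$, giving $\lambda^{i}(1)=0$ for $i\ge 2$. For the product axiom $\lambda^{i}(rs)=P_{i}(\lambda^{1}(r),\ldots,\lambda^{i}(s))$, the polynomial $P_{i}$ is characterised by the symmetric-function statement that, when the $\lambda$-arguments are the elementary symmetric functions of variables $\xi_{k}$ and $\eta_{l}$, the right-hand side is the $i$th elementary symmetric function of the $\xi_{k}\eta_{l}$; at the power-sum side this translates into $\Psi^{i}(rs)=\Psi^{i}(r)\Psi^{i}(s)$, which holds because $\Psi^{i}$ is a ring homomorphism. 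For the composition axiom $\lambda^{i}(\lambda^{j}(r))=P_{i,j}(\lambda^{1}(r),\ldots,\lambda^{ij}(r))$, the same translation reduces it to $\Psi^{i}(\Psi^{j}(r))=\Psi^{ij}(r)$, which is part of the $\Psi$-ring hypothesis.

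The main obstacle is phrasing the equivalence ``$\lambda$-identity $\Leftrightarrow$ $\Psi$-identity'' precisely rather than just intuitively; the cleanest way is to work in the free pre-$\lambda$-ring $U$ on one or two generators (one generator for the identity axiom, two for the product and composition axioms), tensor with $\mathbb{Q}$ so that the $\Psi^{i}$ of the generators are algebraically independent and both the $\lambda^{i}$ and the $\Psi^{i}$ become polynomial generators of $U_{\mathbb{Q}}$ related by Newton's formulas. In this universal setting one verifies each identity by a direct calculation with symmetric functions; it then specialises to $R_{\mathbb{Q}}$ under the unique pre-$\lambda$-ring homomorphism $U\to R$ sending the generators to the chosen elements, and finally descends to $R$ by the injection $R\hookrightarrow R_{\mathbb{Q}}$. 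By Theorem~\ref{Psilamring}'s conclusion, $R$ then satisfies all the axioms of Definition~\ref{deflam}.
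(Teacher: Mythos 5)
The paper does not actually prove this theorem: it is quoted from Knutson \cite{Knut} (p.~49), so your proposal has to be judged against the standard argument, and your first two paragraphs do reproduce that argument correctly. Since $R$ is torsion-free it embeds in $R\otimes\mathbb{Q}$, Newton's formula inverts there to give universal rational polynomials $Q_{i}$ with $\lambda^{i}(x)=Q_{i}(\Psi^{1}(x),\ldots,\Psi^{i}(x))$ for every $x\in R$, and then $\lambda_{t}(1)=1+t$ follows from $\Psi^{i}(1)=1$, while the product and composition axioms follow from $\Psi^{i}(rs)=\Psi^{i}(r)\Psi^{i}(s)$ and $\Psi^{i}\Psi^{j}=\Psi^{ij}$ combined with the symmetric-function identities characterising $P_{i}$ and $P_{i,j}$. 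Two small imprecisions: you do not need (and should not bother) to extend the $\lambda$-operations to $R\otimes\mathbb{Q}$, only the $\Psi^{i}$, which extend because they are additive; and the composition axiom needs the ring-homomorphism property of $\Psi^{k}$ as well as $\Psi^{i}\Psi^{j}=\Psi^{ij}$, since one must push $\Psi^{k}$ through the rational polynomial $Q_{j}$ to get $\Psi^{k}(\lambda^{j}(r))=Q_{j}(\Psi^{k}(r),\Psi^{2k}(r),\ldots,\Psi^{jk}(r))$.

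The genuine gap is in your final paragraph, which is meant to make the translation rigorous. Verifying the identities in the free pre-$\lambda$-ring $U$ on one or two generators and then specialising along the pre-$\lambda$-homomorphism $U\to R$ cannot work, because the identities you want are \emph{false} in $U$: the pre-$\lambda$ axioms say nothing about $\lambda$ of products, of composites, or of $1$. Concretely, $\mathbb{Z}$ with $\lambda_{t}(r)=(1+t+t^{2})^{r}$ is a pre-$\lambda$-ring in which $\lambda^{2}(1)=1$ while $P_{2}(1,1;1,1)=0$; since $U$ admits a pre-$\lambda$-map onto any such example, the unit, product and composition identities already fail in $U$ and in $U_{\mathbb{Q}}$, and the claim that the $\lambda^{i}$ (equivalently $\Psi^{i}$) of the generators are polynomial generators of $U_{\mathbb{Q}}$ is unjustified (elements such as $\lambda^{2}(x^{2})$ and $\lambda^{2}(\lambda^{2}(x))$ are not controlled by the pre-$\lambda$ axioms). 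Moreover a specialisation $U\to R$ cannot import the $\Psi$-ring hypothesis, which $U$ does not satisfy. The correct universal device is a plain polynomial ring $\mathbb{Q}[e_{1},e_{2},\ldots;e'_{1},e'_{2},\ldots]$ (symmetric functions in two sets of variables), in which one proves the purely polynomial identities $Q_{i}(p_{1}p'_{1},\ldots,p_{i}p'_{i})=P_{i}(e_{\bullet};e'_{\bullet})$ and $Q_{i}\bigl(Q_{j}(p_{1},\ldots,p_{j}),\ldots,Q_{j}(p_{i},p_{2i},\ldots,p_{ji})\bigr)=P_{i,j}(e_{1},\ldots,e_{ij})$; one then evaluates by the ordinary ring homomorphism $e_{k}\mapsto\lambda^{k}(r)$, $e'_{k}\mapsto\lambda^{k}(s)$ into $R\otimes\mathbb{Q}$, and uses the $\Psi$-ring axioms on the $R$-side, via the Newton inversion above, to identify the evaluated expressions with $\lambda^{i}(rs)$ and $\lambda^{i}(\lambda^{j}(r))$. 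With that replacement your argument is complete and coincides with the proof the paper cites.
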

The proof of this theorem can also be found in \cite{Knut}.

\begin{example}
Consider the simplest example of a $\lambda$-ring, $\mathbb{Z}$,
together with binomial coefficients $\lambda^{i}(r) =
\binom{r}{i}$. The Adams operations give us $\Psi^{i}(r) = r$ for all $r \in \mathbb{Z}$ and $i
\in \mathbb{N}$, which we have already seen gives us a $\Psi$-ring
structure on $\mathbb{Z}$.
\end{example}

\section{$\lambda$-modules}
For usual rings, we have modules which provide us with the
coefficients for the cohomology. We now define the $\lambda$-modules
for $\lambda$-rings which provide us with the coefficients
for the $\lambda$-ring cohomology.

\begin{definition}
$M$ is a \textit{$\lambda$-module} over the $\lambda$-ring $R$ if $M$ is an $R$-module together
with a sequence of abelian group homomorphisms $\Lambda^{i}: M \rightarrow M$, for $i \in \mathbb{N}$, satisfying
\begin{enumerate}
\item $\Lambda^{1}(m) = m$,
\item $\Lambda^{i}(rm) = \Psi^{i}(r) \Lambda^{i}(m)$,
\item $\Lambda^{ij}(m) = (-1)^{(i+1)(j+1)}\Lambda^{i}\Lambda^{j}(m)$,
\end{enumerate}
for all $m \in M, r \in R$ and $i,j \in \mathbb{N}$.

Let $M,N$ be two $\lambda$-modules over $R$. A map of $\lambda$-modules is a module homomorphism $f:M \rightarrow N$ such that $\Lambda^{i} f(m) =f \Lambda^{i}(m)$ for all $m \in M$ and $i \in \mathbb{N}_{0}$. We let $R \mathfrak{-mod}_{\lambda}$ denote the category of all $\lambda$-modules over $R$.
\end{definition}

The main motivation for our definition of a $\lambda$-module is as follows. First we let $R$ and $X$ be two $\lambda$-rings and $\beta: X \rightarrow R$ be a map of $\lambda$-rings. Assume $M= \Ker \beta$ is a square-zero ideal. Since $\lambda^{i}(0) = 0$, for $i>0$, there are maps $\Lambda^{i}: M \rightarrow M$, for $i>0$, which make the following diagram commutes:
\[\xymatrix{0 \ar[r] & M \ar^{\Lambda^{i}}[d] \ar^{\alpha}[r] & X \ar^{\beta}[r] \ar^{\lambda^{i}}[d] & R \ar[r]  \ar^{\lambda^{i}}[d] & 0 \\ 0 \ar[r] & M \ar^{\alpha}[r] & X \ar^{\beta}[r] & R \ar[r] & 0. }\]
The properties of the $\Lambda$-operations follow from the properties of the $\lambda$-operations. For example,
\begin{align*}\alpha \lambda^{i}(rm) = & \lambda^{i}\alpha(rm) \\ = & \lambda^{i}(x \alpha(m)), \end{align*}
for some $x \in X$ with $\beta(x)=r$. Therefore,
\[\alpha \lambda^{i}(rm) = P_{i}(\lambda^{1}(x),\ldots,\lambda^{i}(x),\lambda^{1}(\alpha(m)),\ldots\lambda^{i}(\alpha(m))). \]
However $\alpha(m)\alpha(n)=0$ for all $m,n \in M$ so most of the
terms vanish leaving
\[\alpha \lambda^{i}(rm) = \alpha\Psi^{i}(r)\Lambda^{i}(m).\]

For the rest of this chapter, we let $R$ denote a $\lambda$-ring and $M \in R-\mathfrak{mod}_{\lambda}$. We let $\underline{R}$ denote the underlying commutative ring of $R$, and $\underline{M}$ denote the underlying $\underline{R}$-module of $M$.

\begin{example}
In general, $R$ is not a $\lambda$-module over
itself unless the multiplication in $R$ is trivial. However we can consider the sequence of operations
$\Lambda^{i}:R \rightarrow R$ given by
$\Lambda^{i}(r)=(-1)^{(i+1)}\Psi^{i}(r)$. With these
$\Lambda$-operations $R$ is a $\lambda$-module over $R$.
\end{example}

\begin{theorem}
The Adams operation $\psi^{n}:M \rightarrow M$ given by \[\psi^{n}(m) =
(-1)^{(n+1)}n\Lambda^{n}(m),\] give us a special $\Psi$-module structure on $M$ over $R_{\Psi}$, which we denote by $M_{\Psi}$.
\end{theorem}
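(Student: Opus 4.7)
The plan is to verify directly the four conditions required: the three $\Psi$-module axioms ($\psi^{1}=\mathrm{id}$, $\psi^{i}(rm)=\Psi^{i}(r)\psi^{i}(m)$, $\psi^{i}\psi^{j}=\psi^{ij}$) together with the special condition $\psi^{p}(m)\in pM$ for primes $p$. Since $\Lambda^{n}$ is additive, $\psi^{n}=(-1)^{n+1}n\Lambda^{n}$ is automatically an abelian group homomorphism, so only the listed compatibility axioms remain. The fact that $R_{\Psi}$ is itself a special $\Psi$-ring is the content of the earlier theorem and may be invoked freely.

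First I would check axiom 1: $\psi^{1}(m)=(-1)^{2}\cdot 1\cdot\Lambda^{1}(m)=m$ by the first $\lambda$-module axiom. For axiom 2, I pull the scalar through $\Lambda^{i}$ using $\Lambda^{i}(rm)=\Psi^{i}(r)\Lambda^{i}(m)$, giving
\[
\psi^{i}(rm)=(-1)^{i+1}i\Lambda^{i}(rm)=\Psi^{i}(r)\cdot(-1)^{i+1}i\Lambda^{i}(m)=\Psi^{i}(r)\psi^{i}(m),
\]
which is exactly the required compatibility with $R_{\Psi}$.

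The main (and only slightly delicate) step is axiom 3, which amounts to a sign computation. I would compute
\[
\psi^{i}\psi^{j}(m)=(-1)^{i+1}i\,\Lambda^{i}\bigl((-1)^{j+1}j\Lambda^{j}(m)\bigr)=(-1)^{i+j}ij\,\Lambda^{i}\Lambda^{j}(m),
\]
and then apply the third $\lambda$-module axiom in the form $\Lambda^{i}\Lambda^{j}(m)=(-1)^{(i+1)(j+1)}\Lambda^{ij}(m)$. Since $(i+1)(j+1)=ij+i+j+1$, the combined sign is $(-1)^{i+j+ij+i+j+1}=(-1)^{ij+1}$, so
\[
\psi^{i}\psi^{j}(m)=(-1)^{ij+1}ij\,\Lambda^{ij}(m)=\psi^{ij}(m),
\]
as required. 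This sign bookkeeping is the only point where the peculiar $(-1)^{(i+1)(j+1)}$ in the definition of a $\lambda$-module justifies itself; I expect this to be the main (though still elementary) step.

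Finally, the special condition is immediate: $\psi^{p}(m)=(-1)^{p+1}p\Lambda^{p}(m)$ lies in $pM$ for every prime $p$, so $\psi^{p}(m)\equiv 0\pmod{pM}$. Combined with the already established fact that $R_{\Psi}$ is a special $\Psi$-ring, this shows that $M_{\Psi}$ is a special $\Psi$-module over $R_{\Psi}$, completing the proof.
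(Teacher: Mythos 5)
Your proposal is correct and follows essentially the same route as the paper: a direct verification of the $\Psi$-module axioms, with the key point being exactly the sign computation $(-1)^{i+j}\cdot(-1)^{(i+1)(j+1)}=(-1)^{ij+1}$ for $\psi^{i}\psi^{j}=\psi^{ij}$. If anything, you are slightly more complete, since you also record the (immediate) special condition $\psi^{p}(m)\in pM$, which the paper's proof leaves implicit.
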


\begin{proof}
\begin{enumerate}
  \item $\psi^{1}(m) = \Lambda^{1}(m) = m$,
  \item $\psi^{i}(m_{1} + m_{2}) = (-1)^{i+1}i\Lambda^{i}(m_{1} + m_{2}) = (-1)^{i+1}i\Lambda^{i}(m_{1}) + (-1)^{i+1}i\Lambda^{i}(m_{2}) \\=\psi^{i}(m_{1}) +
  \psi^{i}(m_{2})$,
  \item $\psi^{i}(rm) = (-1)^{i+1}i\Lambda^{i}(rm) = (-1)^{i+1}i\Psi^{i}(r)\Lambda^{i}(m) = \Psi^{i}(r)\psi^{i}(m)$,
  \item $\psi^{i}(\psi^{j}(m)) = \psi^{i}((-1)^{(j+1)}j \Lambda^{j}(m)) = (-1)^{(i+ j)}ij \Lambda^{i}(\Lambda^{j}(m)) \\= (-1)^{(ij+1)}ij\Lambda^{ij}(m) = \psi^{(ij)}(m).$
          \end{enumerate}
\end{proof}

We will require the following useful lemma.

\begin{lemma}\label{chilemma}
  \[\sum_{i=1}^{\nu -1}[(-1)^{i+1}\chi_{i}(r,m)\Psi^{\nu-i}(r) + (-1)^{\nu+1}i\lambda^{i}(r)\Lambda^{\nu-i}(m)] = 0,\]
  for all $r \in R, m \in M$ and $\nu \geq 2$, where $\chi_{i}(r,m) = \sum_{j=1}^{i}\Lambda^{j}(m)\lambda^{i-j}(r)$.
\end{lemma}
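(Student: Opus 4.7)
The plan is to reduce the identity to the Newton formula defining the Adams operations by swapping the order of summation in the $\chi_i$ sum and then invoking the formula. The key observation is that the combined sum is bilinear in $r$-data and $m$-data, so we should collect coefficients of $\Lambda^j(m)$.

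First I will expand $\chi_i(r,m) = \sum_{j=1}^{i}\Lambda^j(m)\lambda^{i-j}(r)$ and interchange the order of summation in the first half of the expression, writing
\[
\sum_{i=1}^{\nu-1}(-1)^{i+1}\chi_i(r,m)\Psi^{\nu-i}(r) = \sum_{j=1}^{\nu-1}(-1)^{j+1}\Lambda^j(m)\sum_{k=0}^{\nu-j-1}(-1)^{k}\lambda^{k}(r)\Psi^{\nu-j-k}(r),
\]
where I have set $k=i-j$ and used $(-1)^{i+1}=(-1)^{j+1}(-1)^k$. Next I recognise the inner sum as the left-hand side of the Newton formula for $\Psi^{\nu-j}(r)$ with the final term $(-1)^{\nu-j}(\nu-j)\lambda^{\nu-j}(r)$ missing. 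Hence, using $\lambda^{0}(r)=1$ and the Newton formula stated in the paper,
\[
\sum_{k=0}^{\nu-j-1}(-1)^{k}\lambda^{k}(r)\Psi^{\nu-j-k}(r) = (-1)^{\nu-j+1}(\nu-j)\lambda^{\nu-j}(r).
\]

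Substituting this back and simplifying the signs ($(-1)^{j+1}(-1)^{\nu-j+1}=(-1)^{\nu}$) yields
\[
\sum_{i=1}^{\nu-1}(-1)^{i+1}\chi_i(r,m)\Psi^{\nu-i}(r) = (-1)^{\nu}\sum_{j=1}^{\nu-1}(\nu-j)\lambda^{\nu-j}(r)\Lambda^{j}(m).
\]
A simple re-indexing $j=\nu-i$ in the second half of the original expression gives
\[
(-1)^{\nu+1}\sum_{i=1}^{\nu-1}i\lambda^{i}(r)\Lambda^{\nu-i}(m) = (-1)^{\nu+1}\sum_{j=1}^{\nu-1}(\nu-j)\lambda^{\nu-j}(r)\Lambda^{j}(m),
\]
which is the negative of the previous expression; adding them produces $0$.

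The only step requiring any care is the bookkeeping of signs and the re-indexing, and the sole non-formal ingredient is the Newton formula, which is built into the definition of $\Psi^{i}$. I do not anticipate a genuine obstacle; the identity is essentially a compatibility between the $\Lambda$-operations on $M$ and the $\lambda$-operations on $R$ packaged through the Newton relation.
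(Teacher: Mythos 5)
Your proof is correct, but it takes a genuinely different route from the paper. The paper proves the lemma by induction on $\nu$: it checks $\nu=2$ and $\nu=3$ by hand, then uses the Newton formula to expand $\Psi^{\nu-i}(r)$ and regroups the resulting terms into instances of the inductive hypothesis, which requires a fairly delicate rearrangement of indices and signs. You instead collect the coefficient of each $\Lambda^{j}(m)$ by interchanging the order of summation, apply the Newton relation
\[
\sum_{k=0}^{\nu-j-1}(-1)^{k}\lambda^{k}(r)\Psi^{\nu-j-k}(r) = (-1)^{\nu-j+1}(\nu-j)\lambda^{\nu-j}(r)
\]
once for each $j$, and observe that the result exactly cancels the re-indexed second half of the sum; all the steps check out (the sign bookkeeping $(-1)^{j+1}(-1)^{\nu-j+1}=(-1)^{\nu}$ and the re-indexing $j=\nu-i$ are correct, and multiplying the Newton identity in $R$ by $\Lambda^{j}(m)\in M$ is legitimate since the module action is bilinear). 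What your argument buys is brevity and transparency: it exposes the lemma as nothing more than the Newton formula summed against the $\Lambda^{j}(m)$, with no induction and no base cases. The paper's inductive proof is longer and its intermediate regroupings are harder to follow, though it is stylistically consistent with the adjacent proofs in the chapter, which all proceed by induction on $\nu$.
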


\begin{proof}
  We are going to use proof by induction on $\nu$. Consider the case when $\nu=2$.
\begin{align*}  LHS =& (-1)^{2}\chi_{1}(r,m)\Psi^{1}(r) +
(-1)^{3}\lambda^{1}(r)\Lambda^{1}(m) \\=& mr - rm \\=& 0.\end{align*}
We are also going to consider the case when $\nu=3$.

\begin{align*}  LHS =& \chi_{1}(r,m)\Psi^{2}(r) + \lambda^{1}(r)\Lambda^{2}(m)
- \chi_{2}(r,m)\Psi^{1}(r) + 2\lambda^{2}(r)\Lambda^{1}(m) \\=&
m[r^{2}-2\lambda^{2}(r)] + r\Lambda^{2}(m) - [mr + \Lambda^{2}(m)]r
+ 2\lambda^{2}(r)m \\=& 0. \end{align*}

Now assume that
\[\sum_{i=1}^{\nu-k-1}[(-1)^{i+1}\chi_{i}(r,m)\Psi^{\nu-k-i}(r) +
(-1)^{\nu-k+1}i\lambda^{i}(r)\Lambda^{\nu-k-i}(m)] = 0,\] for $1 \leq k
\leq \nu-2$.

It follows that
\begin{align*}
&\sum_{i=1}^{\nu-1}[(-1)^{i+1}\chi_{i}(r,m)\Psi^{\nu-i}(r) +
(-1)^{\nu+1}i\lambda^{i}(r)\Lambda^{\nu-i}(m)] \\=&
\sum_{i=1}^{\nu-1}(-1)^{\nu}(\nu-i)\lambda^{\nu-i}(r)\chi_{i}(r,m) \\&+
\sum_{i=1}^{\nu-2}(-1)^{i+1}\chi_{i}(r,m)[\sum_{j=1}^{\nu-i-1}(-1)^{j+1}\lambda^{j}(r)\Psi^{\nu-i-j}(r)]
+ \sum_{i=1}^{\nu-1}(-1)^{\nu+1}i\lambda^{i}(r)\Lambda^{\nu-i}(m) \\=&
\sum_{i=1}^{\nu-2}(-1)^{\nu}i\lambda^{i}(r)[\sum_{j=1}^{\nu-i-1}\Lambda^{j}(m)\lambda^{\nu-i-j}(r)]
+
\sum_{i=1}^{\nu-2}\chi_{i}(r,m)[\sum_{j=1}^{\nu-i-1}(-1)^{j+i}\lambda^{j}(r)\Psi^{\nu-i-j}(r)]
\\=&
\sum_{k=1}^{\nu-2}\lambda^{k}(r)[\sum_{i=1}^{\nu-k-1}(-1)^{\nu}i\lambda^{i}(r)\lambda^{\nu-k-i}(r)]
+
\sum_{k=1}^{\nu-2}\lambda^{k}(r)[\sum_{i=1}^{\nu-k-1}(-1)^{i+k}\chi_{i}(r,m)\Psi^{\nu-k-i}(r)]
\\=&\sum_{k=1}^{\nu-2}(-1)^{k+1}\lambda^{k}(r)[\sum_{i=1}^{\nu-k-1}[(-1)^{i+1}\chi_{i}(r,m)\Psi^{\nu-k-i}(r)
+ (-1)^{\nu-k+1}i\lambda^{i}(r)\Lambda^{\nu-k-i}(m)] \\=& 0.\end{align*}
as required.
\end{proof}

\begin{lemma}\label{semidirectlma}
The set $R \rtimes M$ with \begin{align*} (r,m) + (s,n) &= (r+s, m+n),
\\ (r,m)(s,n) &= (rs, rn + ms),
\end{align*} together with maps $\lambda^{i}: R \rtimes M
\rightarrow R \rtimes M$ for $i \in \mathbb{N}_{0}$ given by
\[ \lambda^{i} (r,m) = (\lambda^{i}(r), f_{i}(r,m)),\] for a sequence
of maps $f_{i}: R \rtimes M \rightarrow M$, for $i \in \mathbb{N}_{0}$, is a pre-$\lambda$-ring if and only if
\begin{enumerate}
\item $f_{0}(r,m) = 0$,
\item $f_{1}(r,m) = m$,
\item $f_{i}( (r,m)+(s,n) ) = \sum_{j=0}^{i}(f_{j}(r,m)\lambda^{i-j}(s) + \lambda^{j}(r)f_{i-j}(s,n)).$
\end{enumerate}
\end{lemma}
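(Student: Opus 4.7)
The plan is to verify directly, component by component, that each of the three pre-$\lambda$-ring axioms from Definition \ref{defprelam} for $R \rtimes M$ translates into exactly one of the three conditions on the family $\{f_{i}\}$. The fact that $R \rtimes M$ (with the given addition and multiplication) is already a commutative ring with identity $(1,0)$ is standard and may be quoted, so only the $\lambda$-axioms need attention.

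First I would handle the two low-degree axioms. Requiring $\lambda^{0}(r,m) = 1_{R \rtimes M} = (1,0)$ forces $(\lambda^{0}(r), f_{0}(r,m)) = (1,0)$; since $\lambda^{0}(r) = 1$ already in $R$, this reduces precisely to condition (1), $f_{0}(r,m) = 0$. Similarly, $\lambda^{1}(r,m) = (r,m)$ reduces at once to $f_{1}(r,m) = m$, which is condition (2).

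The main calculation is the additive axiom $\lambda^{i}((r,m)+(s,n)) = \sum_{k=0}^{i} \lambda^{k}(r,m)\lambda^{i-k}(s,n)$. I would expand each side in coordinates. The left-hand side is
\[
\lambda^{i}(r+s,\, m+n) \;=\; \bigl(\lambda^{i}(r+s),\, f_{i}(r+s, m+n)\bigr),
\]
and, using the multiplication in $R \rtimes M$, the right-hand side is
\[
\sum_{k=0}^{i} \bigl(\lambda^{k}(r),\, f_{k}(r,m)\bigr)\bigl(\lambda^{i-k}(s),\, f_{i-k}(s,n)\bigr) = \Bigl(\sum_{k=0}^{i}\lambda^{k}(r)\lambda^{i-k}(s),\; \sum_{k=0}^{i}\bigl(\lambda^{k}(r)f_{i-k}(s,n) + f_{k}(r,m)\lambda^{i-k}(s)\bigr)\Bigr).
\]
The first coordinates agree automatically because $R$ is already a pre-$\lambda$-ring, so the equality of the second coordinates is equivalent to condition (3). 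Combining the three equivalences gives the iff statement.

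There is no real obstacle; the only mild care needed is to confirm that condition (3) really does characterise the additive axiom without any hidden dependence on multiplicativity in $R \rtimes M$, and to keep track of the fact that $f_{i}$ depends on both $r$ and $m$ (rather than being additive in $r$ alone) because $\lambda^{i}$ in $R$ is itself not additive. This is automatic from the computation above, so the argument is essentially a direct bookkeeping verification.
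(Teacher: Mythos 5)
Your proposal is correct and follows essentially the same route as the paper's proof: quote that $\underline{R}\rtimes\underline{M}$ is a commutative ring with identity, then check coordinate-wise that the axioms $\lambda^{0}(r,m)=(1,0)$, $\lambda^{1}(r,m)=(r,m)$ and $\lambda^{i}((r,m)+(s,n))=\sum_{k=0}^{i}\lambda^{k}(r,m)\lambda^{i-k}(s,n)$ are equivalent to conditions (1), (2) and (3) respectively. Nothing is missing.
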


\begin{proof}[Proof of lemma]
$\underline{R}$ is a commutative ring with identity, and $\underline{M}$ is an $\underline{R}$-module.
Then we know that $\underline{R} \rtimes \underline{M}$ is a commutative ring with identity.
So we only have to check the properties of $\lambda^{i}: R \rtimes M
\rightarrow R \rtimes M$.
\begin{enumerate}
    \item $\lambda^{0}(r,m) = (\lambda^{0}(r),f_{0}(r,m))$. \\Hence $\lambda^{0}(r,m)= (1,0)$ if and only if
    $f_{0}(r,m) = 0$,

    \item $\lambda^{1}(r,m) = (\lambda^{1}(r),f_{1}(r,m))$. \\Hence $\lambda^{1}(r,m)= (r,m)$ if and only if
    $f_{1}(r,m) = m$,

    \item $\lambda^{i}((r,m)+(s,n)) = \lambda^{i}(r+s,m+n) = (\lambda^{i}(r+s),f_{i}(r+s,m+n))
    \\ \sum_{j=0}^{i}\lambda^{j}(r,m)\lambda^{i-j}(s,n) = \sum_{j=0}^{i}(\lambda^{j}(r),f_{j}(r,m))(\lambda^{i-j}(s),f_{i-j}(s,n))
    \\ = \sum_{j=0}^{i}(\lambda^{j}(r)\lambda^{i-j}(s) ,f_{j}(r,m)\lambda^{i-j}(s) +
    \lambda^{j}(r)f_{i-j}(s,n))$ \\Hence \\$\lambda^{i}((r,m)+(s,n)) = \sum_{j=0}^{i}\lambda^{j}(r,m)\lambda^{i-j}(s,n)$
     if and only if \\$f_{i}( (r,m)+(s,n) ) = \sum_{j=0}^{i}(f_{j}(r,m)\lambda^{i-j}(s) + \lambda^{j}(r)f_{i-j}(s,n)).$
\end{enumerate}
\end{proof}

\begin{lemma}
The set $R \rtimes M$ together with maps $\lambda^{i}: R \rtimes M
\rightarrow R \rtimes M$, for $i \in \mathbb{N}_{0}$, given by
\[ \lambda^{i} (r,m) = \left\{
                         \begin{array}{ll}
                           (1,0) & \textrm{for }
i =0, \\
                           (\lambda^{i}(r),\sum_{j=1}^{i} \Lambda^{j}(m)\lambda^{i-j}(r)) & \textrm{for }
i \in \mathbb{N},
                         \end{array}
                       \right.
 \] gives us a $\lambda$-ring.
\end{lemma}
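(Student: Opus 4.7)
The plan is to apply Theorem \ref{Psilamring}: show that the given data makes $R \rtimes M$ into a (torsion-free) pre-$\lambda$-ring whose associated Adams operations satisfy the $\Psi$-ring axioms, and deduce from this that it is in fact a $\lambda$-ring. The key observation is that $f_i(r,m) := \sum_{j=1}^{i}\Lambda^{j}(m)\lambda^{i-j}(r)$ is exactly the expression $\chi_i(r,m)$ appearing in Lemma \ref{chilemma}, so that lemma is surely there to handle the one genuinely delicate step below.

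First I would apply Lemma \ref{semidirectlma} with $f_0\equiv 0$ and $f_i=\chi_i$ for $i\geq 1$. Conditions (1) and (2) are immediate from the definition and from $\Lambda^{1}(m)\lambda^{0}(r)=m$. Condition (3) is a routine reindexing: expanding $f_i((r,m)+(s,n))$ using additivity of each $\Lambda^{j}$ and the pre-$\lambda$-ring identity $\lambda^{i-j}(r+s)=\sum_{k}\lambda^{k}(r)\lambda^{i-j-k}(s)$, then splitting the resulting double sum by the substitutions $(k,j)\mapsto(k,j-k)$ and the analogous substitution on the other side, matches the right-hand side term by term. This gives the pre-$\lambda$-ring structure.

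Next I would compute the Adams operations on $R\rtimes M$. The claim, to be proved by induction on $\nu$, is that
\[
\Psi^{\nu}(r,m) \;=\; \bigl(\Psi^{\nu}(r),\;\psi^{\nu}(m)\bigr),
\]
where $\psi^{\nu}(m)=(-1)^{\nu+1}\nu\Lambda^{\nu}(m)$ is the Adams operation on $M_{\Psi}$. The cases $\nu=1,2$ are direct. For the inductive step, I substitute $\lambda^{k}(r,m)=(\lambda^{k}(r),\chi_k(r,m))$ and $\Psi^{\nu-k}(r,m)=(\Psi^{\nu-k}(r),\psi^{\nu-k}(m))$ into the Newton recurrence
\[
\Psi^{\nu}(r,m) \;=\; \sum_{k=1}^{\nu-1}(-1)^{k+1}\lambda^{k}(r,m)\Psi^{\nu-k}(r,m) + (-1)^{\nu+1}\nu\,\lambda^{\nu}(r,m).
\]
The first coordinate collapses to $\Psi^{\nu}(r)$ by Newton's formula in $R$. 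In the second coordinate, the term $(-1)^{\nu+1}\nu\Lambda^{\nu}(m)$ (coming from the last summand of $\chi_\nu$) is precisely $\psi^{\nu}(m)$; the remaining contributions are
\[
\sum_{k=1}^{\nu-1}(-1)^{k+1}\chi_{k}(r,m)\Psi^{\nu-k}(r) \;+\; \sum_{k=1}^{\nu-1}(-1)^{k+1}\lambda^{k}(r)\psi^{\nu-k}(m) \;+\; (-1)^{\nu+1}\nu\sum_{j=1}^{\nu-1}\Lambda^{j}(m)\lambda^{\nu-j}(r).
\]
Rewriting $\psi^{\nu-k}(m)=(-1)^{\nu-k+1}(\nu-k)\Lambda^{\nu-k}(m)$ and reindexing the last sum, the non-$\chi$ terms combine into $\sum_{i=1}^{\nu-1}(-1)^{\nu+1}i\lambda^{i}(r)\Lambda^{\nu-i}(m)$, so the total is the left-hand side of Lemma \ref{chilemma}, which vanishes. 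This establishes the inductive claim.

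Since $R_{\Psi}$ is a $\Psi$-ring and $M_{\Psi}$ is a $\Psi$-module over it, the formula $\Psi^{\nu}(r,m)=(\Psi^{\nu}(r),\psi^{\nu}(m))$ is exactly the semi-direct product $\Psi$-ring structure, so by Lemma \ref{lmmapsimd1} (with $\varepsilon^{i}\equiv 0$) the operations $\Psi^{\nu}$ satisfy the $\Psi$-ring axioms on $R\rtimes M$. Under the (implicit) torsion-freeness assumption on $R$ and $M$, Theorem \ref{Psilamring} then upgrades the pre-$\lambda$-ring structure on $R\rtimes M$ to a $\lambda$-ring structure. The main obstacle is the cancellation in the induction step above, and the strength of the lemma is precisely that Lemma \ref{chilemma} was engineered so that the definition $f_i=\chi_i$ makes the Newton computation close up cleanly.
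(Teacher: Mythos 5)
Your core computations match the paper's: the verification of the pre-$\lambda$-ring structure via Lemma \ref{semidirectlma} with $f_i=\chi_i$, and the inductive proof that $\Psi^{\nu}(r,m)=(\Psi^{\nu}(r),\psi^{\nu}(m))$ using Lemma \ref{chilemma}, are exactly the steps the paper performs, and the plan of concluding through Theorem \ref{Psilamring} is also the paper's. The gap is in your last sentence. Theorem \ref{Psilamring} applies only to \emph{torsion-free} pre-$\lambda$-rings, while the lemma being proved makes no torsion-freeness hypothesis on $R$ or $M$: $M$ is an arbitrary $\lambda$-module, so its underlying abelian group may have torsion, and then $R\rtimes M$ is not $\mathbb{Z}$-torsion-free and the criterion you invoke is simply unavailable. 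Declaring torsion-freeness an ``implicit assumption'' changes the statement; as written, your argument establishes the lemma only in the torsion-free case.

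The paper closes this gap by a reduction you are missing. It first runs your argument in the case where $R$ is a free $\lambda$-ring and $M$ is a free $\lambda$-module over $R$ (these are torsion-free, so Theorem \ref{Psilamring} legitimately applies), and notes that the two remaining $\lambda$-ring axioms for the semidirect product amount to explicit universal polynomial identities among the components of $\lambda^{i}(r,m)$. Since these identities are equations preserved under surjective maps compatible with the candidate operations, they transfer: choosing a free $\lambda$-module $P\twoheadrightarrow M$ gives a surjection $R\rtimes_{\lambda}P\twoheadrightarrow R\rtimes_{\lambda}M$, settling arbitrary $M$ over free $R$; and writing an arbitrary $R$ as a quotient of a free $\lambda$-ring $F$ gives $F\rtimes_{\lambda}M\twoheadrightarrow R\rtimes_{\lambda}M$, settling the general case. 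You need this two-step transfer (or some other treatment of the torsion case) for your proof to cover the lemma as stated.
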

We call this $\lambda$-ring the \textit{semi-direct product} of $R$ and $M$, denoted by $R \rtimes_{\lambda} M$.

\begin{proof}
We start by showing this is a pre-$\lambda$ ring by using lemma \ref{semidirectlma} with \[f_{i}(r,m) = \left\{
\begin{array}{cc}
0 & \textrm{for }i =0, \\
\sum_{j=1}^{i}\Lambda^{j}(m)\lambda^{i-j}(r) & \textrm{for } i \geq
1.
\end{array}
\right.\] Clearly properties 1 and 2 hold, so we only have to check
3. Let $i \geq 2$ then
\\\begin{align*}f_{i}((r,m)+(s,n)) =&
f_{i}(r+s,m+n) = \sum_{j=1}^{i}\Lambda^{j}(m+n)\lambda^{i-j}(r+s)
\\=&
\sum_{j=1}^{i}((\Lambda^{j}(m)+\Lambda^{j}(n))\sum_{k=0}^{i-j}\lambda^{k}(r)\lambda^{i-j-k}(s)
\\=&
\sum_{j=1}^{i}\sum_{k=0}^{i-j}(\Lambda^{i}(m)\lambda^{k}(r)\lambda^{i-j-k}(s)
+ \Lambda^{i}(n)\lambda^{k}(r)\lambda^{i-j-k}(s)) \\ =&
\sum_{j=1}^{i} \sum_{k=1}^{j}
\Lambda^{k}(m)\lambda^{j-k}(r)\lambda^{i-j}(s) +
\sum_{j=1}^{i-1}\sum_{k=1}^{i-j}\lambda^{j}(r)\Lambda^{k}(n)\lambda^{i-j-k}(s)
\\&+ \sum_{k=1}^{i}\Lambda^{k}(n)\lambda^{i-k}(s)\lambda^{0}(r) \\ =&
\sum_{j=1}^{i-1}
\sum_{k=1}^{j}\Lambda^{k}(m)\lambda^{j-k}(r)\lambda^{i-j}(s) +
\sum_{k=1}^{i}\Lambda^{k}(m)\lambda^{i-k}(r)\lambda^{0}(s)
\\&+\sum_{j=1}^{i-1}
\sum_{k=1}^{i-j}\lambda^{j}(r)\lambda^{k}(n)\lambda^{i-j-k}(s) +
\sum_{k=1}^{i}\Lambda^{k}(n)\lambda^{i-k}(s)\lambda^{0}(r)
\\ =& \sum_{j=1}^{i-1}(f_{j}(r,m)\lambda^{i-j}(s)
\lambda^{j}(r)f_{i-j}(s,n)) \\ & + f_{i}(r,m)\lambda^{0}(s) +
f_{i}(s,n)\lambda^{0}(r)  + \lambda^{i}(s)f_{0}(r,m) +
\lambda^{i}(r)f_{0}(s,n) \\=&
\sum_{j=0}^{i}(f_{j}(r,m)\lambda^{i-j}(s) +
\lambda^{j}(r)f_{i-j}(s,n)).\end{align*}

So we have proved that $R \rtimes_{\lambda} M$ is a
pre-$\lambda$-ring. Checking the last two axioms is reduced to
checking the following the following universal polynomial identities hold.
\begin{itemize}
    \item $P_{i}(\lambda^{1}(r,m),\ldots,
    \lambda^{i}(r,m), \lambda^{1}(s,n), \ldots, \lambda^{i}(s,n)) \\= (P_{i}(\lambda^{1}(r),\ldots,
    \lambda^{i}(r), \lambda^{1}(s), \ldots, \lambda^{i}(s)),\\\sum_{k=1}^{i-1}P_{i-k}(\lambda^{1}(r),\ldots,
    \lambda^{i-k}(r), \lambda^{1}(s), \ldots, \lambda^{i-k}(s))[\Psi^{k}(s)\Lambda^{k}(m)+\Psi^{k}(r)\Lambda^{k}(n)])$,
    \item $P_{i,j}(\lambda^{1}(r,m),\ldots,\lambda^{ij}(r,m)) = (P_{i,j}(\lambda^{1}(r),\ldots,\lambda^{ij}(r)),\\\sum_{k=1}^{i}\sum_{l=1}^{j}(-1)^{(k+1)(l+1)}\Lambda^{kl}(m)\Psi^{k}(\lambda^{j-l}(r))P_{(i-k),j}(\lambda^{1}(r),\ldots,\lambda^{(i-k)j}(r))) $.
\end{itemize}

We are going to start by considering the case where $R$ is a free
$\lambda$-ring and $M$ is free as a $\lambda$-module over $R$.

Our aim is to show that the Adams operations give us the $\Psi$-ring
structure on $R \rtimes M$ with $\Psi^{\nu}(r,m) =
(\Psi^{\nu}(r),\psi^{\nu}(m))$ by using induction on $\nu$. Then
theorem \ref{Psilamring} tells us that $R \rtimes_{\lambda} M$ is a
$\lambda$-ring and the universal polynomial identities hold.

Consider the case when $\nu=1$ \[\Psi^{1}(r,m) = (r,m) =
(\Psi^{1}(r),\psi^{1}(m)).\]

Consider the case when $\nu=2$ \[\Psi^{2}(r,m) = (r^{2},2rm) -
2\lambda^{2}(r,m) = (r^{2} - 2\lambda^{2}(r), -2\Lambda^{2}(m)) =
(\Psi^{2}(r),\psi^{2}(m)).\]

Assume that $\Psi^{\nu-k}(r,m) = (\Psi^{\nu-k}(r),\psi^{\nu-k}(m))$ for $1
\leq k \leq \nu-1$. It follows that

\begin{align*}\Psi^{\nu}(r,m) =&
\sum_{j=1}^{\nu-1}(-1)^{\nu-j+1}(\lambda^{\nu-j}(r),\sum_{k=1}^{\nu-j}\lambda^{\nu-j-k}(r)\Lambda^{k}(m))(\Psi^{j}(r),\psi^{j}(m))
\\&+(-1)^{\nu+1}\nu(\lambda^{\nu}(r),\sum_{k=1}^{\nu}\lambda^{\nu-k}(r)\Lambda^{k}(m)) \\=& \sum_{j=1}^{\nu-1}(-1)^{\nu-j+1}
(\lambda^{\nu-j}(r)\Psi^{j}(r),\Psi^{j}(r)\sum_{k=1}^{\nu-j}\lambda^{\nu-j-k}(r)\Lambda^{k}(m)
\\& + \Psi^{j}(m)\lambda^{\nu-j}(r)) +
(-1)^{\nu+1}\nu(\lambda^{\nu}(r),\sum_{k=1}^{\nu}\lambda^{\nu-k}(r)\Lambda^{k}(m))
\\ =&  (\Psi^{\nu}(r),\sum_{j=1}^{\nu-1}(-1)^{\nu-j+1}[
\lambda^{\nu-j}(r)\Psi^{j}(m) +
\Psi^{j}(r)\sum_{k=1}^{\nu-j}\lambda^{\nu-j-k}(r)\Lambda^{k}(m)]
\\&+
(-1)^{\nu+1}\nu[\sum_{k=1}^{\nu}\lambda^{\nu-k}(r)\Lambda^{k}(m)])
\\=&
(\Psi^{\nu}(r),\sum_{j=1}^{\nu-1}[(-1)^{j+1}\Psi^{\nu-i}(r)\chi_{i}(r,m)
+ (-1)^{\nu+1}j\lambda^{j}(r)\Lambda^{\nu-j}(m)] \\&+
(-1)^{\nu+1}\nu\Lambda^{\nu}(m)) \\ =& (\Psi^{\nu}(r),
(-1)^{\nu+1}\nu\Lambda^{\nu}(m)) =
(\Psi^{\nu}(r),\psi^{\nu}(m)),\end{align*} as required.

Now consider the case where $R$ is a free $\lambda$-ring and $M$ is
an arbitrary $\lambda$-module over $R$. Choose $P$ a free
$\lambda$-module over $R$ with a surjective
homomorphism $P \twoheadrightarrow M$, this gives us a surjective
homomorphism $R \rtimes_{\lambda} P \twoheadrightarrow R
\rtimes_{\lambda}M$. Since the universal polynomial identities hold
on $R \rtimes_{\lambda} P$ they also hold on $R \rtimes_{\lambda}
M$.

Now we can consider the case when $R$ is an arbitrary $\lambda$-ring
and $M$ is a $\lambda$-module over $R$. Any $\lambda$-ring is the
quotient of a free $\lambda$-ring, therefore $R$ is the quotient of
a free $\lambda$-ring $F$. There exists a surjective homomorphism $F
\rtimes_{\lambda} M \twoheadrightarrow R \rtimes_{\lambda}M$. Since
the universal polynomial identities hold on $F \rtimes_{\lambda} M$
they also hold on $R \rtimes_{\lambda} M$. Hence $R
\rtimes_{\lambda} M$ is a $\lambda$-ring. Moreover we proved that
$(R \rtimes_{\lambda} M)_{\Psi}=R_{\Psi}\rtimes_{\Psi}M_{\Psi}$.
\end{proof}

\section{$\lambda$-derivations}
\begin{definition}
A \textit{$\lambda$-derivation} of $R$ with values in $M$ is an additive homomorphism $d:R
\rightarrow M$ such that
\begin{enumerate}
\item $d(r s) = r d(s) + d(r)s$,
\item $d(\lambda^{i}(r)) = \Lambda^{i}(d(r)) + \Lambda^{i-1}(d(r)) \lambda^{1}(r) + \ldots +
  \Lambda^{2}(d(r))\lambda^{i-2}(r)+ \Lambda^{1}(d(r))\lambda^{i-1}(r)$,
\end{enumerate}
for all $r,s \in R$, and $i \in \mathbb{N}$. We let $\Der_{\lambda}(R,M)$ denote the set of all $\lambda$-derivations
of $R$ with values in $M$.
\end{definition}

\begin{example}
Let $\mathbb{Z}_{\lambda}[x]$ be the free $\lambda$-ring on one generator $x$, and let
$M \in \mathbb{Z}_{\lambda}[x]\mathfrak{-mod}_{\lambda}$.
\[Der_{\lambda}(\mathbb{Z}_{\lambda}[x],M) \cong M.\]
$\mathbb{Z}_{\lambda}[x] = \mathbb{Z}[x_{1},x_{2},\ldots]$ together with
operations determined by $\lambda^{i}(x_{1}) = x_{i}$. For any
$\lambda$-derivation, $d: \mathbb{Z}_{\lambda}[x] \rightarrow M$, we have that
\begin{align*}
d(x_{1}) &= m,
\\d(x_{i}) &= \sum_{j=1}^{i}\Lambda^{j}(m)x_{i-j},
\end{align*}
where $m \in M$ and $x_{0}=1$.\end{example}

\begin{theorem}
There is a one-to-one correspondence between the sections of $
\xymatrix{ R \rtimes_{\lambda} M \ar[r]^-{\pi} & R }$ and the
$\lambda$-derivations $d:R \rightarrow M$.
\end{theorem}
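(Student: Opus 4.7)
The plan is to mirror the proof of the corresponding $\Psi$-ring statement, using the explicit formula $\lambda^{i}(r,m)=(\lambda^{i}(r), \sum_{j=1}^{i}\Lambda^{j}(m)\lambda^{i-j}(r))$ for the $\lambda$-operations on $R\rtimes_{\lambda}M$ that was established in the semi-direct product lemma. Concretely, I will set up a map going in each direction and verify that the $\lambda$-ring section condition for $\sigma$ translates exactly into the two clauses of the definition of a $\lambda$-derivation.

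First, starting from a section, suppose we have
\[\xymatrix{ R \rtimes_{\lambda} M \ar@<1ex>[r]^-{\pi} & R \ar@<1ex>[l]^-{\sigma} }\]
with $\pi\sigma = \mathrm{id}_{R}$. Since $\pi$ is the projection onto the first coordinate, we can write $\sigma(r)=(r,d(r))$ for a uniquely determined function $d:R\rightarrow M$. The requirements that $\sigma$ be additive and multiplicative force $d$ to be additive and to satisfy the ordinary Leibniz rule $d(rs)=rd(s)+d(r)s$, exactly as in the $\Psi$-ring case. What is new here is the compatibility of $\sigma$ with the $\lambda$-operations, i.e.\ $\lambda^{i}\sigma(r)=\sigma\lambda^{i}(r)$ for all $i\in\mathbb{N}$. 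Using the formula for the semi-direct product, the left-hand side is
\[\lambda^{i}(r,d(r))=\Bigl(\lambda^{i}(r),\;\sum_{j=1}^{i}\Lambda^{j}(d(r))\lambda^{i-j}(r)\Bigr),\]
while the right-hand side is $(\lambda^{i}(r),d(\lambda^{i}(r)))$. Matching the second coordinates gives precisely the second axiom of a $\lambda$-derivation, so $d\in\Der_{\lambda}(R,M)$.

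Conversely, given $d\in\Der_{\lambda}(R,M)$, define $\sigma:R\rightarrow R\rtimes_{\lambda}M$ by $\sigma(r)=(r,d(r))$. The additivity and Leibniz axioms of $d$ show that $\sigma$ is a ring homomorphism (this uses only the underlying ring structure of $R\rtimes_{\lambda}M$, which coincides with the classical semi-direct product). The second axiom of a $\lambda$-derivation, read against the semi-direct product formula just recalled, says exactly that $\lambda^{i}\sigma(r)=\sigma\lambda^{i}(r)$ for every $i$, so $\sigma$ is a map of $\lambda$-rings, and obviously $\pi\sigma=\mathrm{id}_{R}$.

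These two constructions are clearly mutually inverse: starting from $d$, building $\sigma$, and reading off its second coordinate returns $d$; starting from $\sigma$ with $\sigma(r)=(r,d(r))$, the associated derivation is $d$ by definition. There is no real obstacle; the only point that requires care is bookkeeping the $\lambda$-operations on $R\rtimes_{\lambda}M$, but this is handed to us by the explicit formula from the previous lemma, so the verification reduces to matching coordinates. This completes the one-to-one correspondence.
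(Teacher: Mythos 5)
Your proposal is correct and follows essentially the same route as the paper: write $\sigma(r)=(r,d(r))$, note that additivity and the Leibniz rule come from $\sigma$ being a ring homomorphism, and observe that compatibility with the $\lambda$-operations on $R\rtimes_{\lambda}M$ matches coordinates to give exactly the second $\lambda$-derivation axiom $d(\lambda^{i}(r))=\sum_{j=1}^{i}\Lambda^{j}(d(r))\lambda^{i-j}(r)$, with the converse by the same formula. The only cosmetic difference is that the paper re-derives the second coordinate of $\lambda^{i}(r,d(r))$ by expanding $(r,0)+(0,d(r))$, whereas you quote the explicit semi-direct product formula directly, which amounts to the same computation.
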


\begin{proof}[Proof of theorem]
Assume we have a section of $\pi$, then we have the following  \[ \xymatrix{ R \rtimes_{\lambda} M
\ar@<1ex>[r]^-{\pi} & R, \ar@<1ex>[l]^-{\sigma} }\] where $\pi \sigma  =
Id_{R}$. Hence $\sigma(r) = (r,d(r))$ for some $d:R \rightarrow M$. The properties
\begin{align*}d(r+s) =& d(r) + d(s), \\ \qquad d(rs) =& d(r)s + r
d(s),\end{align*} follow from
$\sigma$ being a ring homomorphism. However $\sigma$ also preserves
the $\lambda$-ring structure, meaning that $\lambda^{i}\sigma(r) =
\sigma\lambda^{i}(r)$. We know that \begin{align*}\lambda^{i}\sigma(r) &=
\lambda^{i}(r, d(r)) = \lambda^{i}((r,0)+(0,d(r))) =
\Sigma_{j=0}^{i}\lambda^{j}(r,0)\lambda^{i-j}(0,d(r))
\\ &= \Sigma_{j=0}^{i-1}(0,\lambda^{j}(r)\Lambda^{i-j}(d(r))) +
(\lambda^{i}(r),0) = (\lambda^{i}(r),
\Sigma_{j=0}^{i-1}\lambda^{j}(r)\Lambda^{i-j}(d(r)))
\\\sigma\lambda^{i}(r) &= (\lambda^{i}(r),d(\lambda^{i}(r))).
\end{align*} Hence $\lambda^{i}\sigma(r) = \sigma\lambda^{i}(r)$ if and only if
$d\lambda^{i}(r) =
\Sigma_{j=0}^{i-1}\lambda^{j}(r)\Lambda^{i-j}(d(r))$. This tells us
that if $\sigma$ is a section of $\pi$, then we have a $\lambda$-derivation
$d$.

Conversely, if we have a $\lambda$-derivation $d:R \rightarrow M$,
then $\sigma(r) = (r,d(r))$ is a section of $\pi$.
\end{proof}

\begin{theorem}
The $\lambda$-derivations of $R$ with values in $M$ are also $\Psi$-derivations of $R_{\Psi}$ with values in $M_{\Psi}$.
\end{theorem}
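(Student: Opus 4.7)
The plan is to verify the two defining conditions of a $\Psi$-derivation for a given $\lambda$-derivation $d: R \to M$. The Leibniz rule $d(rs) = rd(s) + d(r)s$ is shared between both definitions, so the only thing to check is the compatibility with Adams operations: $\psi^n(d(r)) = d(\Psi^n(r))$ for all $n \in \mathbb{N}$ and $r \in R$, where $\psi^n(m) = (-1)^{n+1} n \Lambda^n(m)$.

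I would proceed by induction on $n$. The case $n = 1$ is immediate since $\Psi^1 = \mathrm{id}_R$ and $\psi^1 = \mathrm{id}_M$. For the inductive step, I would apply $d$ to the Newton formula
\[
\Psi^n(r) = \sum_{k=1}^{n-1}(-1)^{k+1}\lambda^k(r)\Psi^{n-k}(r) + (-1)^{n+1} n\, \lambda^n(r),
\]
and use the Leibniz rule together with the defining identity $d(\lambda^k(r)) = \chi_k(r, d(r))$ where $\chi_k(r,m) = \sum_{j=1}^k \Lambda^j(m)\lambda^{k-j}(r)$, plus the inductive hypothesis $d(\Psi^{n-k}(r)) = \psi^{n-k}(d(r)) = (-1)^{n-k+1}(n-k)\Lambda^{n-k}(d(r))$ for $1 \le k \le n-1$.

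Expanding $\chi_n(r,d(r))$ by separating off the $j=n$ summand yields the desired term $(-1)^{n+1} n \Lambda^n(d(r)) = \psi^n(d(r))$, plus a tail which I would reindex as a sum over $k = 1,\ldots,n-1$. Combining this tail with the $\lambda^k(r)\psi^{n-k}(d(r))$ contribution gives, after a short simplification of the signs and coefficients $(-1)^n(n-k) + (-1)^{n+1} n = (-1)^{n+1} k$, the expression
\[
d(\Psi^n(r)) = \psi^n(d(r)) + \sum_{k=1}^{n-1}\bigl[(-1)^{k+1}\chi_k(r,d(r))\Psi^{n-k}(r) + (-1)^{n+1} k\, \lambda^k(r)\Lambda^{n-k}(d(r))\bigr].
\]

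The main obstacle, and the whole reason the proof works, is showing that this remaining sum vanishes: but this is precisely the statement of Lemma \ref{chilemma} applied to $m = d(r)$ and $\nu = n$. Applying that lemma closes the induction and completes the proof.
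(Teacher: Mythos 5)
Your proposal is correct and follows essentially the same route as the paper's proof: induction on $n$, applying $d$ to the Newton formula, substituting $d(\lambda^{k}(r)) = \chi_{k}(r,d(r))$ and the inductive hypothesis, simplifying the coefficients to $(-1)^{n+1}k$, and invoking Lemma \ref{chilemma} with $m = d(r)$ to make the residual sum vanish. Nothing further is needed.
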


\begin{proof}
  Let $d:R \rightarrow M$ be a $\lambda$-derivation, we are going to
use induction on $\nu$ to show $\psi^{\nu}(d(r)) = d(\Psi^{\nu}(r))$ for all
$\nu \geq 1$.

Consider the case when $\nu=1$
\[\psi^{1}(d(r)) = d(r) = d(\Psi^{1}(r)).\]
Consider the case when $\nu=2$.
\[d(\Psi^{2}(r)) = d(r^{2} - 2\lambda^{2}(r)) = 2rd(r) - 2[\Lambda^{2}(d(r)) + d(r)r] = -2\Lambda^{2}(d(r)) = \psi^{2}(d(r)).\]
Also consider the case $\nu=3$.
\[d(\Psi^{3}(r)) = d(r^{3} - 3r\lambda^{2}(r)+3\lambda^{3}(r)) =  3\Lambda^{3}(d(r)) = \psi^{3}(d(r)).\]

Assume that $\psi^{\nu-k}(d(r)) = d(\Psi^{\nu-k}(r))$ for $1 \leq k \leq
\nu-1$.
\begin{align*}d(\Psi^{\nu}(r))=&
\sum_{i=1}^{\nu-1}(-1)^{i+1}d(\lambda^{i}(r))\Psi^{\nu-i}(r) +
\sum_{i=1}^{\nu-1}(-1)^{i+1}\lambda^{i}(r)d(\Psi^{\nu-i}(r)) \\
&+ (-1)^{\nu+1}\nu d(\lambda^{\nu}(r)) \\ d(\Psi^{\nu}(r)) - \psi^{\nu}(d(r)) =&
\sum_{i=1}^{\nu-1}(-1)^{i+1}[\sum_{j=1}^{i}\Lambda^{j}(d(r))\lambda^{i-j}(r)]\Psi^{\nu-i}(r)
\\
&+ \sum_{i=1}^{\nu-1}(-1)^{i+1}\lambda^{i}(r)[(-1)^{\nu-i+1}(\nu-i)\Lambda^{\nu-i}(d(r))]
\\ &+ (-1)^{\nu+1}\nu[\sum_{j=1}^{\nu-1}\Lambda^{j}(d(r))\lambda^{\nu-j}(r)]
\\=& \sum_{i=1}^{\nu-1}[(-1)^{i+1}\chi_{i}(r,d(r))\Psi^{\nu-i}(r)+(-1)^{\nu+1}i\lambda^{i}(r)\Lambda^{\nu-i}(d(r))]
\\=&0.
\end{align*}

Hence $d(\Psi^{\nu}(r)) = \psi^{\nu}(d(r))$.
\end{proof}

\begin{theorem}
If $M$ is $\mathbb{Z}$-torsion-free  then the
$\Psi$-derivations of $R_{\Psi}$ with values in $M_{\Psi}$ are also
$\lambda$-derivations of $R$ with values in $M$
\[\Der_{\lambda}(R,M) = \Der_{\Psi}(R_{\Psi},M_{\Psi}).\]
\end{theorem}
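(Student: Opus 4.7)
The plan is to show that any additive map $d: R \to M$ satisfying the Leibniz rule and the $\Psi$-compatibility $\psi^{i}(d(r)) = d(\Psi^{i}(r))$ automatically satisfies the $\lambda$-compatibility $d(\lambda^{i}(r)) = \chi_{i}(r,d(r))$, where as before $\chi_{i}(r,m) = \sum_{j=1}^{i}\Lambda^{j}(m)\lambda^{i-j}(r)$. I will proceed by induction on $i$. The base case $i=1$ is immediate since $\lambda^{1}(r)=r$ and $\chi_{1}(r,d(r)) = \Lambda^{1}(d(r)) = d(r)$.

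For the inductive step I will exploit the Newton formula, which rearranges to express $i\lambda^{i}(r)$ as a $\mathbb{Z}$-linear combination of products $\lambda^{k}(r)\Psi^{i-k}(r)$ for $0 \leq k \leq i-1$ (with $\lambda^{0}(r)=1$). Applying $d$, using the Leibniz rule, the inductive hypothesis $d(\lambda^{k}(r))=\chi_{k}(r,d(r))$ for $k<i$, and the $\Psi$-compatibility together with $\psi^{i-k}(d(r)) = (-1)^{i-k+1}(i-k)\Lambda^{i-k}(d(r))$, produces an explicit formula for $i\cdot d(\lambda^{i}(r))$ as the sum of two pieces: one where $d$ hits a $\lambda^{k}(r)$ factor (giving $\sum_{k=1}^{i-1}(-1)^{i+k+1}\chi_{k}(r,d(r))\Psi^{i-k}(r)$), and one where $d$ hits a $\Psi^{i-k}(r)$ factor (which after a sign cancellation simplifies to $\sum_{j=1}^{i} j\,\Lambda^{j}(d(r))\lambda^{i-j}(r)$).

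The key simplification is to recognise that the first of these sums is exactly controlled by Lemma \ref{chilemma}. Indeed, applying that lemma with $m=d(r)$ and $\nu=i$ and rearranging gives
\[
\sum_{k=1}^{i-1}(-1)^{i+k+1}\chi_{k}(r,d(r))\Psi^{i-k}(r) \;=\; \sum_{k=1}^{i-1} k\,\lambda^{k}(r)\Lambda^{i-k}(d(r)) \;=\; \sum_{j=1}^{i-1}(i-j)\,\Lambda^{j}(d(r))\lambda^{i-j}(r).
\]
Adding this to $\sum_{j=1}^{i} j\,\Lambda^{j}(d(r))\lambda^{i-j}(r)$, the coefficients $(i-j)+j=i$ combine across $1 \leq j \leq i-1$ and the $j=i$ term contributes $i\Lambda^{i}(d(r))$, yielding $i\cdot d(\lambda^{i}(r)) = i\cdot \chi_{i}(r,d(r))$.

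The final step is to invoke the hypothesis that $M$ is $\mathbb{Z}$-torsion-free, which lets us cancel the factor $i$ to obtain $d(\lambda^{i}(r)) = \chi_{i}(r,d(r))$, completing the induction. This cancellation is exactly where torsion-freeness is indispensable: the Newton-formula strategy intrinsically produces the identity only after multiplication by $i$, and that is the main obstacle the hypothesis removes. Combined with the previous theorem, this gives the equality $\Der_{\lambda}(R,M) = \Der_{\Psi}(R_{\Psi},M_{\Psi})$.
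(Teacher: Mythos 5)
Your proposal is correct and takes essentially the same route as the paper's proof: induction on the exponent, applying $d$ to the Newton formula, using the $\Psi$-compatibility together with $\psi^{j}(d(r)) = (-1)^{j+1}j\Lambda^{j}(d(r))$ and Lemma \ref{chilemma}, and finally cancelling the integer factor via $\mathbb{Z}$-torsion-freeness. The only difference is organizational — you apply $d$ directly to the rearranged expression for $i\lambda^{i}(r)$ and use Lemma \ref{chilemma} to collapse the $\chi$-sum, whereas the paper isolates $\nu\, d(\lambda^{\nu}(r))$ from $d(\Psi^{\nu}(r))=\psi^{\nu}(d(r))$ and checks the resulting identity by a direct computation — but the underlying algebra is identical.
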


\begin{proof}
Let $M$ be $\mathbb{Z}$-torsion-free and $d:R_{\Psi} \rightarrow M_{\Psi}$ be a $\Psi$-derivation. We are going to
use induction on $\nu$ to show $d(\lambda^{\nu}(r)) =
\sum_{i=1}^{\nu}\Lambda^{i}(d(r))\lambda^{\nu -i}(r)$ for $\nu \in \mathbb{N}$.

Consider the case when $\nu=1$.
\[ \Lambda^{1}(d(r)) = d(r) = d(\lambda^{1}(r)). \]
Consider the case when $\nu=2$.
\begin{align*}
d(\Psi^{2}(r))  &=
\psi^{2}(d(r))
\\ d(r^{2} - 2\lambda^{2}(r)) &=  -2\Lambda^{2}(d(r))
\\ 2[d(\lambda^{2}(r)) - rd(r) - \Lambda^{2}(d(r))]&=0
\\ 2[d(\lambda^{2}(r)) - \sum_{i=1}^{2}\Lambda^{i}(d(r))\lambda^{2-i}(r) ] &= 0
\\ d(\lambda^{2}(r)) - \sum_{i=1}^{2}\Lambda^{i}(d(r))\lambda^{2-i}(r)
&=0. \end{align*}

Assume that $d(\lambda^{\nu-k}(r)) =
\sum_{i=1}^{\nu-k}\Lambda^{i}(d(r))\lambda^{\nu-i-k}(r)$ for $1 \leq k
\leq \nu-1$, we want to show that $\nu d(\lambda^{\nu}(r)) = \nu \sum_{i=1}^{\nu}\Lambda^{i}(d(r))\lambda^{\nu-i}(r)$.
From $\psi^{v}(d(r))=d(\Psi^{v}(r))$ we get\\
$\nu(\Lambda^{\nu}(d(r))-d(\lambda^{\nu}(r)))=\sum_{i=1}^{\nu-1}(-1)^{i+v}[d(\lambda^{i}(r))\Psi^{\nu -i}(r) + \lambda^{i}(r)d(\Psi^{\nu-i}(r))].$
 \\Therefore we have to show that
\begin{align*}
&(-1)^{\nu}\nu\sum_{i=1}^{\nu-1}\Lambda^{i}(d(r))\lambda^{\nu-i}(r) \\=&
\sum_{i=1}^{\nu-1}(-1)^{i+1}d(\lambda^{i}(r))\Psi^{\nu-i}(r)  +
\sum_{i=1}^{\nu-1}(-1)^{i+1}\lambda^{i}(r)d(\Psi^{\nu-i}(r)) \\=&
\sum_{i=1}^{\nu-1}(-1)^{i+1}[\sum_{j=1}^{i}\Lambda^{j}(d(r))\lambda^{i-j}(r)]\cdot[\sum_{k=1}^{\nu-i-1}(-1)^{k+1}\lambda^{k}(r)\Psi^{\nu-i-k}(r)
 \\&+ (-1)^{\nu-i-1}(\nu-i)\lambda^{\nu-i}(r)] +
(-1)^{\nu}\sum_{i=1}^{\nu-1} i \Lambda^{i}(d(r))\lambda^{\nu-i}(r).
\end{align*}
\\Hence it is sufficient to show that \\$\sum_{i=1}^{\nu-2}(-1)^{i+1}\chi_{i}(r,d(r))[\sum_{k=1}^{\nu-i-1}(-1)^{k+1}\lambda^{k}(r)\Psi^{\nu-i-k}(r)]
\\ + \sum_{i=1}^{\nu-1}(-1)^{\nu+1}(i-\nu)\lambda^{\nu-i}(r)\chi_{i}(r,d(r))
+ (-1)^{\nu+1}\sum_{i=1}^{\nu-1}i\lambda^{i}(r)\Lambda^{\nu-i}(d(r))]=0,$
\\with $\chi_{i}$ as in lemma \ref{chilemma}. We get that
\\ \begin{align*} &\sum_{i=1}^{\nu-2}(-1)^{i+1}\chi_{i}(r,d(r))[\sum_{k=1}^{\nu-i-1}(-1)^{k+1}\lambda^{k}(r)\Psi^{\nu-i-k}(r)]
\\& + \sum_{i=1}^{\nu -1}(-1)^{\nu+1}(i-\nu)\lambda^{\nu -i}(r) \chi_{i}(r,d(r))
+ (-1)^{\nu+1}\sum_{i=1}^{\nu -1}i\lambda^{i}(r)\Lambda^{\nu-i}(d(r))]
\\=&
\sum_{i=1}^{\nu -2}\chi_{i}(r,d(r))[(-1)^{i+1}[\sum_{k=1}^{\nu -i-1}(-1)^{k+1}\lambda^{k}(r)\Psi^{\nu -i-k}(r)
\\ &+ (-1)^{\nu -i-1}(\nu -i)\lambda^{\nu -i}(r) - \Psi^{\nu -i}(r)]]
\\ =& 0,
\end{align*}
as required. \end{proof}

\section{$\lambda$-ring extensions}
We have seen in proposition \ref{AQprop1} that the Andr\'{e}-Quillen cohomology
$H^{1}_{AQ}(\underline{R},\underline{M})$ classifies the extensions
of \underline{R} by \underline{M}. In this section, we develop the
$\lambda$-analogue of extensions.

\begin{definition}
A \textit{$\lambda$-ring extension} of $R$ by $M$ is an extension of $\underline{R}$ by $\underline{M}$
\[\xymatrix{0 \ar[r] & M \ar^{\alpha}[r] & X \ar^{\beta}[r] & R \ar[r] & 0}\] where
$X$ is a $\lambda$-ring, $\beta$ is a map of $\lambda$-rings and $\alpha \Lambda^{n} = \lambda^{n} \alpha$ for all $n \in \mathbb{N}$.
\end{definition}

Two $\lambda$-ring extensions $(X),(X')$
with $R,M$ fixed are said to be \textit{equivalent} if there exists a map of
$\lambda$-rings $\phi: X \rightarrow X'$ such that the following
diagram commutes.
\[\xymatrix{0 \ar[r] & M \ar@{=}[d] \ar[r] & X \ar[r] \ar[d]^{\phi} & R \ar[r] \ar@{=}[d] & 0 \\ 0 \ar[r] & M \ar[r] & X' \ar[r] & R \ar[r] & 0}\]

We denote the set of equivalence classes of $\lambda$-ring extensions
of $R$ by $M$ by $Ext_{\lambda}(R,M)$.

The Harrison cohomology $Harr^{1}(\underline{R},\underline{M})$ classifies the additively split extensions
of \underline{R} by \underline{M}. We can also define the $\lambda$-analogue of these types of extensions.

\begin{definition}
Let $R$ be a $\lambda$-ring and $M \in R$-mod$_{\lambda}$ then an
\textit{additively split $\lambda$-ring extension} of $R$ by $M$ is a $\lambda$-ring extension of $R$ by $M$
\[\xymatrix{0 \ar[r] & M \ar^{\alpha}[r] & X \ar^{\beta}[r] & R \ar[r] & 0}\]
where $\beta$ has a section that is an additive homomorphism.
\end{definition}

Multiplication in $X = R \oplus
M$ has the form $(r,m)(r',m') = (rr', mr' + rm'
+ f(r,r'))$, where $f:R \times R \rightarrow M$ is some bilinear
map. Associativity in $X$ gives us
\[0 = rf(r',r'')-f(rr',r'') + f(r,r'r'')-f(r,r')r''.\]
Commutativity in $X$ gives us
\[f(r,r') = f(r',r).\]

The $\lambda$-operations $\lambda^{\nu}:R \rtimes M
\rightarrow R \rtimes M$ for $\nu \in \mathbb{N}_{0}$ are given by $\lambda^{\nu}(r,m)
= (\lambda^{\nu}(r), \sum_{i=1}^{\nu}\Lambda^{i}(m)\lambda^{\nu-i}(r) +
\epsilon^{\nu}(r) )$ for a sequence of operations $\epsilon^{\nu}: R
\rightarrow M$ which satisfy the following properties
\begin{enumerate}
    \item $\epsilon^{0}(r) = \epsilon^{1}(r) = 0,$
    \item $\epsilon^{\nu}(r+s) =
    \sum_{i=0}^{\nu}[\epsilon^{i}(r)\lambda^{\nu-i}(s)+\epsilon^{\nu-i}(s)\lambda^{i}(r)],$
    \item $\epsilon^{\nu}(1) = 0,$
    \item $P_{i}(\lambda^{1}(r,m),\ldots,\lambda^{i}(s,n))\\=(\lambda^{i}(rs),\sum_{j=1}^{i}(\Psi^{j}(s)\Lambda^{j}(m)+\Psi^{j}(r)\Lambda^{j}(n) +\Lambda^{j}(f(r,r')))\lambda^{i-j}(rs)+\epsilon^{j}(rs)),$
    \item $P_{i,j}(\lambda^{1}(r,m),\ldots,\lambda^{ij}(r,m))\\=(\lambda^{i}(\lambda^{j}(r)),\sum_{k=1}^{i}\Lambda^{k}(\sum_{a=1}^{j}(\Lambda^{a}(m)\lambda^{j-a}(r)+\epsilon^{j}(r))\lambda^{i-k}(\lambda^{j}(r)))+\epsilon^{i}(\lambda^{j}(r))).$
\end{enumerate}

Assuming we have two additively split $\lambda$-ring extensions
$(X,\varepsilon,f)$,$(X',\varepsilon',f')$ which are equivalent,
together with a $\lambda$-ring map $\phi:X \rightarrow X'$ with
$\phi(r,m) = (r, m + g(r))$ for some $g: R \rightarrow M$. We have
that $\phi$ being a homomorphism tells us that
\[g(r+r') = g(r) + g(r'),\]
\[f(r,r')- f'(r,r') = rg(r') - g(rr') + g(r)r'.\]
We also have $\phi(\lambda^{\nu})= \lambda^{\nu}(\phi)$ for all $\nu \in \mathbb{N}_{0}$, which
tells us that \[\varepsilon^{\nu}(r)- \varepsilon'^{\nu}(r) =
\sum_{i=1}^{\nu}\Lambda^{i}(g(r))\lambda^{\nu - i}(r) - g(\lambda^{\nu}(r)).\]

We denote the set of equivalence classes of additively split $\lambda$-ring extensions
of $R$ by $M$ by $AExt_{\lambda}(R,M)$.

In order to describe the properties of $\lambda$-ring extensions we need to define the partial derivatives of the universal polynomials, see appendix \ref{appc} for examples.

We can use the universal polynomials to define continuous functions
\[P_{i}:\mathbb{R}^{2i} \rightarrow \mathbb{R},\]
\[P_{i,j}: \mathbb{R}^{ij} \rightarrow \mathbb{R}.\]

For example $P_{2}: \mathbb{R}^{4} \rightarrow \mathbb{R}$ is given by \[P_{2}(x_{1},x_{2},x_{3},x_{4}) = x_{1}^{2}x_{4} -2 x_{2}x_{4} + x_{2}x_{3}^{2}.\]

We can take the partial derivatives of these functions which are again polynomials. We call these new polynomials the \textit{partial derivatives} of the universal polynomials. For example
\begin{align*}
  \frac{\partial P_{2}(x_{1},x_{2},x_{3},x_{4})}{\partial x_{1}} =& 2x_{1}x_{4}, \\
  \frac{\partial P_{2}(x_{1},x_{2},x_{3},x_{4})}{\partial x_{2}} =& x_{3}^{2} - 2 x_{4},
\end{align*}

For $1 \leq j \leq i$, we let \[ \frac{\partial P_{i}(r,s)}{\partial \lambda^{j}(r)} := \frac{\partial P_{i}(\lambda^{1}(r),\ldots,\lambda^{i}(r),\lambda^{1}(s),\ldots,\lambda^{i}(s))}{\partial \lambda^{j}(r)}. \]

Since the polynomials $P_{i}$ are symmetric, we can let
\[ \frac{\partial P_{i}(r,s)}{\partial \lambda^{j}(s)} :=  \frac{\partial P_{i}(s,r)}{\partial \lambda^{j}(s)}. \]

In our examples
\begin{align*}
 \frac{\partial P_{2}(r,s)}{\partial \lambda^{1}(r)} =& \frac{\partial P_{2}(\lambda^{1}(r),\lambda^{2}(r),\lambda^{1}(s),\lambda^{2}(s))}{\partial \lambda^{1}(r)} =& 2r\lambda^{2}(s), \\
 \frac{\partial P_{2}(r,s)}{\partial \lambda^{2}(r)} =& \frac{\partial P_{2}(\lambda^{1}(r),\lambda^{2}(r),\lambda^{1}(s),\lambda^{2}(s))}{\partial \lambda^{2}(r)} =& s^{2} - 2\lambda^{2}(s).
\end{align*}

Similarly, for $1 \leq k \leq ij$, we let
 \[ \frac{\partial P_{i,j}(r)}{\partial \lambda^{k}(r)} := \frac{\partial P_{i}(\lambda^{1}(r),\ldots,\lambda^{ij}(r))}{\partial \lambda^{k}(r)}. \]

For example,
\[  \frac{\partial P_{2,2}(x_{1},x_{2},x_{3},x_{4})}{\partial x_{1}} = x_{3}. \]
So it follows that
\[ \frac{\partial P_{2,2}(r)}{\partial \lambda^{1}(r)} = \lambda^{3}(r).\]

These partial derivatives appear because of the multiplication in $R \rtimes M$. Consider the following
\[(r,m)^{2} = (r^{2}, 2rm),\]
\[(r,m)^{3} = (r^{3}, 3r^{2}m).\]
\begin{definition}
An \textit{additively and multiplicatively split $\lambda$-ring extension} of $R$ by $M$ is a $\lambda$-ring extension of $R$ by $M$
\[\xymatrix{0 \ar[r] & M \ar[r] & X \ar[r] & R \ar[r] & 0}\]
where $\beta$ has a section that is an additive and multiplicative homomorphism.

As a commutative ring $X = R \rtimes M$, the sequence of operations $\lambda^{\nu}:R \rtimes M
\rightarrow R \rtimes M$ for $\nu \in \mathbb{N}_{0}$ are given by $\lambda^{\nu}(r,m)
= (\lambda^{\nu}(r), \sum_{i=1}^{\nu}\Lambda^{i}(m)\lambda^{\nu-i}(r) +
\epsilon^{\nu}(r))$ for a sequence of operations $\epsilon^{\nu}: R
\rightarrow M$ such that
\begin{enumerate}
    \item $\epsilon^{0}(r) = \epsilon^{1}(r) = 0,$
    \item $\epsilon^{\nu}(r+s) =
    \sum_{i=0}^{\nu}[\epsilon^{i}(r)\lambda^{\nu-i}(s)+\epsilon^{\nu-i}(s)\lambda^{i}(r)],$
    \item $\epsilon^{\nu}(1) = 0,$
    \item $\epsilon^{\nu}(rs) = \sum_{i=1}^{\nu}[\epsilon^{i}(r) \frac{\partial P_{\nu}(r,s)}{\partial \lambda^{i}(r)} + \epsilon^{i}(s)\frac{\partial P_{\nu}(r,s)}{\partial \lambda^{i}(s)}],$
    \item $\epsilon^{k}(\lambda^{\nu}(r)) = \sum_{i=1}^{\nu k}\epsilon^{i}(r) \frac{\partial P_{\nu ,k}(r)}{\partial \lambda^{i}(r)} - \sum_{j=1}^{k}\Lambda^{j}(\epsilon^{\nu}(r))\lambda^{k-j}(\lambda^{\nu}(r)).$
\end{enumerate}
\end{definition}

Two additively and multiplicatively split $\lambda$-ring extensions $(X,\epsilon)$,$(X',\epsilon')$ with
$R,M$ fixed are said to be \textit{equivalent} if there exists a map
of $\lambda$-rings $\phi: X \rightarrow X'$ such that the following
diagram commutes.
\[\xymatrix{0 \ar[r] & M \ar@{=}[d] \ar[r] & X \ar[r] \ar[d]^{\phi} & R \ar[r] \ar@{=}[d] & 0 \\ 0 \ar[r] & M \ar[r] & X' \ar[r] & R \ar[r] & 0}\]

Assuming we have two additively and multiplicatively split $\lambda$-ring extensions
$(X,\epsilon)$,$(X',\epsilon')$ which are equivalent, together with a
$\lambda$-ring map $\phi:X \rightarrow X'$ with $\phi(r,m) = (r, m +
g(r))$ for some $g: R \rightarrow M$. We also have $\phi$ being a
homomorphism which tells us that $g \in Der(R,M)$. We also have
$\phi(\lambda^{\nu})= \lambda^{\nu}(\phi)$ for all $\nu$, which tells us
that
\[\varepsilon^{\nu}(r)- \varepsilon'^{\nu}(r) = \sum_{i=1}^{\nu}\Lambda^{i}(g(r))\lambda^{\nu -i}(r) -
g(\lambda^{\nu}(r)).\]

We denote the set of equivalence classes of additively and multiplicatively split $\lambda$-ring extensions
of $R$ by $M$ by $MExt_{\lambda}(R,M)$.

\begin{theorem}
If $\epsilon^{\nu}:R \rightarrow M$ gives us an additively and multiplicatively split $\lambda$-ring
extension of $R$ by $M$, then $\varepsilon^{\nu}: R \rightarrow M$ with
\[\varepsilon^{\nu}(r) = \sum_{i=1}^{\nu-1}(-1)^{i+1}[\epsilon^{i}(r)\Psi^{\nu-i}(r)+\lambda^{i}(r)\varepsilon^{\nu-i}(r)]+(-1)^{\nu+1}\nu\epsilon^{\nu}(r),\]
give us an additively and multiplicatively split $\Psi$-ring extension of $R_{\Psi}$ by $M_{\Psi}$.
\end{theorem}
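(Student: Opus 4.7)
My plan is to realise the target $\varepsilon^{\nu}$ as the $\varepsilon$-data of the $\Psi$-ring obtained by taking Adams operations on the $\lambda$-ring $X := R \rtimes M$ supplied by the hypothesis. Concretely, the hypothesis equips $X$ with the $\lambda$-operations
\[\lambda^{\nu}(r,m) = \bigl(\lambda^{\nu}(r),\, \sum_{i=1}^{\nu}\Lambda^{i}(m)\lambda^{\nu-i}(r) + \epsilon^{\nu}(r)\bigr),\]
making $\beta: X \to R$ a map of $\lambda$-rings with kernel the square-zero ideal $\alpha(M)$. By the theorem that any $\lambda$-ring becomes a special $\Psi$-ring via its Adams operations, $X_{\Psi}$ is a $\Psi$-ring, $\beta$ is automatically a map of $\Psi$-rings, and $\alpha\psi^{n} = \Psi^{n}\alpha$ holds because $\alpha(M)^{2}=0$ forces only the $\Lambda$-terms to survive in the Newton recursion evaluated at $\alpha(m) = (0,m)$. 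Since $s(r)=(r,0)$ is a ring section of $\beta$ (both additive and multiplicative), this makes $0 \to M \to X \to R \to 0$ an additively and multiplicatively split $\Psi$-ring extension of $R_{\Psi}$ by $M_{\Psi}$ in the sense of Definition \ref{pmextdef}.

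What remains is to identify the $\varepsilon^{\nu}$ of this extension. I would show by induction on $\nu$ that
\[\Psi^{\nu}(r,m) = (\Psi^{\nu}(r),\, \psi^{\nu}(m) + \varepsilon^{\nu}(r))\]
with $\varepsilon^{\nu}$ given by the asserted formula. The base case $\nu=1$ is trivial, giving $\varepsilon^{1}=0$. For the induction step I apply the Newton relation
\[\Psi^{\nu}(r,m) = \sum_{i=1}^{\nu-1}(-1)^{i+1}\lambda^{i}(r,m)\Psi^{\nu-i}(r,m) + (-1)^{\nu+1}\nu\,\lambda^{\nu}(r,m)\]
in $X$, substitute the expression for $\lambda^{i}(r,m)$ and the inductive value of $\Psi^{\nu-i}(r,m)$, and split the resulting sum into $\epsilon$-free terms and the remainder. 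The $\epsilon$-free part reproduces verbatim the calculation already performed in the proof that $R \rtimes_{\lambda} M$ is a $\lambda$-ring (which invokes Lemma \ref{chilemma}) and collapses to $\psi^{\nu}(m)$. The remaining terms come from the second coordinate of $(\lambda^{i}(r),\epsilon^{i}(r))\cdot(\Psi^{\nu-i}(r),\varepsilon^{\nu-i}(r))$, namely $\lambda^{i}(r)\varepsilon^{\nu-i}(r) + \epsilon^{i}(r)\Psi^{\nu-i}(r)$, together with $(-1)^{\nu+1}\nu\,\epsilon^{\nu}(r)$ from the final summand; these assemble into the stated formula.

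Finally, Lemma \ref{lmmapsimd1} says that once $\Psi^{\nu}$ on $R \rtimes M$ has the displayed form, the data $\varepsilon^{\nu}$ automatically satisfy the four axioms of an additively and multiplicatively split $\Psi$-ring extension, because those axioms are precisely what is needed for $X$ to be a $\Psi$-ring, and that was secured in the first paragraph. The main obstacle is the bookkeeping in the inductive step: the combinatorics mirrors the $\epsilon=0$ argument used for $R \rtimes_{\lambda} M$, but I must verify that the $\epsilon$-contributions do not contaminate the $m$-part and that they collect into exactly the asserted recursion rather than some other linear combination. The explicit checks for $\nu = 2,3$ (which involve only one or two $\epsilon$-terms) provide a reliable template for the general inductive manipulation.
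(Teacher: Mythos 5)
Your proposal is correct and is essentially the paper's own argument: the paper likewise takes the $\lambda$-ring $R\rtimes M$ determined by $\epsilon^{\nu}$, passes to its Adams operations, and reads off that $\Psi^{\nu}(r,m)=(\Psi^{\nu}(r),\psi^{\nu}(m)+\varepsilon^{\nu}(r))$ yields an additively and multiplicatively split $\Psi$-ring extension. Your inductive identification of $\varepsilon^{\nu}$ via the Newton relation and Lemma \ref{chilemma} simply makes explicit what the paper's terse proof leaves implicit.
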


\begin{proof}
If $\epsilon^{\nu}:R \rightarrow M$ gives an additively and multiplicatively split $\lambda$-ring extension of
$R$ by $M$, then $\lambda^{\nu}:R\rtimes M \rightarrow R \rtimes M$
given by $\lambda^{\nu}(r,m) =
(\lambda^{\nu}(r),\sum_{i=1}^{\nu}\Lambda^{i}(m)\lambda^{\nu-i}(r)+
\epsilon^{\nu}(r))$ is a $\lambda$-ring and hence the Adams operations
give the $\Psi$-ring with operations $\Psi^{\nu}:R\rtimes M
\rightarrow R \rtimes M$ given by $\Psi^{\nu}(r,m) =
(\Psi^{\nu}(r),\psi^{\nu}(m) + \varepsilon^{\nu}(r))$ which is an additively and multiplicatively split
$\Psi$-ring extension of $R_{\Psi}$ by $M_{\Psi}$.
\end{proof}

\section{Crossed $\lambda$-extensions}
A \textit{crossed $\lambda$-module} consists of a $\lambda$-ring $C_{0}$, a $\lambda$-module $C_{1}$ over $C_{0}$ and a map of $\lambda$-modules
\[\xymatrix{C_{1} \ar^{\partial}[r] & C_{0}},\]
which satisfies the property
\[\partial(c)c' = c \partial(c'),\]
for $c,c' \in C_{1}$. In other words, a crossed $\lambda$-module is a chain algebra which is non-trivial only in dimensions 0 and 1. Since $C_{2}=0$ the condition
$\partial(c)c' = c \partial(c')$ is equivalent to the Leibnitz relation \[0  = \partial (cc') = \partial(c)c' - c \partial(c'). \]
We can define a product by \[ c * c' := \partial(c) c', \]
for $c,c' \in C_{1}$. This gives us a $\lambda$-ring structure on $C_{1}$ and $\partial: C_{1} \rightarrow C_{0}$ is a map of $\lambda$-rings.

Let $\partial : C_{1} \rightarrow C_{0}$ be a crossed $\lambda$-module. We let $M= \Ker (\partial)$ and $R= \Coker (\partial)$ Then the image $\Image (\partial)$ is an ideal of $C_{0}$, $M C_{1} = C_{1}M = 0$ and $M$ has a well-defined $\lambda$-module structure over $R$.

A \textit{crossed $\lambda$-extension} of $R$ by $M$ is an exact sequence
\[\xymatrix{0 \ar[r] & M \ar[r]^{\alpha} & C_{1} \ar[r]^{\partial} & C_{0} \ar[r]^{\gamma} & R \ar[r] & 0}\]
where $\partial : C_{1} \rightarrow C_{0}$ is a crossed $\lambda$-module, $\gamma$ is a map of $\lambda$-rings, and the $\lambda$-module structure on $M$ coincides with the one induced from the crossed $\lambda$-module. We denote the category of crossed $\lambda$-extensions of $R$ by $M$ by $Cross_{\lambda}(R,M)$. We let $\pi_{0} Cross_{\lambda}(R,M)$ denote the connected components of the category $Cross_{\lambda}(R,M)$.

An \textit{additively split crossed $\lambda$-extension} of $R$ by $M$ is a
crossed $\lambda$-extension \begin{equation}\label{lcesasc} \xymatrix{0
\ar[r] & M \ar^{\omega}[r] & C_{1} \ar^{\rho}[r] & C_{0}
\ar^{\pi}[r] &  R \ar[r] & 0}\end{equation} such that all the arrows
in the exact sequence $\ref{lcesasc}$ are additively split.
We denote the connected components of the category of additively split crossed
$\lambda$-extensions of $R$ by $M$ by $\pi_{0} ACross_{\lambda}(R,M)$.

An \textit{additively and multiplicatively split crossed $\lambda$-extension} of $R$ by $M$ is an additively split
crossed $\lambda$-extension \[ \xymatrix{0
\ar[r] & M \ar^{\omega}[r] & C_{1} \ar^{\rho}[r] & C_{0}
\ar^{\pi}[r] &  R \ar[r] & 0}\] such that $\pi$ is additively and multiplicatively split.
We denote the connected components of the category of additively and multiplicatively split crossed
$\lambda$-extensions of $R$ by $M$ by $\pi_{0} MCross_{\lambda}(R,M)$.

\section{Yau cohomology for $\lambda$-rings}\label{yaucosect}
In 2005, Donald Yau published a paper entitled, ``Cohomology of
$\lambda$-rings'' \cite{Yau}, in which he developed a cohomology
theory for $\lambda$-rings. In this section we describe Yau's
cochain complex and what it computes.

Let $R$ be a $\lambda$-ring. We let $End(R)$ denote the algebra of $\mathbb{Z}$-linear endomorphisms of $R$, where the product is given by composition. We let $\overline{End}(R)$ denote the subalgebra of $End(R)$ which consists of the linear endomorphisms $f$ of $R$ which satisfy the condition,
\[f(r)^{p} \equiv f(r^{p}) \qquad \textrm{mod } pR,\] for each prime $p$ and every $r \in R$.

Yau defined $C_{Yau}^{0}(R)$ be the underlying group of $\overline{End}(R)$. He defined $C_{Yau}^{1}(R)$ be the set of functions $f: \mathbb{N} \rightarrow End(R)$ satisfying the condition $f(p)(R) \subset pR$ for each prime $p$. Then for $\nu \geq 2$ he set $C_{Yau}^{\nu}(R)$ to be the set of functions $f: \mathbb{N}^{\nu} \rightarrow End(R)$. For $\nu \in \mathbb{N}_{0}$, the coboundary map, $\delta^{\nu}: C^{\nu}_{Yau} \rightarrow C^{\nu+1}_{Yau}$, is given by the following
\begin{align*}
\delta^{\nu}(f)(m_{0},\ldots,m_{\nu}) =& \Psi^{m_{0}}\circ f(m_{1},\ldots,m_{\nu}) + \sum_{i=1}^{\nu}(-1)^{i}f(m_{0},\ldots,m_{i-1}m_{i},\ldots,m_{\nu}) \\ &+ (-1)^{\nu+1}f(m_{0},\ldots,m_{\nu-1})\circ\Psi^{m_{\nu}}.
\end{align*}
We say that the $\nu^{th}$ cohomology of the cochain complex $(C_{Yau},\delta)$ is the $\nu^{th}$ Yau cohomology of $R$, denoted by $H^{\nu}_{Yau}(R).$

From the cochain complex it is clear that \[H^{0}_{Yau}(R) = \{f \in
\overline{End}(R) : f\Psi^{\nu} = \Psi^{\nu}f \textrm{ for all } \nu \in
\mathbb{N}\}.\]

We define the group of \textit{Yau derivations} of $R$, denoted by
$YDer_{\lambda}(R)$, to consist of the functions $f \in C^{1}_{Yau}(R)$
such that
\[f(ij) = \Psi^{j}\circ f(i) + f(j) \circ \Psi^{i},\] for all $i,j \in \mathbb{N}$. We define the group of \textit{Yau inner-derivations} of $R$,
denoted by $YIDer_{\lambda}(R)$, to consist of the functions
$f:\mathbb{N} \rightarrow End(R)$ which are of the form \[f(i) =
\Psi^{i}\circ g - g \circ \Psi^{i},\] for some $g \in
\overline{End}(R)$.

The first Yau cohomology is given by the quotient,
\[H^{1}_{Yau}(R) = \frac{YDer_{\lambda}(R)}{YIDer_{\lambda}(R)}.\]

Yau tells us that there exists a canonical surjection,
\[H^{2}_{Yau}(R) \twoheadrightarrow HH^{2}(\mathbb{Z}[\mathbb{N}]
,End(R)),\] and for $\nu \geq 3$, there exists a canonical
isomorphism,
\[H^{\nu}_{Yau}(R) \cong HH^{\nu}(\mathbb{Z}[\mathbb{N}] ,End(R)),\]
where $HH^{\nu}(\mathbb{Z}[\mathbb{N}] ,End(R))$ denotes the $\nu^{th}$
Hochschild cohomology of $\mathbb{Z}[\mathbb{N}]$ with coefficients
in $End(R)$.

Yau defined his cohomology in order to study deformations of
$\lambda$-rings.

We let \[\Psi^{*}_{t} = \psi^{*}_{0} + t\psi^{*}_{1} +
t^{2}\psi^{*}_{2} + \ldots \] be a formal power series, in which
each $\psi^{*}_{i}$ is a function \[\psi^{*}_{i} : \mathbb{N}
\rightarrow End(R),\]  satisfying the following properties. We let $\psi^{j}_{i}$ denote $\psi^{*}_{i}(j).$ \begin{enumerate}
                                     \item $\psi^{j}_{0}(r) = \Psi^{j}(r)$,
                                     \item $\psi^{1}_{i}=0$ \qquad
                                     for $i \geq 1$,
                                     \item $\psi^{kl}_{i}(r)=\sum_{j=0}^{i}\psi^{k}_{j}\circ\psi^{l}_{i-j}(r)$
                                      \qquad for $k,l \geq 1$ and $i \geq 0$,
                                     \item $\psi^{p}_{i}(r) \subset
                                     pR$ \qquad for $i \geq 1$ and
                                     $p$ prime.
                                   \end{enumerate}
Yau calls $\Psi^{*}_{t}$ a \textit{deformation} of $R$.

Note that the Gerstenhaber and Schack's definition we provided in \ref{Psi deformation} is very
similar to Yau's definition but gives a different result. We would like to compare the results in the case when $\alpha_{i}=0$ for $i \geq 1$. We omitted the condition
$\psi^{p}_{i}(r) \subset pR$ for $p$ prime, but introduced the condition
$\psi^{j}_{i}(rs) = \sum_{k=0}^{i}\psi^{j}_{k}(r)\psi^{j}_{i-k}(s)$.
This last condition makes things more complicated and may seem
strange, but it is necessary to ensure that
\[\Psi^{*}_{t}(rs)=\Psi^{*}_{t}(r)\Psi^{*}_{t}(s).\]
Yau's condition gives us $\psi^{i}_{1} \in End(R)$. Gerstenhaber and Schack's condition gives us $\psi^{i}_{1} \in \Der(R,R^{i})$ where $R^{i}$ is the $R$-module with $R$ as an abelian group and the following action of $R$
\[(r,a) \mapsto \Psi^{i}(r)a, \qquad \textrm{ for } r \in R, a \in R^{i} .\]

\chapter{Harrison cohomology of diagrams of commutative algebras}
\label{Chapter5}

\section{Introduction}
For this chapter we let $I$ denote a small category. A category $I$ is said to be \textit{small} if
the collection of morphisms is a set. We let $i,j,k$
denote objects in $I$ and we let $\alpha:i \rightarrow j$ and
$\beta:j \rightarrow k$ denote morphisms in $I$.

\begin{definition}
A \textit{diagram of commutative algebras} is a covariant functor
\[A: I \rightarrow \mathfrak{Com.alg},\] where $I$ is a small
category, and $\mathfrak{Com.alg}$ is some category of commutative
algebras. We call $I$ the \textit{shape} of the diagram.

If $A,B$ are two covariant functors from $I$ to
$\mathfrak{Com.alg}$, then a map of diagrams is a natural
transformation $\mu :A \rightarrow B$. We denote the category of diagrams of commutative algebras with
shape $I$ by $\mathfrak{Com.alg}^{I}$.
\end{definition}

\begin{definition}
An $A$-module is a functor $M: I \rightarrow \mathfrak{Ab}$ such
that for all $i \in I$ we have that $M(i) \in A(i)$-$\mathfrak{mod}$
and for all $\alpha \in I$ we have \[M(\alpha)(a \cdot m) =
A(\alpha)(a)\cdot M(\alpha)(m),\] for all $a \in A(i), m \in M(i)$.
We let $A-\mathfrak{mod}^{I}$ denote the category of all $A$-modules.
\end{definition}

\section{Natural System}
Let $A: I \rightarrow \mathfrak{Com.alg}$ be a diagram of a commutative algebra, and $M$ be an $A$-module. For any $n \geq 0$ there exists a natural system on $I$ as follows
\[D_{\alpha} := C^{n}_{Harr}(A(i), \alpha^{*} M(j)),\]
where $(\alpha: i \rightarrow j) \in I$ and $M(j)$ is considered an $A(i)$-module via $\alpha$. For any $(\beta:j \rightarrow j') \in I$, we have $\beta_{*}:D_{\alpha} \rightarrow D_{\beta\alpha}$ which is induced by $M(\beta): M(j) \rightarrow M(j')$.  For any $(\gamma:i' \rightarrow i) \in I$, we have $\gamma^{*}:D_{\alpha} \rightarrow D_{\alpha\gamma}$ which is induced by $A(\gamma): A(i') \rightarrow A(i)$.

\section{Bicomplex}\label{dhbi}
Let $A: I \rightarrow \mathfrak{Com.alg}$ be a diagram of a
commutative algebra, and $M$ be an $A$-module. For each $i \in I$ we
can consider the Harrison cochain complex of the commutative algebra
$A(i)$ with coefficients in $M(i)$.
\[
\xymatrix{ C^{0}_{Harr}(A(i),M(i)) \ar[r] & C^{1}_{Harr}(A(i),M(i)) \ar[r] & C^{2}_{Harr}(A(i),M(i)) \ar[r]  & \ldots}
\]

We can use this to construct the following bicomplex denoted by $C^{*,*}_{Harr}(I,A,M)$:

\[
C^{p,q}_{Harr}(I,A,M) = \prod_{\alpha: i_{0}\rightarrow ... \rightarrow
i_{p}}C^{q+1}_{Harr}(A(i_{0})),\alpha^{*} M(i_{p})),
\]
for $p,q \geq 0$. The map $C^{p,q}_{Harr}(I,A,M) \rightarrow C^{p+1,q}_{Harr}(I,A,M)$ is the map in
the Baues-Wirsching cochain complex, and the map $C^{p,q}_{Harr}(I,A,M)
\rightarrow C^{p,q+1}_{Harr}(I,A,M)$ is the product of the Harrison coboundary maps.

\[
\xymatrix@W=1pc{ \vdots & \vdots
\\\prod_{i}C^{3}_{Harr}(A(i),M(i)) \ar[u]\ar^-{\delta}[r] &
\prod_{\alpha: i\rightarrow j}C^{3}_{Harr}(A(i),\alpha^{*}M(j)) \ar[r]\ar[u] &\ldots
\\\prod_{i}C^{2}_{Harr}(A(i),M(i)) \ar^-{\delta}[r] \ar^{\partial}[u]& \prod_{\alpha:i\rightarrow
j}C^{2}_{Harr}(A(i),\alpha^{*}M(j)) \ar[r]\ar^{-\partial}[u] & \ldots
\\\prod_{i}C^{1}_{Harr}(A(i),M(i)) \ar^-{\delta}[r] \ar^{\partial}[u] & \prod_{\alpha:i\rightarrow
j}C^{1}_{Harr}(A(i),\alpha^{*}M(j)) \ar[r]  \ar^{-\partial}[u] & \ldots}
\]

Let $(\alpha_{n}:i_{n} \rightarrow i_{n+1}) \in I$, and $\alpha = \alpha_{p} \ldots \alpha_{0}: i_{0} \rightarrow i_{p+1}$. Then the coboundary map $\delta: C^{p,q}_{Harr}(I,A,M) \rightarrow C^{p+1,q}_{Harr}(I,A,M)$ is given by

\begin{align*}
\delta (f)_{\alpha_{p+1},\ldots,\alpha_{0}}(x_{1},\ldots,x_{q}) =&  f_{\alpha_{p+1},\ldots,\alpha_{1}}(A(\alpha_{0})(x_{1}),\ldots,A(\alpha_{0})(x_{q}))
\\& + \sum_{k=0}^{p}(-1)^{k+1}f_{\alpha_{p+1},\ldots, \alpha_{k+1}\alpha_{k},\ldots,\alpha_{0}}(x_{1},\ldots,x_{q})
\\ &+ (-1)^{p+2} M(\alpha_{p+1})(f_{\alpha_{p},\ldots,\alpha_{0}}(x_{1},\ldots,x_{q})).
\end{align*}
The coboundary map $\partial: C^{p,q}_{Harr}(I,A,M) \rightarrow C^{p,q+1}_{Harr}(I,A,M)$ is given by
\begin{align*}
\partial (f)_{\alpha_{p},\ldots,\alpha_{0}}(x_{1},\ldots,x_{q+1}) =& A(\alpha)(x_{1})\cdot f_{\alpha_{p},\ldots,\alpha_{0}}(x_{2},\ldots,x_{q+1})
\\& + \sum_{k=1}^{q}(-1)^{k}f_{\alpha_{p},\ldots,\alpha_{0}}(x_{1},\ldots,x_{k}x_{k+1},\ldots,x_{q})
 \\ &+ (-1)^{q+1}f_{\alpha_{p},\ldots,\alpha_{0}}(x_{1},\ldots,x_{q})\cdot A(\alpha)(x_{q+1}).
\end{align*}

\begin{lemma} The maps $\partial$ and $\delta$ are coboundary maps. \[\partial^{2} = 0 = \delta^{2}.\]
\end{lemma}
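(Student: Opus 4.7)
The plan is to recognize each of the two coboundary operators as an instance of a known coboundary whose square vanishes, thereby reducing $\partial^2=0$ and $\delta^2=0$ to results already established in the earlier sections.

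For $\partial^{2}=0$, observe that the formula defining $\partial$ does not mix different indices $(\alpha_p,\ldots,\alpha_0)$: the value of $\partial(f)$ at the tuple $(\alpha_p,\ldots,\alpha_0)$ depends only on the value of $f$ at the same tuple. Thus $\partial$ is the product, over all composable chains $\alpha:i_0\to\cdots\to i_p$, of the standard Harrison coboundary on $C^{*}_{Harr}(A(i_0),\alpha^{*}M(i_p))$, where $A(i_0)$ acts on $\alpha^{*}M(i_p)$ in the usual way through $A(\alpha)$. Hence $\partial^2=0$ componentwise by the fact that the Harrison (Hochschild) coboundary squares to zero, which was recorded in the section on Harrison cohomology.

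For $\delta^2=0$, I would identify $\delta$ with the Baues--Wirsching coboundary of the natural system $D$ defined in the previous section, namely $D_\alpha = C^{q+1}_{Harr}(A(i_0),\alpha^{*}M(i_p))$ with the structure maps $\gamma^{*}$ and $\beta_{*}$ induced by $A(\gamma)$ and $M(\beta)$ respectively. Comparing the formula for $\delta$ given above with the Baues--Wirsching coboundary formula
\[
(df)(\alpha_1\ldots\alpha_{n+1}) = (\alpha_1)_{*}f(\alpha_2,\ldots,\alpha_{n+1}) + \sum_{j=1}^{n}(-1)^{j}f(\alpha_1,\ldots,\alpha_j\alpha_{j+1},\ldots,\alpha_{n+1}) + (-1)^{n+1}(\alpha_{n+1})^{*}f(\alpha_1,\ldots,\alpha_n),
\]
one sees they coincide term by term, once one reads the first term $f_{\alpha_{p+1},\ldots,\alpha_1}(A(\alpha_0)(x_1),\ldots)$ as the pullback $\alpha_0^{*}$ applied to $f_{\alpha_{p+1},\ldots,\alpha_1}$ and the last term $M(\alpha_{p+1})(f_{\alpha_p,\ldots,\alpha_0})$ as the pushforward $(\alpha_{p+1})_{*}$ applied to $f_{\alpha_p,\ldots,\alpha_0}$. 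Then $\delta^2=0$ follows from the corresponding fact for the Baues--Wirsching coboundary.

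The main obstacle, which is really only bookkeeping, is checking that the two pieces of ``extra'' data (the $q$ algebra-arguments $(x_1,\ldots,x_q)$ in the Harrison direction and the action/coaction of morphisms of $I$ in the Baues--Wirsching direction) do not interfere with one another: one must verify that $A(\alpha_0):A(i_1)\to A(i_0)$ is an algebra homomorphism (so that pullback commutes with the Harrison differential $\partial$) and that $M(\alpha_{p+1})$ is compatible with the module action (so that pushforward commutes with $\partial$). Both are immediate from the definition of a diagram of commutative algebras and of an $A$-module, so no further combinatorial work is needed; the signs $(-1)^{k+1}$ in $\delta$ and $(-1)^{k}$ in $\partial$ are exactly those required to make the squares in the bicomplex anticommute, a fact to be checked in the next lemma.
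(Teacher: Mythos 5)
Your reduction is correct in substance, but it is a genuinely different route from the paper's. The paper proves the lemma by a direct simplicial computation: it decomposes $\partial=\sum_{k}(-1)^{k}\partial_{k}$ and $\delta=\sum_{k}(-1)^{k}\delta_{k}$ into face-type operators and verifies the identities $\partial_{k}\partial_{l}=\partial_{l}\partial_{k-1}$ and $\delta_{k}\delta_{l}=\delta_{l}\delta_{k-1}$ for $l<k$, from which the alternating sums square to zero; this is self-contained bookkeeping. You instead observe that $\partial$ acts componentwise, on the factor indexed by a fixed chain $\alpha$, as the Harrison (Hochschild) coboundary of $A(i_{0})$ with coefficients in the symmetric module $\alpha^{*}M(i_{p})$, and that for fixed $q$ the rows form the Baues--Wirsching complex of the natural system $D_{\alpha}=C^{q+1}_{Harr}(A(i_{0}),\alpha^{*}M(i_{p}))$ from the preceding section; this is more conceptual, explains where the formulas come from, and feeds directly into the later $E_{2}$ identification, at the price of leaning on quoted background rather than an explicit check. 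Two caveats. First, the identification of $\delta$ with the Baues--Wirsching differential is not literally ``term by term'': in the paper's formula the pullback along the source-most morphism $\alpha_{0}$ carries sign $+1$ and the pushforward along the target-most morphism $\alpha_{p+1}$ carries $(-1)^{p+2}$, while in the Baues--Wirsching formula the pushforward is the unsigned first term and the pullback carries $(-1)^{n+1}$; after matching the indexing, the two differentials agree only up to an overall sign $(-1)^{p}$ depending on the degree. This is harmless, since a degreewise sign change affects neither $d^{2}=0$ nor the cohomology, but it should be stated rather than asserted as coincidence. Second, the compatibilities you defer to the next lemma (that each $A(\alpha)$ is an algebra homomorphism and each $M(\alpha)$ is compatible with the module action) are precisely what make $D$ a well-defined natural system, i.e. they are already needed for your proof of $\delta^{2}=0$, not only for the anticommutation of the squares.
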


\begin{proof}
$\partial(f) = \sum_{k=0}^{q+1}(-1)^{k}\partial_{k}(f)$ where
\[(\partial_{k}(f))(x_{1},\ldots,x_{q+1}) = \left\{
              \begin{array}{cc}
                A(\alpha)(x_{1})\cdot f_{\alpha_{p},\ldots,\alpha_{0}}(x_{2},\ldots,x_{q+1}) & k=0,\\
                 f_{\alpha_{p},\ldots,\alpha_{0}}(x_{1},\ldots,x_{k}x_{k+1},\ldots,x_{q+1}) & 0< k < q+1,\\
                f_{\alpha_{p},\ldots,\alpha_{0}}(x_{1},\ldots,x_{q})\cdot A(\alpha)(x_{q+1}) & k=q+1.\\
              \end{array}
            \right. \]

$\delta(f) = \sum_{k=0}^{p+2}(-1)^{k}\delta_{k}(f)$ where
$$(\delta_{k}(f))(x_{1},\ldots,x_{q}) = \left\{
              \begin{array}{cc}
                f_{\alpha_{p+1},\ldots,\alpha_{1}}(A(\alpha_{0})(x_{1}),\ldots,A(\alpha_{0})(x_{q})) & k=0,\\
                 f_{\alpha_{p+1},\ldots,\alpha_{k}\alpha_{k-1},\ldots,\alpha_{0}}(x_{1},\ldots,x_{q}) & 0< k < p+2,\\
                M(\alpha_{p+1})(f_{\alpha_{p},\ldots,\alpha_{0}}(x_{1},\ldots,x_{q})) & k=p+2.\\
              \end{array}
            \right.
 $$
$\partial^{2} = 0 = \delta^{2}$ follows from:
$$\partial_{k}\partial_{l} = \partial_{l}\partial_{k-1} \qquad 0 \leq l < k \leq q+2, $$
$$\delta_{k}\delta_{l} = \delta_{l}\delta_{k-1} \qquad 0 \leq l < k \leq p+2. $$
\end{proof}

\begin{lemma}  The coboundary maps $\partial$ and $\delta$ commute. \[\delta\partial = \partial\delta.\]
\end{lemma}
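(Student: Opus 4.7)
The plan is to reduce the identity $\delta\partial = \partial\delta$ to a pointwise check on the cofaces. Writing $\partial = \sum_{l=0}^{q+1}(-1)^{l}\partial_{l}$ and $\delta = \sum_{k=0}^{p+2}(-1)^{k}\delta_{k}$ as in the previous lemma, one has
\[
\partial\delta - \delta\partial = \sum_{k,l}(-1)^{k+l}\bigl(\partial_{l}\delta_{k} - \delta_{k}\partial_{l}\bigr),
\]
so it is enough to verify the stronger identity $\partial_{l}\delta_{k} = \delta_{k}\partial_{l}$ for every admissible pair $(k,l)$. I would organise the verification according to whether each of $k$ and $l$ is \emph{interior} (strictly between its extremes) or \emph{boundary} (i.e.\ $k\in\{0,p+2\}$, $l\in\{0,q+1\}$).

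First, when both $k$ and $l$ are interior the operations act on disjoint data: $\partial_{l}$ only contracts adjacent arguments $x_{l}x_{l+1}$ inside $f$, while $\delta_{k}$ only contracts adjacent morphisms $\alpha_{k}\alpha_{k-1}$ in the index. These cases commute tautologically. Secondly, if $\delta_{k}$ is interior and $\partial_{l}\in\{\partial_{0},\partial_{q+1}\}$, then both compositions carry a scalar $A(\alpha)(x_{\ast})$ where $\alpha = \alpha_{p+1}\cdots\alpha_{0}$; because composing two adjacent arrows does not change the overall composite $\alpha$, this scalar is unchanged and the two orders agree. Thirdly, if $\partial_{l}$ is interior and $\delta_{k}\in\{\delta_{0},\delta_{p+2}\}$, the identity follows from two facts: $A(\alpha_{0})$ is a ring homomorphism so $A(\alpha_{0})(x_{l}x_{l+1}) = A(\alpha_{0})(x_{l})\,A(\alpha_{0})(x_{l+1})$, which handles $\delta_{0}$; and $M(\alpha_{p+1})$ is only applied to the output of $f$, so it commutes past any relabelling of the $x_{i}$'s, handling $\delta_{p+2}$.

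The only interesting regime is the four corners where both indices are extremal. The pairs $(\delta_{0},\partial_{0})$ and $(\delta_{0},\partial_{q+1})$ reduce to the functoriality of $A$, i.e.\ $A(\alpha_{p+1}\cdots\alpha_{1})\circ A(\alpha_{0}) = A(\alpha)$, together with the fact that $A(\alpha_{0})$ is a homomorphism. The genuinely substantive subcases are $(\delta_{p+2},\partial_{0})$ and $(\delta_{p+2},\partial_{q+1})$: here one must push the scalar $A(\alpha)(x_{1})$ (respectively $A(\alpha)(x_{q+1})$) through $M(\alpha_{p+1})$, which is possible precisely because the $A$-module axiom gives
\[
M(\alpha_{p+1})\bigl(A(\alpha_{p}\cdots\alpha_{0})(x_{1})\cdot m\bigr) \;=\; A(\alpha_{p+1})\!\bigl(A(\alpha_{p}\cdots\alpha_{0})(x_{1})\bigr)\cdot M(\alpha_{p+1})(m) \;=\; A(\alpha)(x_{1})\cdot M(\alpha_{p+1})(m),
\]
the last equality again by functoriality of $A$. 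This $A$-module equivariance is the one non-combinatorial input, and it is the only place in the proof where anything beyond bookkeeping occurs.

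I expect the main obstacle to be purely notational rather than conceptual: keeping track of the two-level indexing (morphisms $\alpha_{p+1},\ldots,\alpha_{0}$ versus arguments $x_{1},\ldots,x_{q+1}$) and of whether the overall composite $\alpha$ lives at level $p$ or $p+1$ in each intermediate cochain group. Once the corner case is handled via the module compatibility $M(\beta)(a\cdot m) = A(\beta)(a)\cdot M(\beta)(m)$, the remaining sixteen or so pairs $(\partial_{l},\delta_{k})$ match term for term.
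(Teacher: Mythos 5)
Your proof is correct, but it is organised differently from the paper's. The paper proves the lemma by expanding $\delta\partial(f)$ in full — all three blocks of the Harrison coboundary against all three blocks of the Baues--Wirsching coboundary — and then regrouping the resulting sum term by term until it visibly equals $\partial\delta(f)$; the functoriality of $A$ and $M$ and the module identity $M(\alpha_{p+1})\bigl(A(\alpha_{p}\cdots\alpha_{0})(x_{1})\cdot m\bigr)=A(\alpha)(x_{1})\cdot M(\alpha_{p+1})(m)$ are used silently inside that regrouping. You instead prove the strictly stronger, termwise statement $\delta_{k}\partial_{l}=\partial_{l}\delta_{k}$ for each pair of cofaces, which is legitimate since both composites land in bidegree $(p+1,q+1)$ so the double sums match index for index; this mirrors the coface decomposition the paper itself introduced in the preceding lemma to prove $\partial^{2}=0=\delta^{2}$, and it has the advantage of isolating exactly where the non-combinatorial input occurs, namely the two corners $(\delta_{p+2},\partial_{0})$ and $(\delta_{p+2},\partial_{q+1})$ where the $A$-module axiom (together with functoriality of $A$, and, for the right-hand corner, commutativity/symmetry of the module action) is needed; the remaining cases are bookkeeping, as you say. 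What the paper's route buys is a single self-contained explicit computation; what yours buys is modularity and a clear identification of the one substantive hypothesis. A minor remark: the corner $(\delta_{0},\partial_{0})$ needs only functoriality of $A$, not the ring-homomorphism property (that is needed only when $\delta_{0}$ meets an interior $\partial_{l}$), and "sixteen or so pairs" is loose — there are $(p+3)(q+2)$ pairs falling into nine structural types — but neither point affects correctness.
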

The proof is given on the next page.
\begin{proof}
Let $f \in C^{p,q}_{Harr}(I,A,M)$.
\begin{align*}
\delta\partial(f) =&
A(\alpha)(x_{1})\cdot f_{\alpha_{p+1},\ldots,\alpha_{1}}(A(\alpha_{0})(x_{2}),\ldots,A(\alpha_{0})(x_{q+1}))
\\&+ \sum_{k=1}^{q}(-1)^{k}f_{\alpha_{p+1},\ldots,\alpha_{1}}(A(\alpha_{0})(x_{1}),\ldots,A(\alpha_{0})(x_{k}x_{k+1}),\ldots,A(\alpha_{0})(x_{q+1}))
\\ &+
(-1)^{q+1}f_{\alpha_{p+1},\ldots,\alpha_{1}}(A(\alpha_{0})(x_{1}),\ldots,A(\alpha_{0})(x_{q}))\cdot A(\alpha)(x_{q+1})
\\&+
\sum_{l=0}^{p}(-1)^{l+1}[A(\alpha)(x_{1})\cdot f_{\alpha_{p+1},\ldots,\alpha_{l+1}\alpha_{l},\ldots,\alpha_{0}}(x_{2},\ldots,x_{q+1})
\\ &+ \sum_{k=1}^{q} (-1)^{k}
f_{\alpha_{p+1},\ldots,\alpha_{l+1}\alpha_{l},\ldots,\alpha_{0}}(x_{1},\ldots,x_{k}x_{k+1},\ldots,x_{q+1})
\\&+ (-1)^{q+1}
f_{\alpha_{p+1},\ldots,\alpha_{l+1}\alpha_{l},\ldots,\alpha_{0}}(x_{1},\ldots,x_{q})\cdot A(\alpha)(x_{q+1})]
\\&+
(-1)^{p+2}M(\alpha_{p+1})[A(\alpha_{p}\cdots \alpha_{0})(x_{1})\cdot f_{\alpha_{p},\ldots,\alpha_{0}}(x_{2},\ldots,x_{q+1})
\\ &+
\sum_{k=1}^{q}(-1)^{k}f_{\alpha_{p},\ldots,\alpha_{0}}(x_{1},\ldots,x_{k}x_{k+1},\ldots,x_{q+1})
\\&+
(-1)^{q+1}f_{\alpha_{p},\ldots,\alpha_{0}}(x_{1},\ldots,x_{q})\cdot A(\alpha_{p}\cdots \alpha_{0})(x_{q+1})]
\\  =&
A(\alpha)(x_{1})\cdot [f_{\alpha_{p+1},\ldots,\alpha_{1}}(A(\alpha_{0})(x_{2}),\ldots,A(\alpha_{0})(x_{q+1}))
\\&+
\sum_{l=0}^{p}(-1)^{l+1}f_{\alpha_{p+1},\ldots,\alpha_{l+1}\alpha_{l},\ldots,\alpha_{0}}(x_{2},\ldots,x_{q+1})
\\&+
(-1)^{p+2}M(\alpha_{p+1}) f_{\alpha_{p},\ldots,\alpha_{0}}(x_{2},\ldots,x_{q+1})]
\\&+ \sum_{k=1}^{q}(-1)^{k}
[f_{\alpha_{p+1},\ldots,\alpha_{1}}(A(\alpha_{0})(x_{1}),\ldots,A(\alpha_{0})(x_{k}x_{k+1}),\ldots,A(\alpha_{0})(x_{q+1}))
\\ &+
\sum_{l=0}^{p}(-1)^{l+1}f_{\alpha_{p+1},\ldots,\alpha_{l+1}\alpha_{l},\ldots,\alpha_{0}}(x_{1},\ldots,x_{k}x_{k+1},\ldots,x_{q+1})
\\&+
(-1)^{p+2}M(\alpha_{p+1}) f_{\alpha_{p},\ldots,\alpha_{0}}(x_{1},\ldots,x_{k}x_{k+1},\ldots,x_{q+1})]
\\ &+ (-1)^{q+1}[f_{\alpha_{p+1},\ldots,\alpha_{1}}(A(\alpha_{0})(x_{1}),\ldots,A(\alpha_{0})(x_{q}))
\\& + \sum_{l=0}^{p}(-1)^{l+1}f_{\alpha_{p+1},\ldots, \alpha_{l+1}\alpha_{l},\ldots,\alpha_{0}}(x_{1},\ldots,x_{q})
\\ &+ (-1)^{p+2} M(\alpha_{p+1}) (f_{\alpha_{p},\ldots,\alpha_{0}}(x_{1},\ldots,x_{q}))
]\cdot A(\alpha)(x_{q+1}) \\ =& \partial\delta(f).\end{align*}
\end{proof}

\section{Harrison cohomology of diagrams of commutative algebras}
Let $A: I \rightarrow \mathfrak{Com.alg}$ be a diagram of commutative algebras, and $M$ be an $A$-module.
We define the \textit{Harrison cohomology} of $A$ with coefficients in $M$, denoted by $Harr^{*}(I,A,M)$, to be the cohomology of the total
complex of  $C^{*,*}_{Harr}(I,A,M)$.

The spectral sequence of a bicomplex yields the following spectral sequence.
$$E^{p,q}_{2} = H^{p}_{BW}(I,\mathcal{H}_{Harr}^{q+1}(A,M)) \Rightarrow Harr^{p+q}(I,A,M),$$
where $\mathcal{H}_{Harr}^{q}(A,M)$ is the natural system on $I$ whose value on $(\alpha:i \rightarrow j)$ is given by $Harr^{q}(A(i),\alpha^{*}M(j))$.

\begin{definition}
A \textit{derivation} $d: A \rightarrow M$ is of the form $d = (d_{i})_{i \in I}$ where each $d_{i}:A(i) \rightarrow M(i)$ is a
derivation of $A(i)$ with values in $M(i)$ such that for all $(\alpha:i \rightarrow j)
\in I$ we have that $M(\alpha)(d_{i}) = d_{j}( A(\alpha))$.
We denote the set of all derivations of $A$ with values in $M$ by
$\mathfrak{Der}(A,M)$.
\end{definition}

\begin{lemma}
\[Harr^{0}(I,A,M) \cong \mathfrak{Der}(A,M),\]
\[H_{BW}^{0}(I,\mathcal{H}_{Harr}^{1}(A,M)) \cong \mathfrak{Der}(A,M).\]
\end{lemma}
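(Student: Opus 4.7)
My plan is to unravel both groups directly from the definitions in degree zero and check that each of them collects the same data: an $I$-indexed family of Harrison $1$-cocycles satisfying a naturality condition.

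For $Harr^0(I,A,M)$ I would start from the total complex. Since the bicomplex is first-quadrant, $\mathrm{Tot}^0 = C^{0,0}_{Harr}(I,A,M) = \prod_{i\in I} C^1_{Harr}(A(i),M(i))$, and its differential is $d = \delta + \partial$. An element $f = (f_i)_{i\in I}$ is a $0$-cocycle iff both of its components in $\mathrm{Tot}^1 = C^{1,0}\oplus C^{0,1}$ vanish. The vertical condition $\partial f = 0$ unpacks, at each $i$, to the Harrison $1$-cocycle equation on $f_i$, which (since Harrison and Hochschild cochains agree in low degree) is exactly the derivation equation $f_i(xy) = x\,f_i(y) + f_i(x)\,y$. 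The horizontal condition $\delta f = 0$, evaluated on a morphism $\alpha : i\to j$, reads $f_j\circ A(\alpha) - M(\alpha)\circ f_i = 0$, which is precisely the naturality required in the definition of $\mathfrak{Der}(A,M)$. Together these recover $\mathfrak{Der}(A,M)$.

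For $H^0_{BW}(I,\mathcal{H}^1_{Harr}(A,M))$ I would use the Baues--Wirsching definition directly. The degree-zero cochains are $C^0_{BW}(I,D) = \prod_{i\in I} D(\mathrm{id}_i) = \prod_i Harr^1(A(i),M(i))$, which by the earlier lemma $Harr^1\cong \Der$ equals $\prod_i \Der(A(i), M(i))$. The BW coboundary at degree zero, applied to $(f_i)_i$ and evaluated on $\alpha:i\to j$, is $\alpha_*(f_i) - \alpha^*(f_j) = M(\alpha)\circ f_i - f_j\circ A(\alpha)$, and its kernel is again exactly the naturality condition, giving $\mathfrak{Der}(A,M)$. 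As an alternative route to the second isomorphism I would invoke Proposition \ref{genss}: the bicomplex spectral sequence specialises to $E_2^{p,q} = H^p_{BW}(I,\mathcal{H}^{q+1}_{Harr}(A,M)) \Rightarrow Harr^{p+q}(I,A,M)$, and at total degree zero only $E_2^{0,0}$ can contribute (no $d_r$ can enter or leave $(0,0)$ in a first-quadrant sequence), so $Harr^0(I,A,M) \cong E_2^{0,0} = H^0_{BW}(I,\mathcal{H}^1_{Harr}(A,M))$.

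The main obstacle is purely bookkeeping: one must verify that the signs in $\partial$ and $\delta$ cause the two defining conditions of $\mathfrak{Der}(A,M)$ (derivation at each object, naturality across morphisms) to decouple cleanly into the $C^{0,1}$ and $C^{1,0}$ summands of $\mathrm{Tot}^1$, rather than tangling through the total differential. Nothing deep is involved; the content of the lemma is really just that the two independently defined $H^0$'s both record the obvious set of natural families of derivations, and a slightly more careful accounting of the first-quadrant spectral sequence then identifies them with each other.
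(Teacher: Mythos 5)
Your proof is correct; the paper in fact states this lemma without any proof, and your argument is precisely the intended one: in total degree zero the only contribution is $C^{0,0}=\prod_i C^1_{Harr}(A(i),M(i))$, the vanishing of $\partial$ gives the derivation condition at each object, the vanishing of $\delta$ gives naturality along morphisms, and the same data is picked out by $\ker\bigl(C^0_{BW}\to C^1_{BW}\bigr)$ using $Harr^1\cong\Der$. The corner argument via the first-quadrant spectral sequence $E_2^{p,q}=H^p_{BW}(I,\mathcal{H}^{q+1}_{Harr}(A,M))\Rightarrow Harr^{p+q}(I,A,M)$ is also valid and consistent with the paper's stated spectral sequence.
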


\begin{definition}
An \textit{additively split extension} of $A$ by $M$ is an exact sequence of functors
\[\xymatrix{0 \ar[r] & M \ar^{q}[r] & X \ar^{p}[r] &  A \ar[r] & 0}\]
where $X: I \rightarrow \mathfrak{Com.alg}$ such that for all $i \in
I$ we get an additively split extension of $A(i)$ by $M(i)$.
\[\xymatrix{0 \ar[r] & M(i) \ar^{q(i)}[r] & X(i) \ar^{p(i)}[r] &  A(i) \ar[r] & 0}\]
This means that there are additive homomorphisms $s(i):A(i)
\rightarrow X(i)$ for all $i \in I$ such that $s(i)$ is a section of
$p(i)$. The sections induce additive isomorphisms $M(i) \oplus A(i)
\approx X(i)$ where addition is given by $(m,a)+(m',a') = (m+m',
a+a')$ and multiplication is given by
\[(m,a)(m',a') = (a'm + am' + f_{i}(a,a'), aa'),\] where
$f_{i}:A(i)\times A(i) \rightarrow M(i)$ is a bilinear map given by
\[f_{i}(a,a') = s(i)(a)s(i)(a')-s(i)(aa').\] Associativity in $X(i)$
gives us \[0 = af_{i}(a',a'')-f_{i}(aa',a'') +
f_{i}(a,a'a'')-f_{i}(a,a')a''.\] Commutativity in $X(i)$ gives us
\[f_{i}(a,a')=f_{i}(a',a).\] For all $(\alpha:i \rightarrow j) \in
I$ we identify $M(j)$ with $\Ker(p(j))$ and $M(\alpha)$ with the restriction of $X(\alpha)$ to get a map $\epsilon_{\alpha}:A(i) \rightarrow M(j)$ given by
\[\epsilon_{\alpha}(a) =  X(\alpha)(s(i)(a)) - s(j)(A(\alpha)(a)),\] which
satisfies the following properties:
\begin{enumerate}
  \item $\epsilon_{id}(a)=0$,
  \item $\epsilon_{\alpha}(a+a') = \epsilon_{\alpha}(a)+\epsilon_{\alpha}(a')$,
  \item $\epsilon_{\alpha}(aa') = A(\alpha)(a)\epsilon_{\alpha}(a') + A(\alpha)(a')\epsilon_{\alpha}(a) \\+f_{j}(A(\alpha)(a),A(\alpha)(a')) - M(\alpha)(f_{i}(a,a'))$,
  \item $\epsilon_{\beta\alpha}(a) = M(\beta)(\epsilon_{\alpha}(a)) + \epsilon_{\beta}( A(\alpha)(a))$.
\end{enumerate}
\end{definition}

Two additively split extensions $(X),(X')$ with $A,M$ fixed are said
to be \textit{equivalent} if there exists a map of diagrams $\phi: X
\rightarrow X'$ such that the following diagram commutes.
\[\xymatrix{0 \ar[r] & M \ar@{=}[d] \ar[r] & X \ar[r] \ar[d]^{\phi} & A \ar[r] \ar@{=}[d] & 0 \\ 0 \ar[r] & M \ar[r] & X' \ar[r] & A \ar[r] & 0}\]
For all $i \in I$ we get that $\phi_{i}: X(i) \rightarrow X'(i)$ is a homomorphism of commutative algebras. Hence $\phi_{i}(m,a) = (m + g_{i}(a),a)$ for some $g_{i}:A \rightarrow M$ such that
\[g_{i}(a+a') = g_{i}(a) + g_{i}(a'),\]
\[f_{i}(a,a') -f'_{i}(a,a') = ag_{i}(a) - g_{i}(aa') + g_{i}(a)a'.\]
For all $\alpha \in I$ we get that
\[\epsilon_{\alpha}(a) - \epsilon'_{\alpha}(a) = M(\alpha)(g_{i}(a)) - g_{j}( A(\alpha)(a)).\]

We denote the set of equivalence classes of additively split
extensions of $A$ by $M$ by $\mathfrak{AExt}(A,M)$.

An \textit{additively and multiplicatively split extension} of $A$
by $M$ is an additively split extension of $A$ by $M$
\[\xymatrix{0 \ar[r] & M \ar^{q}[r] & X \ar^{p}[r] &  A \ar[r] & 0}\]
such that for each $i \in I$ the arrow $p(i)$ is additively and
multiplicatively split.

We denote the set of equivalence classes of additively and
multiplicatively split extensions of $A$ by $M$ by
$\mathfrak{MExt}(A,M)$.

\begin{lemma}
\[Harr^{1}(I,A,M) \cong \mathfrak{AExt}(A,M).\]
\end{lemma}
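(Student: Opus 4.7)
The plan is to generalize the classical identification $Harr^{2}(A,M) \cong \AExt(A,M)$ recalled earlier to the diagrammatic setting by exhibiting mutually inverse maps between $\mathfrak{AExt}(A,M)$ and $Harr^{1}(I,A,M)$. The key observation is that a $1$-cochain in $Tot(C^{*,*}_{Harr}(I,A,M))$ decomposes as a pair $(\epsilon, f)$ with $\epsilon = (\epsilon_{\alpha}) \in C^{1,0}_{Harr}(I,A,M)$ a family of additive maps $\epsilon_{\alpha}\colon A(i)\to M(j)$ indexed by $\alpha\colon i\to j$, and $f = (f_{i}) \in C^{0,1}_{Harr}(I,A,M)$ a family of symmetric bilinear maps $f_{i}\colon A(i)\otimes A(i)\to M(i)$. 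Writing out $d(\epsilon,f)=0$ splits into three conditions: (i) $\partial f = 0$, so each $f_{i}$ is a Harrison $2$-cocycle; (ii) $\delta \epsilon = 0$, i.e.\ $\epsilon_{\beta\alpha}(a) = M(\beta)(\epsilon_{\alpha}(a)) + \epsilon_{\beta}(A(\alpha)(a))$; and (iii) the mixed compatibility
\[
\epsilon_{\alpha}(aa') - A(\alpha)(a)\epsilon_{\alpha}(a') - A(\alpha)(a')\epsilon_{\alpha}(a) = f_{j}(A(\alpha)(a),A(\alpha)(a')) - M(\alpha)(f_{i}(a,a')).
\]

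The forward map $\Phi\colon \mathfrak{AExt}(A,M)\to Harr^{1}(I,A,M)$ would assign to an extension $0\to M\to X\to A\to 0$ with chosen additive sections $s_{i}$ the pair
\[
f_{i}(a,a') := s_{i}(a)s_{i}(a') - s_{i}(aa'), \qquad \epsilon_{\alpha}(a) := X(\alpha)(s_{i}(a)) - s_{j}(A(\alpha)(a)),
\]
under the identification of $M(i)$ with the kernel ideal of $X(i)\to A(i)$. Associativity and commutativity in $X(i)$ give (i); the four properties of $\epsilon_{\alpha}$ in the definition of an additively split extension deliver (ii) and (iii) directly (properties $1$ and $2$ being automatic once the sections $s_{i}$ are additive and normalized). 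Changing sections by an additive family $g = (g_{i}) \in C^{0,0}_{Harr}(I,A,M)$ alters $(\epsilon,f)$ by the total coboundary $dg$, using crucially that $M$ is a square-zero ideal; equivalences of extensions induce such a change of sections vertex by vertex, so $\Phi$ descends to cohomology classes.

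For the inverse, given a $1$-cocycle $(\epsilon,f)$, I would define $X\colon I\to \mathfrak{Com.alg}$ by $X(i) := M(i)\oplus A(i)$ with multiplication $(m,a)(m',a') := (am' + a'm + f_{i}(a,a'),\ aa')$ and transition $X(\alpha)(m,a) := (M(\alpha)(m) + \epsilon_{\alpha}(a),\ A(\alpha)(a))$. Condition (i) makes each $X(i)$ a commutative ring, condition (ii) makes $X$ a functor, and condition (iii) makes each $X(\alpha)$ a ring homomorphism. The resulting sequence $0\to M\to X\to A\to 0$ is additively split by $s_{i}(a)=(0,a)$ and, under $\Phi$, recovers $(\epsilon,f)$ on the nose. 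Adding a coboundary $dg$ to $(\epsilon,f)$ yields an equivalent extension via the isomorphism $(m,a)\mapsto (m+g_{i}(a),a)$, so the construction descends to equivalence classes and is inverse to $\Phi$.

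The main obstacle is the sign bookkeeping in the mixed square of the bicomplex: the anticommuting convention fixed in Section~\ref{dhbi} has to be reconciled with the natural signs coming from the defining formulas for $f_{i}$ and $\epsilon_{\alpha}$, and the four axioms in the definition of an additively split extension must be shown to encode precisely the three cocycle conditions. Once this translation is made, all remaining verifications are direct computations in the style of the bicomplex calculations already carried out in Section~\ref{dhbi}, and follow the template of the non-diagrammatic proof that $Harr^{2}(A,M) \cong \AExt(A,M)$.
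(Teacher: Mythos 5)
Your proposal is correct and follows essentially the same route as the paper: decompose a total degree-one cocycle as a pair $(\epsilon_{\alpha})\in C^{1,0}_{Harr}(I,A,M)$, $(f_{i})\in C^{0,1}_{Harr}(I,A,M)$, assign to an extension with chosen additive sections the cocycle $f_{i}(a,a')=s(i)(a)s(i)(a')-s(i)(aa')$, $\epsilon_{\alpha}(a)=X(\alpha)(s(i)(a))-s(j)(A(\alpha)(a))$, and invert by equipping $M\oplus A$ with the twisted multiplication and transition maps $(M\oplus A)(\alpha)(m,a)=(M(\alpha)(m)+\epsilon_{\alpha}(a),A(\alpha)(a))$, with coboundaries corresponding to equivalences. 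Your write-up is in fact somewhat more explicit than the paper's about which cocycle conditions encode which axioms of an additively split extension, but the argument is the same.
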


\begin{proof}
A 1-cocycle is a pair $(f_{i}:A(i) \times A(i) \rightarrow M(i))_{i
\in I}$ and $(\epsilon_{\alpha}: A(i) \rightarrow M(j))_{(\alpha:i
\rightarrow j) \in I}$. We get an additively split extension of $A$
by $M$ given by taking the exact sequence
\[ \xymatrix{0 \ar[r] & M \ar[r] & M\oplus A \ar[r] &  A \ar[r] & 0} \]
where addition in $M\oplus A$ is given by $(m,a)+(m',a') = (m+m',
a+a')$ and multiplication is given by  \[(m,a)(m',a') = (a'm + am' +
f_{i}(a,a'), aa').\] For all $(\alpha:i \rightarrow j) \in I$ set the
map $(M\oplus A)(\alpha):(M\oplus A)(i) \rightarrow (M\oplus A)(j)$
to be \[ (M\oplus A)(\alpha)(m,a) = (M(\alpha)(m) +
\epsilon_{\alpha}(a), A(\alpha)(a)).\] Given two 1-cocycles which
differ by a 1-coboundary, then the two additively split extensions
we get are equivalent.

Given an additively split extension of $A$ by $M$
\[\xymatrix{0 \ar[r] & M \ar^{q}[r] & X \ar^{p}[r] &  A \ar[r] & 0}\]
there are additive homomorphisms $s(i):A(i) \rightarrow X(i)$ for
all $i \in I$ such that $s(i)$ is a section of $p(i)$.

For all $i \in I$ we define the maps $f_{i}:A(i)\times A(i)
\rightarrow M(i)$ to be given by \[f_{i}(a,a') =
s(i)(a)s(i)(a')-s(i)(aa').\] For all $(\alpha:i \rightarrow j) \in
I$ we define the maps $\epsilon_{\alpha}:A(i) \rightarrow M(j)$ to
be given by \[\epsilon_{\alpha}(a) =  X(\alpha)(s(i)(a)) -
s(j)(A(\alpha)(a)).\] Then $(f_{i}:A(i) \times A(i) \rightarrow
M(i))_{i \in I}$ and $(\epsilon_{\alpha}: A(i) \rightarrow
M(j))_{(\alpha:i \rightarrow j) \in I}$ give us a 1-cocycle. Given
two additively split extensions which are equivalent, then the two
1-cocycles we get differ by a 1-coboundary.
\end{proof}

\begin{corollary}
\[H_{BW}^{1}(I,\mathcal{H}_{Harr}^{1}(A,M)) \cong \mathfrak{MExt}(A,M).\]
\end{corollary}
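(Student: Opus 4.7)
The plan is to identify both sides explicitly and then match them. Using the lemma $Harr^{1}(A(i),\alpha^{*}M(j)) \cong \Der(A(i),\alpha^{*}M(j))$, the natural system $\mathcal{H}^{1}_{Harr}(A,M)$ assigns to each morphism $\alpha\colon i\to j$ in $I$ the derivation group $\Der(A(i),\alpha^{*}M(j))$, with $\beta_{*}$ given by post-composition with $M(\beta)$ and $\gamma^{*}$ by pre-composition with $A(\gamma)$. So a normalized $1$-cocycle in the Baues--Wirsching complex is a family $(\epsilon_{\alpha})_{\alpha\in I}$ with $\epsilon_{\alpha}\in\Der(A(i),\alpha^{*}M(j))$ satisfying $\epsilon_{\mathrm{id}_{i}}=0$ and
\[
\epsilon_{\beta\alpha}(a) \;=\; M(\beta)\bigl(\epsilon_{\alpha}(a)\bigr) \;+\; \epsilon_{\beta}\bigl(A(\alpha)(a)\bigr),
\]
and two such cocycles are cohomologous iff their difference has the form $\epsilon_{\alpha}(a)-\epsilon'_{\alpha}(a)=M(\alpha)(g_{i}(a))-g_{j}(A(\alpha)(a))$ for some $g=(g_{i})\in\prod_{i\in I}\Der(A(i),M(i))$.

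First I would construct the map from cocycles to extensions. Given $(\epsilon_{\alpha})$, define $X\in\mathfrak{Com.alg}^{I}$ by $X(i):=A(i)\rtimes M(i)$ (multiplication $(a,m)(a',m')=(aa',am'+a'm)$) and, for $\alpha\colon i\to j$,
\[
X(\alpha)(a,m)\;:=\;\bigl(A(\alpha)(a),\,M(\alpha)(m)+\epsilon_{\alpha}(a)\bigr).
\]
The derivation property of $\epsilon_{\alpha}$ is exactly what is needed for $X(\alpha)$ to be a ring homomorphism, and the BW cocycle condition on $(\epsilon_{\alpha})$ is exactly the functoriality $X(\beta\alpha)=X(\beta)X(\alpha)$. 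Thus $X$ defines an additively and multiplicatively split extension.

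Conversely, given a class in $\mathfrak{MExt}(A,M)$, choose for each $i$ a ring-theoretic splitting so that $X(i)\cong A(i)\rtimes M(i)$; since $X(\alpha)$ restricts to $M(\alpha)$ on $M(i)$ and covers $A(\alpha)$, it has the shape $(a,m)\mapsto(A(\alpha)(a),M(\alpha)(m)+\epsilon_{\alpha}(a))$ for a unique $\epsilon_{\alpha}\colon A(i)\to M(j)$. Expanding $X(\alpha)((a,0)(a',0))=X(\alpha)(a,0)X(\alpha)(a',0)$ forces $\epsilon_{\alpha}\in\Der(A(i),\alpha^{*}M(j))$, and $X(\beta\alpha)=X(\beta)X(\alpha)$ forces the BW cocycle identity. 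These two constructions are mutually inverse on the nose.

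Finally I would check that the equivalence relations correspond. An equivalence $\phi\colon X\to X'$ of additively and multiplicatively split extensions is, at each $i$, a ring homomorphism of semidirect products fixing $M(i)$ and $A(i)$, hence of the form $\phi_{i}(a,m)=(a,m+g_{i}(a))$ with $g_{i}\in\Der(A(i),M(i))$; naturality $\phi_{j}X(\alpha)=X'(\alpha)\phi_{i}$ gives $\epsilon_{\alpha}-\epsilon'_{\alpha}=M(\alpha)\circ g_{i}-g_{j}\circ A(\alpha)$, which is precisely $(dg)(\alpha)$ in the BW complex. The construction is manifestly functorial in the choice of cocycle and of equivalence, so the assignment descends to a bijection $H^{1}_{BW}(I,\mathcal{H}^{1}_{Harr}(A,M))\cong\mathfrak{MExt}(A,M)$. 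No real obstacle is expected: the derivation identification is already in hand, and the only content is translating the two-variable structure (one variable for the internal derivation, one for the morphism in $I$) into the BW bookkeeping; the mildly delicate point is merely keeping the source/target conventions for $\alpha$ consistent with the BW formula.
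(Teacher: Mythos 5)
Your proof is correct and takes essentially the same route as the paper: the paper states this corollary without proof as the immediate specialization of its lemma $Harr^{1}(I,A,M)\cong\mathfrak{AExt}(A,M)$ (whose proof is exactly this cocycle-versus-extension bookkeeping) to the case where the internal bilinear cochains $f_{i}$ vanish, and your explicit dictionary between Baues--Wirsching $1$-cocycles valued in $\Der(A(i),\alpha^{*}M(j))$ and additively and multiplicatively split extensions is precisely that specialization, with the correct cocycle and coboundary identities. Nothing essential is missing.
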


\begin{definition}
An \textit{additively split crossed extension} of $A$ by $M$ is an exact sequence of functors
\[\xymatrix{0 \ar[r] & M \ar^{\phi}[r] & C_{1} \ar^{\rho}[r] & C_{0} \ar^{\pi}[r] &  A \ar[r] & 0}\]
such that for all $i \in I$ we get an additively split crossed extension of $A(i)$ by $M(i)$.
\begin{equation}\label{esasc} \xymatrix{0 \ar[r] & M(i) \ar^{\phi(i)}[r] & C_{1}(i) \ar^{\rho(i)}[r] & C_{0}(i) \ar^{\gamma(i)}[r] &  A(i) \ar[r] & 0}
\end{equation}
This means that all the arrows in the exact sequence $\ref{esasc}$
are additively split. We let $\pi_{0} \mathfrak{ACross}(A,M)$ denote the connected components of the category of additively split crossed extensions of $A$ by $M$.

An \textit{additively and multiplicatively split crossed extension}
of $A$ by $M$ is an exact sequence of functors
\[\xymatrix{0 \ar[r] & M \ar^{\phi}[r] & C_{1} \ar^{\rho}[r] & C_{0} \ar^{\gamma}[r] &  A \ar[r] & 0}\]
such that for all $i \in I$ we get an additively and multiplicatively split crossed
extension of $A(i)$ by $M(i)$,
\begin{equation} \xymatrix{0 \ar[r] & M(i) \ar^{\phi(i)}[r] & C_{1}(i) \ar^{\rho(i)}[r] & C_{0}(i) \ar^{\gamma(i)}[r] &  A(i) \ar[r] & 0}
\end{equation}
where $\gamma(i)$ and $\rho(i)$ are additively and multiplicatively split.  We let $\pi_{0} \mathfrak{MCross}(A,M)$ denote the connected components of the category of additively and multiplicatively split crossed extensions of $A$ by $M$.
\end{definition}

\begin{lemma}\label{hul1} If $\gamma: C_{0} \rightarrow A$ is a morphism of diagrams of commutative algebras  then
\[Harr^{1}(I,\gamma: C_{0} \rightarrow A,M) \cong \pi_{0} \mathfrak{ACross}(\gamma: C_{0} \rightarrow A ,M),\]
where $Harr^{*}(I,\gamma: C_{0} \rightarrow A,M)$ and $\pi_{0} \mathfrak{ACross}(\gamma: C_{0} \rightarrow A ,M)$ are defined as follows. Consider the following short exact sequence of cochain complexes:
\[\xymatrix{0 \ar[r] & C^{*}_{Harr}(I,A,M) \ar[r]^-{\gamma^{*}}   \ar[r] & C^{*}_{Harr}(I,C_{0},M) \ar[r] \ar[r]^-{\kappa^{*}} & \Coker(\gamma^{*}) \ar[r] & 0,}\] where $C^{*}_{Harr}(I,A,M)$ denotes the total complex of the bicomplex $(C^{*,*}_{Harr}(I,A,M)$. We define the cochain complex $C^{*}_{Harr}(I,\gamma: C_{0} \rightarrow A,M) := \Coker(\gamma^{*}).$ This allows us to define the relative Harrison cohomology \[Harr^{*}(I,\gamma: C_{0} \rightarrow A, M) := H^{*}(C^{*}_{Harr}(I,\gamma: C_{0} \rightarrow A, M)).\]

We let $\mathfrak{ACross}(\gamma: C_{0} \rightarrow A ,M)$ denote the category whose objects are the additively split crossed extensions of $A$ by $M$ \[\xymatrix{0 \ar[r] & M \ar^{\phi}[r] & C_{1} \ar^{\rho}[r] & C_{0} \ar^{\gamma}[r] &  A \ar[r] & 0}\] with $\gamma: C_{0} \rightarrow A$ fixed. A morphism between two of these crossed extensions consists of a morphism of diagrams of commutative algebras $h_{1}:C_{1} \rightarrow C_{1}$ such that the following diagram commutes.
\[\xymatrix{0 \ar[r] & M \ar@{=}[d] \ar[r]^{\phi} & C_{1} \ar^{h_{1}}[d] \ar[r]^{\rho} & C_{0} \ar@{=}[d] \ar[r]^{\gamma} & A \ar@{=}[d] \ar[r] & 0  \\ 0 \ar[r] & M \ar[r]^{\phi'} & C'_{1} \ar[r]^{\rho'} & C_{0} \ar[r]^{\gamma} & A \ar[r] & 0 \\}\]

Note that $\mathfrak{ACross}(\gamma: C_{0} \rightarrow A ,M)$ is a groupoid.
\end{lemma}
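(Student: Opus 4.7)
The plan is to adapt the argument from the absolute case (the preceding lemma identifying $Harr^{2}(\gamma\colon C_{0}\to A,M)$ with $\pi_{0} ACross(\gamma\colon C_{0}\to A,M)$) by tracking, in addition to the data assigned to each object of $I$, the failure of naturality over each morphism $\alpha\colon i\to j$. This extra data will populate the horizontal direction of the bicomplex $C^{*,*}_{Harr}(I,C_{0},M)$ defined in section \ref{dhbi}, so that the whole construction lives in $Tot^{1}$ and, after passing to the cokernel of $\gamma^{*}$, produces a $1$-cocycle.

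First I would construct the map $\pi_{0}\mathfrak{ACross}(\gamma\colon C_{0}\to A,M)\to Harr^{1}(I,\gamma\colon C_{0}\to A,M)$. Given an additively split crossed extension
\[\xymatrix{0 \ar[r] & M \ar^{\phi}[r] & C_{1} \ar^{\rho}[r] & C_{0} \ar^{\gamma}[r] & A \ar[r] & 0,}\]
I choose, for each $i\in I$, additive $k$-linear sections $s(i)\colon A(i)\to C_{0}(i)$ of $\gamma(i)$ and $\sigma(i)\colon V(i)\to C_{1}(i)$ of $\rho(i)\colon C_{1}(i)\to V(i)$, where $V(i)=\Ker\gamma(i)$. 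Applying the absolute construction from the previous lemma at each $i$ produces cochains $f_{i}\in C^{2}_{Harr}(C_{0}(i),M(i))$ whose images $\kappa^{*}f_{i}$ are $2$-cocycles modulo $\gamma^{*}$. These sections are not natural in $\alpha\colon i\to j$, and I would encode the obstruction by setting
\[\epsilon_{\alpha}(c)\;:=\;\sigma(j)\bigl(C_{0}(\alpha)(c)-s(j)A(\alpha)\gamma(i)(c)\bigr)\;-\;C_{1}(\alpha)\,\sigma(i)\bigl(c-s(i)\gamma(i)(c)\bigr),\]
viewed as an element of $M(j)\subset C_{1}(j)$, giving $\epsilon_{\alpha}\in C^{1}_{Harr}(C_{0}(i),\alpha^{*}M(j))$. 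A direct computation shows that $(f_{i},\epsilon_{\alpha})\in C^{0,1}_{Harr}(I,C_{0},M)\oplus C^{1,0}_{Harr}(I,C_{0},M)=Tot^{1}$ has total coboundary landing in the image of $\gamma^{*}$, so its class in $\Coker(\gamma^{*})$ is a $1$-cocycle. Any other choice of sections modifies $(f_{i},\epsilon_{\alpha})$ by a total $1$-coboundary, so the class depends only on the connected component.

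Next, for the inverse map, given a $1$-cocycle represented by $(f_{i},\epsilon_{\alpha})$ lifted to $C_{Harr}^{*,*}(I,C_{0},M)$, I define $C_{1}(i):=M(i)\times V(i)$ as a $C_{0}(i)$-module via $c\cdot(m,v):=(cm+f_{i}(c,v),\,cv)$ (as in the absolute lemma), and for $\alpha\colon i\to j$ I set
\[C_{1}(\alpha)(m,v)\;:=\;\bigl(M(\alpha)(m)+\epsilon_{\alpha}(v),\,C_{0}(\alpha)(v)\bigr).\]
The horizontal piece of the cocycle condition (the Baues–Wirsching differential $\delta$) gives exactly functoriality $C_{1}(\beta\alpha)=C_{1}(\beta)C_{1}(\alpha)$; the vertical piece gives the crossed-module and module compatibilities at each $i$; the mixed piece $\delta\partial=\partial\delta$ guarantees that $C_{1}(\alpha)$ is a morphism of $C_{0}(\alpha)$-semilinear modules. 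Coboundaries in the relative complex correspond to morphisms in $\mathfrak{ACross}(\gamma\colon C_{0}\to A,M)$ that fix $\gamma$, confirming that the two maps are mutually inverse on connected components. The main obstacle is the bookkeeping in verifying the cocycle/functoriality correspondence: one must check that every signed term in $d(f_{i},\epsilon_{\alpha})=(\partial f_{i}-\delta\epsilon_{\alpha},\,\partial\epsilon_{\alpha}+\delta(\text{something of }\gamma))$ corresponds to exactly one of the axioms of an additively split crossed extension of diagrams, with signs from the bicomplex matching the signs introduced by the shuffle-vanishing conditions defining $C^{*}_{Harr}$.
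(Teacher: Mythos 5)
Your overall strategy coincides with the paper's: run the absolute construction objectwise to get $f_{i}\in C^{2}_{Harr}(C_{0}(i),M(i))$, record the failure of naturality over each morphism $\alpha$ as a $(1,0)$-cochain so that the pair sits in total degree one of $C^{*,*}_{Harr}(I,C_{0},M)$ and maps to a cocycle in $\Coker(\gamma^{*})$, and invert by putting $C_{1}(i)=M(i)\times V(i)$ with the $f_{i}$-twisted action and $C_{1}(\alpha)(m,v)=(M(\alpha)(m)+e_{\alpha}(v),C_{0}(\alpha)(v))$. However, your defining formula for $\epsilon_{\alpha}$ has a genuine defect: it does not take values in $M(j)$. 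Writing $\omega_{i}(c)=\sigma_{i}(c-s_{i}\gamma_{i}(c))$ and using $\gamma_{j}C_{0}(\alpha)=A(\alpha)\gamma_{i}$, your $\epsilon_{\alpha}(c)$ is exactly $\omega_{j}(C_{0}(\alpha)(c))-C_{1}(\alpha)(\omega_{i}(c))$, and applying $\rho_{j}$ gives
\[\rho_{j}\bigl(\epsilon_{\alpha}(c)\bigr)=C_{0}(\alpha)\bigl(s_{i}(\gamma_{i}(c))\bigr)-s_{j}\bigl(A(\alpha)(\gamma_{i}(c))\bigr),\]
the non-naturality defect of the chosen sections $s$, which is nonzero in general. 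So $\epsilon_{\alpha}$ is not an element of $C^{1}_{Harr}(C_{0}(i),\alpha^{*}M(j))$ as you claim. The paper repairs precisely this by introducing the auxiliary cochain $q_{\alpha}(a)=\sigma_{j}\bigl(C_{0}(\alpha)(s_{i}(a))-s_{j}(A(\alpha)(a))\bigr)$ and setting $e_{\alpha}(c)=\omega_{j}(C_{0}(\alpha)(c))-C_{1}(\alpha)(\omega_{i}(c))-q_{\alpha}(\gamma_{i}(c))$, whose image under $\rho_{j}$ vanishes.

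The omission of $q_{\alpha}$ is not merely cosmetic; it is also what makes the step ``the total coboundary lands in the image of $\gamma^{*}$'' provable. In the paper one exhibits an explicit element $(\theta,\vartheta,\eta)$ of total degree two in $C^{*}_{Harr}(I,A,M)$: $\theta_{i}$ is the associator of $g_{i}$, $\vartheta_{\alpha}$ is the naturality defect of $g$ expressed through $q_{\alpha}$, and $\eta_{\beta\alpha}(x)=-q_{\beta}(A(\alpha)(x))+q_{\beta\alpha}(x)-C_{1}(\beta)(q_{\alpha}(x))$ lives over pairs of composable morphisms; one then verifies $\delta(f,e)=\gamma^{*}(\theta,\vartheta,\eta)$. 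Your proposal relegates the $(1,1)$- and $(2,0)$-components of the coboundary to unspecified bookkeeping, but without $q$ you have no candidate preimage under $\gamma^{*}$ for those components, so the cocycle claim cannot be established as written. Once you introduce $q_{\alpha}$, correct $\epsilon_{\alpha}$ accordingly, and build $(\theta,\vartheta,\eta)$ from it, your argument becomes the paper's proof; the inverse construction you give already agrees with the paper's.
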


\begin{proof}
We use the method used in \cite{KL} for the crossed modules of Lie
algebras. Given any additively split crossed module of $A$ by $M$,
\[\xymatrix{0 \ar[r] & M \ar^{\phi}[r] & C_{1} \ar^{\rho}[r] & C_{0} \ar^{\gamma}[r] &  A \ar[r] & 0,}\]
we let $V= \Ker \gamma = \Image \rho$. For all objects $i \in I$
there are linear sections $s_{i}: A(i) \rightarrow C_{0}(i)$ of
$\gamma$ and $\sigma_{i} :V(i) \rightarrow C_{1}(i)$ of $\rho(i) :
C_{1}(i) \rightarrow V(i)$.  We define the maps $g_{i}: A(i) \otimes
A(i) \rightarrow C_{1}(i)$ by:
\[g_{i}(a,b)= \sigma_{i}(s_{i}(a)s_{i}(b)-s_{i}(ab)).\]
We also define the maps $\omega_{i} : C_{0}(i) \rightarrow C_{1}(i)$ by:
\[\omega_{i} (c) = \sigma_{i}(c- s_{i}\gamma_{i} (c)).\]
By identifying $M$ with $\Ker \rho$, we define the maps $f_{i}: C_{0}(i) \otimes C_{0}(i) \rightarrow M(i)$ by:
\[f_{i}(c,c') = g_{i}(\gamma_{i} (c), \gamma_{i} (c')) + c' \omega_{i}(c) + c \omega_{i}(c') - \omega_{i}(c) * \omega_{i}(c') - \omega_{i}(cc').\]
Since $g_{i}(c,c') = g_{i}(c',c)$, it follows that $f_{i}(c,c') = f_{i}(c',c)$ and so $f_{i} \in C^{2}_{Harr}(C_{0}(i),M(i))$.

For all morphisms $(\alpha: i \rightarrow j) \in I$ we define the maps $q_{\alpha}:A(i) \rightarrow C_{1}(j)$ by:
\[q_{\alpha}(a) = \sigma_{j}(C_{0}(\alpha)(s_{i}(a)) - s_{j}(A(\alpha)(a))).\]

By identifying $M$ with $\Ker \rho$, we define the maps $e_{\alpha}: C_{0}(i) \rightarrow M(j)$ by:
\[e_{\alpha}(c) = \omega_{j}(C_{0}(\alpha)(c)) - C_{1}(\alpha)(\omega_{i}(c)) -  q_{\alpha}(\gamma_{i}(c)).\]
Note that $e_{\alpha} \in C^{1}_{Harr}(C_{0}(i),\alpha^{*}M(j))$.

For all objects $i \in I$ we define the maps $\theta_{i} \in C^{3}_{Harr}(A(i),M(i))$ by:
\[\theta_{i}(x,y,z) = s_{i}(x)g_{i}(y,z) - g_{i}(xy,z) + g_{i}(x,yz) - g_{i}(y,x)s_{i}(z).\]

For all morphisms $(\alpha: i \rightarrow j) \in I$ we define the maps $\vartheta_{\alpha} \in C^{2}_{Harr}(A(i),\alpha^{*} M(j))$ by:
\begin{align*}\vartheta_{\alpha} (x,y) =& g_{j}(A(\alpha)(x), A(\alpha)(y)) - C_{1}(\alpha)g_{i}(x,y)  \\ &+ C_{0}(\alpha)(s_{i}(x))q_{\alpha}(y) - q_{\alpha}(xy) + q_{\alpha}(x)s_{j}(A(\alpha)(y)).\end{align*}

For all pairs of composable morphisms $(\beta\alpha:i \rightarrow j \rightarrow k) \in I$ we define the maps $\eta_{\beta\alpha} \in C^{1}_{Harr}(A(i),(\beta\alpha)^{*}M(k))$ by:
\[\eta_{\beta\alpha}(x) =  - q_{\beta}(A(\alpha)(x)) + q_{\beta\alpha}(x) - C_{1}(\beta)(q_{\alpha}(x)) .\]

We let $f = (f_{i})_{(i \in I)}$ and $e = (e_{\alpha})_{(\alpha:i \rightarrow j \in I)}$. We also let $\theta = (\theta_{i})_{(i \in I)}$, \\$\vartheta = (\vartheta_{\alpha})_{(\alpha:i \rightarrow j \in I)}$ and $\eta = (\eta_{\beta\alpha})_{(\beta\alpha:i \rightarrow j \rightarrow k \in I)}$. Consider the following commutative diagram.
\[\xymatrix{0 \ar[r] & C^{1}_{Harr}(I,A,M) \ar^{\gamma^{*}}[r] \ar[d] & C^{1}_{Harr}(I,C_{0},M) \ar^-{\kappa^{*}}[r] \ar^{\delta}[d] & \ar[r] \ar^{\delta}[d] C^{1}_{Harr}(I,\gamma: C_{0} \rightarrow A, M)  & 0 \\ 0 \ar[r] & C^{2}_{Harr}(I,A,M) \ar^{\gamma^{*}}[r]  & C^{2}_{Harr}(I,C_{0},M) \ar^-{\kappa^{*}}[r]  & \ar[r]  C^{2}_{Harr}(I,\gamma: C_{0} \rightarrow A, M)  & 0 }\]

Note that $(f,e) \in C^{1}_{Harr}(I,C_{0},M)$ and $(\theta,\vartheta,\eta) \in C^{2}_{Harr}(I,A,M)$. A direct calculation shows that $\delta (f,e) = \gamma^{*}(\theta,\vartheta,\eta)$. We also have that $\delta \kappa^{*}(f,e) = \kappa^{*} \delta (f,e) = \kappa^{*} \gamma^{*}(\theta,\vartheta,\eta) =0$, this tells us that $\kappa^{*} (f,e)$ is a cocycle. If we have two equivalent additively split crossed modules then we can choose sections in such a way that the associated cocycles are the same. Therefore we have a well-defined map:
\[ \xymatrix{\mathfrak{ACross}(\gamma: C_{0} \rightarrow A ,M) \rightarrow H^{2}_{Harr}(I,\gamma: C_{0} \rightarrow A ,M).}\]

Inversely, assume we have a cocycle in $C^{1}_{Harr}(I,\gamma: C_{0} \rightarrow A, M)$ which we lift to a cochain $(f,e) \in C^{1}_{Harr}(I,C_{0},M)$. Let $V = \Ker \gamma $. For all objects $i \in I$ we define $C_{1}(i) = M(i) \times V(i)$ as a module over $k$ with the following action of $C_{0}(i)$ on $C_{1}(i)$:
\[c(m,v) := (cm + f_{i}(c,v), cv).\]
The maps $C_{1}(\alpha): C_{1}(i) \rightarrow C_{1}(j)$ are given by:
\[C_{1}(\alpha)(m,v) := (M(\alpha)(m) + e_{\alpha}(v),C_{0}(\alpha)(v)).\]
It is easy to check using the properties of $f_{i}$ and $e_{\alpha}$ that this action is well defined and together with the maps $\rho_{i}: C_{0}(i) \rightarrow C_{1}(i)$ given by $\rho_{i} (m,v) = v$, we have an additively split crossed module of $A$ by $M$.
\end{proof}

\begin{lemma}\label{hul2}
If $k$ is a field of characteristic $0$ then \[Harr^{2}(I,A,M) \cong
\pi_{0} \mathfrak{ACross(A,M)}.\] \end{lemma}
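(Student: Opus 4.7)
The plan is to mirror the proof of the analogous non-diagram statement $Harr^{3}(A,M) \cong \pi_{0}ACross(A,M)$, noting that every Harrison-cohomology degree is shifted down by one (a shift already visible in lemma \ref{hul1} and in the identifications $Harr^{0}(I,A,M) \cong \mathfrak{Der}(A,M)$ and $Harr^{1}(I,A,M) \cong \mathfrak{AExt}(A,M)$).

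First I would choose a surjection $\gamma : P_{0} \twoheadrightarrow A$ in $\mathfrak{Com.alg}^{I}$ such that $P_{0}$ is ``free'' as a diagram: each $P_{0}(i)$ is a polynomial $k$-algebra, and moreover $Harr^{1}(I,P_{0},M) = 0 = Harr^{2}(I,P_{0},M)$. For any such $P_{0}$ the long exact sequence associated to the short exact sequence of cochain complexes
\[0 \to C^{*}_{Harr}(I,A,M) \to C^{*}_{Harr}(I,P_{0},M) \to C^{*}_{Harr}(I,\gamma\colon P_{0}\to A,M) \to 0\]
yields an isomorphism $Harr^{1}(I,\gamma\colon P_{0}\to A, M) \cong Harr^{2}(I,A,M)$, and lemma \ref{hul1} identifies the left-hand side with $\pi_{0}\mathfrak{ACross}(\gamma\colon P_{0}\to A, M)$.

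I would then exhibit the isomorphism of the statement as the composite of the forgetful functor $\pi_{0}\mathfrak{ACross}(\gamma\colon P_{0}\to A, M) \to \pi_{0}\mathfrak{ACross}(A,M)$ with the above. For surjectivity, given a crossed extension with middle term $C_{0}$ and surjection $\gamma_{0}\colon C_{0}\to A$, I would lift $P_{0} \twoheadrightarrow A$ through $\gamma_{0}$ (using freeness plus surjectivity) to a morphism $P_{0}\to C_{0}$, and then form the new middle term by object-wise pullback of $C_{1}$; the resulting morphism of crossed extensions shows that the class lies in the image. For injectivity, two crossed extensions with equal image in $Harr^{2}(I,A,M)$ can each be covered in this way by a crossed extension with middle term $P_{0}$, and since $\mathfrak{ACross}(\gamma\colon P_{0}\to A, M)$ is a groupoid and the two lifts already agree in $Harr^{1}(I,\gamma\colon P_{0}\to A,M)$ by the isomorphism above, lemma \ref{hul1} produces a zig-zag connecting the two original crossed extensions in $\pi_{0}\mathfrak{ACross}(A,M)$.

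The main obstacle is the construction of a diagram $P_{0}$ with the requisite vanishing. The object-wise polynomial condition is straightforward, and the spectral sequence
\[E_{2}^{p,q} = H^{p}_{BW}(I,\mathcal{H}^{q+1}_{Harr}(P_{0},M)) \Rightarrow Harr^{p+q}(I,P_{0},M)\]
then collapses onto the row $q=0$, since in characteristic zero $Harr^{q+1}(P_{0}(i),-) = 0$ for $q \geq 1$ when $P_{0}(i)$ is polynomial. The technical heart is therefore arranging that the Baues-Wirsching cohomology of the natural system $\alpha\mapsto\Der(P_{0}(i),\alpha^{*}M(j))$ vanishes in degrees $1$ and $2$. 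I would achieve this by taking $P_{0}$ to be the diagram of polynomial algebras on a sufficiently large set-valued functor obtained by left Kan extension from the discrete subcategory $I_{0}\subset I$, which produces a natural system admitting a contracting homotopy in the spirit of lemma \ref{natsyslma}, thereby making Baues-Wirsching cohomology acyclic in positive degrees.
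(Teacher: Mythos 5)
Your proposal is correct and takes essentially the same route as the paper's proof: the long exact sequence of the relative complex for a free diagram $\gamma: P_{0}\rightarrow A$, the identification $Harr^{1}(I,\gamma: P_{0}\rightarrow A,M)\cong \pi_{0}\mathfrak{ACross}(\gamma: P_{0}\rightarrow A,M)$ from lemma \ref{hul1}, and pullback liftings together with the groupoid property to get surjectivity and injectivity of $\pi_{0}\mathfrak{ACross}(A,M)\rightarrow Harr^{2}(I,A,M)$. The only difference is that you justify the vanishing $Harr^{1}(I,P_{0},M)=Harr^{2}(I,P_{0},M)=0$ (via the spectral sequence, characteristic zero, and the left Kan extension construction of $P_{0}$), which the paper simply asserts for $P_{0}$ free as a diagram.
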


\begin{proof}
From the definition of $C^{*}_{Harr}(I,\gamma: C_{0} \rightarrow A,M)$ we get the long exact sequence:
\begin{equation}
\xymatrix{ \ldots \ar[r] & Harr^{1}(I,A,M) \ar[r] &
Harr^{1}(I,C_{0},M) \ar[r] &  \\  & Harr^{1}(I,\gamma: C_{0}
\rightarrow A, M) \ar[r] &  Harr^{2}(I,A,M) \ar[r] &
\ldots}\end{equation} Given any additively split crossed module in
$\pi_{0} \mathfrak{ACross}(A,M)$,
\[\xymatrix{0 \ar[r] & M \ar[r]^{\phi} & C_{1} \ar[r]^{\rho} & C_{0} \ar[r]^{\gamma} & A \ar[r] & 0}\]
we can lift $\gamma$ to get a map $P_{0} \rightarrow A$ where
$P_{0}$ is free as a diagram of commutative algebras. We can then
use a pullback to construct $P_{1}$ to get a crossed module where
the following diagram commutes:
\[\xymatrix{0 \ar[r] & M \ar@{=}[d] \ar[r]^{\phi} & C_{1}  \ar[r]^{\rho} & C_{0}  \ar[r]^{\gamma} & A \ar@{=}[d] \ar[r] & 0 \\ 0 \ar[r] & M \ar[r] & P_{1} \ar[u] \ar[r] & P_{0} \ar[u] \ar[r] & A \ar[r] & 0 }\]
These two crossed modules are in the same connected component of
$\pi_{0} \mathfrak{ACross}(A,M)$. By considering the second crossed
module in the long exact sequence, we replace $C_{0}$ by $P_{0}$ to
get the new exact sequence:
\begin{equation}\label{lesrh2}
\xymatrix{ 0 \ar[r] &   Harr^{1}(I,\gamma: P_{0} \rightarrow A, M) \ar[r] &  Harr^{2}(I,A,M) \ar[r] & 0  }\end{equation}
since $Harr^{1}(I,P_{0},M)=0$ and $Harr^{2}(I,P_{0},M) =0$.

The exact sequence \ref{lesrh2} tells us that every element in
$Harr^{2}(I,A,M)$ comes from an element in $Harr^{1}(I,\gamma: P_{0}
\rightarrow A, M)$ and the previous lemma tells us that this comes
from a crossed module in $\pi_{0} \mathfrak{ACross}(A,M)$. Therefore
the map $\pi_{0} \mathfrak{ACross}(A,M) \rightarrow Harr^{2}(I,A,M)$
is surjective.

Assume we have two crossed modules which go to the same element  in $Harr^{2}(I,A,M)$,
\begin{equation}\label{Hxm12} \xymatrix{0 \ar[r] & M \ar[r]^{\phi} & C_{1} \ar[r]^{\rho} & C_{0} \ar[r]^{\gamma} & A \ar[r] & 0,}\end{equation}
\begin{equation}\label{Hxm22}\xymatrix{0 \ar[r] & M \ar[r]^{\phi '} & C'_{1} \ar[r]^{\rho '} & C'_{0} \ar[r]^{\gamma '} & A \ar[r] & 0.}\end{equation}
There exist morphisms
\[\xymatrix{0 \ar[r] & M \ar@{=}[d] \ar[r]^{\phi} & C_{1} \ar[d] \ar[r]^{\rho} & C_{0} \ar[d] \ar[r]^{\gamma} & A \ar@{=}[d] \ar[r] & 0 \\ 0 \ar[r] & M \ar[r] & P_{1} \ar[r] & P_{0} \ar[r] & A \ar[r] & 0, }\]
\[\xymatrix{0 \ar[r] & M \ar@{=}[d] \ar[r]^{\phi '} & C'_{1} \ar[d] \ar[r]^{\rho '} & C'_{0} \ar[d] \ar[r]^{\gamma '} & A \ar@{=}[d] \ar[r] & 0 \\ 0 \ar[r] & M \ar[r] & P_{2} \ar[r] & P_{0} \ar[r] & A \ar[r] & 0, }\]
where $P_{0}$ is free as a diagram of commutative algebras and
$P_{1},P_{2}$ are constructed via pullbacks. These give us two
elements in $Harr^{1}(I,\gamma: P_{0}\rightarrow A,M)$ which go to
the same element in $Harr^{2}(I,A,M)$. However the exact sequence
\ref{lesrh2} tells us that the two crossed modules \ref{Hxm12} and
\ref{Hxm22} have to go to the same element in $Harr^{1}(I,\gamma:
P_{0}\rightarrow A,M)$. The previous lemma tells us that the two
crossed modules \ref{Hxm12} and \ref{Hxm22} go to the same element
in $\mathfrak{ACross}(\gamma: C_{0} \rightarrow A ,M)$ which is a
groupoid, so there is a map $P_{2} \rightarrow P_{1}$ which makes
the following diagram commute:
\[\xymatrix{0 \ar[r] & M \ar@{=}[d] \ar[r]^{\phi} & C_{1}  \ar[r]^{\rho} & C_{0}  \ar[r]^{\gamma} & A \ar@{=}[d] \ar[r] & 0
\\ 0 \ar[r] & M  \ar@{=}[d] \ar[r] & P_{1} \ar[u] \ar[r] & P_{0} \ar@{=}[d] \ar[u] \ar[r] & A \ar@{=}[d] \ar[r] & 0
\\ 0 \ar[r] & M \ar[r] & P_{2} \ar[d] \ar[r] & P_{0} \ar[d] \ar[r] & A
\ar[r] & 0 \\ 0 \ar[r] & M \ar@{=}[u] \ar[r]^{\phi '} & C'_{1}
\ar[r]^{\rho '} & C'_{0}  \ar[r]^{\gamma '} & A \ar@{=}[u] \ar[r] &
0
 }\]
Therefore the two crossed modules \ref{Hxm12} and \ref{Hxm22} are in
the same connected component of $\pi_{0} \mathfrak{ACross}(A,M)$ and
the map $\pi_{0} \mathfrak{ACross}(A,M) \rightarrow Harr^{2}(I,A,M)$
is injective.\end{proof}

\begin{corollary} If $k$ is a field of characteristic 0 then
\[H_{BW}^{2}(I,\mathcal{H}_{Harr}^{1}(A,M)) \cong \pi_{0} \mathfrak{MCross}(A,M).\]
\end{corollary}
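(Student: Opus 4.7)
The plan is to parallel the proofs of Lemma \ref{hul1} and Lemma \ref{hul2}, but restricted to data that is additively \emph{and} multiplicatively split; the multiplicativity of the splitting will cause the Harrison direction of the bicomplex to collapse down to its degree-one part (which is precisely $\mathcal{H}^{1}_{Harr}(A,M) = \mathcal{D}er(A,M)$), so that the resulting cocycle lives in the pure Baues-Wirsching complex of the derivations natural system. Given an additively and multiplicatively split crossed extension, at each object $i \in I$ the section $s_{i}: A(i) \to C_{0}(i)$ of $\gamma_{i}$ may be chosen to be an algebra homomorphism, so the Harrison 2-cochains $f_{i}(a,b) = s_{i}(a)s_{i}(b) - s_{i}(ab)$ appearing in the proof of Lemma \ref{hul1} vanish identically. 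The defining relation for $e_{\alpha}$ then degenerates into the derivation identity, so each $e_{\alpha}: C_{0}(i) \to \alpha^{*}M(j)$ lies in $\Der(C_{0}(i), \alpha^{*}M(j))$, and similarly the obstruction cochains $\theta_{i}$ and $\vartheta_{\alpha}$ vanish, leaving only $\eta_{\beta\alpha}$ as nontrivial data.

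Next I would verify that, in this multiplicative setting, the Harrison cocycle condition $\delta(0,e) = \gamma^{*}(0,0,\eta)$ from Lemma \ref{hul1} is equivalent to the Baues-Wirsching 2-cocycle condition for $\eta$ as an element of $C^{2}_{BW}(I, \mathcal{H}^{1}_{Harr}(A,M))$; that is, that
\[
\eta_{\beta\alpha}(x) = -q_{\beta}(A(\alpha)(x)) + q_{\beta\alpha}(x) - C_{1}(\beta)(q_{\alpha}(x))
\]
satisfies $(\alpha_{1})_{*}\eta - \eta(\alpha_{1}\alpha_{2}, \alpha_{3}) + \eta(\alpha_{1},\alpha_{2}\alpha_{3}) - (\alpha_{3})^{*}\eta = 0$ as a direct consequence of the associativity built into $q$. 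Independence from the choice of multiplicative section $s_{i}$ and of the lift $\sigma_{i}$ should then yield a well-defined map $\pi_{0}\mathfrak{MCross}(A,M) \to H^{2}_{BW}(I, \mathcal{H}^{1}_{Harr}(A,M))$; different choices of multiplicative sections differ by a family of maps $A(i) \to M(i)$ which are themselves derivations, and modifying $\eta$ by the associated Baues-Wirsching 1-coboundary leaves its class unchanged.

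For the inverse direction I would copy the resolution argument of Lemma \ref{hul2}: given a Baues-Wirsching 2-cocycle $(\eta_{\beta\alpha})$, lift $\gamma$ to a free diagram $P_{0} \twoheadrightarrow A$, build $P_{1}$ by a pullback as in Lemma \ref{hul1}, and check that the resulting crossed extension is \emph{multiplicatively} split because $P_{0}$, being free, admits multiplicative sections of $\gamma$. Combining these two directions and passing to connected components gives the desired isomorphism. The main obstacle will be the careful bookkeeping in the reverse construction: one must show that every Baues-Wirsching 2-cocycle of derivations can be realized by an honest additively and multiplicatively split crossed extension (and not merely an additively split one), which requires choosing the pullback $P_{1}$ and the map $P_{1} \to P_{0}$ compatibly with a multiplicative lift, and then verifying that this construction inverts the first map up to equivalence in $\pi_{0}\mathfrak{MCross}$; this is the analogue of the groupoid argument at the end of Lemma \ref{hul2}, and here one must additionally check that the connecting morphism between two realizations can be chosen to respect the multiplicative structure.
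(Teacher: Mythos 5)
Your forward direction coincides with the paper's: once the sections $s_{i}$ and $\sigma_{i}$ are multiplicative, $g_{i}=0$, hence $f$, $\theta$ and $\vartheta$ vanish, the $e_{\alpha}$ are derivations, and the only surviving datum is $\eta$, which is a cocycle in $C^{2}_{BW}(I,\mathcal{H}^{1}_{Harr}(A,M))$. (A small imprecision there: two multiplicative sections of $\gamma(i)$ differ by a map into $V(i)=\Ker\gamma(i)$, not into $M(i)$, and that difference is not literally a derivation; what one actually checks is that the induced change in $\eta$ is a Baues--Wirsching coboundary.)

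The genuine gap is in your inverse direction. You want to run the resolution argument of Lemma \ref{hul2}, replacing $C_{0}$ by a free diagram $P_{0}\twoheadrightarrow A$, and you justify multiplicative splitness by saying that ``$P_{0}$, being free, admits multiplicative sections of $\gamma$''. Freeness of $P_{0}$ gives the lifting property for maps \emph{out of} $P_{0}$ (projectivity); it does not provide a ring section $A\rightarrow P_{0}$ of the surjection $P_{0}\twoheadrightarrow A$, and in general no such section exists (a polynomial algebra surjecting onto $k[x]/(x^{2})$ is already a counterexample). Since an object of $\mathfrak{MCross}(A,M)$ must have $\gamma(i)$ additively \emph{and multiplicatively} split at every object, a crossed extension with $C_{0}=P_{0}$ free is in general only an object of $\mathfrak{ACross}(A,M)$: your construction therefore does not land in $\pi_{0}\mathfrak{MCross}(A,M)$, and being connected inside $\mathfrak{ACross}$ to some multiplicatively split object neither produces one nor yields a well-defined component of $\mathfrak{MCross}$. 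This is precisely where the multiplicative case must part company with Lemma \ref{hul2}: the d\'etour through a free $P_{0}$ is what the multiplicative splitting cannot survive. The paper avoids it altogether and inverts directly through the explicit construction of Lemma \ref{hul1} applied with vanishing $f$-component: one takes $C_{1}(i)=M(i)\times V(i)$ with the untwisted action $c(m,v)=(cm,cv)$, $\rho_{i}(m,v)=v$, and $C_{1}(\alpha)(m,v)=(M(\alpha)(m)+e_{\alpha}(v),C_{0}(\alpha)(v))$, which is visibly additively and multiplicatively split, so the cocycle is realized inside $\mathfrak{MCross}(A,M)$ without resolving $C_{0}$. (Also a small slip: the pullback construction you cite is from Lemma \ref{hul2}; the inverse construction of Lemma \ref{hul1} is the direct product construction just described.)
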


\begin{proof}
Given an additively and multiplicatively split crossed extension of $A$ by $M$ we get that (with the notation of lemma \ref{hul1}) $g_{i}=0$ for all $i \in I$. Since $\rho(i)$ is additively and multiplicatively split for all $i \in I$ it follows that $f=0$, $\theta=0$ and $\vartheta=0$.
Therefore $\eta$ is a cocycle in $C^{2}_{BW}(I,\mathcal{H}_{Harr}^{1}(A,M))$.

Inversely, the construction given in lemma \ref{hul1} gives us an additively and multiplicatively split extension.
\end{proof}

\section{Harrison cohomology of $\Psi$-rings}
Let $R$ be a $\Psi$-ring, and $M \in R-\mathfrak{mod}_{\Psi}$.
Let $I$ denote the category with one object associated to the multiplicative monoid of the natural numbers $\mathbb{N}^{mult}$.
For any $j \geq 1$, there is a natural system on $I$ as follows:
\[D_{f} := C_{Harr}^{j}(R,f^{*}M),\]
where $f^{*}M$ is an $\Psi$-module over $R$ with
$M$ as an abelian group and the following action of $R$
\[(r,m) \mapsto \Psi^{f}(r)m, \textrm{ for $r \in R, m \in M$}.\]
For $u \in \mathcal{F}I$ (the category of factorisations in $I$), we have $u_{*}:D_{f} \rightarrow
D_{uf}$ which is induced by $\Psi^{u}: f^{*}M \rightarrow (uf)^{*}M$.
For $v \in \mathcal{F}I$, we have $v^{*}:D_{f} \rightarrow
D_{fv}$ which is induced by $\Psi^{v}: R \rightarrow R$.

The bicomplex in section \ref{dhbi} becomes

\[
\xymatrix{ C^{1}_{Harr}(R,M) \ar[d]_-{d} \ar[r]^-{b} & \prod_{i \in \mathbb{N}}C^{1}_{Harr}(R,i^{*}M) \ar[d]_-{-d} \ar[r]^-{b} & \prod_{i,j \in \mathbb{N}}C^{1}_{Harr}(R,(ij)^{*}M) \ar[d]_-{d} \ar[r]^-{b} & \cdots\\
C^{2}_{Harr}(R,M) \ar[d]_-{d} \ar[r]^-{b} &
\prod_{i \in \mathbb{N}} C^{2}_{Harr}(R,i^{*}M) \ar[d]_-{-d} \ar[r]^-{b} &
\prod_{i,j \in \mathbb{N}}C^{2}_{Harr}(R,(ij)^{*}M) \ar[d]_-{d} \ar[r]^-{b} & \cdots
\\ C^{3}_{Harr}(R,M) \ar[d]_-{d} \ar[r]^-{b} & \prod_{i \in \mathbb{N}}C^{3}_{Harr}(R,i^{*}M) \ar[d]_-{-d} \ar[r]^-{b} & \prod_{i,j \in \mathbb{N}}C^{3}_{Harr}(R,(ij)^{*}M) \ar[d]_-{d} \ar[r]^-{b} & \cdots
\\ \vdots & \vdots & \vdots}
\]

with
\[
d: \prod_{t=t_{1}\ldots t_{i} \in \mathbb{N}} C^{j}_{Harr}(R,t^{*}M) \rightarrow
\prod_{t=t_{1}\ldots t_{i} \in \mathbb{N}} C^{j+1}_{Harr}(R,t^{*}M),\]
with the product being over $i$-tuples $(t_{1},\ldots,t_{i})$ and $t$ is the composite,
is given by
\begin{align*}df_{t_{1},\ldots,t_{i}}(x_{1},\ldots,x_{j+1}) = & \Psi^{t_{1} t_{2} \ldots
t_{i}}(x_{1})f_{t_{1},\ldots,t_{i}}(x_{2},\ldots, x_{j+1}) \\&+
\sum_{k=1}^{j}(-1)^{k}f_{t_{1},\ldots,t_{i}}(x_{1},\ldots,x_{k}x_{k+1},\ldots,x_{j+1})
\\ &+ (-1)^{j+1}f_{t_{1},\ldots,t_{i}}(x_{1},\ldots,x_{j})\Psi^{t_{1} t_{2}
\ldots t_{i}}(x_{j+1}). \end{align*}
and
\[
b: \prod_{t=t_{1}\ldots t_{i} \in \mathbb{N}} C^{j}_{Harr}(R,t^{*}M) \rightarrow
\prod_{t=t_{1}\ldots t_{i+1} \in \mathbb{N}} C^{j}_{Harr}(R,t^{*}M),\]
being given by
\begin{align*} bf_{t_{1},\ldots,t_{i+1}}(x_{1},\ldots,x_{j}) = &
\Psi^{t_{1}}f_{t_{2},\ldots,t_{i+1}}(x_{1},\ldots,x_{j}) \\&+
\sum_{k=1}^{i}(-1)^{k}f_{t_{1},\ldots,t_{k}t_{k+1},\ldots,t_{i+1}}(x_{1},\ldots,x_{j})
\\ &+
(-1)^{i+1}f_{t_{1},\ldots,t_{i}}(\Psi^{t_{i+1}}(x_{1}),\ldots,\Psi^{t_{i+1}}(x_{j})).
\end{align*}

We let $Harr^{i}_{\Psi}(R,M)$ denote the $i^{th}$ cohomology of
the total complex of the bicomplex described above.

\begin{theorem}
There exists a spectral sequence
\[E^{p,q}_{2} = H^{p}_{BW}(I,\mathcal{H}_{Harr}^{q+1}(R,M)) \Rightarrow Harr^{p+q}_{\Psi}(R,M).\]
where $\mathcal{H}_{Harr}^{q}(R,M)$ is the natural system on $I$ whose value on $(\alpha:i \rightarrow j)$ is given by $Harr^{q}(R,\alpha^{*}M)$.
\end{theorem}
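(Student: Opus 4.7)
The plan is to read the bicomplex constructed just above the theorem through Proposition~\ref{genss} and to identify its two successive cohomologies with the Harrison cohomology of $R$ and the Baues--Wirsching cohomology of $I$ respectively. First I would verify that the bicomplex
\[
C^{p,q}(I,R,M) \;=\; \prod_{t_{1},\ldots,t_{p}\in\mathbb{N}} C^{q+1}_{Harr}(R,(t_{1}\cdots t_{p})^{*}M),
\]
with horizontal differential $b$ and vertical differential $(-1)^{p}d$ as written in the chapter, is a genuine first-quadrant bicomplex. The commutation $bd=db$ together with $b^{2}=0=d^{2}$ is a routine check, essentially identical to the one carried out for $\delta$ and $\partial$ in Section~\ref{dhbi}; the only new ingredient is that the twisted action of $R$ on $(t_{1}\cdots t_{p})^{*}M$ via $\Psi^{t_{1}\cdots t_{p}}$ is compatible with the individual $\Psi^{t_{k}}$ appearing in the faces of $b$. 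By definition $Harr^{*}_{\Psi}(R,M)$ is the cohomology of the total complex, so Proposition~\ref{genss} supplies a convergent first-quadrant spectral sequence
\[
E_{2}^{p,q} \;=\; H^{p}_{h}H^{q}_{v}(C) \;\Longrightarrow\; Harr^{p+q}_{\Psi}(R,M).
\]

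Next I would compute the vertical cohomology. Since products in $\mathfrak{Ab}$ are exact, the vertical cohomology at bidegree $(p,q)$ is
\[
H^{q}_{v}C^{p,*} \;=\; \prod_{t_{1},\ldots,t_{p}\in\mathbb{N}} Harr^{q+1}\bigl(R,(t_{1}\cdots t_{p})^{*}M\bigr).
\]
This is precisely $C^{p}_{BW}(I,\mathcal{H}^{q+1}_{Harr}(R,M))$, the Baues--Wirsching cochain group, because $I$ has a single object and so a $p$-tuple of composable morphisms is just a $p$-tuple $(t_{1},\ldots,t_{p})\in\mathbb{N}^{p}$, whose composite is $t_{1}\cdots t_{p}$, and the natural system $\mathcal{H}^{q+1}_{Harr}(R,M)$ was defined to take that composite to $Harr^{q+1}(R,(t_{1}\cdots t_{p})^{*}M)$.

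The key remaining step is to verify that the horizontal differential induced by $b$ on vertical cohomology coincides, term by term, with the Baues--Wirsching coboundary map for the natural system $\mathcal{H}^{q+1}_{Harr}(R,M)$. I would do this by inspecting the three summands of $bf_{t_{1},\ldots,t_{p+1}}$ exhibited in the display just before the theorem and matching them with the three summands $(\alpha_{1})_{*}$, the alternating middle terms $f(\ldots,\alpha_{k}\alpha_{k+1},\ldots)$, and $(\alpha_{p+1})^{*}$ in the formula for the Baues--Wirsching differential $d$. The first summand of $b$ applies $\Psi^{t_{1}}$ coefficientwise, which is exactly the covariant transport $(t_{1})_{*}$ induced on Harrison cohomology by the map of $\Psi$-modules $\Psi^{t_{1}}\colon(t_{2}\cdots t_{p+1})^{*}M\to(t_{1}\cdots t_{p+1})^{*}M$; the middle terms compose adjacent indices, matching the $\alpha_{k}\alpha_{k+1}$ terms; and the final summand precomposes the arguments with $\Psi^{t_{p+1}}$, which is the contravariant transport $(t_{p+1})^{*}$ on the Harrison complex. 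The signs agree because both the $(-1)^{p}$ vertical sign in the bicomplex and the alternating signs in $b$ were chosen for this purpose.

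The main obstacle is really this last identification of the $E_{1}$-differential, not because it is deep, but because it requires one to trust that the naturality of the shuffle-quotient passes cohomology classes through the correct functoriality maps of the natural system. Since the twisted action $\alpha^{*}M$ on $Harr^{q+1}(R,-)$ is by definition the one induced by $\Psi^{\alpha}$, and since the Harrison cochain complex is strictly functorial in both $R$ and the coefficient module, the identification $H^{p}_{h}H^{q}_{v}(C)\cong H^{p}_{BW}(I,\mathcal{H}^{q+1}_{Harr}(R,M))$ is forced. Combining this with the convergence statement from Proposition~\ref{genss} yields the announced spectral sequence.
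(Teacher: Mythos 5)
Your proposal is correct and follows essentially the same route as the paper: the theorem is exactly the first-quadrant bicomplex spectral sequence (Proposition~\ref{genss}) applied to $C^{*,*}_{Harr}(I,A,M)$ specialized to the one-object category $\mathbb{N}^{mult}$, with vertical cohomology giving the natural system $\mathcal{H}^{q+1}_{Harr}(R,M)$ and the induced horizontal complex being the Baues--Wirsching complex. Your explicit verification of the $E_{1}$-differential identification simply spells out what the paper leaves implicit.
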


\begin{theorem}
\[Harr^{0}_{\Psi}(R,M) = \Der_{\Psi}(R,M),\]
\[Harr^{1}_{\Psi}(R,M) = \AExt_{\Psi}(R,M),\]
\[Harr^{2}_{\Psi}(R,M) = \pi_{0} ACross_{\Psi}(R,M).\]
\end{theorem}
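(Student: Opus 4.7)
The plan is to realise the entire statement as a direct specialisation of the results already proved for diagrams of commutative algebras in the previous sections. Let $I$ denote the one-object category whose endomorphism monoid is $\mathbb{N}^{mult}$. A $\Psi$-ring $R$ is, by definition, a functor $A_{R}: I \rightarrow \mathfrak{Com.alg}$ with $A_{R}(\ast) = R$ and $A_{R}(n) = \Psi^{n}$, and a $\Psi$-module $M$ over $R$ is precisely an $A_{R}$-module $M_{\bullet}$ with $M_{\bullet}(n) = \psi^{n}$. First I would check that, under this identification, the bicomplex displayed in this section coincides term-by-term and differential-by-differential with $C^{*,*}_{Harr}(I,A_{R},M_{\bullet})$ from section~\ref{dhbi}: the factor $\alpha^{*}M(j)$ becomes $f^{*}M$ where $f = f_{1}\cdots f_{p}$, and the coboundary formulas for $b$ and $d$ reduce to the general $\delta$ and $\partial$ once the pullback module structures are expanded. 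This is essentially a bookkeeping verification, so I would not grind through it.

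With the bicomplexes identified, the first equality $Harr^{0}_{\Psi}(R,M)=\Der_{\Psi}(R,M)$ follows directly from the lemma $Harr^{0}(I,A,M)\cong\mathfrak{Der}(A,M)$ together with the observation that a derivation of the diagram $A_{R}$ in the sense of the earlier section is a single $k$-linear derivation $d: R \rightarrow M$ satisfying $M_{\bullet}(n)\circ d = d \circ A_{R}(n)$, i.e.\ $\psi^{n}d = d\Psi^{n}$ for every $n \in \mathbb{N}$, which is exactly the definition of a $\Psi$-derivation.

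For the second equality I would apply the identification $Harr^{1}(I,A,M)\cong\mathfrak{AExt}(A,M)$. The content is to check that when $I$ is the monoid $\mathbb{N}^{mult}$, the data $(f_{i},\varepsilon_{\alpha})$ classifying an additively split extension of the diagram reduces to exactly the data $(f,\varepsilon^{i})$ used in chapter 3 to parametrise an additively split $\Psi$-ring extension: the bilinear $f$ encodes the commutative-ring extension on the single object, the family $\varepsilon^{i}: R \rightarrow M$ encodes how each $\Psi$-operation is twisted, and the four compatibility conditions listed after definition~\ref{pmextdef} translate verbatim into the four conditions on $\varepsilon_{\alpha}$ in the diagram case. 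The equivalence relation also transports, because an equivalence of extensions in the diagram sense is given by a single map $g: R \rightarrow M$ satisfying the same two identities.

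For the third equality I would invoke lemma~\ref{hul2}, identifying crossed extensions in the diagram sense with crossed $\Psi$-extensions in the sense of chapter 3: a crossed $\Psi$-module $\partial: C_{1} \rightarrow C_{0}$ is precisely a crossed module of the diagram $A_{R}$, because the $\Psi$-module structure on $C_{1}$ over $C_{0}$ encodes exactly the functoriality required. The hardest part of the programme will in fact be this third step: lemma~\ref{hul2} assumes a ground field of characteristic zero, and this hypothesis must be maintained (so the statement should be read with the understanding that the third identification requires $k$ to be a field of characteristic $0$, matching the scope of lemma~\ref{hul2}). Granting this, the connected-component groupoid $\pi_{0}\mathfrak{ACross}(A_{R},M_{\bullet})$ is exactly $\pi_{0} ACross_{\Psi}(R,M)$, and the proof is complete.
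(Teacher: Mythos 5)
Your proposal is correct and follows the paper's own (implicit) route: the paper gives no separate proof of this theorem, since the preceding subsection has already specialised the bicomplex of section \ref{dhbi} to the one-object category of $\mathbb{N}^{mult}$, so the three identifications are exactly the diagram-level results $Harr^{0}(I,A,M)\cong\mathfrak{Der}(A,M)$, $Harr^{1}(I,A,M)\cong\mathfrak{AExt}(A,M)$ and lemma \ref{hul2}, translated just as you describe into $\Der_{\Psi}(R,M)$, the cocycle data $(f,\varepsilon^{i})$ of an additively split $\Psi$-ring extension, and crossed $\Psi$-extensions. Your caveat that the degree-two identification inherits the characteristic-zero hypothesis of lemma \ref{hul2}, which the theorem's statement suppresses, is correct and worth keeping; the only slip, citing the conditions listed after definition \ref{pmextdef} (the multiplicatively split case) instead of the four conditions with the $f$-terms for the additively split case, is immaterial to the argument.
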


\section{Harrison cohomology and $\lambda$-rings}
Let $R$ be a $\lambda$-ring and $M \in R-\mathfrak{mod}_{\lambda}$.
\begin{conjecture}
There exists a cochain bicomplex which starts:
\[
\xymatrix{ C^{1}_{1-Harr}(\underline{R},\underline{M}) \ar[d]_-{d_{1}} \ar[r]^-{b^{1}_{1}} & C^{1}_{2-Harr}(\underline{R},\underline{M})) \ar[d]_-{-d_{2}} \ar[r]^-{b^{1}_{2}} & C^{1}_{3-Harr}(\underline{R},\underline{M})) \ar[d] \ar[r] & \cdots\\
C^{2}_{1-Harr}(\underline{R},\underline{M}) \ar[d]_-{d_{1}} \ar[r]^-{b^{2}_{1}} &
C^{2}_{2-Harr}(\underline{R},\underline{M})) \ar[d]_-{-d_{2}} \ar[r] &
\ddots  &
\\ C^{3}_{1-Harr}(\underline{R},\underline{M}) \ar[d]_-{d_{1}} \ar[r] & \ddots &   &
\\ \vdots & & }
\]
where the first column is the Harrison cochain complex.
\[C^{i}_{1-Harr}(\underline{R},\underline{M}) := C^{i}_{Harr}(\underline{R},\underline{M}).\]

For all $i\geq 1$ and $j\geq 2$ we have that

\[C^{i}_{j-Harr}(\underline{R},\underline{M}) \subset  \prod_{n_{1},\ldots,n_{j-1} \in \mathbb{N}}Maps(\underline{R}^{\otimes i},\underline{M}). \]

For example, when $j=2$, we have

\begin{align*}C^{1}_{2-Harr}(\underline{R},\underline{M})) = \{ & f \in \prod_{n \in \mathbb{N}}Maps(\underline{R},\underline{M}) | \\& f_{n}(r+s) = \sum_{j=1}^{n}[f_{j}(r)\lambda^{n-j}(s) + f_{j}(s)\lambda^{n-j}(r)] \}.\end{align*}

\begin{align*}
C^{2}_{2-Harr}(\underline{R},\underline{M})) = \{&f \in \prod_{n \in \mathbb{N}}Maps(\underline{R}\otimes\underline{R},\underline{M}) | f_{n}(r,s) = f_{n}(s,r), \\ &f_{n}((r,s)+(t,u)) = \sum_{j=1}^{n}[f_{j}(r,s)\lambda^{n-j}(tu+ru+ts)\\ &+ f_{j}(t,u)\lambda^{n-j}(rs+ru+ts) + f_{j}(r,u)\lambda^{n-j}(rs+tu+ts) + \\ & f_{j}(t,s)\lambda^{n-j}(rs+ru+tu)]\}.
\end{align*}

The coboundary maps $d_{2}:C^{i}_{2-Harr}(\underline{R},\underline{M}) \rightarrow C^{i+1}_{2-Harr}(\underline{R},\underline{M})$ are given by
\begin{align*}(d_{2}(f))_{n}(r_{1},\ldots,r_{i+1}) =& \sum_{j=1}^{n}[\frac{\partial P_{n}(r_{1},r_{2}\ldots r_{i+1})}{\partial \lambda^{j}(r_{2}\ldots r_{i+1})} f_{j}(r_{2},\ldots,r_{i+1}) ] \\ & + \sum_{j=1}^{i}f_{n}(r_{1},\ldots, r_{j}r_{j+1}, \ldots, r_{i+1}) \\ & + \sum_{j=1}^{n}[\frac{\partial P_{n}(r_{1}\ldots r_{i},r_{i+1})}{\partial \lambda^{j}(r_{1}\ldots r_{i})} f_{j}(r_{1},\ldots,r_{i}) ]. \end{align*}

\[(b^{1}_{1}(g))_{n}(r) = g(\lambda^{n}(r))-\sum_{i=1}^{n}\Lambda^{i}(g(r))\lambda^{n-i}(r). \]

\[(b^{1}_{2}(f))_{n,m}(r) = f_{m}(\lambda^{n}(r))-\sum_{i=1}^{nm}f_{i}(r) \frac{\partial P_{n,m}(r)}{\partial \lambda^{i}(r)} + \sum_{j=1}^{m}\Lambda^{j}(f_{n}(r))\lambda^{m-j}(\lambda^{n}(r)). \]

We let $Harr^{i}_{\lambda}(R,M)$ denote the $i^{th}$ cohomology of the bicomplex above. Then we get the following
\[Harr^{0}_{\lambda}(R,M) \cong \Der_{\lambda}(R,M),\]
\[Harr^{1}_{\lambda}(R,M) \cong \AExt_{\lambda}(R,M).\]
\end{conjecture}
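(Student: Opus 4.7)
The plan is to (i) complete the definition of the bicomplex by specifying all groups $C^{i}_{j-Harr}(\underline{R},\underline{M})$ and all differentials, (ii) verify the bicomplex axioms, and (iii) read off $H^{0}$ and $H^{1}$ directly from the cocycle and coboundary descriptions.

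For (i), I would model the higher groups on the pattern visible in the first three entries: $C^{i}_{j-Harr}(\underline{R},\underline{M})$ should consist of families $f = (f_{n_{1},\ldots,n_{j-1}})$ of symmetric multilinear maps $\underline{R}^{\otimes i} \to \underline{M}$ indexed by $(j-1)$-tuples from $\mathbb{N}$, subject to additivity conditions dictated by the universal polynomials $P_{n}$ and $P_{n,m}$ so that each component behaves like a derivation through the $\lambda$-operations. The vertical differential $d_{j}$ on the $j$-th column should be a twisted Harrison differential weighted by $\lambda$-operations, while the horizontal differentials $b^{i}_{j}$ should take the alternating-sum form suggested by $b^{1}_{1}$ and $b^{1}_{2}$, built from the partial derivatives of $P_{n,m}$ and the auxiliary $\Lambda$-action. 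I would then verify $d_{j}^{2}=0$, $b^{i}_{j+1}b^{i}_{j}=0$, and the anticommutation $d_{j+1}b^{i}_{j} + b^{i+1}_{j}d_{j} = 0$. Each of these reduces to a polynomial identity among the partial derivatives of $P_{n}$ and $P_{n,m}$, ultimately coming from the associativity and commutativity of the composition of $\lambda$-operations.

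For the computation of $H^{0}$, a $0$-cochain is a map $g \in C^{1}_{1-Harr} = \Hom(\underline{R},\underline{M})$. The cocycle conditions $d_{1}g = 0$ and $b^{1}_{1}g = 0$ are respectively the standard Leibniz rule for $g$ and the identity $g(\lambda^{n}(r)) = \sum_{i=1}^{n} \Lambda^{i}(g(r))\lambda^{n-i}(r)$, which together constitute the definition of a $\lambda$-derivation; this yields $H^{0}_{\lambda}(R,M) \cong \Der_{\lambda}(R,M)$. For $H^{1}$, a $1$-cochain is a pair $(f,\epsilon)$ with $f \in C^{2}_{1-Harr}$ and $\epsilon \in C^{1}_{2-Harr}$. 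The three cocycle conditions $d_{1}f = 0$, $d_{2}\epsilon = b^{2}_{1}f$, and $b^{1}_{2}\epsilon = 0$ encode, respectively, associativity and commutativity of the twisted multiplication $(r,m)(r',m') = (rr', rm'+r'm+f(r,r'))$ on $R \oplus M$, the compatibility of the proposed $\lambda^{n}(r,m)$ with that multiplication, and the compatibility with the composition $\lambda^{n}\lambda^{m}$. Comparing to the five axioms listed in the definition of an additively split $\lambda$-ring extension shows these are equivalent. The $1$-coboundary of $g \in C^{1}_{1-Harr}$ produces the pair $(d_{1}g, b^{1}_{1}g)$, which is precisely the change of cocycles induced by the equivalence $(r,m) \mapsto (r, m+g(r))$ of extensions, so $H^{1}_{\lambda}(R,M) \cong \AExt_{\lambda}(R,M)$.

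The main obstacle is giving a uniform and provably correct definition of the higher groups $C^{i}_{j-Harr}$ and their differentials for $j \geq 3$. The required combinatorial input is an explicit calculus describing how the partial derivatives of $P_{n}$ and $P_{n,m}$ interact under composition and substitution, which is presumably why the result is stated as a conjecture: the $j=1$ and $j=2$ columns are essentially forced by the computations already carried out in this chapter, so the low-dimensional isomorphisms can be proved on the partial bicomplex written down above, but extending past $j=2$ demands a systematic treatment of the universal-polynomial calculus whose details are not routine.
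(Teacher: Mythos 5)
There is no proof in the paper to compare against: the author states this result only as a conjecture, precisely because the bicomplex is never fully constructed. Your proposal, read honestly, does not close that gap either --- it is an outline that defers exactly the points that make the statement conjectural. Concretely, you never define the groups $C^{i}_{j-Harr}(\underline{R},\underline{M})$ for $j\geq 3$ (the paper only gives the containment in $\prod Maps(\underline{R}^{\otimes i},\underline{M})$), nor the differentials $d_{j}$ for $j\geq 3$, nor the horizontal maps $b^{i}_{j}$ beyond $b^{1}_{1}$ and $b^{1}_{2}$; you say they ``should'' take an alternating-sum form built from partial derivatives of the universal polynomials, and that the identities $d_{j}^{2}=0$, $b\circ b=0$ and the anticommutation relations ``reduce to polynomial identities''. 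Those identities are the actual mathematical content: a systematic calculus for how $\partial P_{n}/\partial\lambda^{j}$ and $\partial P_{n,m}/\partial\lambda^{j}$ behave under substitution and composition is not written down anywhere in the thesis (the appendix only records low-degree examples and a further conjecture), so asserting that the verification ``reduces'' to them is not a proof step but a restatement of the conjecture.

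Even the low-degree claims are not actually established by what you write. To compute $Harr^{1}_{\lambda}$ you need the total-degree-one differential, whose components include $b^{2}_{1}:C^{2}_{1-Harr}\rightarrow C^{2}_{2-Harr}$ and a target $C^{1}_{3-Harr}$ for $b^{1}_{2}$; neither is defined in the statement, and you do not supply formulas, so the three cocycle conditions you list ($d_{1}f=0$, $d_{2}\epsilon=\pm b^{2}_{1}f$, $b^{1}_{2}\epsilon=0$) are not yet meaningful. Moreover the asserted match with the definition of an additively split $\lambda$-ring extension is left as a comparison ``to the five axioms'': conditions such as $\epsilon^{\nu}(1)=0$ and $\epsilon^{0}=\epsilon^{1}=0$ do not visibly come from any of your cocycle conditions, so either they must be derived or the cochain groups must be normalised, and you do not address this. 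The part of your plan that is on solid ground --- that a coboundary $(d_{1}g,b^{1}_{1}g)$ corresponds to the change of $(f,\epsilon)$ under an equivalence $(r,m)\mapsto(r,m+g(r))$, matching the formulas the author derives in the section on additively split $\lambda$-ring extensions --- is consistent with the paper, but by itself it only shows the expected correspondence in degrees $0$ and $1$ \emph{once} the bicomplex exists. As it stands, your write-up identifies the right strategy and the right obstruction, but it proves nothing beyond what the author already asserts.
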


\section{Gerstenhaber-Schack cohomology}
In the paper \cite{Gerdiag} Gerstenhaber and Schack describe a cohomology for diagrams of associative algebras which we denote by $H^{*}_{GS}(I,A,M)$. Let $I=\{i,j,k,\ldots \}$  be a partially ordered set. We can view $I$ as the set of objects of a category in which there exists a unique morphism $i\rightarrow j$ when $i \leq j$.  They define a diagram to be a contravariant functor $A:I^{op} \rightarrow \mathfrak{Com.alg}$. They define an $A$-module to be a contravariant functor $M: I^{op} \rightarrow \mathfrak{Ab}$ such that $M(i) \in A(i)-\mathfrak{mod}$ for all $i \in I$ and for each
$i \leq j$ the map $M(i \rightarrow j)$ is an $A(j)$-module homomorphism where $A(i)$ is viewed as an $A(j)$-module via the morphism $A(i \rightarrow j)$. If we consider $A$ as a covariant functor $A:I \rightarrow \mathfrak{Com.alg}$ and $M$ as a covariant functor $M:I \rightarrow \mathfrak{Ab}$ then we can apply the theory we developed earlier.

The bicomplex described by Gerstenhaber and Schack coincides with our bicomplex $C^{p,q}_{Harr}(I,A,M)$. Therefore $H^{n}_{GS}(I,A,M) = Harr^{n}(I,A,M)$ for $n \geq 0$. Therefore we get a new spectral sequence
\[E^{p,q}_{2} = H^{p}_{BW}(I,\mathcal{H}_{Harr}^{q+1}(A,M)) \Rightarrow H_{GS}^{p+q}(I,A,M),\]
where $\mathcal{H}_{Harr}^{q}(A,M)$ is the natural system on $I$ whose value on $(\alpha:i \rightarrow j)$ is given by $Harr^{q}(A(i),\alpha^{*}M(j))$.

\chapter{Andr\'{e}-Quillen cohomology of diagrams of algebras}
\label{Chapter6}

In this chapter, let $\mathfrak{C}$ denote a category with limits,
and $I$ denote a small category.

We have already seen that for algebraic objects, we can get
cohomology from monads and comonads. In this chapter, we define a
cohomology for diagrams of algebras. Our approach can be described
as follows. First, we fix a small category $I$. A diagram of algebras is a functor $I \rightarrow \mathfrak{Alg}(T)$, where $T$ is
a monad on sets. For appropriate $T$, one gets a diagram of groups,
a diagram of Lie algebras, a diagram of commutative rings, etc. The
adjoint pair $\xymatrix{\mathfrak{Alg}(T) \ar@<0.5ex>[r] &
\ar@<0.5ex>[l] \mathfrak{Sets} }$ yields a comonad which we denote
by $\mathbb{G}$. We can also consider the category $I_{0}$, which
has the same objects as $I$, but only the identity morphisms. The
inclusion $I_{0} \subset I$ yields the functor $\mathfrak{Sets}^{I}
\rightarrow \mathfrak{Sets}^{I_{0}}$ which has a left adjoint given
by the left Kan extension. We also have the pair of adjoint functors
$\xymatrix{\mathfrak{Alg}(T)^{I} \ar@<0.5ex>[r] & \ar@<0.5ex>[l]
\mathfrak{Sets}^{I} }$ which comes from the adjoint pair
$\xymatrix{\mathfrak{Alg}(T) \ar@<0.5ex>[r] & \ar@<0.5ex>[l]
\mathfrak{Sets} }$. By gluing these diagrams together, one gets
another adjoint pair
\[\xymatrix{\mathfrak{Alg}(T)^{I} \ar@<0.5ex>[r] & \ar@<0.5ex>[l]
\mathfrak{Sets}^{I_{0}}. }\] This adjoint pair yields a comonad which
we denote by $\mathbb{G}_{I}$.  We will prove that
$\mathfrak{Alg}(T)^{I}$ is monadic in $\mathfrak{Sets}^{I_{0}}$ and
the right cohomology theory of diagrams of algebras is one which is
associated to the comonad $\mathbb{G}_{I}$. These cohomology
theories are denoted by $H^{*}_{\mathbb{G}_{I}}(A,M)$.

\section{Base change} Let $\mathfrak{C}$ be a category, and $X$ be
an object in $\mathfrak{C}$. An \emph{$X$-module} in $\mathfrak{C}$
is an abelian group object in the category $\mathfrak{C}/X$,
\[X -\mathfrak{mod} := \mathrm{Ab}(\mathfrak{C}/X).\]

\begin{theorem}
Let $f: X \rightarrow Y$ be a morphism in $\mathfrak{C}$, then there
exists a base-change functor $f^{*}: Y-\mathfrak{mod} \rightarrow X-\mathfrak{mod}$ via
pullbacks.
\end{theorem}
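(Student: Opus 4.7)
The plan is to define $f^{*}(q\colon N\to Y)$ as the pullback of $q$ along $f$ and then show that this pullback functor between slice categories preserves finite products, which automatically carries abelian group objects to abelian group objects. More precisely, given an object $q\colon N\to Y$ of $\mathfrak{C}/Y$, I would form the pullback square
\[\xymatrix{ N\times_{Y}X \ar[r] \ar[d]_{p_{X}} & N \ar[d]^{q} \\ X \ar[r]_{f} & Y }\]
and define $f^{*}(q\colon N\to Y) := (p_{X}\colon N\times_{Y}X \to X)$, which is an object of $\mathfrak{C}/X$. Functoriality on morphisms $h\colon (q\colon N\to Y)\to (q'\colon N'\to Y)$ of $\mathfrak{C}/Y$ is immediate from the universal property of the pullback: the composite $N\times_{Y}X\to N\to N'$ together with $p_{X}$ factors uniquely through $N'\times_{Y}X$, yielding $f^{*}(h)$.

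Next I would verify that $f^{*}\colon \mathfrak{C}/Y\to\mathfrak{C}/X$ preserves finite products, which is the core technical point. The terminal object of $\mathfrak{C}/Y$ is $\mathrm{id}_{Y}$, and its pullback along $f$ is $\mathrm{id}_{X}$, which is terminal in $\mathfrak{C}/X$; so $f^{*}$ preserves terminal objects. For binary products, recall that in a slice category the product of $q\colon N\to Y$ and $q'\colon N'\to Y$ is the pullback $N\times_{Y}N'\to Y$. The required identification $f^{*}(N\times_{Y}N') \cong f^{*}(N)\times_{X} f^{*}(N')$ follows from the standard fact that iterated pullbacks commute, which can be checked by writing out the universal property or via the pasting lemma for pullback squares.

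Once finite-product preservation is established, the theorem is essentially formal. An abelian group object in any category with finite products is a tuple $(A,m,i,z)$ satisfying the commutative diagrams displayed in the Modules subsection, all of which involve only finite products and the terminal object. Any finite-product-preserving functor sends abelian group objects to abelian group objects and morphisms of abelian group objects to morphisms of abelian group objects. Applying this to $f^{*}$ yields the induced functor $f^{*}\colon \mathrm{Ab}(\mathfrak{C}/Y) \to \mathrm{Ab}(\mathfrak{C}/X)$, which by definition is $f^{*}\colon Y\textrm{-}\mathfrak{mod}\to X\textrm{-}\mathfrak{mod}$.

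The only real obstacle is a bookkeeping one: carefully verifying the natural isomorphism $f^{*}(N\times_{Y}N')\cong f^{*}(N)\times_{X}f^{*}(N')$ and checking that the transferred structure maps (multiplication, unit, inverse) on $N\times_{Y}X$ satisfy the abelian group axioms in $\mathfrak{C}/X$. Both are routine diagram chases using the pasting lemma for pullbacks and the universal property, but they must be written out with care to make the naturality in both $N$ and in composable pairs $X\xrightarrow{f}Y\xrightarrow{g}Z$ (so that $(gf)^{*}\cong f^{*}g^{*}$ if one wishes to record this) transparent.
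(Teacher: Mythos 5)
Your proposal is correct and follows essentially the same route as the paper: both define $f^{*}$ on $\mathfrak{C}/Y$ by pullback along $f$ and then use the key identification $f^{*}(M)\times_{X}f^{*}(M)\cong f^{*}(M\times_{Y}M)$ to transfer the abelian group object structure. The only difference is presentational — you package the last step as the general principle that finite-product-preserving functors preserve abelian group objects, whereas the paper constructs the multiplication $f^{*}(mult)$ directly from the universal property of the pullback.
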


\begin{proof}
The product in the slice category is given by pullbacks. The functor we are going to use is $f^{*}:\mathfrak{C}/Y \rightarrow
\mathfrak{C}/X $ given by pullbacks.

\[\xymatrix{f^{*}(M) \ar[r] \ar[d] & M \ar[d]^{p} \\ X \ar[r]^{f} & Y}\]
If $M \in Y-\mathfrak{mod}$ then $f^{*}(M)$ has a canonical $X$-module
structure. In set-theoretic notation,
\[f^{*}(M) = \{(x,m) | x \in X,\textrm{ } m \in M, \textrm{ } f(x) = p(m) \},\]
\[f^{*}(M) \times_{X} f^{*}(M) = \{(x,m, m') | x \in X,\textrm{ } m,m' \in M, \textrm{ } f(x) = p(m) = p(m')\},\]
\[f^{*}(M) \times_{X} f^{*}(M) \simeq f^{*}(M \times_{Y} M).\]
Consider the following commuting diagram.
\[\xymatrix{f^{*}(M \times_{Y} M) \ar[r] \ar[d] \ar@{..>}[ddr]^(0.3){\exists !} & M \times_{Y} M \ar[d] \ar[ddr]^{mult}\\ X \ar[r]_(0.35){f} \ar@{=}[ddr] & Y \ar@{=}[ddr] \\ & f^{*}(M) \ar[r] \ar[d] & M \ar[d]\\ & X \ar[r]_{f} & Y}\]
The unique morphism $f^{*}(mult):f^{*}(M \times_{Y} M) \rightarrow
f^{*}(M)$ exists by the universal property of pullbacks. The
isomorphism $f^{*}(M) \times_{X} f^{*}(M) \simeq f^{*}(M \times_{Y}
M)$ and this unique morphism yield multiplication
\[f^{*}(mult):f^{*}(M) \times_{X} f^{*}(M) \rightarrow
f^{*}(M),\] which gives an abelian group object structure on
$f^{*}(M)$.
\end{proof}

\section{Derivations}
For $M \in X-\mathfrak{mod}$, one defines a \emph{derivation} from $X$ to $M$
to be a morphism $d:X \rightarrow M$ which is a section of the
canonical morphism $M \rightarrow X$. Let $\Der(X,M)$ denote the set
of derivations $d: X \rightarrow M$. This is a special case of \ref{pderivation} and there is an abelian group structure. We will require the following
useful theorem later.
\begin{theorem}\label{derttt}
If $X = \coprod_{\alpha \in I} X_{\alpha}$ and $M \in X-\mathfrak{mod}$, then
\[\Der(X,M) \cong \prod_{\alpha \in I} \Der(X_{\alpha},M_{\alpha}),\]
where $M_{\alpha}$ is the $X_{\alpha}$-module produced from $M$ by the base-change
functor from the morphism $i_{\alpha}: X_{\alpha} \rightarrow X$.
\end{theorem}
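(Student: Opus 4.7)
The plan is to exhibit the isomorphism directly from the universal properties of the coproduct $X = \coprod_{\alpha} X_\alpha$ and of the pullback defining $M_\alpha = i_\alpha^{*}M$. A derivation $d : X \to M$ is by definition a morphism in $\mathfrak{C}/X$ which is a section of the structure map $p : M \to X$; the first step is to unpack this in terms of the coproduct.

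First I would use the coproduct property: the data of a morphism $d : X \to M$ is the same as a family $(d_\alpha : X_\alpha \to M)_{\alpha \in I}$, where $d_\alpha := d \circ i_\alpha$. The condition $p \circ d = \mathrm{id}_X$ precomposed with $i_\alpha$ becomes $p \circ d_\alpha = i_\alpha$ for every $\alpha$, and conversely such a family assembles to a $d$ that is a section of $p$. Thus derivations $X \to M$ are in bijection with families of morphisms $d_\alpha : X_\alpha \to M$ in $\mathfrak{C}$ satisfying $p \circ d_\alpha = i_\alpha$.

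Next I would invoke the pullback square defining $M_\alpha$: by the universal property, a morphism $d_\alpha : X_\alpha \to M$ with $p \circ d_\alpha = i_\alpha$ corresponds uniquely to a morphism $\tilde{d}_\alpha : X_\alpha \to M_\alpha$ whose composition with $M_\alpha \to X_\alpha$ is $\mathrm{id}_{X_\alpha}$, i.e.\ to a derivation $X_\alpha \to M_\alpha$ in the sense of \ref{pderivation}. Assembling over all $\alpha$ gives the claimed bijection of underlying sets $\Der(X,M) \cong \prod_\alpha \Der(X_\alpha, M_\alpha)$.

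Finally I would check compatibility with the abelian group structures. The addition on $\Der(X,M)$ is induced from the abelian group object structure $m : M \times_X M \to M$ in $\mathfrak{C}/X$; by the theorem of the previous section, the base-change functor $i_\alpha^{*}$ preserves finite limits and in particular sends abelian group objects to abelian group objects with multiplication $i_\alpha^{*}(m)$. Hence the correspondence $d \mapsto (\tilde{d}_\alpha)$ is compatible with the product abelian group structure on $\prod_\alpha \Der(X_\alpha, M_\alpha)$. The main thing to be careful about is the interaction between the pullback and the coproduct in a general category with limits; since the construction only uses the universal properties one factor at a time, no exactness or distributivity assumption is actually needed, and the argument is essentially formal.
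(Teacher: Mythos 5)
Your argument is correct and is essentially the proof in the paper: both directions come from exactly the same two universal properties (restricting a section of $M\rightarrow X$ along $i_{\alpha}$ and factoring through the pullback defining $M_{\alpha}$, then reassembling a family of sections via the coproduct and using its uniqueness clause to see $pf=\mathrm{id}_{X}$). Your closing remark verifying compatibility with the abelian group structures, using that $i_{\alpha}^{*}$ preserves the relevant limits, is a small addition the paper leaves implicit, but it does not change the approach.
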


\begin{proof}
From the definition of the coproduct one has a morphism $i_{\alpha}: X_{\alpha} \rightarrow X$. Using this one gets $M_{\alpha} \in
X_{\alpha}$-$\mathfrak{mod}$ via the following pullback diagram.
\[ \xymatrix{ M  \ar[d]_{p} & M_{\alpha}  \ar[l]_{j_{\alpha}} \ar[d]^{p_{\alpha}}  \\
 X   & X_{\alpha} \ar[l]^{i_{\alpha}} }\]
Let $f$ be a section of $p$, this means that $pf=id_{X}$. Consider
the following diagram.
\[\xymatrix{ & & X_{\alpha} \ar@{..>}[ld]^{f_{\alpha}}
\ar[lld]_{fi_{\alpha}} \ar[ddl]^{id_{X_{\alpha}}} \\
 M  \ar@<1ex>[d]^{p}  & M_{\alpha}  \ar[l]_{j_{\alpha}} \ar[d]_{p_{\alpha}}  &  \\
 X  \ar@<1ex>[u]^{f}  &  X_{\alpha} \ar[l]^{i_{\alpha}} &  }
\]
The diagram commutes since $pfi_{\alpha} = id_{X}i_{\alpha} =
i_{\alpha}id_{X_{\alpha}}$. By the universal property of pullbacks
$p_{\alpha}f_{\alpha} = id_{X_{\alpha}}$. So if $f$ is a section of
$p$ then $f_{\alpha}$ is a section of $p_{\alpha}$.

Conversely, let $f_{\alpha}$ be a section of $p_{\alpha}$, this
means that $p_{\alpha}f_{\alpha}=id_{X_{\alpha}}$. By the definition
of the coproduct there exists a unique morphism $f$ such that the
following diagram commutes.
\[
\xymatrix{
 M  \\
 X  \ar@{..>}[u]^{f} &  X_{\alpha} \ar[l]^{i_{\alpha}} \ar[ul]_{j_{\alpha}f_{\alpha}} }
\]
This means that $fi_{\alpha}= j_{\alpha}f_{\alpha}$. Composing with
$p$ on the left gives us that $ pfi_{\alpha}= pj_{\alpha}f_{\alpha}
= i_{\alpha}p_{\alpha}f_{\alpha} = i_{\alpha}id_{X_{\alpha}} =
i_{\alpha}$ Thus the following diagram commutes.
\[
\xymatrix{ X \ar@<1ex>[r]^{id_{X}} \ar@<-1ex>[r]_{pf}  & X \\
X_{\alpha} \ar[u]^{i_{\alpha}} \ar[ur]_{i_{\alpha}}& & }\] The
universal property of the coproduct says that $pf = id_{X}$. Hence
$f$ is a section of $p$.
\end{proof}

We will require the following useful lemma later.

\begin{lemma}\label{derlma}
For all objects $Z \in \mathfrak{C}^{I}$, for $M \in G_{I}(Z)-\mathfrak{mod}$, and
$\alpha: i \rightarrow j$ in the small category $I$, one has
\[\Der(G(Z(i)),\alpha^{*}M(j)) = \prod_{m \in UZ(i)}
p_{j}^{-1}\alpha_{*}\gamma_{i}(m),\] where $p_{j}$ is the canonical
 morphism $p_{j}:M(j) \rightarrow GZ(j)$ and $\gamma_{i}$ is the inclusion $\gamma_{i}: UZ(i) \rightarrow GZ(i)$.
\end{lemma}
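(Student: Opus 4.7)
The plan is to use Theorem~\ref{derttt} together with freeness of $G(Z(i))$ to reduce the claim to computing derivations out of the free algebra on a single generator, and then to identify each such derivation with a single fibre of $p_j$.

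The first step I would take is to observe that $F\colon\mathfrak{Sets}\to\mathfrak{Alg}(T)$, being a left adjoint, preserves coproducts, so
\[G(Z(i))\;=\;F(UZ(i))\;=\;\coprod_{m\in UZ(i)}F(\{*\}),\]
with coproduct inclusions $\iota_m\colon F(\{*\})\to G(Z(i))$ indexed by the elements $m\in UZ(i)$. Applying Theorem~\ref{derttt} then yields
\[\Der\bigl(G(Z(i)),\alpha^{*}M(j)\bigr)\;\cong\;\prod_{m\in UZ(i)}\Der\bigl(F(\{*\}),\,\iota_m^{*}\alpha^{*}M(j)\bigr),\]
where $\iota_m^{*}\alpha^{*}M(j)$ is produced from $\alpha^{*}M(j)$ by base change along $\iota_m$.

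Next I would identify each factor with a single fibre of $p_j$. Let $\tilde\alpha\colon G(Z(i))\to G_I Z(j)$ denote the composite $G(Z(i))\to G_I Z(i)\xrightarrow{G_I Z(\alpha)}G_I Z(j)$ along which the pullback $\alpha^{*}M(j)=G(Z(i))\times_{G_I Z(j)}M(j)$ is formed. By the universal property of the pullback, a derivation $F(\{*\})\to\iota_m^{*}\alpha^{*}M(j)$, that is, a section of the structure map of $\iota_m^{*}\alpha^{*}M(j)$ in the slice, is the same datum as an algebra morphism $\sigma\colon F(\{*\})\to M(j)$ satisfying $p_j\sigma=\tilde\alpha\iota_m$. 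By the free--forgetful adjunction, $\sigma$ is determined by the image of the unique generator $*$, which is an element of $U(M(j))$; the side condition then forces this element to lie in the fibre $p_j^{-1}\bigl(\tilde\alpha(\iota_m(*))\bigr)=p_j^{-1}(\alpha_{*}\gamma_i(m))$. Hence the $m$th factor equals exactly $p_j^{-1}(\alpha_{*}\gamma_i(m))$, and assembling the product gives the claimed formula.

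The only genuine pitfall is notational bookkeeping: keeping track of which ``$\gamma_i$'' is meant at each stage (the unit $UZ(i)\to G(Z(i))$ versus its composite with the $\mathrm{id}_i$--summand inclusion into $G_I Z(i)$), and ensuring that the pullback defining $\alpha^{*}M(j)$ is taken along the composite $\tilde\alpha$ rather than along $G_I Z(\alpha)$ alone. I do not anticipate any deeper obstacle.
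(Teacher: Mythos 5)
Your proof is correct, but it is organised differently from the paper's. The paper argues in one stroke: a derivation $G(Z(i))\rightarrow\alpha^{*}M(j)$ is a section of $p_{\alpha}$ in the pullback square over $\alpha_{*}\colon GZ(i)\rightarrow GZ(j)$, and since $UZ(i)$ is a basis of the free object $GZ(i)$, such a section is exactly a set map $s\colon UZ(i)\rightarrow M(j)$ with $p_{j}s=\alpha_{*}\gamma_{i}$, which is literally the product $\prod_{m\in UZ(i)}p_{j}^{-1}\alpha_{*}\gamma_{i}(m)$ --- no appeal to Theorem~\ref{derttt} is made. You instead first write $G(Z(i))=F(UZ(i))=\coprod_{m\in UZ(i)}F(\{*\})$ using that the left adjoint $F$ preserves coproducts, invoke Theorem~\ref{derttt} to split $\Der(G(Z(i)),\alpha^{*}M(j))$ into one factor per generator, and only then use the free--forgetful adjunction together with the universal property of the pullback on a single generator. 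Both arguments rest on the same two facts (freeness and the pullback description of $\alpha^{*}M(j)$); your version buys a clean reuse of Theorem~\ref{derttt} and isolates the one-generator computation, at the price of the extra coproduct-preservation step, while the paper's direct ``basis'' argument is shorter. Your closing remark about pulling back along the composite $\tilde\alpha$ rather than along $G_{I}Z(\alpha)$ alone is a fair point: the paper's own statement is loose about whether $M(j)$ lives over $GZ(j)$ or $G_{I}Z(j)$, and under either reading the fibres $p_{j}^{-1}\alpha_{*}\gamma_{i}(m)$ come out the same, so this does not affect correctness.
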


\begin{proof}
The derivations $\Der(G(Z(i)),\alpha^{*}M(j))$ are the sections of
$p_{\alpha}$ in the following pullback diagram.
\[\xymatrix{&\alpha^{*}M(j) \ar[r]  \ar[d]_{p_{\alpha}}  & M(j)  \ar[d]^{p_{j}}  \\UZ(i) \ar@{^(->}[r]_{\gamma_{i}} &GZ(i) \ar[r]_{\alpha_{*}} & GZ(j)}\]

By definition, $UZ(i)$ is the basis of the free object $GZ(i)$.
\begin{align*}
\Der(G(Z(i)),\alpha^{*}M(j)) &= \{s:UZ(i) \rightarrow M(j) |
\alpha_{*}\gamma_{i} = p_{j}s, \textrm{ } s\textrm{ is a set map.}
\}
\\ &= \prod_{m \in UZ(i)} p_{j}^{-1}\alpha_{*}\gamma_{i}(m).
\end{align*}
\end{proof}

\section{Natural system}
We require the following useful theorem.

\begin{theorem}\label{natsysttt}
Let $A \in \mathfrak{C}^{I}$ and $M \in A$-$\mathfrak{mod}$. If $\alpha : i \rightarrow j$ is a morphism in $I$ then $M(j) \in
A(j)-\mathfrak{mod}$ and \[ \mathfrak{Der}(A,M)(\alpha) = \Der(A(i),
\alpha^{*}M(j)), \] defines a natural system on $I$.
\end{theorem}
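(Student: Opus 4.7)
The plan is to exhibit an explicit functor $\mathfrak{Der}(A,M) : \mathcal{F}I \to \mathfrak{Ab}$ whose value on an object $\alpha : i \to j$ of $\mathcal{F}I$ is $\Der(A(i),\alpha^{*}M(j))$, and then check that it is indeed functorial. The first preliminary is a useful reformulation of what a derivation is: since $\alpha^{*}M(j) = A(i) \times_{A(j)} M(j)$ is defined as a pullback, a section of the canonical arrow $\alpha^{*}M(j) \to A(i)$ is the same data as a morphism $\widetilde d : A(i) \to M(j)$ in $\mathfrak{C}$ satisfying $p_{j} \circ \widetilde d = A(\alpha)$, where $p_{j} : M(j) \to A(j)$ is the structure morphism of $M(j)$. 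Under this identification, $\Der(A(i),\alpha^{*}M(j))$ is simply the set of liftings of $A(\alpha)$ through $p_{j}$, with its natural abelian-group-object structure inherited from $M(j)$.

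Next I would construct the morphism maps. Recall that a morphism $(\sigma,\tau) : f \to g$ in $\mathcal{F}I$, with $f : i \to j$ and $g : i' \to j'$, consists of $\sigma : j \to j'$ and $\tau : i' \to i$ such that $g = \sigma f \tau$. Given a derivation $\widetilde d : A(i) \to M(j)$ lifting $A(f)$, I define
\[
(\sigma,\tau)_{*}(\widetilde d) \;:=\; M(\sigma) \circ \widetilde d \circ A(\tau) \;:\; A(i') \longrightarrow M(j').
\]
Using that $M$ is an $A$-module, the arrow $M(\sigma)$ covers $A(\sigma)$, i.e.\ $p_{j'} \circ M(\sigma) = A(\sigma) \circ p_{j}$. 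Therefore
\[
p_{j'} \circ (\sigma,\tau)_{*}(\widetilde d) \;=\; A(\sigma) \circ p_{j} \circ \widetilde d \circ A(\tau) \;=\; A(\sigma) \circ A(f) \circ A(\tau) \;=\; A(\sigma f \tau) \;=\; A(g),
\]
so the result is a derivation in $\Der(A(i'),g^{*}M(j'))$ as required. Abelian group structures on the target of the derivation pass through composition with the fixed arrows $M(\sigma)$ and $A(\tau)$, so $(\sigma,\tau)_{*}$ is a group homomorphism.

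Functoriality is then a direct check. For the identity $(\mathrm{id}_{j},\mathrm{id}_{i}) : f \to f$ the formula gives $M(\mathrm{id}) \circ \widetilde d \circ A(\mathrm{id}) = \widetilde d$, using that $A$ and $M$ (as functors) preserve identities. For composable arrows $(\sigma,\tau) : f \to g$ and $(\sigma',\tau') : g \to h$, whose composition in $\mathcal{F}I$ is $(\sigma'\sigma,\tau\tau')$, we compute
\[
(\sigma'\sigma,\tau\tau')_{*}(\widetilde d) = M(\sigma'\sigma) \circ \widetilde d \circ A(\tau\tau') = M(\sigma') \circ M(\sigma) \circ \widetilde d \circ A(\tau) \circ A(\tau') = (\sigma',\tau')_{*} \bigl( (\sigma,\tau)_{*}(\widetilde d) \bigr),
\]
where we used functoriality of $A$ and $M$ and associativity of composition. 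This shows $\mathfrak{Der}(A,M)$ is a genuine functor $\mathcal{F}I \to \mathfrak{Ab}$, i.e.\ a natural system on $I$. There is no real obstacle here: the only potential subtlety is verifying that the base-change $\alpha^{*}M(j)$ really exists as an object of $A(i)\text{-}\mathfrak{mod}$, but this was already handled by the base-change theorem proved earlier in the chapter, so the proof is essentially bookkeeping once the identification of derivations with liftings is in place.
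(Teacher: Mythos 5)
Your proof is correct, and it rests on the same two ingredients as the paper's argument: the base-change description $\alpha^{*}M(j)=A(i)\times_{A(j)}M(j)$ and the functoriality of $A$, $M$ together with the naturality of the structure map $p:M\rightarrow A$. The difference is in the packaging. The paper stays inside the slice-category picture: a derivation is a section of $\alpha^{*}M(j)\rightarrow A(i)$, and the two unilateral actions (post-composition along $\beta:j\rightarrow j'$ and pre-composition along $\gamma:i'\rightarrow i$) are built separately by invoking the universal property of the relevant pullbacks --- the induced map $\alpha^{*}M(j)\rightarrow\alpha^{*}\beta^{*}M(j')=(\beta\alpha)^{*}M(j')$ coming from $M(\beta)$, respectively the unique section of $(\alpha\gamma)^{*}M(j)$ over $A(i')$ --- and the functor axioms for the resulting natural system are left implicit. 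You instead use the universal property of the pullback exactly once, to identify a section with a lifting $\widetilde d:A(i)\rightarrow M(j)$ satisfying $p_{j}\circ\widetilde d=A(\alpha)$; after that the action of a general morphism $(\sigma,\tau)$ of $\mathcal{F}I$ is literally pre- and post-composition, and the identity and composition laws (including composites mixing the two directions, which the paper never checks) become one-line computations. What your route buys is this uniform and complete verification of functoriality; what the paper's route buys is that it never leaves the language of sections in $\mathfrak{C}/A(i)$ used throughout the chapter. The one place where you are terse is the claim that $(\sigma,\tau)_{*}$ is a group homomorphism: this needs that $M(\sigma)$ commutes with the fibrewise addition, which holds because the multiplication $M\times_{A}M\rightarrow M$ is a natural transformation (limits in $\mathfrak{C}^{I}$ being pointwise); it is worth stating this explicitly, but it is not a gap.
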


\begin{proof}
Start by fixing $A$ and $M$, then let $D(\alpha)$ denote $
\mathfrak{Der}(A,M)(\alpha)$. Let $\gamma, \alpha, \beta \in I$ such
that
\[
\xymatrix{ i' \ar[r]^{\gamma} & i \ar[r]^{\alpha} & j \ar[r]^{\beta}
& j'. }
\]
We are going to show that we have induced maps as follows.
\[
\xymatrix{ D(\alpha \gamma) & D(\alpha) \ar[l]_-{\gamma^{*}}
\ar[r]^-{\beta^{*}} & D(\beta \alpha) }.
\]
Let $s \in D(\alpha)$, then the following diagram commutes with $ps
= id_{A(i)}$, and $\alpha^{*}M(j)$ is a pullback, $\alpha^{*}M(j)
\in A(i)-\mathfrak{mod}.$

\[
\xymatrix{\alpha^{*}M(j) \ar[rr]  \ar@<1ex>[dd]^{p} && M(j) \ar[dd] \\
 & &\\
A(i) \ar@<1ex>[uu]^{s} \ar[rr]^{A(\alpha)} && A(j) }\]

Consider the following commuting diagram.
\[
\xymatrix{M(i) \ar@<-4ex>@/_4pc/[dddddd] \ar@{..>}[dd]_{\exists !} \ar[ddrr]^{M(\alpha)} \\
\\
 \alpha^{*}M(j) \ar@<-1ex>@/_2pc/[dddd]_{p} \ar@{..>}[dd]^{\exists ! \tau} \ar[rr] && M(j) \ar@/^2pc/[dddd] \ar@<-1ex>@{..>}[dd]_{\exists !} \ar[ddrr]^{M(\beta)} \\
&  \\
\alpha^{*}\beta^{*}M(j') \ar[rr] \ar[dd]^{p'}  && \beta^{*}M(j') \ar@<-1ex>[dd] \ar[rr] && M(j') \ar[dd]\\
&  & & && \\
A(i) \ar@<3ex>@/^2pc/[uuuu]^{s} \ar[rr]_{A(\alpha)} && A(j)
\ar[rr]_{A(\beta)} && A(j')}
\]
Let $s': A(i) \rightarrow \alpha^{*}\beta^{*}M(j')$ be the map $s' =
\tau s$. Hence \[p' \tau s = p s = id_{A(i)}.\] So define $\beta^{*}(s) = s'.$ Hence $s' \in
\Der(A(i), \alpha^{*}\beta^{*}M(j')) = \Der(A(i),
(\beta\alpha)^{*}M(j'))$.

Consider the following commutative diagram, with $s$ a section of
$p$.
\[
\xymatrix{
(\alpha \gamma)^{*}M(j) \ar[rr]  \ar@<1ex>[dd]^{p'} && \alpha^{*}M(j) \ar[rr]  \ar@<1ex>[dd]^{p} && M(j) \ar[dd] \\
&  & &  &\\
A(i')  \ar[rr]^{A(\gamma)} && A(i) \ar@<1ex>[uu]^{s}
\ar[rr]^{A(\alpha)} && A(j) }\] There exists a unique $s': A(i')
\rightarrow (\alpha \gamma)^{*}M(j)$ which is a section of $p'$
which would make the above diagram still commute. So define $\gamma^{*}(s) = s'$. Therefore $s' \in
Der(A(i'), (\alpha\gamma)^{*}M(j))$.
\end{proof}

\begin{corollary} For $q \geq 0$ there exists a natural system
$\mathcal{H}^{q}(A,M)$ on $I$ whose value on $(\alpha: i \rightarrow
j)$ is given by $H^{q}_{\mathbb{G}}(A(i),\alpha^{*}M(j))$.
\end{corollary}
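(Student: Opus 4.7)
The plan is to extend Theorem \ref{natsysttt} degreewise along the comonad resolution $\mathbb{G}_*(A(i)) \to A(i)$. For each $n \geq 0$, I claim that the assignment
\[D_n(\alpha : i \to j) := \Der(G^{n+1}(A(i)), \alpha^* M(j))\]
is a natural system on $I$, where $\alpha^* M(j)$ is viewed as a module over $G^{n+1}(A(i))$ via the augmentation $G^{n+1}(A(i)) \to A(i)$. The induced maps $\gamma^*$ and $\beta_*$ are defined exactly as in the proof of Theorem \ref{natsysttt}, except that $A(i)$ is replaced by $G^{n+1}(A(i))$ and $A(\gamma) : A(i') \to A(i)$ by $G^{n+1}(A(\gamma)) : G^{n+1}(A(i')) \to G^{n+1}(A(i))$. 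The verification is word-for-word the same as in Theorem \ref{natsysttt}, since that argument used only the universal property of pullbacks together with functoriality; the only new input is that $G^{n+1}$ is an endofunctor on $\mathfrak{Alg}(T)$, so it preserves all morphisms coming from $I$.

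Next, I would check that the comonad coboundary $\delta^n : D_n \to D_{n+1}$, built from the alternating sum of the face maps $\varphi_i = G^i \varepsilon G^{n-i}$, is a morphism of natural systems, i.e.\ it commutes with both $\gamma^*$ and $\beta_*$. The compatibility with $\beta_*$ is immediate because $\beta_*$ is postcomposition with (the pullback of) $M(\beta)$, which trivially commutes with precomposition by $\varphi_i$. The compatibility with $\gamma^*$ follows from naturality of $\varepsilon : G \to \mathrm{Id}$ in the comonad $\mathbb{G}$: the square
\[\xymatrix{G^{n+2}(A(i')) \ar[r]^{\varphi_i} \ar[d] & G^{n+1}(A(i')) \ar[d] \\ G^{n+2}(A(i)) \ar[r]^{\varphi_i} & G^{n+1}(A(i))}\]
commutes, and combined with the base-change identification $A(\gamma)^* \alpha^* M(j) \cong (\alpha\gamma)^* M(j)$ this gives the required square of natural-system morphisms.

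Finally, since the category of natural systems on $I$ valued in $\mathfrak{Ab}$ is a functor category into an abelian category, hence abelian, the cohomology of the cochain complex $(D_*, \delta^*)$ can be computed pointwise. I set $\mathcal{H}^q(A,M) := H^q(D_*, \delta^*)$; by construction its value on $\alpha : i \to j$ is the $q$-th cohomology of the complex $\Der(G^{*+1}(A(i)), \alpha^* M(j))$, which is precisely $H^q_{\mathbb{G}}(A(i), \alpha^* M(j))$. The only step requiring genuine care is the mixed identity $\beta_* \gamma^* = \gamma^* \beta_*$ at the cochain level, but this reduces to a diagram chase involving two consecutive pullbacks and naturality of $M(\beta)$, so there is no serious obstacle.
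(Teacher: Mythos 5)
Your argument is correct and is essentially the paper's: the corollary is obtained by applying Theorem \ref{natsysttt} degreewise to the comonad resolution, i.e.\ by forming the natural system of chain complexes $\alpha \mapsto \Der(\mathbb{G}_{*}(A(i)),\alpha^{*}M(j))$ that the paper introduces immediately after the statement, and then passing to cohomology in the abelian functor category of natural systems. The extra verifications you spell out (compatibility of the comonad coboundary with $\gamma^{*}$ and $\beta_{*}$ via naturality of $\varepsilon$ and the base-change identification $(\beta\alpha)^{*}M \cong \alpha^{*}\beta^{*}M$) are exactly what the paper leaves implicit.
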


This corollary allows us to define, for fixed $q \geq 0$, the
Baues-Wirsching cohomology $H^{*}_{BW}(I,\mathcal{H}^{q}(A,M))$ of
$I$ with coefficients in the natural system $\mathcal{H}^{q}(A,M)$.

Furthermore, we can consider a natural system on the category of
chain complexes $\mathfrak{Chain complex}$ as follows. To each
morphism $\alpha:i \rightarrow j \in I$ we assign the chain complex
$\Der(\mathbb{G}_{*}(A(i)),\alpha^{*}M(j))$. This gives us a
functor,
\[ D: \mathcal{FI} \rightarrow \mathfrak{Chain complex},\]
where $\mathcal{FI}$ denotes the \emph{category of factorizations}
in $\mathcal{I}$.

This natural system gives rise to a cosimplicial object in
$\mathfrak{Chain complex}$:
\[\xymatrix{ \prod_{i}D(id_{i}) \ar@<.5ex>[r]\ar@<-.5ex>[r] & \prod_{\alpha :i\rightarrow j} D(\alpha) \ar[r] \ar@<.75ex>[r]\ar@<-.75ex>[r] & \ldots}\]
which gives rise to a bicomplex described in the next in the next section.

\section{Bicomplex}
Let $\mathbb{G}$ be a comonad in $\mathfrak{C}$, let $A \in \mathfrak{C}^{I}$ and $M \in A-\mathfrak{mod}$. Then we
can construct the following bicomplex denoted by $C^{*,*}(I,A,M)$.
\[
C^{p,q}(I,A,M) = \prod_{\alpha:i_{0}\rightarrow \ldots \rightarrow
i_{p}}\Der(G^{q+1}(A(i_{0})),\alpha^{*}M(i_{p})).
\]
The map $C^{p,q}(I,A,M) \rightarrow C^{p+1,q}(I,A,M)$ is the map in
the Baues-Wirsching cochain complex, and the map $C^{p,q}(I,A,M)
\rightarrow C^{p,q+1}(I,A,M)$ is the product of maps in the
comonad cochain complex.
\[
\xymatrix@W=1pc{ \vdots & \vdots
\\\prod_{i}\Der(G^{3}(A(i)),M(i)) \ar^{\partial}[u]\ar^-{\delta}[r] &
\prod_{\alpha:i\rightarrow j}\Der(G^{3}(A(i)),\alpha^{*}M(j)) \ar^-{\delta}[r]\ar^{-\partial}[u] &\ldots
\\\prod_{i}\Der(G^{2}(A(i)),M(i)) \ar^-{\delta}[r] \ar^{\partial}[u]& \prod_{\alpha:i\rightarrow
j}\Der(G^{2}(A(i)),\alpha^{*}M(j)) \ar^-{\delta}[r]\ar^{-\partial}[u] & \ldots
\\\prod_{i}\Der(G(A(i)),M(i)) \ar^-{\delta}[r] \ar^{\partial}[u] & \prod_{\alpha:i\rightarrow
j}\Der(G(A(i)),\alpha^{*}M(j)) \ar^-{\delta}[r]  \ar^{-\partial}[u] & \ldots}
\]

This bicomplex lives in the category of abelian groups. We let
\emph{$H^{*}(I,A,M)$} denote the cohomology of the total complex of
$C^{*,*}(I,A,M)$.

We will need the following useful lemmas.

\begin{lemma}\label{GIprojttt}
If $A$ is $\mathbb{G}_{I}$-projective, then $A(i)$ is
$\mathbb{G}$-projective for all $i \in I$.
\end{lemma}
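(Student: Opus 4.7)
The plan is to reduce to the structural characterization of $\mathbb{G}$-projectives given in Lemma \ref{lemma28} and then compute what $G_I$ looks like object-by-object in $I$.

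First I would invoke the analogue of Lemma \ref{lemma28} for the comonad $\mathbb{G}_I$ on $\mathfrak{Alg}(T)^I$: the same proof goes through verbatim, so $A$ is $\mathbb{G}_I$-projective if and only if $A$ is a retract of $G_I(Z)$ for some $Z \in \mathfrak{Sets}^{I_0}$. Thus it suffices to show that, for each such $Z$ and each $i \in I$, the object $G_I(Z)(i) \in \mathfrak{Alg}(T)$ is $\mathbb{G}$-projective, because evaluation at $i$ preserves retracts and a retract of a $\mathbb{G}$-projective is $\mathbb{G}$-projective (again by Lemma \ref{lemma28}).

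Next I would compute $G_I(Z)(i)$ explicitly. The left adjoint $L_I : \mathfrak{Sets}^{I_0} \to \mathfrak{Alg}(T)^I$ factors as the composite of the left Kan extension $\mathrm{Lan} : \mathfrak{Sets}^{I_0} \to \mathfrak{Sets}^I$ along $I_0 \subset I$, followed by the pointwise free-algebra functor $\mathfrak{Sets}^I \to \mathfrak{Alg}(T)^I$. Since $I_0$ carries only identity morphisms, the standard colimit formula for left Kan extensions gives
\[
\mathrm{Lan}(Z)(i) \;=\; \coprod_{j \in I}\, \coprod_{\alpha : j \to i}\, Z(j).
\]
Because $G : \mathfrak{Sets} \to \mathfrak{Alg}(T)$ is a left adjoint it preserves coproducts, and since $U_I(A)(j) = U(A(j))$ by construction, we obtain
\[
G_I(A)(i) \;=\; G\!\left(\coprod_{\alpha : j \to i} U(A(j))\right),
\]
which is visibly of the form $G(Y)$ for the set $Y = \coprod_{\alpha : j \to i} U(A(j))$.

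Finally I would conclude: $G_I(Z)(i)$ is of the form $G(Y)$ and so is $\mathbb{G}$-projective by the example immediately preceding Lemma \ref{lemma28}; hence any retract of such an object in $\mathfrak{Alg}(T)$ is $\mathbb{G}$-projective; evaluation at $i$ sends the retraction exhibiting $A$ as a retract of $G_I(Z)$ to a retraction exhibiting $A(i)$ as a retract of $G_I(Z)(i)$, so $A(i)$ is $\mathbb{G}$-projective. The only nontrivial step is pinning down the explicit formula for $L_I$, and in particular justifying the interchange of $G$ with the coproduct coming from the Kan extension; this is purely formal once one notes $G$ is a left adjoint, so I do not expect any real obstacle.
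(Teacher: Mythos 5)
Your proposal is correct and follows essentially the same route as the paper: reduce (via the retract characterization of Lemma \ref{lemma28}, applied to $\mathbb{G}_I$) to the case $A = G_I(Z)$, compute $G_I(Z)(i)$ pointwise from the Kan extension formula, and conclude projectivity from the facts established before Lemma \ref{lemma28}. The only cosmetic difference is that the paper writes $G_I(Z)(i)$ as the coproduct $\coprod_{x\to i} G(Z(x))$ and invokes the coproduct-of-projectives lemma, whereas you use that the free functor preserves coproducts to see it as a single free object; these are two readings of the same isomorphism, and your extra care with the retract reduction (evaluation at $i$ preserves retracts) only tightens a step the paper leaves implicit.
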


\begin{proof}
Consider $A = G_{I}(Z): I \rightarrow \mathfrak{C}$ where
$G_{I}(Z)(i) = \coprod _{x \rightarrow i} G(Z(x))$. Since $G(Z(x))$
is $\mathbb{G}$-projective, it follows that $\coprod _{x \rightarrow
i} G(Z(x))$ is $\mathbb{G}$-projective for all $i \in I$.
\end{proof}

\begin{lemma}\label{lemmaHIAM}
$H^{0}(I,A,M) \cong \mathfrak{Der}(A,M)$, furthermore, if $A$ is
$\mathbb{G}_{I}$-projective then $H^{n}(I,A,M) = 0$ for $n > 0$.
\end{lemma}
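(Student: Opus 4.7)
The plan is to handle the two claims separately: first I identify $H^0(I,A,M)$ by direct inspection of the bicomplex in low degrees, and then I collapse the first spectral sequence of Proposition \ref{genss} and reduce the surviving Baues--Wirsching cohomology to that of an under-category with an initial object.

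For $H^0(I,A,M)\cong\mathfrak{Der}(A,M)$, I examine the kernel of $\partial+\delta$ on $C^{0,0}(I,A,M)=\prod_i \Der(G(A(i)),M(i))$. An element $f=(f_i)$ in this kernel satisfies, first, $\partial f_i=0$ for each $i$, which by Lemma \ref{Gderiv} produces a derivation $d_i:A(i)\to M(i)$, and, second, $(\delta f)_\alpha = \alpha_* f_i - \alpha^* f_j = 0$ for every $\alpha:i\to j$, which in terms of the $d_i$ becomes the naturality relation $M(\alpha)\circ d_i = d_j\circ A(\alpha)$. The resulting family $(d_i)$ is precisely an element of $\mathfrak{Der}(A,M)$.

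For the vanishing in positive degrees, I take vertical cohomology first in Proposition \ref{genss}. The $p$th column is $\prod_\alpha C^{*}_\mathbb{G}(A(i_0),\alpha^*M(i_p))$, and Lemma \ref{GIprojttt} tells us that $\mathbb{G}_I$-projectivity of $A$ forces $A(i_0)$ to be $\mathbb{G}$-projective. Lemma \ref{exacttt} then gives $H^q_v(C^{p,*})=0$ for $q>0$, and Lemma \ref{Gderiv} gives $H^0_v(C^{p,*})=C^p_{BW}(I,\mathcal{H}^0(A,M))$, so the sequence collapses on the row $q=0$ and yields $H^n(I,A,M)\cong H^n_{BW}(I,\mathcal{H}^0(A,M))$.

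The main obstacle is to show this Baues--Wirsching cohomology vanishes for $n>0$. Using the analogue of Lemma \ref{lemma28} for $\mathbb{G}_I$ together with the functoriality of both sides under retracts, it is enough to treat $A=G_I(Z)$ for some $Z\in\mathfrak{Sets}^{I_0}$. Combining $G_I(Z)(i)=\coprod_{\beta:x\to i}G(Z(x))$ with Theorem \ref{derttt} and Lemma \ref{derlma} and performing a Fubini reindexing, one obtains
\[
C^n_{BW}(I,\mathcal{H}^0(G_I(Z),M)) \;\cong\; \prod_{x\in I} C^n_{BW}(x/I, F_x),
\]
where $F_x:x/I\to\mathfrak{Ab}$ sends $(\gamma:x\to i)$ to $\Der(G(Z(x)),\gamma^*M(i))$, regarded as a natural system on the under-category $x/I$. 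Because $x/I$ has $(x,\mathrm{id}_x)$ as an initial object, Lemma \ref{natsyslma} forces $H^n_{BW}(x/I,F_x)=0$ for $n>0$, which completes the proof. The delicate point is checking that under this Fubini identification the Baues--Wirsching differential on $I$ matches the one on $x/I$; this reduces to observing that the coface $(\alpha_{n+1})^*$ alters only the source index in $I$ and leaves the auxiliary index $x$ untouched, so the outer product over $x$ is respected throughout.
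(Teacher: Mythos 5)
Your argument is correct and follows the paper's own proof in all essentials: collapse the columns of the bicomplex via Lemmas \ref{GIprojttt}, \ref{exacttt} and \ref{Gderiv} to identify the total cohomology of a $\mathbb{G}_{I}$-projective $A$ with $H^{*}_{BW}(I,\mathfrak{Der}(A,M))$, reduce to $A=G_{I}(Z)$, decompose the Baues--Wirsching complex over the objects $x\in I$ using Theorem \ref{derttt}, and conclude with Lemma \ref{natsyslma} applied to the comma category $x/I$ with its initial object. The only (harmless) deviations are that you read off $H^{0}(I,A,M)$ directly from the total degree-zero cocycles for arbitrary $A$, and you apply Lemma \ref{natsyslma} to the functor $F_{x}$ itself rather than first splitting it over the basis $UZ(x)$ via Lemma \ref{derlma} as the paper does.
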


\begin{proof}
It is sufficient to consider the case when $A= G_{I}(Z)$. When $A =
G_{I}(Z)$, it is known that $A$ is $\mathbb{G}_{I}$-projective. By
lemma \ref{GIprojttt} and lemma \ref{exacttt}, one gets that the
vertical columns in our bicomplex are exact except in dimension 0.
There is a well known lemma for bicomplexes which tells us the
cohomology of the total complex is isomorphic to the cohomology of
the following chain complex.

\[\xymatrix{\prod_{i}\Der(A(i),M(i)) \ar[r] & \prod_{\alpha:i \rightarrow j}\Der(A(i),\alpha^{*}M(j)) \ar[r] &
\ldots}\] It is known that the cohomology of this cochain complex is
just $H_{BW}^{*}(I,\mathfrak{Der}(A,M))$.

To prove the first statement it is enough to show that \[0
\rightarrow \mathfrak{Der}(A,M) \rightarrow \prod_{i}\Der(A(i),M(i))
\rightarrow \prod_{\alpha:i \rightarrow j}\Der(A(i),\alpha^{*}M(j))\] is
exact. Let $\psi \in \prod_{i}\Der(A(i),M(i))$ and $(\alpha:i
\rightarrow j) \in I$, then $d \psi (\alpha : i \rightarrow j) =
\alpha_{*}\psi(i) - \alpha^{*}\psi(j).$ Therefore $d\psi(\alpha: i
\rightarrow j) =0$ if and only if $\alpha_{*} \psi(i) =
\alpha^{*}\psi(j)$. However $\alpha_{*} \psi(i) = \alpha^{*}\psi(j)$
if and only if $M(\alpha)\psi(i) = \psi(j)A(\alpha),$ i.e. the
following diagram commutes.
\[\xymatrix{A(i) \ar[d]^{A(\alpha)} \ar[r]^{\psi(i)} & M(i) \ar[d]^{M(\alpha)} \\ A(j) \ar[r]_{\psi(j)} &
M(j)}\] Hence $\psi \in \mathfrak{Der}(A,M)$. This tells us that the
sequence above is exact. Hence $H^{0}(I,A,M) = \mathfrak{Der}(A,M)$.

To prove the second statement, let us consider \begin{align*}
D(\alpha: i\rightarrow j) :&= \mathfrak{Der}(A(i),\alpha^{*}M(j))
\\ & = \Der(\coprod_{\beta:y \rightarrow i}GZ(y), \alpha^{*}M(j))\\
& = \prod_{\beta:y \rightarrow i}\Der(GZ(y),
\beta^{*}\alpha^{*}M(j)), \textrm{ by lemma \ref{derttt}.}
\end{align*}

Define $D_{y}$ for a fixed object $y \in I$ to be a natural system
on $I$ (using theorem \ref{natsysttt}) given by: \[D_{y}(\alpha:i
\rightarrow j) = \prod_{\beta:y \rightarrow i}\Der(GZ(y),
\beta^{*}\alpha^{*}M(j)).\]

So one has that \[D(i\rightarrow j) = \prod_{y}D_{y}(i\rightarrow
j).\] Hence, \[H^{*}_{BW}(I,D) = \prod_{y \in I}H^{*}_{BW}(I,D_{y}).\]

Now consider the cochain complex $C^{*}_{BW}(I,D_{y})$.

\begin{align*}
C^{*}_{BW}(I,D_{y}) &= \xymatrix{\prod _{i}D_{y}(i\rightarrow i)
\ar[r] & \prod_{\alpha: i \rightarrow j}D_{y}(i\rightarrow j) \ar[r]
& \ldots} \\ &= \xymatrix{\prod_{i} \prod_{\beta:y \rightarrow
i}\Der(GZ(y), \beta^{*}M(i)) \ar[r] & \\ \prod_{ \alpha : i
\rightarrow j} \prod_{\beta:y \rightarrow
i}\Der(GZ(y),\beta^{*}\alpha^{*}M(j)) \ar[r] & \ldots}
\end{align*}
$UZ(y)$ forms a basis of the free object $GZ(y)$, applying lemma
\ref{derlma}, one can rewrite the cochain complex as
\[C^{*}_{BW}(I,D_{y}) = \xymatrix{\prod_{y\rightarrow i} \prod_{m \in
UZ(y)}A_{\beta j(m)} \ar[r]& \prod_{\alpha:i \rightarrow
j}\prod_{\beta:y \rightarrow i}\prod_{m \in UZ(y)}A_{\alpha\beta
j(m)} \ar[r] & \ldots }, \] where $A_{\beta j(m)} = \textrm{preimage
of } \beta\gamma(m) \textrm{ in the projection } M(j) \rightarrow
GZ(j)$. This allows us to rewrite the cochain complex as
\[C^{*}_{BW}(I,D_{y}) = \prod_{m \in UZ(y)}C^{*}_{BW}(y/I, F_{m})\]
where $F_{m}: y/I \rightarrow Ab$ is a functor defined by
$F_{m}(\beta:y\rightarrow i) = A_{\beta j(m)}.$

Since the category $y/I$ contains an initial object
$id_{y}:y \rightarrow y$, by lemma \ref{natsyslma} the
cohomology vanishes in positive dimensions.
\end{proof}

\begin{theorem}
$H^{*}_{\mathbb{G}_{I}}(A,M) \cong H^{*}(I,A,M)$.
\end{theorem}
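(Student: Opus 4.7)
The plan is to introduce an auxiliary tricomplex whose total cohomology can be computed two ways, once collapsing to $H^{*}_{\mathbb{G}_{I}}(A,M)$ and once to $H^{*}(I,A,M)$. Let $P_{\bullet} = \mathbb{G}_{I}^{\bullet+1}(A)$ denote the $\mathbb{G}_{I}$-comonad resolution of $A$, and consider the first-octant tricomplex
\[
T^{p,q,r} \;=\; C^{p,q}(I, P_{r}, M) \;=\; \prod_{\alpha : i_{0} \to \cdots \to i_{p}} \Der\bigl(G^{q+1}(P_{r}(i_{0})),\, \alpha^{*}M(i_{p})\bigr),
\]
equipped with the Baues--Wirsching differential in the $p$-direction, the $\mathbb{G}$-comonad differential in the $q$-direction, and the $\mathbb{G}_{I}$-comonad differential (coming from $P_{\bullet}$) in the $r$-direction, with the usual alternating signs that make them pairwise anticommute.

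First I would run the spectral sequence obtained by taking cohomology in the $(p,q)$-directions first. Each $P_{r} = \mathbb{G}_{I}^{r+1}(A)$ is $\mathbb{G}_{I}$-projective by construction, so Lemma \ref{lemmaHIAM} gives $H^{n}(I, P_{r}, M) = 0$ for $n > 0$ and equals $\mathfrak{Der}(P_{r}, M)$ in degree zero. The surviving $r$-differential then produces exactly the cochain complex $\mathfrak{Der}(\mathbb{G}_{I}^{\bullet+1}(A), M)$, whose cohomology is $H^{*}_{\mathbb{G}_{I}}(A,M)$ by definition. Hence this spectral sequence collapses, giving $H^{*}_{\mathbb{G}_{I}}(A,M) \Rightarrow H^{*}(\mathrm{Tot}(T))$.

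Next I would analyse the spectral sequence obtained by filtering the other way, taking cohomology in the $r$-direction first. For each fixed $(p,q)$ and each chain $\alpha : i_{0} \to \cdots \to i_{p}$, it is enough to show that the augmented cosimplicial abelian group
\[
\Der\bigl(G^{q+1}(A(i_{0})),\, \alpha^{*}M(i_{p})\bigr) \;\longrightarrow\; \Der\bigl(G^{q+1}(P_{\bullet}(i_{0})),\, \alpha^{*}M(i_{p})\bigr)
\]
is contractible. By the standard property of comonad resolutions, the augmentation $P_{\bullet} \to A$ is $F_{I}$-split in $\mathfrak{Sets}^{I_{0}}$, where $F_{I}$ is the right adjoint defining $\mathbb{G}_{I}$; evaluating at $i_{0} \in I_{0}$ gives an augmented simplicial object $P_{\bullet}(i_{0}) \to A(i_{0})$ in $\mathfrak{Alg}(T)$ that becomes contractible after the forgetful $U : \mathfrak{Alg}(T) \to \mathfrak{Sets}$. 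Since the extra degeneracies providing this contraction live in $\mathfrak{Sets}$, iteratively applying $L$ and $U$ (with $G = LU$) transports them through, so $G^{q+1}(P_{\bullet}(i_{0})) \to G^{q+1}(A(i_{0}))$ is a contractible augmented simplicial object in $\mathfrak{Alg}(T)$. Applying the contravariant functor $\Der(-, \alpha^{*}M(i_{p}))$ yields a contractible augmented cosimplicial abelian group, whose associated cochain complex has cohomology concentrated in degree zero where it equals $\Der(G^{q+1}(A(i_{0})), \alpha^{*}M(i_{p}))$. Thus the $E_{1}$ page of this filtration is precisely the bicomplex $C^{*,*}(I, A, M)$, and its $E_{2}$ is $H^{*}(I,A,M) \Rightarrow H^{*}(\mathrm{Tot}(T))$.

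Both spectral sequences converge to $H^{*}(\mathrm{Tot}(T))$ and collapse as described, so $H^{*}_{\mathbb{G}_{I}}(A,M) \cong H^{*}(I, A, M)$. The main obstacle will be the middle step: verifying that the $F_{I}$-splitting of $P_{\bullet} \to A$ really evaluates at each $i_{0}$ to a $U$-split augmented simplicial algebra, and that this $U$-splitting survives $q+1$ iterations of $G$. This is essentially a formal consequence of the adjunction $L \dashv U$ and the naturality of the comonad structure, but it has to be made precise objectwise. A secondary bookkeeping task is pinning down the signs in the three differentials of $T^{p,q,r}$ so that the pairwise anticommutation needed for the bicomplex-of-bicomplexes collapse really holds; these use the naturality squares relating the $\mathbb{G}$ on $\mathfrak{Alg}(T)$ with the $\mathbb{G}_{I}$ on $\mathfrak{Alg}(T)^{I}$.
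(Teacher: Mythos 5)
Your proposal is correct and is essentially the paper's own argument: the paper forms exactly this double/triple complex $C^{p,q}(I,\mathbb{G}_I^{r+1}(A),M)$ (pre-totalizing the $(p,q)$ directions), collapses one way using $\mathbb{G}_I$-projectivity of the resolution terms via Lemma \ref{lemmaHIAM}, and collapses the other way by transporting the $U$-split contraction of $\mathbb{G}_I(A)_*(i)\to A(i)$ through iterated applications of the free and forgetful functors before applying $\Der(-,\alpha^{*}M(j))$. Apart from keeping the tricomplex explicit and a slip in calling the forgetful (right) adjoint $F_I$, the two arguments coincide.
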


\begin{proof}
Consider the bicomplex $C^{*}(I,\mathbb{G}_{I}(A)_{*},M)$ shown below.
\[\xymatrix{\vdots & \vdots & \vdots & \\ C^{2}(I,G_{I}(A),M) \ar[u] \ar[r] & C^{2}(I,G^{2}_{I}(A),M) \ar[u] \ar[r] & C^{2}(I,G^{3}_{I}(A),M) \ar[u] \ar[r] & \cdots  \\ C^{1}(I,G_{I}(A),M) \ar[u] \ar[r] & C^{1}(I,G^{2}_{I}(A),M) \ar[u] \ar[r] & C^{1}(I,G^{3}_{I}(A),M) \ar[u] \ar[r] & \cdots  \\ C^{0}(I,G_{I}(A),M) \ar[u] \ar[r] & C^{0}(I,G^{2}_{I}(A),M) \ar[u] \ar[r] & C^{0}(I,G^{3}_{I}(A),M) \ar[u] \ar[r] & \cdots}\]

We are going to show that
\[H^{*}(I,A,M) \cong H^{*}(Tot(C^{\bullet}(I,\mathbb{G}_{I}(A)_{\bullet},M))) \cong  H^{n}_{\mathbb{G}_{I}}(A,M).\]
Since $G_{I}^{p}(A)$ is $\mathbb{G}_{I}$-projective, lemma \ref{lemmaHIAM} tells us that the vertical cohomology
\[H^{n}(C^{*}(I,G_{I}^{p}(A),M)) \cong \left\{
                                  \begin{array}{ll}
                                    \mathfrak{Der}(G_{I}^{p}(A),M), & \hbox{$n=0$,} \\
                                    0, & \hbox{otherwise,}
                                  \end{array}
                                \right.
\]so each column of the bicomplex $C^{*}(I,\mathbb{G}_{I}(A)_{*},M)$ is exact except at $C^{0}(I,G_{I}^{p}(A),M)$. Therefore by the spectral sequence argument
\begin{align*}
 H^{n}(Tot(C^{\bullet}(I,\mathbb{G}_{I}(A)_{\bullet},M))) \cong & H^{n}(\mathfrak{Der}(G_{I}(A),M) \rightarrow \mathfrak{Der}(G_{I}^{2}(A),M) \rightarrow \cdots ) \\ = &  H^{n}_{\mathbb{G}_{I}}(A,M).
\end{align*}

We are now going to compute the horizontal cohomology. From the definition of $C^{*}(I,A,M)$ we see that each row of the bicomplex $C^{*}(I,\mathbb{G}_{I}(A)_{*},M)$ is a product of cochain complexes of the form $\Der(G^{p} \mathbb{G}_{I}(A)_{*}(i),\alpha^{*}M(j))$.

Consider $\mathbb{G}_{I}(A)_{*} \rightarrow A$ which is an augmented simplicial object. For all objects $i \in I$ we have $\mathbb{G}_{I}(A)_{*}(i) \rightarrow  A(i)$ which is also an augmented simplicial object. Applying the forgetful functor $U: \mathfrak{Alg}(T) \rightarrow \mathfrak{Sets}$ we get $U\mathbb{G}_{I}(A)_{*}(i) \rightarrow U A(i)$ which is contractible in the category $\mathfrak{Sets}$. Then applying the free functor $F: \mathfrak{Sets} \rightarrow \mathfrak{Alg}(T)$ we get $G\mathbb{G}_{I}(A)_{*}(i) \rightarrow G A(i)$  which is contractible in the category $\mathfrak{Alg}(T)$. Repeated applications of the functors $U$ and $F$ give us $G^{p}\mathbb{G}_{I}(A)_{*}(i) \rightarrow G^{p} A(i)$ which is contractible in the category $\mathfrak{Alg}(T)$. For any arrow $\alpha:i \rightarrow j$ in $I$ we can apply the functor $\Der(-,\alpha^{*}M(j))$ to get a contractible cosimplicial abelian group $\Der(G^{p} A(i),\alpha^{*}M(j)) \rightarrow \Der(G^{p}\mathbb{G}_{I}(A)_{*}(i),\alpha^{*}M(j))$.
Therefore each row of the bicomplex $C^{*}(I,\mathbb{G}_{I}(A)_{*},M)$ is exact except at $C^{p}(I,G_{I}(A),M)$. Therefore
\[H^{n}(C^{p}(I,G_{I}(A)_{*},M)) \cong \left\{
                                  \begin{array}{ll}
                                    C^{p}(I,A,M), & \hbox{$n=0$,} \\
                                    0, & \hbox{otherwise.}
                                  \end{array}
                                \right.
\]
Therefore by the spectral sequence argument
\[ H^{n}(Tot(C^{\bullet}(I,\mathbb{G}_{I}(A)_{\bullet},M))) \cong H^{n}(C^{*}(I,A,M))= H^{n}(I,A,M).\]
\end{proof}

Now one has both a global cohomology, $H^{*}_{\mathbb{G}_{I}}(A,M)$,
and a local cohomology, $H^{*}_{\mathbb{G}}(A(i),M(i))$. One can ask how these
two are related; the answer is given by the local to global
spectral sequence.

\begin{theorem}\label{AQss} There exists a spectral sequence \[E^{pq}_{2} = H^{p}_{BW}(I,\mathcal{H}^{q}(A,M))
\Rightarrow H^{p+q}_{\mathbb{G}_{I}}(A,M),\] where
$\mathcal{H}^{q}(A,M)$ is a natural system on $I$ whose value on
$(\alpha: i \rightarrow j)$ is given by
$H^{q}_{\mathbb{G}}(A(i),\alpha^{*}M(j))$.
\end{theorem}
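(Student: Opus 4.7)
The plan is to apply Proposition \ref{genss} (the spectral sequence of a first-quadrant bicomplex) to the bicomplex $C^{*,*}(I,A,M)$ defined earlier, then identify the $E_2$-page with Baues--Wirsching cohomology of the natural system $\mathcal{H}^{q}(A,M)$, and identify the abutment with $H^{*}_{\mathbb{G}_{I}}(A,M)$ via the theorem just established, $H^{*}(I,A,M) \cong H^{*}_{\mathbb{G}_{I}}(A,M)$.

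First I would compute the vertical cohomology. Fix $p\ge 0$. In degree $p$, the bicomplex is
\[
C^{p,q}(I,A,M) = \prod_{\alpha: i_{0}\rightarrow \ldots \rightarrow i_{p}}\Der(G^{q+1}(A(i_{0})),\alpha^{*}M(i_{p})),
\]
and the vertical differential is (up to sign) the product of the comonad coboundary maps, one for each $p$-simplex $\alpha$. Since cohomology commutes with products of cochain complexes of abelian groups, and the $\alpha$-factor of the vertical complex is by definition the cochain complex whose cohomology is $H^{q}_{\mathbb{G}}(A(i_{0}),\alpha^{*}M(i_{p}))$, I obtain
\[
H_{v}^{q}(C^{p,*}) \;=\; \prod_{\alpha: i_{0}\rightarrow \ldots \rightarrow i_{p}} H^{q}_{\mathbb{G}}(A(i_{0}),\alpha^{*}M(i_{p})) \;=\; \prod_{\alpha: i_{0}\rightarrow \ldots \rightarrow i_{p}} \mathcal{H}^{q}(A,M)(\alpha),
\]
which is exactly $C_{BW}^{p}(I,\mathcal{H}^{q}(A,M))$.

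Next I would check that under this identification the induced horizontal differential on $H_{v}^{q}(C^{*,*})$ agrees with the Baues--Wirsching coboundary. This is essentially bookkeeping: the horizontal differential in our bicomplex is the same alternating sum built from the face maps $(\alpha_{1})_{*}$, compositions $\alpha_{j}\alpha_{j+1}$, and $(\alpha_{p+1})^{*}$ that defines $d$ in $C_{BW}^{*}(I,D)$; passing to vertical cohomology simply descends these maps to the maps on $H^{q}_{\mathbb{G}}(A(i_{0}),-)$ provided by the natural system structure established in Theorem \ref{natsysttt} and its corollary. Consequently
\[
E_{2}^{p,q} \;=\; H_{h}^{p} H_{v}^{q}(C^{*,*}) \;=\; H_{BW}^{p}(I,\mathcal{H}^{q}(A,M)).
\]

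Finally, Proposition \ref{genss} gives a convergent spectral sequence $E_{2}^{p,q} \Rightarrow H^{p+q}(\mathrm{Tot}(C^{*,*}(I,A,M)))$, and by definition the right-hand side is $H^{p+q}(I,A,M)$, which by the preceding theorem is isomorphic to $H^{p+q}_{\mathbb{G}_{I}}(A,M)$. Combining these identifications yields the claimed spectral sequence. The main point requiring care is the identification of the horizontal differential after vertical cohomology with the Baues--Wirsching differential, since one must verify that the natural-system structure on $\mathcal{H}^{q}(A,M)$ coming from pullbacks and pushforwards of modules matches the way the simplicial-style horizontal differential of $C^{*,*}(I,A,M)$ acts on the components indexed by chains of composable morphisms; the rest reduces to the standard bicomplex spectral sequence argument.
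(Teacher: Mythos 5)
Your proposal is correct and follows exactly the argument the paper intends: the paper leaves Theorem \ref{AQss} without a written proof because it is the first-quadrant bicomplex spectral sequence (Proposition \ref{genss}) applied to $C^{*,*}(I,A,M)$, with the column cohomology identified (via exactness of products and Theorem \ref{natsysttt}) with $C^{*}_{BW}(I,\mathcal{H}^{q}(A,M))$ and the abutment identified with $H^{*}_{\mathbb{G}_{I}}(A,M)$ by the immediately preceding theorem $H^{*}(I,A,M)\cong H^{*}_{\mathbb{G}_{I}}(A,M)$. Your attention to matching the induced horizontal differential with the Baues--Wirsching coboundary is exactly the bookkeeping point that the paper's construction of the natural system is designed to handle, so nothing is missing.
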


\begin{definition}
An \textit{extension} of $A$ by $M$ is an exact sequence of functors
\[\xymatrix{0 \ar[r] & M \ar^{q}[r] & X \ar^{p}[r] &  A \ar[r] & 0}\]
where $X: I \rightarrow \mathfrak{Com.alg}$ such that for all $i \in
I$ we get an extension of $A(i)$ by $M(i)$
\[\xymatrix{0 \ar[r] & M(i) \ar^{q(i)}[r] & X(i) \ar^{p(i)}[r] &  A(i) \ar[r] & 0}\]
\end{definition}

Two extensions $(X),(X')$ with $A,M$ fixed are said
to be \textit{equivalent} if there exists a map of diagrams $\phi: X
\rightarrow X'$ such that the following diagram commutes.
\[\xymatrix{0 \ar[r] & M \ar@{=}[d] \ar[r] & X \ar[r] \ar[d]^{\phi} & A \ar[r] \ar@{=}[d] & 0 \\ 0 \ar[r] & M \ar[r] & X' \ar[r] & A \ar[r] & 0}\]

We denote the set of equivalence classes of extensions of $A$ by $M$ by $\mathfrak{Ext}(A,M)$.

\begin{theorem}
$H^{1}_{\mathbb{G}_{I}}(A,M) \cong \mathfrak{Ext}(A,M)$.
\end{theorem}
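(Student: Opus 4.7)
The plan is to mimic the proof of Proposition \ref{AQprop1}.3 (the single-object case) by constructing an explicit bijection between $\mathfrak{Ext}(A,M)$ and $H^{1}(I,A,M) \cong H^{1}_{\mathbb{G}_{I}}(A,M)$, using the bicomplex identification $H^{*}_{\mathbb{G}_{I}}(A,M) \cong H^{*}(I,A,M)$ proved above. I would build a map $\Phi: \mathfrak{Ext}(A,M) \to H^{1}(I,A,M)$ from an extension via projectivity, check it is well defined, and construct an inverse.

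Given an extension $(E): 0 \to M \to X \to A \to 0$, for each $i \in I$ the surjection $X(i) \twoheadrightarrow A(i)$ is a $\mathbb{G}$-epimorphism, so the $\mathbb{G}$-projectivity of $G(A(i))$ yields a $T$-algebra lift $s_{i}: G(A(i)) \to X(i)$ of the counit $\varepsilon_{A(i)}$. For each $i$ the composites $s_{i} \circ G\varepsilon$ and $s_{i} \circ \varepsilon G$ from $G^{2}(A(i))$ to $X(i)$ project to the same map into $A(i)$, hence combine into a $T$-algebra morphism into the pullback $X(i) \times_{A(i)} X(i) \cong X(i) \rtimes M(i)$ whose fibre component is a derivation $f_{i} \in \Der(G^{2}(A(i)), M(i))$. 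For each $\alpha: i \to j$, the two composites $X(\alpha) \circ s_{i}$ and $s_{j} \circ G(A(\alpha))$ from $G(A(i))$ to $X(j)$ similarly yield a derivation $g_{\alpha} \in \Der(G(A(i)), \alpha^{*} M(j))$. A routine calculation shows that $(f, g) \in C^{0,1}(I,A,M) \oplus C^{1,0}(I,A,M)$ is a $1$-cocycle in the total complex.

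To show $\Phi$ is well defined, if $s_{i}'$ is another lift then $(s_{i}, s_{i}')$ defines a derivation $h_{i} \in \Der(G(A(i)), M(i)) = C^{0,0}(I,A,M)_{i}$ via the same pullback trick, and a direct computation shows the resulting cocycle changes by the total coboundary $d(h)$. An equivalence $\phi: X \to X'$ carries each lift $s_{i}$ to a lift $\phi_{i} \circ s_{i}$, producing the same cocycle. Hence the class depends only on the equivalence class of the extension.

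For the inverse, given a cocycle $(f, g)$, each $f_{i}$ is a $1$-cocycle in the local comonad cochain complex and, by the monad-theoretic analog of Proposition \ref{AQprop1}.3, classifies a local extension $X_{i}$ of $A(i)$ by $M(i)$. The compatibility relation $\delta f + \partial g = 0$ in $C^{1,1}$ furnishes, for each $\alpha: i \to j$, a $T$-algebra morphism $X_{i} \to X_{j}$ lifting $A(\alpha)$ and extending $M(\alpha)$, while $\delta g = 0$ in $C^{2,0}$ makes these morphisms functorial in $\alpha$. Assembling yields $X \in \mathfrak{Alg}(T)^{I}$ with an extension structure $0 \to M \to X \to A \to 0$, and cohomologous cocycles give equivalent extensions. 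The hard part will be the abstract formulation of the local classification result (the analog of Proposition \ref{AQprop1}.3 for a general monad on $\mathfrak{Sets}$) together with the bookkeeping needed to translate the bicomplex cocycle conditions into the functoriality relations for the assembled extension; once these are pinned down, the verifications $\Phi \circ \Psi = \mathrm{id}$ and $\Psi \circ \Phi = \mathrm{id}$ are routine.
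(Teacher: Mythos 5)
Your outline is essentially correct, but it is a genuinely different route from the one the paper takes. The paper works entirely with the global comonad resolution $P_{*}=\mathbb{G}_{I}(A)_{*}$ in the functor category: given an extension, surjectivity of $u$ together with $\mathbb{G}_{I}$-projectivity of $P_{0}$ gives a single global lift $h:P_{0}\rightarrow X$, the class is $d=i^{-1}(h\varphi^{1}_{0}-h\varphi^{1}_{1})\in\mathfrak{Der}(P_{1},M)$, and the inverse is the explicit cokernel construction $X=\Coker(P_{1}\rightrightarrows P_{0}\oplus M)$, with no appeal to the bicomplex at all. You instead pass through the identification $H^{1}_{\mathbb{G}_{I}}(A,M)\cong H^{1}(I,A,M)$ and encode the extension by a pair $(f,g)\in C^{0,1}\oplus C^{1,0}$: local difference cocycles $f_{i}\in\Der(G^{2}(A(i)),M(i))$ plus a gluing term $g_{\alpha}$ measuring $X(\alpha)s_{i}$ against $s_{j}G(A(\alpha))$. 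Since $G_{I}(Z)(i)=\coprod_{x\rightarrow i}G(Z(x))$, the paper's global cochain complex $\mathfrak{Der}(\mathbb{G}_{I}(A)_{*},M)$ decomposes into exactly your local-plus-gluing data (this is in effect what Lemma \ref{lemmaHIAM} exploits), so the two constructions are two presentations of the same class; your forward map is the paper's lift-and-difference argument done objectwise. What your route buys is more explicit local-to-global information (it makes visible how the class restricts to each $H^{1}_{\mathbb{G}}(A(i),M(i))$ and what the Baues--Wirsching gluing datum is); what it costs is the extra input you yourself flag as the ``hard part'': the inverse direction leans on a classification of singular extensions of a single $T$-algebra by $H^{1}_{\mathbb{G}}$, which the paper only records for commutative rings (Proposition \ref{AQprop1}) and never proves for a general monad on $\mathfrak{Sets}$ -- for an arbitrary $T$ this is a genuine theorem of Beck--Barr type, not bookkeeping. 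The paper's cokernel construction sidesteps that dependency entirely, and if you want your argument to be self-contained you should either prove the local classification in the monadic setting or replace that step by the same cokernel trick applied to the local resolutions. One reassuring check in your gluing step: the cocycle condition $\delta g=0$ evaluated on $(\mathrm{id}_{i},\mathrm{id}_{i})$ forces $g_{\mathrm{id}_{i}}=0$, so the assembled $X$ really is a functor without any separate normalisation argument.
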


\begin{proof}
Suppose we have a free resolution $P_{*}$ of $A$ and an
extension representing a class in $\mathfrak{Ext}(A,M)$.
\[\xymatrix{0 \ar[r] & M \ar^{i}[r] & X \ar^{u}[r] & A \ar[r] & 0}\]
The map $u$ is a surjection and $P_{0}$ is free, so there exists a
lift $h:P_{0}\rightarrow X$ which makes the following diagram
commute.
\[\xymatrix{0 \ar[r] & M \ar^{i}[r] & X \ar^{u}[r] & A \ar@{=}[d] \ar[r] & 0 \\
       \ldots \ar@<1ex>^{\varphi^{2}_{0}}[r] \ar@<-1ex>_{\varphi^{2}_{2}}[r] \ar[r] &
       P_{1} \ar@<0.5ex>^{\varphi^{1}_{0}}[r] \ar@<-0.5ex>_{\varphi^{1}_{1}}[r] & P_{0} \ar@{..>}_{h}[u] \ar^{\varepsilon}[r] & A \ar[r] & 0}\]
Then we can get a map
$d=i^{-1}(h\varphi^{1}_{0}-h\varphi^{1}_{1}):P_{1} \rightarrow M.$

$d$ is a derivation, and $d$ is also a 1-cocycle in
$\mathfrak{Der}(P_{*},M)$ and defines a class in $H^{1}_{\mathbb{G}_{I}}(A,M)$.
This class is independent of the choice of lifting $h$. This gives a
map $\Phi:\mathfrak{Ext}(A,M) \rightarrow H^{1}_{\mathbb{G}_{I}}(A,M)$.

Conversely, given a derivation $D:P_{1} \rightarrow M$ we let
\[X = \Coker(\xymatrix{P_{1} \ar@<-0.5ex>_{(\varphi^{1}_{0},0)}[r] \ar@<0.5ex>^{(\varphi^{1}_{1},0)}[r] & P_{0}\oplus M}).\]

The cokernel is in the category $A-\mathfrak{mod}$, and we
let $p:P_{0}\oplus M \rightarrow X$ be the canonical projection. If
$D$ is a 1-cocycle in $\mathfrak{Der}(P_{*},M)$ then we obtain an
extension in $\mathfrak{Ext}(A,M)$ where $i:M \rightarrow X$ is
given by $i(m)=p(0\oplus m)$ and $u:X \rightarrow A$ is given by
$u(p(y\oplus m))=\varepsilon(y)$.

\[\xymatrix{\ldots \ar@<1ex>^{\varphi^{2}_{0}}[r] \ar@<-1ex>_{\varphi^{2}_{2}}[r] \ar[r] &
       P_{1} \ar_{D}[d] \ar@<0.5ex>^{\varphi^{1}_{0}}[r] \ar@<-0.5ex>_{\varphi^{1}_{1}}[r] & P_{0} \ar[d] \ar^{\varepsilon}[r] & A \ar[r] & 0\\
       0 \ar[r] & M \ar^{i}[r] & X \ar^{u}[r] & A \ar@{=}[u] \ar[r] & 0 } \]

This procedure gives us an inverse to $\Phi$.
\end{proof}

\begin{definition}
A \textit{crossed extension} of $A$ by $M$ is an exact sequence of functors
\[\xymatrix{0 \ar[r] & M \ar^{\omega}[r] & C_{1} \ar^{\rho}[r] & C_{0} \ar^{\pi}[r] &  A \ar[r] & 0}\]
such that for all $i \in I$ we get a crossed extension of $A(i)$ by $M(i)$
\[ \xymatrix{0 \ar[r] & M(i) \ar^{\omega(i)}[r] & C_{1}(i) \ar^{\rho(i)}[r] & C_{0}(i) \ar^{\pi(i)}[r] &  A(i) \ar[r] & 0}\]

We let $\pi_{0} \mathfrak{ACross}(A,M)$ denote the connected components of the category of additively split crossed extensions of $A$ by $M$.
\end{definition}

\begin{lemma}
\[H^{2}_{\mathbb{G}_{I}}(A,M) \cong \pi_{0} \mathfrak{Cross}(A,M).\]
\end{lemma}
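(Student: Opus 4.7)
The plan is to adapt the proof strategy of Lemmas \ref{hul1} and \ref{hul2} from Chapter 5 to the comonad setting, using the bicomplex $C^{*,*}(I,A,M)$ in place of the Harrison bicomplex and the cotriple resolution $\mathbb{G}_I^*(A) \to A$ in place of the polynomial resolution. Since in the comonad framework $H^0_{\mathbb{G}_I} \cong \mathfrak{Der}$ and $H^1_{\mathbb{G}_I} \cong \mathfrak{Ext}$ (one dimension lower than their Harrison analogues), the expected isomorphism corresponds to the Harrison statement $Harr^3 \cong \pi_0 \mathfrak{ACross}$ shifted down by one degree.

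For any morphism $\gamma: C_0 \to A$ in $\mathfrak{C}^I$, I would first define the relative cohomology
\[H^*(I, \gamma: C_0 \to A, M) := H^*(\Coker(\gamma^*: C^*(I, A, M) \to C^*(I, C_0, M))),\]
which yields a long exact sequence
\[\cdots \to H^n(I, A, M) \to H^n(I, C_0, M) \to H^n(I, \gamma: C_0 \to A, M) \to H^{n+1}(I, A, M) \to \cdots.\]
The first key step is to prove the comonad analogue of Lemma \ref{hul1}: for $\gamma$ fixed, $H^1(I, \gamma: C_0 \to A, M)$ is in natural bijection with the connected components of the groupoid of crossed extensions of $A$ by $M$ with the morphism $\gamma: C_0 \to A$ fixed. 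The forward map packages the failure of a chosen pointwise section $\sigma$ of $\rho: C_1 \to \ker \gamma$ to be natural in $I$, together with its failure to respect the $C_0$-action, into a relative 1-cocycle, using the natural system $\mathfrak{Der}(A,M)(\alpha) = \Der(A(i), \alpha^*M(j))$ from Theorem \ref{natsysttt}. The inverse construction builds $C_1$ objectwise as a twisted $M(i) \oplus \ker \gamma(i)$, with the $C_0(i)$-action and the transition maps $C_1(\alpha)$ determined by the two components of the cocycle.

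The second key step is to specialize to $C_0 = \mathbb{G}_I(A)$. Since $\mathbb{G}_I(A)$ is $\mathbb{G}_I$-projective, Lemma \ref{lemmaHIAM} forces $H^n(I, \mathbb{G}_I(A), M) = 0$ for $n \geq 1$, so the long exact sequence collapses to
\[H^1(I, \gamma: \mathbb{G}_I(A) \to A, M) \cong H^2(I, A, M) \cong H^2_{\mathbb{G}_I}(A, M).\]
To pass from crossed extensions with $C_0 = \mathbb{G}_I(A)$ fixed to arbitrary ones in $\pi_0 \mathfrak{Cross}(A,M)$, I would then run the argument of Lemma \ref{hul2}: surjectivity by lifting any $\pi: C_0 \to A$ through the counit $\varepsilon: \mathbb{G}_I(A) \to A$ and forming the pullback to obtain a crossed extension with $C_0$ replaced by $\mathbb{G}_I(A)$ in the same connected component, and injectivity by the diamond argument comparing two competing pullbacks over a common $P_1$.

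The hard part will be the first step. In the Harrison case one had explicit cochain formulas in terms of shuffles, so the cocycle condition could be matched by hand with the Leibniz identity $\partial(c)c' = c \partial(c')$ together with the exactness conditions of the crossed extension. Here the cochains are abstract derivations out of $G^{q+1}(A(i_0))$, so the correspondence between the geometric data of a crossed extension and relative cocycles has to go through the adjunction $\mathfrak{Alg}(T)^I \rightleftarrows \mathfrak{Sets}^{I_0}$ and the explicit description of derivations out of free objects given in Lemma \ref{derlma}. Ensuring that the constructed $C_1$ lies in $\mathfrak{C}^I$ with a well-defined $C_0$-module structure and compatible transition maps will be the main technical burden.
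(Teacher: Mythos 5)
Your overall architecture (a relative cochain complex $C^{*}(I,\gamma\colon C_{0}\rightarrow A,M):=\Coker(\gamma^{*})$, its long exact sequence, collapse when $C_{0}=G_{I}(A)$ is $\mathbb{G}_{I}$-projective by Lemma \ref{lemmaHIAM}, then the pullback diamond as in Lemma \ref{hul2}) is a genuinely different route from the paper, which instead identifies crossed extensions with simplicial objects whose Moore complex has length one via an internal nerve construction and then invokes Glenn's classification theorem for exact categories to identify those with classes in $H^{2}_{\mathbb{G}_{I}}(A,M)$. However, your first key step --- the claimed comonad analogue of Lemma \ref{hul1} --- has a genuine gap, and it is precisely the point that the appeal to Glenn is designed to handle. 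The Harrison proof of Lemma \ref{hul1} works because the cochains there are literally the structure constants of additively split data: an element of $C^{2}_{Harr}(C_{0}(i),M(i))$ is a symmetric bilinear map and an element of $C^{1}_{Harr}(C_{0}(i),\alpha^{*}M(j))$ is an additive map, so additive sections of an (by hypothesis additively split) crossed extension produce such cochains, and conversely a cocycle lets you define $C_{1}(i)$ as the twisted sum $M(i)\oplus V(i)$. In the cotriple bicomplex the cochains in the relevant bidegrees are $\prod_{i}\Der(G^{2}(C_{0}(i)),M(i))$ and $\prod_{\alpha}\Der(G(C_{0}(i)),\alpha^{*}M(j))$, i.e.\ set maps on generators of free objects, not bilinear or linear maps on $C_{0}$; and the objects to be classified are arbitrary crossed extensions, with no additive splitting assumed. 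Your forward map therefore cannot start from an additive section $\sigma$ of $\rho\colon C_{1}\rightarrow \Ker\gamma$ (such a section need not exist, and a merely set-theoretic one does not hand you a cocycle in this complex without real work), and your inverse construction, which builds $C_{1}(i)$ as a twisted $M(i)\oplus\Ker\gamma(i)$, only ever produces additively split crossed extensions --- so at best it would prove a relative statement about $\pi_{0}\mathfrak{ACross}$, not about $\pi_{0}\mathfrak{Cross}$, and the resulting map could not hit the components of non-split crossed extensions.

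Concretely, the missing idea is the mechanism for passing between a (relative) cotriple cocycle, which is a derivation defined on the free objects $G^{q+1}(C_{0}(i))$, and an object $C_{1}$ of $\mathfrak{C}^{I}$ carrying a crossed module structure, in the absence of any splitting; this is not a formal repackaging through the adjunction and Lemma \ref{derlma}, but is the substantive content of interpreting cotriple $H^{2}$, which the paper obtains by converting a crossed module $\partial\colon C_{1}\rightarrow C_{0}$ into an internal category, taking its nerve to get a simplicial object with Moore complex of length one, and quoting Glenn. The parts of your plan surrounding this step are sound --- the injectivity of $\gamma^{*}$ for surjective $\gamma$, the vanishing $H^{n}(I,G_{I}(A),M)=0$ for $n\geq 1$ (which, unlike the Harrison case, needs no characteristic-zero hypothesis), and the surjectivity/injectivity diamond --- but as written the central equivalence between relative $1$-cocycles and fixed-$C_{0}$ crossed extensions would fail; to salvage your skeleton you would have to prove that step by the same simplicial/torsor technology, or by an explicit obstruction-theoretic construction of $C_{1}$ from a cocycle that does not presuppose an additive splitting.
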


\begin{proof}

We are going to show that the crossed extensions are equivalent to the simplicial groups whose Moore complex is of length one. Given a crossed extension we have a crossed module \[\xymatrix{C_{1} \ar^{\partial}[r] & C_{0}.}\]

Let $X_{0} = C_{0}$ and $X_{1} = C_{1} \oplus C_{0}$ where addition is given by $(c_{1},c_{0})+(d_{1},d_{0}) = (c_{1}+d_{1},c_{0}+d_{0})$ and multiplication is given by $(c_{1},c_{0})(d_{1},d_{0}) = ( 0 ,c_{0}d_{0} + \partial(c_{1})d_{1} + c_{0}d_{1} + d_{0}c_{1})$. For all $\alpha:i \rightarrow j$ then we have $X_{1}(\alpha) (c_{0},d_{0}) = (C_{1}(\alpha)(c_{1}),C_{0}(\alpha)(c_{0}))$. This gives us that $X_{1}$ is a diagram of algebras. We set $d_{1}:X_{1} \rightarrow X_{0}$ to be $d_{1}(c_{1},c_{0})=c_{0}$ and $d_{0}:X_{1} \rightarrow X_{0}$ to be $d_{0}(c_{1},c_{0})= \partial(c_{1}) + c_{0}$. Then $d_{0}$ is a natural transformation.

We define the category $\mathfrak{C}$ to be the category whose objects are the elements of
$X_{0}$ and whose morphisms are the elements of $X_{1}$. The source of the morphism
$(c_{1},c_{0}) \in \mathfrak{C}$ is given by $d_{0}(c_{1},c_{0}) =
\partial(c_{1})+c_{0}$ and the target of $(c_{1},c_{0}) \in \mathfrak{C}$ is given by $d_{1}(c_{1},c_{0}) = c_{0}$. The
composable morphisms in $\mathfrak{C}$ are pairs of morphisms
$(c_{1},c_{0}), (c'_{1},c'_{0})$ such that $c'_{0}=\partial c_{1} +
c_{0}$. The nerve of the category $\mathfrak{C}$ is a
simplicial group whose Moore complex is \[\xymatrix{\ldots \ar[r] &
0 \ar[r] & \Ker d_{1} \ar[r] & C_{0}, }\] which is of length one.

Let $K_{*}$ be a simplicial object whose Moore complex is of length one. Then the Moore complex
\[\xymatrix{\Ker d_{1} \ar[r]^{d_{0}} & K_{0},}\]
is a crossed module.

The category of diagrams of algebras is exact and so the results of Glenn \cite{Glenn} tell us that $H^{2}_{\mathbb{G}_{I}}(A,M)$ classifies the simplicial groups whose Moore complexes are of length one.

\end{proof}

\section{Cohomology of diagrams of groups}
In the paper by Cegarra \cite{Cegarra}, the cohomology of diagrams of groups is described, which we denote by $H_{Cg}^{*}(G,M)$. A diagram of groups is a functor $G: I \rightarrow \mathfrak{Gp}$ where $I$ is a small category and $\mathfrak{Grp}$ is the category of groups. A $G$-module is a functor $M:I \rightarrow \mathfrak{Ab}$ such that for all objects $i \in I$ we have that $M(i) \in A(i)-\mathfrak{mod}$ and for all morphisms $(\alpha:i \rightarrow j) \in I$ we have that $M(\alpha)(gm) = G(\alpha)(g)\cdot M(\alpha)(m)$ for all $g \in G(i)$ and $m \in M(i)$.

A derivation of $G$ into $M$ is a natural transformation $d: G \rightarrow M$ such that $d(i):G(i) \rightarrow M(i)$ is a derivation of the group $G(i)$ into $M(i)$. We denote the abelian group of all derivations of $G$ into $M$ by $\Der_{I}(G,M)$. When $G$ is locally constant then $H_{Cg}^{n+1}(G,M) = R^{n}\Der_{I}(G,M)$ and the following spectral sequence exists. \[E^{p,q}_{2} = H^{p}_{BW}(I,\mathcal{H}^{q+1}(G,M))
\Rightarrow H^{p+q+1}_{Cg}(G,M),\] where
$\mathcal{H}^{q}(G,M)$ is a natural system on $I$ whose value on
$(\alpha: i \rightarrow j)$ is given by
$H^{q}(G(i),\alpha^{*}M(j))$.
So when $G$ is locally constant the cohomology described by Cegarra coincides with the Andr\'{e}-Quillen cohomology described above with a dimension shift.

\chapter{Andr\'{e}-Quillen cohomology of $\Psi$-rings and $\lambda$-rings} 
\label{Chapter7}

\section{Cohomology of $\Psi$-rings}

Let $I$ denote  the category with one
object associated to the multiplicative monoid of the
nonzero natural numbers. We can consider
$\Psi$-rings as diagrams of commutative rings; $\Psi$-rings are
functors from $I$ to the category of commutative rings
 \[ R: I \rightarrow \mathfrak{Com.rings.}\]
Therefore we can use the theory we developed in the previous chapter.

We are now going to construct the free $\Psi$-ring on one generator
$a$. Let $A$ be the free commutative ring generated by $\{a_{i} | i
\in \mathbb{N} \}$. Let the operations $\Psi^{i}: A \rightarrow A$
be given by $\Psi^{i}(a_{j})=a_{ij}$, for $i,j \in \mathbb{N}$. Then
$A$ is the \textit{free $\Psi$-ring on one generator}.

\begin{lemma}
If $R$ and $S$ are $\Psi$-rings, then $R \otimes S$ with $\Psi^{i}:R \otimes S \rightarrow R \otimes S$ given by $\Psi^{i}(r,s) = (\Psi^{i}(r),\Psi^{i}(s)) $ is the coproduct in
the category $\Psi-\mathfrak{rings}$.
\end{lemma}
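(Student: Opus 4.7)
The plan is to verify the statement in three steps: that the proposed operations equip $R \otimes S$ with a $\Psi$-ring structure, that the canonical ring-theoretic inclusions become maps of $\Psi$-rings, and finally that this object satisfies the coproduct universal property in $\Psi\mathfrak{-rings}$. Throughout I will use that $R \otimes S$ (tensor product over $\mathbb{Z}$) is already the coproduct in the category of commutative rings, with inclusions $\iota_R(r) = r \otimes 1$ and $\iota_S(s) = 1 \otimes s$.

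First I would define $\Psi^i : R \otimes S \to R \otimes S$ on elementary tensors by $\Psi^i(r \otimes s) = \Psi^i(r) \otimes \Psi^i(s)$ and observe that the assignment $(r,s) \mapsto \Psi^i(r) \otimes \Psi^i(s)$ is $\mathbb{Z}$-bilinear, so it descends to a well-defined additive map on the tensor product. Since each $\Psi^i$ on $R$ and on $S$ is a ring homomorphism, a check on a pair of elementary tensors $(r \otimes s)(r' \otimes s')$ shows $\Psi^i$ is multiplicative on $R \otimes S$ and preserves the identity $1 \otimes 1$. The axiom $\Psi^1 = \mathrm{id}$ follows from $\Psi^1_R = \mathrm{id}_R$ and $\Psi^1_S = \mathrm{id}_S$; and
\[
\Psi^i(\Psi^j(r \otimes s)) = \Psi^i(\Psi^j(r)) \otimes \Psi^i(\Psi^j(s)) = \Psi^{ij}(r) \otimes \Psi^{ij}(s) = \Psi^{ij}(r \otimes s),
\]
extended additively to all of $R \otimes S$. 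Hence definition \ref{psi1} is satisfied.

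Next I would verify that $\iota_R$ and $\iota_S$ are maps of $\Psi$-rings: for $r \in R$, $\iota_R(\Psi^i(r)) = \Psi^i(r) \otimes 1 = \Psi^i(r) \otimes \Psi^i(1) = \Psi^i(r \otimes 1) = \Psi^i(\iota_R(r))$, and symmetrically for $\iota_S$.

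For the universal property, let $T$ be a $\Psi$-ring and $f : R \to T$, $g : S \to T$ be maps of $\Psi$-rings. The coproduct property in $\mathfrak{Com.rings}$ gives a unique ring homomorphism $h : R \otimes S \to T$ with $h(r \otimes s) = f(r) g(s)$, $h \circ \iota_R = f$, and $h \circ \iota_S = g$. I would then check that $h$ commutes with $\Psi$-operations: on an elementary tensor,
\[
h(\Psi^i(r \otimes s)) = f(\Psi^i(r)) g(\Psi^i(s)) = \Psi^i(f(r)) \Psi^i(g(s)) = \Psi^i(f(r) g(s)) = \Psi^i(h(r \otimes s)),
\]
using that $f$ and $g$ preserve $\Psi^i$ and that $\Psi^i$ on $T$ is a ring homomorphism. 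Additivity of both $h$ and $\Psi^i$ extends this to the whole of $R \otimes S$. Uniqueness of $h$ as a $\Psi$-ring map is automatic since it is already unique as a ring map. There is no genuine obstacle here: the argument is essentially forced by the fact that the $\Psi$-operations are defined factor-wise and that the underlying coproduct in commutative rings is $R \otimes S$; the only subtlety worth flagging is the preliminary well-definedness of $\Psi^i$ on the tensor product, which is handled by bilinearity.
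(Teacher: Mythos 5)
Your proof is correct and follows essentially the same route as the paper: both exploit that $R \otimes S$ is already the coproduct in commutative rings and then equip it with the factor-wise $\Psi$-operations forced by requiring $r \mapsto r \otimes 1$ and $s \mapsto 1 \otimes s$ to be $\Psi$-ring maps. You are in fact more thorough than the paper, which only derives the formula $\Psi^i(r \otimes s) = \Psi^i(r) \otimes \Psi^i(s)$ and leaves the well-definedness, the $\Psi$-ring axioms, and the universal-property check implicit, all of which you verify explicitly.
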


\begin{proof}
The coproduct of two commutative rings is given by the tensor
product, so we only need to check the $\Psi$-operations. There is a
unique $\Psi$-ring structure on $R \otimes S$ such that
\[ R \rightarrow R \otimes S, \qquad r \mapsto r \otimes 1,\]
\[ S \rightarrow R \otimes S, \qquad s \mapsto 1 \otimes s,\]
are homomorphisms of $\Psi$-rings given by
\begin{align*}\Psi^{i}(r \otimes s) &=\Psi^{i}((r \otimes 1)(1 \otimes s)) \\&= \Psi^{i}(r \otimes 1)\Psi^{i}(1 \otimes s) \\ &= (\Psi^{i}(r) \otimes 1)(1 \otimes \Psi^{i}(s)) \\&= \Psi^{i}(r) \otimes \Psi^{i}(s).
\end{align*}
\end{proof}

\begin{corollary}
Let $A$ be the free commutative ring generated by
$\{a_{i},b_{i},\ldots,x_{i} | i \in \mathbb{N}\}$.
Let the operations $\Psi^{i}: A \rightarrow A$
be given by $\Psi^{i}(a_{j})=a_{ij}$, $\Psi^{i}(b_{j})=b_{ij}$, $\ldots$, $\Psi^{i}(x_{j})=x_{ij}$ for $i,j \in \mathbb{N}$. Then $A$ is
the \textit{free $\Psi$-ring} generated by $\{a, b, \ldots, x\}$.
\end{corollary}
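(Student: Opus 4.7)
The plan is to build the free $\Psi$-ring on $\{a,b,\ldots,x\}$ as an iterated coproduct, using both the construction of the free $\Psi$-ring on one generator (given just before the lemma) and the lemma identifying coproducts in $\Psi\mathfrak{-rings}$ with tensor products.

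First, I would let $A_{a}, A_{b}, \ldots, A_{x}$ denote copies of the free $\Psi$-ring on one generator (with that generator renamed $a,b,\ldots,x$ respectively), so $A_{a}$ is the polynomial ring $\mathbb{Z}[a_{i} : i \in \mathbb{N}]$ with $\Psi^{i}(a_{j}) = a_{ij}$, and similarly for the others. By a standard universal-property argument, the free $\Psi$-ring on a finite set is the coproduct in $\Psi\mathfrak{-rings}$ of the free $\Psi$-rings on each generator. Applying the previous lemma iteratively, this coproduct is realised by the tensor product
\[
A_{a} \otimes A_{b} \otimes \cdots \otimes A_{x},
\]
equipped with the diagonal $\Psi$-operations $\Psi^{i}(y_{1}\otimes\cdots\otimes y_{n}) = \Psi^{i}(y_{1})\otimes\cdots\otimes\Psi^{i}(y_{n})$.

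The second step is to identify this tensor product with the ring $A$ described in the corollary. As a commutative ring, the tensor product of polynomial rings $\mathbb{Z}[a_{i}]\otimes\mathbb{Z}[b_{i}]\otimes\cdots\otimes\mathbb{Z}[x_{i}]$ is canonically isomorphic to $\mathbb{Z}[a_{i},b_{i},\ldots,x_{i} : i \in \mathbb{N}]$, which is precisely the underlying commutative ring of $A$. Under this isomorphism, $a_{j}$ corresponds to $a_{j}\otimes 1 \otimes \cdots \otimes 1$, and similarly for $b_{j},\ldots,x_{j}$. The diagonal $\Psi$-operations on the tensor product then send $a_{j} \mapsto a_{ij}$, $b_{j} \mapsto b_{ij}$, $\ldots$, $x_{j} \mapsto x_{ij}$, matching the operations defined on $A$.

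Since the tensor product is the coproduct in $\Psi\mathfrak{-rings}$, $A$ satisfies the required universal property: given any $\Psi$-ring $S$ and any elements $s_{a}, s_{b}, \ldots, s_{x} \in S$, the unique $\Psi$-ring homomorphism $A_{a} \to S$ with $a_{1} \mapsto s_{a}$ (which sends $a_{j} = \Psi^{j}(a_{1}) \mapsto \Psi^{j}(s_{a})$) and its analogues for $b,\ldots,x$ assemble via the coproduct into a unique $\Psi$-ring map $A \to S$ sending $a \mapsto s_{a}, \ldots, x \mapsto s_{x}$. No step is a serious obstacle: the only thing requiring any care is checking that the diagonal $\Psi$-operations on the tensor product genuinely restrict to the advertised operations on $A$, which is immediate from the multiplicativity of each $\Psi^{i}$ on $S$ and the definition of $\Psi^{i}$ on $A_{a},\ldots,A_{x}$.
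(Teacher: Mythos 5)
Your proposal is correct and follows the same route the paper intends: the corollary is placed as a direct consequence of the coproduct lemma, and your argument — iterating that lemma to realise the free $\Psi$-ring on $\{a,b,\ldots,x\}$ as the tensor product of the one-generator free $\Psi$-rings, identifying this with the polynomial ring on all the $a_{i},b_{i},\ldots,x_{i}$ with the diagonal $\Psi$-operations, and checking the universal property — is exactly that argument spelled out. No gaps.
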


It is well known that there is an adjoint pair of functors
\[\xymatrix{\mathfrak{Sets}  \ar@<1ex>[rr]^{F} && \mathfrak{Com.rings} \ar@<1ex>[ll]^{U}},\]
where $U$ is the forgetful functor and $F$ takes a set $S$ to the free commutative ring generated by $S$.
The adjoint pair gives rise to a comonad $\mathbb{G}$ on $\mathfrak{Com.rings}$ which is monadic and the
cohomology with respect to this comonad is the Andr\'{e}-Quillen cohomology of commutative rings.

The adjoint pair gives rise to another adjoint pair
\[\xymatrix{\mathfrak{Sets}  \ar@<1ex>[rr]^{F_{I}} && \mathfrak{Com.rings}^{I} \ar@<1ex>[ll]^{U_{I}}},\]
where $U_{I}$ is the forgetful functor and $F_{I}$ takes a set $S$ to the free $\Psi$-ring generated by $S$.
This adjoint pair yields a comonad $\mathbb{G}_{I}$ on $\mathfrak{Com.rings}^{I} = \Psi-\mathfrak{rings}$ which is monadic. Note that for any $R \in \Psi-\mathfrak{rings}$, we get that $G_{I}(R) = \bigsqcup_{i \in \mathbb{N}} G(R)$.
We define the cohomology of a $\Psi$-ring $R$ with coefficients in $M \in R-\mathfrak{mod}_{\Psi}$   to be
\[H^{*}_{\Psi}(R,M) := H^{*}_{\mathbb{G}_{I}}(R,M) = H^{*}_{\mathbb{G}_{I}}(R,\Der_{\Psi}(-,M)).\]
From theorem \ref{natsysttt} it follows that for any $n \geq 0$, there is a natural system on
$I$ as follows
\[D_{f} := H_{AQ}^{n}(R,M^{f}),\] where $M^{f}$ is an $R$-module with
$M$ as an abelian group with the following action of $R$
\[(r,m) \mapsto \Psi^{f}(r)m, \textrm{ for $r \in R, m \in M$}.\]
For any morphism $u \in I$, we have $u_{*}:D_{f} \rightarrow D_{uf}$ which is
induced by $\Psi^{u}: M^{f} \rightarrow M^{uf}$. For any morphism $v \in I$, we
have $v^{*}:D_{f} \rightarrow D_{fv}$ which is induced by $\Psi^{v}:
R \rightarrow R$.

Therefore theorem \ref{AQss} gives us the following theorem.

\begin{theorem} There exists a spectral sequence
\[E^{p,q}_{2} = H^{p}_{BW}(I,\mathcal{H}^{q}(R,M)) \Rightarrow H^{p+q}_{\Psi}(R,M),\]
where $\mathcal{H}^{q}(R,M)$ is the natural system on $I$ whose value on a morphism $\alpha$ in $I$
 is given by $H_{AQ}^{q}(R,M^{\alpha})$.
\end{theorem}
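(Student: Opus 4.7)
The plan is to recognize this theorem as a direct specialization of theorem \ref{AQss} to the situation where $I$ is the one-object category attached to the multiplicative monoid $\mathbb{N}^{mult}$. Since a $\Psi$-ring $R$ was interpreted as a functor $R : I \rightarrow \mathfrak{Com.rings}$ and a $\Psi$-module $M$ as a module over this diagram, the setup of Chapter 6 applies verbatim. All that remains is to match each ingredient appearing in theorem \ref{AQss} to the data used in the definition of $H^{*}_{\Psi}(R,M)$.

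First I would identify the abutment. By the definition of $\Psi$-ring cohomology given just above, $H^{n}_{\Psi}(R,M) := H^{n}_{\mathbb{G}_{I}}(R,M)$, where $\mathbb{G}_{I}$ is the comonad on $\Psi\mathfrak{-rings}$ arising from the adjoint pair $F_{I} \dashv U_{I}$ with $\mathfrak{Sets}$. The adjunction is monadic and coincides with the general construction from Chapter 6 applied to the monad $T$ on sets giving commutative rings; the explicit formula $G_{I}(R) = \bigsqcup_{i \in \mathbb{N}} G(R)$ just recovers the general description of the comonad on $\mathfrak{Com.rings}^{I}$. Consequently the right-hand side of theorem \ref{AQss} is literally $H^{p+q}_{\Psi}(R,M)$.

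Second I would identify the $E_{2}$-term. A morphism of $I$ is a single object self-map corresponding to an element $\alpha \in \mathbb{N}^{mult}$, so both source and target equal $R$ and $M(j)=M$. The base-change functor along $\alpha$ endows $M$ with the new $R$-module structure $r \cdot m = \Psi^{\alpha}(r)\, m$, which is precisely the $R$-module $M^{\alpha}$ described before the theorem statement; this uses the definition of pullback module from Chapter 6 together with the fact that $R(\alpha) = \Psi^{\alpha}$. Hence
\[
H^{q}_{\mathbb{G}}(R(i),\alpha^{*}M(j)) \;=\; H^{q}_{\mathbb{G}}(R,M^{\alpha}) \;=\; H^{q}_{AQ}(R,M^{\alpha}),
\]
the last equality being the definition of Andr\'{e}-Quillen cohomology as the cohomology of the comonad $\mathbb{G}$ on $\mathfrak{Com.rings}$. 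The natural system structure required in the Baues-Wirsching complex is supplied by theorem \ref{natsysttt}, with $u_{*}$ and $v^{*}$ realized respectively by the maps $\Psi^{u}: M^{f} \rightarrow M^{uf}$ and $\Psi^{v}: R \rightarrow R$ as recorded just before the theorem.

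With these identifications, theorem \ref{AQss} specializes to the claimed spectral sequence. There is no real obstacle; the only care needed is in checking that the pullback module $\alpha^{*}M$ in the general setting corresponds to the notation $M^{\alpha}$ from Chapter 3, and that the comonad $\mathbb{G}_{I}$ of the monadic adjunction $F_{I} \dashv U_{I}$ matches the comonad used to define $H^{*}_{\Psi}$. Both checks are immediate from the constructions, so the proof reduces to citing theorem \ref{AQss}.
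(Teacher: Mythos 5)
Your proposal is correct and follows essentially the same route as the paper: the paper likewise obtains this theorem by identifying the natural system $D_{f} = H^{q}_{AQ}(R,M^{f})$ via theorem \ref{natsysttt} (with $u_{*}$ and $v^{*}$ induced by $\Psi^{u}$ and $\Psi^{v}$) and then citing theorem \ref{AQss} for the diagram $R: I \rightarrow \mathfrak{Com.rings}$, using $H^{*}_{\Psi}(R,M) := H^{*}_{\mathbb{G}_{I}}(R,M)$ for the abutment. Your explicit check that $\alpha^{*}M$ coincides with $M^{\alpha}$ and that comonad cohomology of commutative rings is Andr\'{e}-Quillen cohomology is exactly the identification the paper leaves implicit.
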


\begin{theorem}
Let $R$ be a $\Psi$-ring and $M \in R\mathfrak{-mod}_{\Psi}$, then
\begin{enumerate}
    \item $H^{0}_{\Psi}(R,M) \cong \Der_{\Psi}(R,M)$,
    \item $H^{1}_{\Psi}(R,M) \cong \Ext_{\Psi}(R,M)$,
    \item $H^{2}_{\Psi}(R,M) \cong \pi_{0} Cross_{\Psi}(R,M)$,
    \item If $R$ is a free $\Psi$-ring, then $H^{n}_{\Psi}(R,M)=0$ for $n \geq 1$.
\end{enumerate}
\end{theorem}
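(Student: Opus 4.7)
The plan is to deduce each clause from the corresponding general result proved in Chapter 6 for $H^{*}_{\mathbb{G}_{I}}$ applied to diagrams of algebras. Since a $\Psi$-ring is, by construction, a functor $R: I \to \mathfrak{Com.rings}$ with $I$ the one-object category associated with $\mathbb{N}^{mult}$, and the cohomology $H^{*}_{\Psi}(R,M)$ is defined to be exactly $H^{*}_{\mathbb{G}_{I}}(R,M)$ for the comonad $\mathbb{G}_{I}$ coming from the free/forgetful adjunction $\mathfrak{Sets}\rightleftarrows \Psi\mathfrak{-rings}$, the whole theorem reduces to matching the abstract diagram-theoretic data (diagram derivations, diagram extensions, diagram crossed extensions) with their $\Psi$-counterparts introduced in Chapter 3.

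First I would handle (1). By Lemma \ref{lemmaHIAM} one has $H^{0}_{\mathbb{G}_{I}}(R,M)\cong \mathfrak{Der}(R,M)$, so it only remains to unwind the definition of $\mathfrak{Der}(R,M)$ in the special case $I=\mathbb{N}^{mult}$: a natural transformation $d:R\to M$ compatible with the module structure is a single additive map $R\to M$ satisfying the ordinary derivation rule together with $M(i)\circ d = d\circ R(i)$ for every $i\in \mathbb{N}$, i.e.\ $\psi^{i}\circ d = d\circ \Psi^{i}$. That is exactly the definition of $\Der_{\Psi}(R,M)$. For (2) and (3), I would apply the identifications $H^{1}_{\mathbb{G}_{I}}(A,M)\cong \mathfrak{Ext}(A,M)$ and $H^{2}_{\mathbb{G}_{I}}(A,M)\cong \pi_{0}\mathfrak{Cross}(A,M)$ proved in Chapter 6, and then observe that when the shape category is $I=\mathbb{N}^{mult}$ a short exact sequence $0\to M\to X\to R\to 0$ in $\mathfrak{Com.rings}^{I}$ is precisely a $\Psi$-ring extension in the sense of Chapter 3 (a single extension of underlying commutative rings together with $\Psi$-operations on $X$ making $\beta$ a map of $\Psi$-rings and satisfying $\alpha\psi^{n}=\Psi^{n}\alpha$), and similarly for crossed extensions. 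The equivalence relations on either side coincide, giving the required bijections with $\Ext_{\Psi}(R,M)$ and $\pi_{0}Cross_{\Psi}(R,M)$.

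For (4), the free $\Psi$-ring $R$ is by construction of the form $F_{I}(S)$ for a set $S$, hence a free algebra for the comonad $\mathbb{G}_{I}$; in particular $R$ is $\mathbb{G}_{I}$-projective. The comonad-cohomology vanishing result (Lemma \ref{exacttt}), applied verbatim to $\mathbb{G}_{I}$, then gives $H^{n}_{\mathbb{G}_{I}}(R,M)=0$ for $n\geq 1$. Alternatively one can deduce this directly from the local-to-global spectral sequence of Chapter 6: if $R$ is a free $\Psi$-ring then $R$ is free as a commutative ring, so by Proposition \ref{AQprop1}(2) the natural system $\mathcal{H}^{q}(R,M)$ vanishes for $q>0$, and the $q=0$ row is the Baues--Wirsching cohomology of $I$ with values in a functor admitting an initial section; the extra freeness in the $\Psi$-direction is exactly what makes this row vanish in positive $p$.

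The main obstacle I expect is in part (3): a careful translation between a crossed extension of diagrams and a crossed $\Psi$-extension needs verification at every level of structure (the $\Psi$-module structure on $M$, the $\Psi$-module structure on $C_{1}$ over $C_{0}$, compatibility of $\partial$ with the $\Psi$-operations, and coincidence of the induced $\Psi$-module structure on $M=\ker\partial$). Each of these is routine, but together they constitute the bulk of the work; once they are in place, the proof is just the application of the Chapter 6 identifications to this particular small category $I$.
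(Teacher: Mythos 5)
Your proposal is correct and follows essentially the same route as the paper: the theorem is exactly the specialization of the Chapter 6 results ($H^{0}_{\mathbb{G}_{I}}\cong\mathfrak{Der}$, $H^{1}_{\mathbb{G}_{I}}\cong\mathfrak{Ext}$, $H^{2}_{\mathbb{G}_{I}}\cong\pi_{0}\mathfrak{Cross}$, and vanishing on $\mathbb{G}_{I}$-projectives) to the one-object category $I=\mathbb{N}^{mult}$, with the translation of diagram derivations, extensions and crossed extensions into their $\Psi$-analogues from Chapter 3, which is precisely how the paper intends it. Your primary argument for part (4) via $\mathbb{G}_{I}$-projectivity of free $\Psi$-rings and Lemma \ref{exacttt} is the right one (the alternative spectral-sequence sketch is vaguer but not needed).
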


\section{Cohomology of $\lambda$-rings}

We are now going to construct the free $\lambda$-ring on one generator
$a$. Let $A$ be the free commutative ring generated by $\{a_{i} | i
\in \mathbb{N} \}$. Let the operations $\lambda^{i}: A \rightarrow A$
be given by $\lambda^{i}(a_{j})= P_{i,j}(a_{1},\ldots,a_{ij})$ for $i,j \in \mathbb{N}$. Then $A$ is the \textit{free $\lambda$-ring on one generator}.

\begin{lemma}
If $R$ and $S$ are $\lambda$-rings, then $R \otimes S$ with $\lambda^{i}:R \otimes S \rightarrow R \otimes S$ given by $\lambda^{i}(r,s) = P_{i}((\lambda^{1}(r),1),\ldots,(\lambda^{i}(r),1),(1,\lambda^{1}(s)),\ldots,(1,\lambda^{i}(s)))$ is the coproduct in
the category $\lambda-\mathfrak{rings}$.
\end{lemma}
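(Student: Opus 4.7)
The plan is to proceed in three stages: construct the $\lambda$-operations on $R \otimes S$, verify the $\lambda$-ring axioms, and establish the universal property.

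First I would exploit the fact that $\lambda_{t}: R \to R[[t]]$, sending $r \mapsto \sum_{i \geq 0} \lambda^{i}(r) t^{i}$, is a ring homomorphism (a reformulation of the pre-$\lambda$-ring axioms together with the multiplication axiom \ref{deflam}.3). Composing with the canonical $R[[t]] \to (R \otimes S)[[t]]$, $r \mapsto r \otimes 1$, gives a ring map $\tilde{\lambda}_{t}^{R}: R \to (R \otimes S)[[t]]$, and similarly one obtains $\tilde{\lambda}_{t}^{S}: S \to (R \otimes S)[[t]]$. The universal property of the tensor product of commutative rings produces a unique ring homomorphism $\Lambda_{t}: R \otimes S \to (R \otimes S)[[t]]$ extending both, and I define $\lambda^{i}(x)$ as the coefficient of $t^{i}$ in $\Lambda_{t}(x)$. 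On pure tensors $r \otimes s = (r \otimes 1)(1 \otimes s)$ this definition coincides with the formula in the statement, since $\Lambda_{t}$ being a ring homomorphism forces the $P_{i}$-expression.

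Next I would check the $\lambda$-ring axioms. The pre-$\lambda$-ring conditions $\lambda^{0}(x) = 1$, $\lambda^{1}(x) = x$, and $\lambda_{t}(x + y) = \lambda_{t}(x)\lambda_{t}(y)$ hold automatically because $\Lambda_{t}$ is a ring homomorphism landing in $(R \otimes S)[[t]]$ and agrees with the correct formulas on the images of $R$ and $S$. The normalisation $\lambda_{t}(1) = 1+t$ is immediate. The multiplication axiom \ref{deflam}.3 is precisely what was forced by construction. The hard part is the composition axiom \ref{deflam}.4, namely $\lambda^{i}(\lambda^{j}(x)) = P_{i,j}(\lambda^{1}(x), \ldots, \lambda^{ij}(x))$. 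Here I plan to follow the strategy used in the proof that $R \rtimes_{\lambda} M$ is a $\lambda$-ring: first reduce to the case of free $\lambda$-rings by passing to quotients, since universal polynomial identities are preserved under surjections; then invoke Theorem \ref{Psilamring}, which for torsion-free pre-$\lambda$-rings reduces checking the $\lambda$-ring axioms to checking the $\Psi$-ring axioms on the Adams operations. On $R \otimes S$ the Adams operations satisfy $\Psi^{i}(r \otimes s) = \Psi^{i}(r) \otimes \Psi^{i}(s)$ (since $\Psi^{i}$ is determined as the coefficient of the log-derivative of $\lambda_{t}$, or alternatively since $\Psi^{i}$ on each of $R$, $S$ is a ring homomorphism and tensor products of ring homomorphisms exist), and the $\Psi$-ring axioms then follow directly from the corresponding axioms on the factors.

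By construction the canonical inclusions $R \hookrightarrow R \otimes S$ and $S \hookrightarrow R \otimes S$ are maps of $\lambda$-rings. For the universal property, given a $\lambda$-ring $T$ together with $\lambda$-ring maps $f: R \to T$ and $g: S \to T$, the coproduct in $\mathfrak{Com.rings}$ supplies a unique ring homomorphism $h: R \otimes S \to T$ with $h(r \otimes 1) = f(r)$ and $h(1 \otimes s) = g(s)$. To check that $h$ commutes with the $\lambda$-operations, it suffices (since $\lambda_{t}$ is multiplicative and additive on factors, and sums of pure tensors are handled by axiom \ref{defprelam}.3) to verify the identity on pure tensors, where both $h(\lambda^{i}(r \otimes s))$ and $\lambda^{i}(h(r \otimes s)) = \lambda^{i}(f(r)g(s))$ reduce to the same expression $P_{i}(f(\lambda^{1}(r)), \ldots, g(\lambda^{i}(s)))$ by axiom \ref{deflam}.3 in $T$ and the hypothesis that $f, g$ are $\lambda$-ring maps.

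The main obstacle will be the composition axiom \ref{deflam}.4; a direct combinatorial verification via the polynomials $P_{i,j}$ is prohibitive, which is why the reduction to torsion-free free $\lambda$-rings together with Theorem \ref{Psilamring} (allowing a passage to the much simpler $\Psi$-ring setting, where the analogous lemma was already proved) is essential.
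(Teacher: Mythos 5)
Your overall strategy is sound and, in fact, supplies a proof where the thesis gives none: the lemma is stated without proof, and your plan (the formula is forced on pure tensors by requiring the two inclusions to be maps of $\lambda$-rings, then the hard composition axiom \ref{deflam}.4 is checked by reducing to free $\lambda$-rings and invoking Theorem \ref{Psilamring}) is exactly parallel to the thesis's own treatment of the $\Psi$-ring coproduct and of the semidirect product $R\rtimes_{\lambda}M$. However, there is a genuine misstep in your first stage as written. The map $\lambda_{t}\colon R \rightarrow R[[t]]$ is \emph{not} a ring homomorphism for the usual ring structure on $R[[t]]$: the pre-$\lambda$-ring axiom says $\lambda_{t}(r+r') = \lambda_{t}(r)\lambda_{t}(r')$, i.e.\ $\lambda_{t}$ converts addition into multiplication of power series, so it is not even additive into $R[[t]]$. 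Consequently the appeal to the universal property of $R\otimes S$ as the coproduct of commutative rings, applied to $\tilde{\lambda}^{R}_{t}$ and $\tilde{\lambda}^{S}_{t}$ with target $(R\otimes S)[[t]]$, does not parse, and with it the claim that the pre-$\lambda$ conditions and axioms \ref{deflam}.2--3 hold ``automatically'' loses its justification. The standard repair is to replace $(R\otimes S)[[t]]$ by the Grothendieck (big Witt) ring $W(R\otimes S) = 1 + t(R\otimes S)[[t]]$, in which addition is multiplication of power series, the unit is $1+t$, and multiplication is defined by the universal polynomials $P_{i}$; axioms \ref{deflam}.2--3 for $R$ say precisely that $\lambda_{t}\colon R \rightarrow W(R)$ is a ring homomorphism, $W$ is functorial, and then the coproduct property does yield a ring homomorphism $R\otimes S \rightarrow W(R\otimes S)$ whose coefficients give a well-defined pre-$\lambda$-structure satisfying \ref{deflam}.2--3 and agreeing with the stated formula on pure tensors. (Alternatively, one can avoid this machinery entirely and, as in the thesis's $\Psi$-ring lemma, treat well-definedness and the additive extension in the free case only.)

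With that correction the remainder of your argument goes through, but one more step deserves an explicit word rather than a parenthesis: the identity $\Psi^{i}(r\otimes s) = \Psi^{i}(r)\otimes\Psi^{i}(s)$ is not a consequence of ``tensor products of ring homomorphisms exist,'' because on $R\otimes S$ the Adams operations are \emph{defined} by the Newton formula from the $P_{i}$-defined $\lambda$-operations. It is a universal polynomial identity, verified in the generic case by writing $\lambda_{t}(x)=\prod_{i}(1+\xi_{i}t)$, $\lambda_{t}(y)=\prod_{j}(1+\eta_{j}t)$, so that $\lambda_{t}(xy)=\prod_{i,j}(1+\xi_{i}\eta_{j}t)$ and the power sums satisfy $\sum_{i,j}(\xi_{i}\eta_{j})^{k} = \bigl(\sum_{i}\xi_{i}^{k}\bigr)\bigl(\sum_{j}\eta_{j}^{k}\bigr)$; since the elementary symmetric functions are algebraically independent this holds formally, hence in $F\otimes F'$ for free $\lambda$-rings $F,F'$, which is a polynomial ring over $\mathbb{Z}$ and so torsion-free, as Theorem \ref{Psilamring} requires. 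From there the $\Psi$-ring axioms on $F\otimes F'$ follow factorwise, Theorem \ref{Psilamring} gives the $\lambda$-ring structure, the surjection $F\otimes F' \twoheadrightarrow R\otimes S$ transports the universal identities, and your verification of the universal property of the coproduct (on pure tensors via axiom \ref{deflam}.3, extended additively) is correct.
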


It is known that there is an adjoint pair of functors
\[\xymatrix{\mathfrak{Sets}  \ar@<1ex>[rr]^{F} && \lambda-\mathfrak{rings} \ar@<1ex>[ll]^{U}},\]
where $U$ is the forgetful functor and $F$ takes a set $S$ to the free $\lambda$-ring generated by $S$.
The adjoint pair gives rise to a comonad $\mathbb{G}$ on $\lambda-\mathfrak{rings}$ which is monadic.
We define the cohomology of a $\lambda$-ring $R$ with coefficients in $M \in R-\mathfrak{mod}_{\lambda}$ to be
\[H^{*}_{\lambda}(R,M) := H^{*}_{\mathbb{G}}(R,M) = H^{*}_{\mathbb{G}}(R,\Der_{\lambda}(-,M)).\]

\begin{theorem}
Let $R$ be a $\lambda$-ring and $M \in R\mathfrak{-mod}_{\lambda}$,
then
\begin{enumerate}
    \item $H^{0}_{\lambda}(R,M) \cong \Der_{\lambda}(R,M)$,
    \item $H^{1}_{\lambda}(R,M) \cong \Ext_{\lambda}(R,M)$,
    \item $H^{2}_{\lambda}(R,M) \cong \pi_{0} Cross_{\lambda}(R,M)$,
    \item If $R$ is a free $\lambda$-ring, then $H^{n}_{\lambda}(R,M)=0$ for $n \geq 1$.
\end{enumerate}
\end{theorem}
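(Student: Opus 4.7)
The proof proceeds by treating the four parts in increasing order of difficulty, using the general comonad machinery from Chapter 2 combined with the $\lambda$-analogues of modules, derivations, and the semi-direct product $R \rtimes_{\lambda} M$ developed earlier in this chapter.

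Parts (1) and (4) are essentially immediate applications of general results. Part (1) is just Lemma \ref{Gderiv} applied to the comonad $\mathbb{G}$ on $\lambda\mathfrak{-rings}$ with coefficient functor $\Der_{\lambda}(-,M)$: by definition $H^{0}_{\lambda}(R,M)$ is the equaliser of the two maps $\Der_{\lambda}(G(R),M) \rightrightarrows \Der_{\lambda}(G^{2}(R),M)$, and unwinding this using the counit $\varepsilon : G(R) \to R$ identifies it with $\Der_{\lambda}(R,M)$. For part (4), if $R$ is a free $\lambda$-ring then $R = G(Z)$ for some set $Z$, so $R$ is a retract of $G(R)$ and therefore $\mathbb{G}$-projective by Lemma \ref{lemma28}. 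Lemma \ref{exacttt} then gives $H^{n}_{\lambda}(R,M) = 0$ for $n \geq 1$.

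For part (2), the plan is to mimic the proof given in Chapter 6 (the theorem $H^{1}_{\mathbb{G}_{I}}(A,M) \cong \mathfrak{Ext}(A,M)$), adapted to the single comonad $\mathbb{G}$ on $\lambda\mathfrak{-rings}$. Given a $\lambda$-ring extension $0 \to M \to X \to R \to 0$, take the free resolution $\mathbb{G}_{*}(R)$ and lift the augmentation $\varepsilon : G(R) \to R$ through the surjection $X \to R$ to obtain $h : G(R) \to X$; the comparison $h\varphi^{1}_{0} - h\varphi^{1}_{1}$ factors through $M$ and defines a $\lambda$-derivation $d : G^{2}(R) \to M$, i.e.\ a 1-cocycle in $\Der_{\lambda}(\mathbb{G}_{*}(R),M)$ whose class does not depend on the choice of lift. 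Conversely, given a 1-cocycle $d : G^{2}(R) \to M$, one constructs $X$ as the cokernel
\[
X = \Coker\!\left( G^{2}(R) \xrightarrow{(\varphi^{1}_{0},0)-(\varphi^{1}_{1},d)} G(R) \rtimes_{\lambda} M \right),
\]
where the semi-direct product $G(R) \rtimes_{\lambda} M$ is the $\lambda$-ring constructed earlier. The $\lambda$-ring structure on the quotient, together with the inclusion $M \hookrightarrow X$ and projection $X \twoheadrightarrow R$, gives an extension in $\Ext_{\lambda}(R,M)$, and the two constructions are mutually inverse up to equivalence.

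Part (3) is the main obstacle, and I would handle it exactly as in the proof of $H^{2}_{\mathbb{G}_{I}}(A,M) \cong \pi_{0}\mathfrak{Cross}(A,M)$ in Chapter 6. The idea is to identify crossed $\lambda$-extensions with simplicial objects in $\lambda\mathfrak{-rings}$ whose Moore complex has length one. Given a crossed $\lambda$-module $\partial : C_{1} \to C_{0}$, one forms the simplicial $\lambda$-ring with $X_{0} = C_{0}$ and $X_{1} = C_{1} \rtimes_{\lambda} C_{0}$ (multiplication being $(c_{1},c_{0})(c'_{1},c'_{0}) = (0, c_{0}c'_{0} + \partial(c_{1})c'_{1} + c_{0}c'_{1} + c'_{0}c_{1})$), with face maps $d_{0}(c_{1},c_{0}) = \partial(c_{1}) + c_{0}$ and $d_{1}(c_{1},c_{0}) = c_{0}$; its Moore complex is precisely $\Ker d_{1} \to C_{0}$, recovering the original crossed $\lambda$-module. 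Conversely, the Moore complex of any length-one simplicial $\lambda$-ring is a crossed $\lambda$-module. Because $\lambda\mathfrak{-rings}$ is tripleable over $\mathfrak{Sets}$, Glenn's classification of torsors \cite{Glenn} (applied in the same way as in Chapter 6) yields $H^{2}_{\lambda}(R,M) \cong \pi_{0}Cross_{\lambda}(R,M)$. The hard part will be verifying that the semi-direct product $C_{1} \rtimes_{\lambda} C_{0}$ genuinely produces a $\lambda$-ring with the claimed face maps being $\lambda$-ring homomorphisms — this is where the universal polynomial identities established in the proof that $R \rtimes_{\lambda} M$ is a $\lambda$-ring are used in an essential way.
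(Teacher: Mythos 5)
Your proposal follows the paper's own proof essentially step for step: parts (1) and (4) from the general comonad lemmas ($H^{0}_{\mathbb{G}}\cong\Der$ and vanishing on $\mathbb{G}$-projectives via the retract lemma), part (2) by lifting along a free resolution to get a $1$-cocycle and inverting via the cokernel of the pair of maps into $P_{0}\rtimes_{\lambda}M$, and part (3) by identifying crossed $\lambda$-modules with simplicial $\lambda$-rings whose Moore complex has length one (using the semi-direct-product $\lambda$-structure on $C_{1}\oplus C_{0}$) and invoking Glenn's classification. The only differences are cosmetic — you use the canonical comonad resolution where the paper takes an arbitrary free resolution, and you correctly attribute part (1) to the $H^{0}$ lemma and part (4) to the vanishing lemma, where the paper's citations are inadvertently swapped.
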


\begin{proof}
Property 1 follows from lemma \ref{exacttt}, and property 4 follows from lemma \ref{Gderiv}. We are now going to prove property 2.

Suppose we have a free resolution $P_{*}$ of $R$ as a $\lambda$-ring and an
extension representing a class in $\Ext_{\lambda}(R,M)$.
\[\xymatrix{0 \ar[r] & M \ar^{i}[r] & X \ar^{u}[r] & R \ar[r] & 0}\]
The map $u$ is a surjection and $P_{0}$ is free, so there exists a
lift $h:P_{0}\rightarrow X$ which makes the following diagram
commute.
\[\xymatrix{0 \ar[r] & M \ar^{i}[r] & X \ar^{u}[r] & R \ar@{=}[d] \ar[r] & 0 \\
       \ldots \ar@<1ex>^{\varphi^{2}_{0}}[r] \ar@<-1ex>_{\varphi^{2}_{2}}[r] \ar[r] &
       P_{1} \ar@<0.5ex>^{\varphi^{1}_{0}}[r] \ar@<-0.5ex>_{\varphi^{1}_{1}}[r] & P_{0} \ar@{..>}_{h}[u] \ar^{\varepsilon}[r] & R \ar[r] & 0}\]
Then we can get a map
$d=i^{-1}(h\varphi^{1}_{0}-h\varphi^{1}_{1}):P_{1} \rightarrow M$

$d$ is a $\Psi$-derivation, and $d$ is also a 1-cocycle in
$\Der_{\Psi}(P_{*},M)$ and defines a class in $H^{1}_{\Psi}(R,M)$.
This class is independent of the choice of lifting $h$. This gives a
map $\Phi:\Ext_{\Psi}(R,M) \rightarrow H^{1}_{\Psi}(R,M)$.

Conversely, given a $\lambda$-derivation $D:P_{1} \rightarrow M$ we
let
\[X = \Coker(\xymatrix{P_{1} \ar@<-0.5ex>_{(\varphi^{1}_{0},0)}[r] \ar@<0.5ex>^{(\varphi^{1}_{1},0)}[r] & P_{0}\oplus M}).\]

The cokernel is in the category $R\mathfrak{-mod}_{\lambda}$, and we
let $p:P_{0}\oplus M \rightarrow X$ be the canonical projection. If
$D$ is a 1-cocycle in $\Der_{\lambda}(P_{*},M)$ then we obtain an
extension in $\Ext_{\lambda}(R,M)$ where $i:M \rightarrow X$ is
given by $i(m)=p(0\oplus m)$ and $u:X \rightarrow R$ is given by
$u(p(y\oplus m))=\varepsilon(y)$.

\[\xymatrix{\ldots \ar@<1ex>^{\varphi^{2}_{0}}[r] \ar@<-1ex>_{\varphi^{2}_{2}}[r] \ar[r] &
       P_{1} \ar_{D}[d] \ar@<0.5ex>^{\varphi^{1}_{0}}[r] \ar@<-0.5ex>_{\varphi^{1}_{1}}[r] & P_{0} \ar[d] \ar^{\varepsilon}[r] & R \ar[r] & 0\\
       0 \ar[r] & M \ar^{i}[r] & X \ar^{u}[r] & R \ar@{=}[u] \ar[r] & 0 } \]

This procedure gives us an inverse to $\Phi$.

We are now going to prove property 3 by showing that the crossed $\lambda$-extensions are equivalent to the simplicial groups whose Moore complex is of length one. Given a crossed $\lambda$-extension we have a crossed $\lambda$-module \[\xymatrix{C_{1} \ar^{\partial}[r] & C_{0}.}\]

Let $X_{0} = C_{0}$ and $X_{1} = C_{1} \oplus C_{0}$ where addition
is given by $(c_{1},c_{0})+(d_{1},d_{0}) =
(c_{1}+d_{1},c_{0}+d_{0})$ and multiplication is given by
$(c_{1},c_{0})(d_{1},d_{0}) = ( 0 ,c_{0}d_{0} + \partial(c_{1})d_{1}
+ c_{0}d_{1} + d_{0}c_{1})$. We let $\lambda^{n}(c_{0},d_{0}) =
(\sum_{j=1}^{i}
\Lambda^{j}(c_{1})\lambda^{i-j}(c_{0}),\lambda^{i}(c_{0})$. This
gives us that $X_{1}$ is a $\lambda$-ring. We set $d_{1}:X_{1}
\rightarrow X_{0}$ to be $d_{1}(c_{1},c_{0})=c_{0}$ and $d_{0}:X_{1}
\rightarrow X_{0}$ to be $d_{0}(c_{1},c_{0})=
\partial(c_{1}) + c_{0}$. Then $d_{0}$ is a $\lambda$-ring map.

We define the category $\mathfrak{C}$ to be the category whose objects are the elements of $X_{0}$  and whose morphisms are the elements of $X_{1}$. The source of the morphism $(c_{1},c_{0}) \in \mathfrak{C}$ is given by $d_{0}(c_{1},c_{0}) = \partial(c_{1})+c_{0}$ and the target of $(c_{1},c_{0}) \in \mathfrak{C}$ is given by  $d_{1}(c_{1},c_{0}) = c_{0}$. The composable morphisms in $\mathfrak{C}$ are pairs of morphisms $(c_{1},c_{0}), (c'_{1},c'_{0})$ such that $c'_{0}=\partial c_{1} + c_{0}$.
Hence the nerve of the category $\mathfrak{C}$ is a simplicial group whose Moore complex is \[\xymatrix{\ldots \ar[r] & 0 \ar[r] & \Ker d_{1} \ar[r] & C_{0}, }\] which is of length one.

Let $K_{*}$ be a simplicial object whose Moore complex is of length one. Then the Moore complex yields
\[\xymatrix{\Ker d_{1} \ar[r]^{d_{0}} & K_{0},}\]
which is a crossed $\lambda$-module.

The category of $\lambda$-rings is exact and so the results of Glenn \cite{Glenn} tell us that $H^{2}_{\lambda}(R,M)$ classifies the simplicial groups whose Moore complexes are of length one.
\end{proof}

\begin{lemma}
Let $R$ be a $\lambda$-ring and let $M \in R$-mod$_{\lambda}$. Then
there exist homomorphisms, for $n \geq 0$,
\begin{align*} &\varsigma_{n}:
H^{n}_{\lambda}(R,M) \rightarrow H^{n}_{\Psi}(R_{\Psi},M_{\Psi}),\\
 &\rho_{n}: H^{n}_{\lambda}(R,M) \rightarrow H^{n}_{AQ}(\underline{R},\underline{M}),
 \\ &\varrho_{n}: H^{n}_{\Psi}(R_{\Psi},M_{\Psi}) \rightarrow H^{n}_{AQ}(\underline{R},\underline{M}).
\end{align*}

\end{lemma}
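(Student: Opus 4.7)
The plan is to derive all three homomorphisms from a single construction: a map of augmented simplicial resolutions induced by the forgetful functors between $\lambda$-rings, $\Psi$-rings, and commutative rings, to which I then apply the appropriate contravariant derivations functor and compose with the evident inclusions of derivation groups established earlier in the excerpt.

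To construct $\varsigma_{n}$, write $V : \lambda\textrm{-}\mathfrak{rings} \to \Psi\textrm{-}\mathfrak{rings}$ for the forgetful functor (so $VR = R_{\Psi}$), let $V^{*}$ denote its left adjoint, and let $F_{\lambda}, F_{\Psi}$ be the free functors out of $\mathfrak{Sets}$. Because left adjoints compose, $F_{\lambda} \cong V^{*} F_{\Psi}$, which yields a canonical natural transformation
\[\tau : \mathbb{G}_{\Psi} V \longrightarrow V \mathbb{G}_{\lambda}\]
obtained by whiskering the unit $\mathrm{id} \to V V^{*}$ of $V^{*} \dashv V$ with $F_{\Psi} U_{\Psi} V$. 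Concretely, $\tau_{R}$ is the evident $\Psi$-ring map from the free $\Psi$-ring on the underlying set of $R$ to the underlying $\Psi$-ring of the free $\lambda$-ring on the same set. Iterating via $\mathbb{G}_{\Psi}\tau$ and $\tau \mathbb{G}_{\lambda}$ produces, for every $n \geq 0$, a comparison $\mathbb{G}_{\Psi}^{n+1} V \to V \mathbb{G}_{\lambda}^{n+1}$ which assembles into a map of augmented simplicial $\Psi$-rings
\[\mathbb{G}_{\Psi}(R_{\Psi})_{*} \longrightarrow V \mathbb{G}_{\lambda}(R)_{*}\]
over $R_{\Psi}$. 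Applying $\mathrm{Der}_{\Psi}(-, M_{\Psi})$ gives a cochain map in the reverse direction; precomposing with the natural inclusion $\mathrm{Der}_{\lambda}(F,M) \hookrightarrow \mathrm{Der}_{\Psi}(VF, M_{\Psi})$, valid for every $\lambda$-ring $F$ by the chapter 4 theorem that $\lambda$-derivations are automatically $\Psi$-derivations, yields a cochain map $\mathrm{Der}_{\lambda}(\mathbb{G}_{\lambda}(R)_{*}, M) \to \mathrm{Der}_{\Psi}(\mathbb{G}_{\Psi}(R_{\Psi})_{*}, M_{\Psi})$, and $\varsigma_{n}$ is defined as the induced map on cohomology.

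The construction of $\varrho_{n}$ follows the identical template with $W : \Psi\textrm{-}\mathfrak{rings} \to \mathfrak{Com.rings}$ in place of $V$: there is an analogous natural transformation $\sigma : \mathbb{G}_{AQ} W \to W \mathbb{G}_{\Psi}$ produced from the unit of $W^{*} \dashv W$, inducing a simplicial comparison $\mathbb{G}_{AQ}(\underline{R_{\Psi}})_{*} \to W \mathbb{G}_{\Psi}(R_{\Psi})_{*}$, and the tautological inclusion $\mathrm{Der}_{\Psi}(S, N) \hookrightarrow \mathrm{Der}(\underline{S}, \underline{N})$ closes the argument. The map $\rho_{n}$ is then defined as the composite $\varrho_{n} \circ \varsigma_{n}$, or equivalently built directly from the composite forgetful functor $WV$ by concatenating the two natural transformations.

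The main obstacle will be verifying that the natural transformation $\tau$, and likewise $\sigma$, intertwines the counit and the comultiplication of the two comonads, so that the iterated comparisons $\mathbb{G}_{\Psi}^{n+1} V \to V \mathbb{G}_{\lambda}^{n+1}$ genuinely assemble into a map of simplicial objects respecting all the face operators $\varphi_{i} = G^{i} \varepsilon G^{n-i}$ and degeneracies $\sigma_{i} = G^{i} \delta G^{n-i}$. Once this simplicial compatibility is established by a routine induction on $n$ using the naturality of $\tau$ and the triangle identities of the adjunction $V^{*} \dashv V$, the cochain-map property becomes automatic, the three homomorphisms are well-defined, and their functoriality in $R$ and $M$ is immediate.
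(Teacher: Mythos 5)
Your construction is correct, and it rests on the same two pillars as the paper's proof --- transporting a resolution across the forgetful functor and then using the inclusion $\Der_{\lambda}(R,M)\subseteq\Der_{\Psi}(R_{\Psi},M_{\Psi})$ from Chapter 4 (respectively the tautological inclusion into ordinary derivations for $\rho_{n},\varrho_{n}$) --- but the mechanism for comparing resolutions is genuinely different. The paper takes an arbitrary projective resolution $P_{*}$ of $R$ in $\lambda\mathfrak{-rings}$, observes that $(P_{*})_{\Psi}$ is a (possibly non-projective) resolution of $R_{\Psi}$, chooses a projective resolution $L_{*}$ of $R_{\Psi}$ and a lift $\alpha:L_{*}\rightarrow(P_{*})_{\Psi}$, and then applies $\Der_{\Psi}(-,M_{\Psi})$; this is short but depends on choices (of resolutions and of $\alpha$) and implicitly on the comparison theorem guaranteeing independence on cohomology. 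You instead work with the canonical comonad resolutions and build a canonical comparison $\mathbb{G}_{\Psi}(R_{\Psi})_{*}\rightarrow V\mathbb{G}_{\lambda}(R)_{*}$ from the natural transformation $\tau$ induced by $F_{\lambda}\cong V^{*}F_{\Psi}$ (equivalently, from the map $F_{\Psi}\rightarrow VF_{\lambda}$ adjunct to the unit of $F_{\lambda}\dashv U_{\lambda}$, which avoids invoking $V^{*}$ at all --- its existence is standard for these algebraic categories but is not needed). The counit/comultiplication compatibilities you flag are exactly the comonad-morphism identities, and since every object involved is a free $\Psi$-ring they can be checked on generators, where both sides send generators to generators; the same remark disposes of the augmentation compatibility that makes your map a map over $R_{\Psi}$ and keeps the coefficient modules aligned. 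What your route buys is a choice-free, strictly natural construction of $\varsigma_{n}$, $\rho_{n}$, $\varrho_{n}$ (naturality in $R$ and $M$ is immediate, whereas the paper would have to argue independence of $P_{*}$, $L_{*}$ and $\alpha$); what the paper's route buys is brevity and the flexibility of computing with any convenient projective resolutions, with $\rho_{n}$ and $\varrho_{n}$ dispatched in one sentence as maps induced by the forgetful functors --- which is, in substance, your $W$-template written out.
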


\begin{proof}
Let $P_{*}$ be a projective resolution of $R$ in the category of
$\lambda$-rings. Then applying the Adams operations we get that
$(P_{*})_{\Psi}$ is a (not necessarily projective) resolution of
$R_{\Psi}$ in the category of $\Psi$-rings. We let $L_{*}$
 be a projective resolution of $R_{\Psi}$ in the category of
$\Psi$-rings. Since $L_{*}$ is projective, we can use the lifting
property to get a map $\alpha: L_{*} \rightarrow (P_{*})_{\Psi}$,
such that the following diagram commutes.
\[\xymatrix{(P_{*})_{\Psi} \ar[r] & R_{\Psi} \ar@{=}[d] \\ L_{*} \ar@{.>}_{\exists}^{\alpha}[u] \ar[r] & R_{\Psi}}\]

We then apply the functor $\Der_{\Psi}(-,M_{\Psi})$ to get the
commutative diagram.

\[\xymatrix{\Der_{\Psi}((P_{*})_{\Psi},M_{\Psi}) \ar[r] \ar^{\alpha^{*}}[d] & \Der_{\Psi}(R_{\Psi},M_{\Psi}) \ar@{=}[d] \\ \Der_{\Psi}(L_{*},M_{\Psi})  \ar[r] & \Der_{\Psi}(R_{\Psi},M_{\Psi})}\]

The inclusion $i: \Der_{\lambda}(R,M) \hookrightarrow
\Der_{\Psi}(R_{\Psi},M_{\Psi})$ gives us maps which make the
following diagram commute.

\[\xymatrix{\Der_{\lambda}(P_{*},M) \ar[r] \ar^{i}[d] & \Der_{\lambda}(R,M) \ar^{i}[d] \\ \Der_{\Psi}((P_{*})_{\Psi},M_{\Psi}) \ar[r] \ar^{\alpha^{*}}[d] & \Der_{\Psi}(R_{\Psi},M_{\Psi}) \ar@{=}[d] \\ \Der_{\Psi}(L_{*},M_{\Psi})  \ar[r] & \Der_{\Psi}(R_{\Psi},M_{\Psi}).}\]

This gives us homomorphisms
\[\xymatrix{\varsigma_{n}:H^{n}_{\lambda}(R,M) = H^{n}(\Der_{\lambda}(P_{*},M)) \ar[r]^{(\alpha^{*}i)^{*}}& H^{n}(\Der_{\Psi}(L_{*},M_{\Psi})) = H^{n}_{\Psi}(R_{\Psi},M_{\Psi}).}\]

The homomorphisms $\rho_{n}$ and $\varrho_{n}$ are induced by the
forgetful functors from $\lambda-\mathfrak{rings}$ and
$\Psi-\mathfrak{rings}$ respectively to $\mathfrak{Com.rings}$.
\end{proof}

\chapter{Applications}
\label{Chapter8} 

\section{K-theory}
The material covered in this section can be found in \cite{AtK} and
\cite{Hatch}.
\subsection{Vector bundles}
In this section we will develop the notion of complex vector
bundles. A lot of the basic theory for real vector bundles is the
same as for complex vector bundles, however we will only be
concerned with complex vector bundles in this chapter.

\begin{definition}
A \emph{complex vector bundle} consists of
\begin{enumerate}
    \item topological spaces $X$ (called the base space) and $E$ (called the total space.)
    \item a continuous map $p:E \rightarrow X$ (called the projection.)
    \item a finite dimensional complex vector space structure on each \[E_{x} = p^{-1}(x) \qquad \textrm{for } x \in
    X,\] (we call the $p^{-1}(x)$ the fibres)
\end{enumerate}
such that the following local triviality condition is satisfied.
There exists an open cover of $X$ by open sets $U_{\alpha}$ and for
each there exists a homeomorphism $\varphi_{\alpha} :
p^{-1}(U_{\alpha}) \rightarrow U_{\alpha} \times \mathbb{C}^{d} $
which takes $p^{-1}(b)$ to $\{b\} \times \mathbb{C}^{d}$ via a
vector space isomorphism for each $b \in U_{\alpha}$.
\end{definition}

\begin{example}
Let $E= X \times \mathbb{C}^{d}$, and $p$ be the projection onto the
first factor. We call this the \textit{product} or \textit{trivial}
bundle.
\end{example}

A \textit{homomorphism} from a complex vector bundle $p:E
\rightarrow X$ to another complex vector bundle $q: F \rightarrow X$
is a continuous map $\varphi: E \rightarrow F$ such that
\begin{enumerate}
    \item $q \varphi = p$,
    \item $\varphi: E_{x} \rightarrow F_{x}$ is a linear map of
    vector spaces for all $x \in X$.
\end{enumerate}
If $\varphi$ is a bijection and $\varphi^{-1}$ is continuous,  then
we say that $\varphi$ is an \emph{isomorphism} and that $E$ and $F$
are \emph{isomorphic}. We will let $Vect(X)$ denote the set of
isomorphism classes of complex vector bundles on $X$.

Let $E$ be a complex vector bundle over $X$. We get that
$dim(E_{x})$ is locally constant on $X$, furthermore it is a
constant function on each of the connected components of $X$.

For vector bundles $E,F$ we can define the following corresponding
bundles
\begin{itemize}
    \item $E \oplus F$, the direct sum of $E$ and $F$,
    \item $E \otimes F$, the tensor product of $E$ and $F$,
    \item $\lambda^{k}(E)$, the $k^{th}$ exterior power of $E$.
\end{itemize}

There exist the following natural isomorphisms
\begin{itemize}
    \item $E \oplus F \cong F \oplus E$,
    \item $E \otimes F \cong F \otimes E$,
    \item $E \otimes (F \oplus F') \cong (E \otimes F) \oplus (E \otimes F')$,
    \item $\lambda^{k}(E \oplus F) \cong \bigoplus_{i+j=k}(\lambda^{i}(E) \otimes \lambda^{j}(F))$.
\end{itemize}

\subsection{K-theory}
For any space $X$, we can consider the set $Vect(X)$ which has an
abelian semigroup structure where  addition is given by the direct
sum. There is also a multiplication, given by tensor products, which
is distributive over the addition of $Vect(X)$ (this makes $Vect(X)$
into a semiring.)

If $A$ is an abelian semigroup, we can associate an abelian group
$K(A)$ to $A$. Let $F(A)$ be the free abelian group generated by
$A$, and let $E(A)$ be the subgroup of $F(A)$ generated by elements
of them form $a + a' - (a \oplus a')$, where $a,a' \in A$ and
$\oplus$ is the addition in $A$. We define the abelian group $K(A) =
F(A)/E(A)$. If $A$ is a semiring, then $K(A)$ is a ring.

If $X$ is a space, then we will write $K(X)$ for the ring
$K(Vect(X))$. Let $f: X \rightarrow Y$ be a continuous map. Then
$f^{*}: Vect(Y) \rightarrow Vect(X)$ induces a ring homomorphism
$f^{*}:K(Y) \rightarrow K(X)$ which only depends on the homotopy
class of $f$.

We can define operations $\lambda^{k}:K(X) \rightarrow K(X)$ using
the exterior powers. These make $K(X)$ into a $\lambda$-ring. We can
then use these to define the Adams operations $\Psi^{k}:K(X)
\rightarrow K(X)$ which makes $K(X)$ into a $\Psi$-ring.

If $X$ is a compact space with distinguished basepoint, then we
define  $\widetilde{K}(X)$ to be the kernel of $i^{*}:K(X)
\rightarrow K(x_{0})$ where $i:x_{0} \rightarrow X$ is the inclusion
of the basepoint. Let $c: X \rightarrow
x_{0}$ be the collapsing map, then $c^{*}$ induces a natural splitting $K(X) \cong
\widetilde{K}(X) \oplus K(x_{0})$.

\begin{example}
$\widetilde{K}(S^{2n}) \cong \mathbb{Z}[y]/(y)^{2}$, where $y$ is the $n$-fold external
product $(H-1)*\ldots *(H-1)$ and $H$ is the canonical line bundle
of $S^{2} = \mathbb{C}\mathrm{P}^{1}$. Multiplication in
$\widetilde{K}(S^{2n})$ is trivial, and the $\lambda$-operations
 $\lambda^{k}: \widetilde{K}(S^{2n}) \rightarrow \widetilde{K}(S^{2n})$ are given by \[\lambda^{k}(x) = (-1)^{k-1}k^{n-1}x.\] Hence the $\Psi$-operations $\Psi^{k}: \widetilde{K}(S^{2n}) \rightarrow \widetilde{K}(S^{2n})$ are given by \[\Psi^{k}(x)=k^{n}x.\]

\end{example}

\section{Natural transformation}
Let $X,Y$ be topological spaces such that $\widetilde{K}(Y)=0$ and $\widetilde{K}(\Sigma X) = 0$. Let $f:
Y \rightarrow X$ be a continuous map, then we can consider the Puppe exact sequence
$$\xymatrix{Y \ar^{f}[r] & X \ar[r] & C_{f} \ar[r] & \Sigma Y \ar[r] & \Sigma X \ar[r] & \Sigma C_{f} \ar[r] & \ldots}$$
where $C_{f}$ is the mapping cone of $f$, and $\Sigma X$ is the
suspension of $X$. After applying the functor $\widetilde{K}(-)$ we
get the long exact sequence.
$$\xymatrix{\ldots \ar[r] &  \widetilde{K}(\Sigma X) \ar[r] & \widetilde{K}(\Sigma Y) \ar[r] & \widetilde{K}(C_{f}) \ar[r] & \widetilde{K}(X) \ar[r] & \widetilde{K}(Y)}$$
However, since $\widetilde{K}(\Sigma X) = 0$ and
$\widetilde{K}(Y) = 0$ we obtain the short exact
sequence.
$$\xymatrix{ 0 \ar[r] & \widetilde{K}(\Sigma Y) \ar[r] & K(C_{f}) \ar[r] & K(X) \ar[r] & 0}$$
This gives us the following proposition.

\begin{proposition}
If $X$ and $Y$ are topological spaces as above then there exist
natural transformations $\tau_{\lambda} : [Y,X] \rightarrow
Ext_{\lambda}(K(X),\widetilde{K}(\Sigma Y))$ and $\tau_{\Psi} : [Y,X]
\rightarrow Ext_{\Psi}(K(X),\widetilde{K}(\Sigma Y)).$
\end{proposition}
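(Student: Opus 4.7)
The plan is to construct $\tau_\lambda$ directly from the Puppe cofibre sequence analysis carried out just above the statement, and then obtain $\tau_\Psi$ by postcomposing with the natural forgetful map from $\lambda$-ring extensions to $\Psi$-ring extensions induced by the Adams operations.

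First, given a homotopy class $[f] \in [Y,X]$, I would pick a representative $f:Y\to X$ and consider the short exact sequence
\[
\xymatrix{0 \ar[r] & \widetilde{K}(\Sigma Y) \ar[r]^-{\alpha} & K(C_f) \ar[r]^-{\beta} & K(X) \ar[r] & 0}
\]
already extracted from the Puppe sequence together with the vanishing hypotheses. Since $K$-theory of any space carries a natural $\lambda$-ring structure and the maps induced by $\Sigma Y \to C_f \to X$ are $\lambda$-ring homomorphisms, the next step is to recognise this as a $\lambda$-ring extension in the sense of Chapter 4. The two things to verify are that $\alpha$ realises $\widetilde{K}(\Sigma Y)$ as a square-zero ideal in $K(C_f)$, which follows from the well-known fact that reduced $K$-theory of a suspension has trivial multiplication, and that $\alpha \Lambda^n = \lambda^n \alpha$, which follows because the $\lambda$-operations on $K(C_f)$ preserve the kernel of $\beta$ and restrict, on that square-zero ideal, to the $\Lambda$-operations determined by the $K(X)$-module structure. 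This yields a candidate assignment $\tau_\lambda([f])$.

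Next, I would verify well-definedness under homotopy and naturality. A homotopy $f \simeq f'$ induces a homotopy equivalence $C_f \simeq C_{f'}$ compatible with the maps to $X$ and from $\Sigma Y$, hence a $\lambda$-ring isomorphism of the middle terms giving an equivalence of extensions. Naturality in $(X,Y)$ is inherited from the functoriality of the mapping cone on morphisms of pairs, together with the naturality of $K$-theory and of its $\lambda$-ring structure. Finally, $\tau_\Psi$ is obtained by composing $\tau_\lambda$ with the canonical forgetful map $\Ext_\lambda(K(X),\widetilde{K}(\Sigma Y)) \to \Ext_\Psi(K(X),\widetilde{K}(\Sigma Y))$ that sends a $\lambda$-ring extension to the underlying $\Psi$-ring extension defined by the associated Adams operations; this map is natural since Adams operations are defined by universal Newton-type formulae in the $\lambda^i$.

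The main obstacle, though technical rather than conceptual, is checking the $\lambda$-module compatibility $\alpha \Lambda^n = \lambda^n \alpha$: one must argue that the $\lambda$-operations on $K(C_f)$ restrict to the kernel ideal and that, on that square-zero ideal, they coincide with the $\Lambda$-operations induced from the $K(X)$-module structure pulled back along $\beta$. The natural way to handle this is to exploit the universal formula for $\lambda^n$ on a sum $(r,m) \in R \rtimes_\lambda M$ established in the proof of the semi-direct product lemma in Chapter 4, and to observe that on a square-zero ideal the higher cross-terms collapse, leaving exactly $\sum_j \Lambda^j(m)\lambda^{n-j}(r)$; applying this to $K(C_f)$ and its kernel gives the desired relation.
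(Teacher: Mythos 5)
Your argument is essentially the paper's own: the paper gives no separate proof beyond the Puppe-sequence discussion preceding the statement, and $\tau_{\lambda}$, $\tau_{\Psi}$ are exactly the assignment $[f]\mapsto$ the class of the resulting short exact sequence, which is a $\lambda$- (resp.\ $\Psi$-)ring extension because the maps come from maps of spaces and $\widetilde{K}(\Sigma Y)$ is a square-zero ideal, just as you check (with $\tau_{\Psi}$ equivalently obtained directly from the Adams operations rather than by your forgetful map, which gives the same class). One small remark: for $\alpha\Lambda^{n}=\lambda^{n}\alpha$ you need not invoke the semidirect-product formula (the extension is not known to split); the paper's motivating discussion in the $\lambda$-modules section already shows that for any map of $\lambda$-rings with square-zero kernel the operations $\lambda^{n}$ preserve the kernel and restrict there to the $\Lambda$-operations, which is all that is required.
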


\begin{corollary}
If $X$ is a topological space such that $\widetilde{K}(\Sigma X) = 0$
then there exist natural transformations $\tau_{\lambda,n} :
\pi_{2n-1}(X) \rightarrow Ext_{\lambda}(K(X),\widetilde{K}(S^{2n}))$
and $\tau_{\Psi,n} : \pi_{2n-1}(X) \rightarrow
Ext_{\Psi}(K(X),\widetilde{K}(S^{2n})).$
\end{corollary}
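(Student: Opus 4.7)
The plan is to derive the corollary as a direct specialization of the preceding proposition by choosing $Y = S^{2n-1}$. First I would observe that $\pi_{2n-1}(X)$ is, by definition, the set of pointed homotopy classes $[S^{2n-1}, X]$, and that the suspension satisfies $\Sigma S^{2n-1} \simeq S^{2n}$, so the target of the map produced by the proposition specializes to $Ext_{\lambda}(K(X), \widetilde{K}(S^{2n}))$ and $Ext_{\Psi}(K(X), \widetilde{K}(S^{2n}))$ respectively, which is exactly what the corollary asserts.

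Second, I need to verify that the two hypotheses of the proposition are satisfied with $Y = S^{2n-1}$. The hypothesis $\widetilde{K}(\Sigma X) = 0$ is given as part of the corollary's assumption. The remaining hypothesis $\widetilde{K}(Y) = 0$ becomes $\widetilde{K}(S^{2n-1}) = 0$, which follows from Bott periodicity: the reduced complex $K$-theory of odd-dimensional spheres vanishes. This verification is the only nontrivial input beyond unraveling definitions.

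Finally, I would define $\tau_{\lambda,n}$ and $\tau_{\Psi,n}$ to be the composites
\[
\pi_{2n-1}(X) \;=\; [S^{2n-1}, X] \;\xrightarrow{\;\tau_{\lambda}\;}\; Ext_{\lambda}(K(X), \widetilde{K}(\Sigma S^{2n-1})) \;\cong\; Ext_{\lambda}(K(X), \widetilde{K}(S^{2n})),
\]
and analogously for $\tau_{\Psi,n}$, where $\tau_{\lambda}$ and $\tau_{\Psi}$ are the natural transformations supplied by the proposition. Naturality in $X$ is inherited from the naturality asserted there, since pullback along a map $X \to X'$ induces the usual map on $\pi_{2n-1}$ on one side and is compatible with the Puppe sequence construction of the short exact sequence of $K$-theory on the other.

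There is essentially no obstacle here; this corollary is a straightforward packaging statement. The only point that requires any thought is invoking Bott periodicity for $\widetilde{K}(S^{2n-1}) = 0$, and confirming that the mapping cone sequence used in the proposition is natural in the map $f$, so that passing to homotopy classes is well-defined.
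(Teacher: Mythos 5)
Your proposal is correct and is exactly the argument the paper intends: the corollary is the specialization of the preceding proposition to $Y = S^{2n-1}$, using $[S^{2n-1},X]=\pi_{2n-1}(X)$, $\Sigma S^{2n-1}\simeq S^{2n}$, and $\widetilde{K}(S^{2n-1})=0$ (Bott periodicity) to verify the hypothesis $\widetilde{K}(Y)=0$. Nothing further is needed.
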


\section{The Hopf invariant of an extension}
We are going to give a proof of the classical result of Adams which was first proved by Adams, and subsequently by Adams-Atiyah \cite{AA}. We are going to use the same approach as Adams-Atiyah; using $\Psi$-rings.

\begin{definition}
Consider the commutative ring $R$ which is free as an abelian group with generators $x$ and $y$, $R \cong \mathbb{Z}x \oplus \mathbb{Z}y$, where $x$ is the unit of the ring and $y^{2} = 0$. Let $M \cong \mathbb{Z}z$ be the $R$-module such that $y \cdot z = 0$. We can consider the square zero extensions of $R$ by $M$ in the category of commutative rings.
All the square zero extensions have the following form
\begin{equation}\label{aqext} \xymatrix{ 0 \ar[r] & M \ar[r]
& X \oplus \mathbb{Z}\gamma \ar[r] & R \ar[r] &0 }\end{equation} where $X \cong
\mathbb{Z}\alpha \oplus \mathbb{Z}\beta$ as an abelian group with $\alpha$ being the
image of the generator $z$, the image of the unit $\gamma$ is the unit $x$ and the image of
$\beta$ being the generator $y$. Since $M^{2}=0$ we get that $\alpha^{2}=0$. Since
$y^{2}=0$, we get that $\alpha \beta = 0$ and
$\beta^{2} = h \alpha$ for some integer $h$. We define $h$ to be the \textit{Hopf invariant} of the extension (\ref{aqext}).
\end{definition}

Let $f: S^{4n-1} \rightarrow S^{2n}$ be a continuous map. We define the \textit{Hopf invariant} of the map $f$ to be the Hopf invariant of the short exact sequence \[\xymatrix{ 0 \ar[r] & \widetilde{K}(S^{4n}) \ar[r] & K(C_{f}) \ar[r] & K(S^{2n}) \ar[r] & 0}\] obtained from applying the natural transformation $\tau_{\Psi}$ to $f$.

We are going to consider the extensions of $K(S^{2n})$ by $\widetilde{K}(S^{2n'})$ in the category of $\Psi$-rings.
We are going to prove the following theorem.
\begin{theorem}\label{psiextttt}
 \[Ext_{\Psi}(K(S^{2n}),\widetilde{K}(S^{2n'})) \cong  \left\{
                                                             \begin{array}{ll}
                                                               \mathbb{Z} \oplus
\mathbb{Z}_{G_{n,n'}} & \textrm{if } n\neq n'; \\
                                                               \mathbb{Z} \oplus \prod_{\textrm{p prime}} \mathbb{Z} & \textrm{if } n=n'.
                                                             \end{array}
                                                           \right.
   \]
where $G_{n,n'}$ denotes the greatest common divisor of all the integers in
the set \\$\{l^{n}-l^{n'} | l \in \mathbb{Z}, l\geq 2\}.$
\end{theorem}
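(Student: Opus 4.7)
The plan is to work out every $\Psi$-ring extension $0 \to M \to X \to R \to 0$ of $R = K(S^{2n})$ by $M = \widetilde{K}(S^{2n'})$ explicitly. Since $M \cong \mathbb{Z}$ is free as an abelian group, any such extension is additively split, so I may write $X = \mathbb{Z}\gamma \oplus \mathbb{Z}\beta \oplus \mathbb{Z}\alpha$, where $\gamma$ is the unit of $X$, $\alpha$ is the image of the generator $z$ of $M$, and $\beta$ is any additive lift of the generator $y \in R$. The ring structure on $X$ is then determined by a single integer $h$, the Hopf invariant, via $\beta^{2} = h\alpha$ (using $y^{2} = 0$ and $y \cdot z = 0$, which force $\alpha^{2} = 0$ and $\alpha\beta = 0$). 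Any $\Psi$-ring structure on $X$ compatible with those on $R$ and $M$ is forced to take the form $\Psi^{k}(\gamma) = \gamma$, $\Psi^{k}(\alpha) = k^{n'}\alpha$, and $\Psi^{k}(\beta) = k^{n}\beta + \varepsilon^{k}\alpha$ for some sequence $(\varepsilon^{k}) \in \mathbb{Z}^{\mathbb{N}}$.

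Next I would translate the $\Psi$-ring axioms into equations in the pair $(h, \varepsilon)$. Applying the multiplicativity of $\Psi^{k}$ to $\beta^{2} = h\alpha$ yields the constraint $h(k^{n'} - k^{2n}) = 0$ for every $k \geq 1$, so the Hopf-invariant factor is unconstrained exactly when $n' = 2n$. The composition rule $\Psi^{k}\Psi^{l} = \Psi^{kl}$ produces the nonabelian cocycle condition
\[
\varepsilon^{kl} = l^{n}\varepsilon^{k} + k^{n'}\varepsilon^{l}, \qquad \varepsilon^{1} = 0.
\]
Two extensions sharing the same $h$ are equivalent precisely when there is a $\Psi$-ring isomorphism $\phi \colon X \to X'$ restricting to the identity on $M$ and descending to the identity on $R$; such $\phi$ is forced to send $\beta$ to $\beta + c\alpha$ for some $c \in \mathbb{Z}$, and compatibility with $\Psi^{k}$ translates into the coboundary relation $\varepsilon^{k} - \varepsilon'^{k} = c(k^{n'} - k^{n})$.

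The substantive step is the computation of the cocycles modulo the coboundaries. Imposing the cocycle relation on the two orderings of a product $pq$ of distinct primes forces $(q^{n} - q^{n'})\varepsilon^{p} = (p^{n} - p^{n'})\varepsilon^{q}$. When $n = n'$ this is automatic, the coboundaries vanish identically (since $k^{n'} - k^{n} = 0$), and the values $\varepsilon^{p}$ may be prescribed independently over the primes, producing a $\prod_{p}\mathbb{Z}$ summand. When $n \neq n'$, the identity forces $\varepsilon^{k} = c(k^{n} - k^{n'})$ for a single rational $c$; demanding that $\varepsilon^{k} \in \mathbb{Z}$ for every $k \in \mathbb{N}$ confines $c$ to $\frac{1}{G_{n,n'}}\mathbb{Z}$, and quotienting by the coboundaries, which correspond to $c \in \mathbb{Z}$, leaves the cyclic group $\mathbb{Z}/G_{n,n'}$. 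Combining these cocycle computations with the Hopf-invariant factor $\mathbb{Z}$ assembles into the stated direct-sum decompositions.

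The chief technical obstacle is proving the integrality characterisation in the $n \neq n'$ case: one must show that $c(k^{n} - k^{n'}) \in \mathbb{Z}$ for every positive integer $k$ if and only if $G_{n,n'}\,c \in \mathbb{Z}$. This requires checking that the gcd $\gcd_{l \geq 2}(l^{n} - l^{n'})$ really controls the denominators of $c$ uniformly in $k$, and that the cocycle identity propagates integrality from primes to composites consistently. The remaining verifications—that the cocycle relation is globally consistent under prime factorisation, that the Hopf-invariant and cocycle contributions assemble as a direct sum rather than a nontrivial extension, and that no further equivalences are introduced by automorphisms of $X$—are essentially bookkeeping, relying only on the explicit form of the data $(h, \varepsilon)$ established in the first two steps.
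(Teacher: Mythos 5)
Your strategy is the same as the paper's: split the extension additively, parametrise the data by the Hopf invariant $h$ and the sequence $\varepsilon^{k}$ in $\Psi^{k}(\beta)=k^{n}\beta+\varepsilon^{k}\alpha$, extract the cocycle identity $\varepsilon^{kl}=l^{n}\varepsilon^{k}+k^{n'}\varepsilon^{l}$ from $\Psi^{k}\Psi^{l}=\Psi^{kl}$, the coboundary relation $\varepsilon^{k}-\varepsilon'^{k}=c(k^{n'}-k^{n})$ from changing the lift $\beta\mapsto\beta+c\alpha$, and then do the gcd computation. That part matches the paper's argument and is correct; the ``chief technical obstacle'' you flag is in fact immediate (write $c=a/b$ in lowest terms: $b$ divides $l^{n}-l^{n'}$ for every $l\geq 2$ if and only if $b$ divides $G_{n,n'}$, which is exactly $G_{n,n'}c\in\mathbb{Z}$), and the consistency of $\varepsilon^{k}=c(k^{n}-k^{n'})$ with the full cocycle identity is a one-line check. (Minor point: the additive splitting holds because $R=K(S^{2n})$ is free abelian, not because $M$ is.)

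The genuine problem is the Hopf-invariant factor, and it is internal to your own write-up. In your second step you impose multiplicativity of $\Psi^{k}$ on $\beta^{2}=h\alpha$ and deduce $h(k^{n'}-k^{2n})=0$ for all $k$ (this deduction is forced by the definitions you use: $\Psi^{k}(\alpha)=k^{n'}\alpha$, $\alpha\beta=0$, $\alpha^{2}=0$), so that $h$ is unconstrained only when $n'=2n$ and is forced to vanish otherwise --- in particular also when $n=n'$, $n\geq 1$. Yet in your final step you assemble an unconstrained $\mathbb{Z}$ summand generated by $h$ into the stated answer for every $(n,n')$. These two claims cannot both stand: with your constraint in force, your argument yields $\mathbb{Z}_{G_{n,n'}}$ (no $\mathbb{Z}$ factor) for $n\neq n'$ with $n'\neq 2n$, and $\prod_{p}\mathbb{Z}$ for $n=n'$, which is not the statement being proved. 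Note that the paper's own proof never performs the multiplicativity check at all: it only uses the commutation/composition of the $\Psi^{k}$ on the additive generators, which is how the free $\mathbb{Z}$ factor appears uniformly (and the constraint is vacuous in the case $n'=2n$ actually used for the Adams application). So, as written, your proposal does not establish the theorem as stated: you must either drop the multiplicative compatibility you invoked (and explain why it may be dropped), restrict the claim to $n'=2n$, or accept the modified classification your constraint entails. Silently carrying both the constraint and the unconstrained $\mathbb{Z}$ factor into the conclusion is a gap that bookkeeping will not close.
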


\begin{corollary}\label{spsiext}
If $n \neq n'$ then,
\[Ext_{\lambda}(K(S^{2n}),\widetilde{K}(S^{2n'})) \cong
\{(h,\nu) \in \mathbb{Z} \oplus \mathbb{Z}_{G_{n,n'}}|h \equiv \nu
\frac{(2^{n}-2^{n'})}{G_{n,n'}} \textrm{ mod }2\}.\] If $n = n'$,
then
\begin{align*}Ext_{\lambda}(K(S^{2n}),\widetilde{K}(S^{2n'}))
\cong \{(h,\nu_{2},\nu_{3},\ldots) \in \mathbb{Z} \oplus
\prod_{\textrm{p prime}} \mathbb{Z}|& h \equiv \nu_{2} \textrm{ mod
}2, \\ &\nu_{p}\equiv 0 \textrm{ mod }p, \textrm{
}p>2\},\end{align*}\end{corollary}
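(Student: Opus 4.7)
The strategy is to identify $Ext_{\lambda}(K(S^{2n}), \widetilde{K}(S^{2n'}))$ as the subset of $Ext_{\Psi}(K(S^{2n}), \widetilde{K}(S^{2n'}))$ consisting of those $\Psi$-ring extensions that admit a compatible $\lambda$-structure. The middle term $X$ of any such extension is free abelian of rank three with generators $\gamma\mapsto 1$, $\beta\mapsto y$, and $\alpha$ spanning the ideal $\widetilde{K}(S^{2n'})$, so $X$ is torsion-free. Theorem \ref{Psilamring} then reduces the problem: the $\Psi$-ring structure lifts to a $\lambda$-ring structure on $X$ precisely when the rationally-defined $\lambda^{k}$ (obtained by inverting Newton's formula) takes values in $X$. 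Because the $\lambda^{k}$ are determined by the $\Psi^{k}$ on the torsion-free $X$, the forgetful map $Ext_{\lambda}\to Ext_{\Psi}$ is injective, and its image is characterized by these integrality conditions.

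Using Theorem \ref{psiextttt}, parameterize a $\Psi$-ring extension by its Hopf invariant $h$ together with the class $\nu\in\mathbb{Z}_{G_{n,n'}}$ (when $n\neq n'$) or the sequence $(\nu_{p})_{p}$ indexed by primes (when $n=n'$). The corresponding $\Psi$-operations take the form $\Psi^{k}(\beta) = k^{n}\beta + \epsilon_{k}\alpha$, where the cocycle $\epsilon_{k}$ is determined by the parameters: representatively $\epsilon_{k} = \nu(k^{n}-k^{n'})/G_{n,n'}$ in the first case, and $\epsilon_{p} = \nu_{p}$ on primes in the second. Writing $\lambda^{k}(\beta) = (-1)^{k-1}k^{n-1}\beta + c_{k}\alpha$, the $\beta$-coefficient is automatically an integer, so the lifting condition reduces to $c_{k}\in\mathbb{Z}$ for every $k$. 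The axiom $\lambda^{i}(\lambda^{j}(r)) = P_{i,j}(\lambda^{1}(r),\ldots,\lambda^{ij}(r))$, combined with the multiplicative factorization $\Psi^{k} = \Psi^{p_{1}}\cdots\Psi^{p_{r}}$ through primes, then reduces the check to $c_{p}\in\mathbb{Z}$ for each prime $p$.

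Applying Newton's formula directly, and using $\beta^{2} = h\alpha$ together with $\alpha\beta = \alpha^{2} = 0$, one obtains
\[
pc_{p} = (-1)^{p+1}\epsilon_{p} + (-1)^{p}h\sum_{j=1}^{p-1}j^{n-1}(p-j)^{n}.
\]
At $p=2$ this becomes $2c_{2} = h - \epsilon_{2}$, yielding the condition $h\equiv\epsilon_{2}\pmod{2}$. For odd primes $p$, reduce the sum modulo $p$ via $(p-j)^{n}\equiv(-j)^{n}$: the sum becomes $(-1)^{n}\sum_{j=1}^{p-1}j^{2n-1}\pmod{p}$. By Fermat's little theorem this power sum is $-1\pmod{p}$ when $(p-1)\mid(2n-1)$ and $0$ otherwise. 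Since $p-1$ is even while $2n-1$ is odd, the divisibility never holds; the Hopf invariant contribution vanishes modulo $p$ and the condition collapses to $\epsilon_{p}\equiv 0\pmod{p}$.

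Finally, translate these prime-level conditions back to the parameters. In the case $n = n'$, $\epsilon_{p} = \nu_{p}$, so one recovers exactly $h\equiv\nu_{2}\pmod{2}$ together with $\nu_{p}\equiv 0\pmod{p}$ for each odd prime. In the case $n\neq n'$, the $p=2$ condition becomes $h\equiv\nu(2^{n}-2^{n'})/G_{n,n'}\pmod{2}$, exactly as stated. For each odd prime $p$, the condition $\nu(p^{n}-p^{n'})/G_{n,n'}\equiv 0\pmod{p}$ must be established automatically; this is the main obstacle, and is settled by a $p$-adic valuation analysis of $G_{n,n'}$: comparing $v_{p}(p^{n}-p^{n'})\geq\min(n,n')\geq 1$ against $v_{p}(G_{n,n'})$ via a case split on whether $(p-1)\mid(n-n')$ (using lifting-the-exponent for the primitive-root cases) shows that $v_{p}((p^{n}-p^{n'})/G_{n,n'})\geq 1$ uniformly. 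Combined with Theorem \ref{Psilamring}, this completes the identification of $Ext_{\lambda}$ with the described subset of $Ext_{\Psi}$.
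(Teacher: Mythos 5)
Your computation up to the prime-by-prime congruences is sound and is essentially a repackaging of the paper's argument: the paper identifies $\lambda$-ring extensions with \emph{special} $\Psi$-ring extensions (torsion-freeness plus Theorem \ref{Psilamring} / the Adams--Atiyah--Wilkerson correspondence) and then imposes $\Psi^{p}(x)\equiv x^{p} \bmod pX$ on the extensions classified in Theorem \ref{psiextttt}, obtaining exactly $\nu_{2}\equiv h \bmod 2$ and $\nu_{p}\equiv 0 \bmod p$ for odd $p$. You reach the same two congruences by instead demanding integrality of $\lambda^{p}(\beta)$ via Newton's formula; your identity $pc_{p}=(-1)^{p+1}\epsilon_{p}+(-1)^{p}h\sum_{j=1}^{p-1}j^{n-1}(p-j)^{n}$ checks out (using $\beta^{2}=h\alpha$, $\alpha\beta=\alpha^{2}=0$), as does the Fermat argument killing the sum modulo odd $p$ because $p-1$ is even and $2n-1$ is odd. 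Up to that point your route is a legitimate, slightly more computational alternative to the paper's Frobenius-congruence shortcut.

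The gap is in the final translation step for $n\neq n'$. You assert that $v_{p}\bigl((p^{n}-p^{n'})/G_{n,n'}\bigr)\geq 1$ holds uniformly for every odd prime $p$, so that the conditions $\nu_{p}\equiv 0 \bmod p$ are automatic; this is false. Take $n=1$, $n'=3$, $p=3$: here $G_{1,3}=\gcd\{l-l^{3}\mid l\geq 2\}=6$, while $3^{1}-3^{3}=-24$, so $(p^{n}-p^{n'})/G_{n,n'}=-4$, which is not divisible by $3$; the condition at $p=3$ then reads $\nu\equiv 0 \bmod 3$ and is a genuine extra constraint, not an automatic one. In general $v_{p}(p^{n}-p^{n'})=\min(n,n')$, whereas when $(p-1)\mid |n-n'|$ one has $v_{p}(G_{n,n'})=\min\bigl(\min(n,n'),\,1+v_{p}(|n-n'|)\bigr)$, so the automatic vanishing you need holds only when $\min(n,n')\geq v_{p}(|n-n'|)+2$; your proposed lifting-the-exponent case split cannot repair this because the inequality itself fails for small $\min(n,n')$. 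Note that the paper's own proof stops at the two congruences and never attempts (nor, for the Hopf-invariant application, needs) the ``odd primes are vacuous'' claim, so the burden you took on at the end is precisely the step that does not go through as stated.
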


All the $\Psi$-ring extensions of $K(S^{2n})$ by $\widetilde{K}(S^{2n'})$ have the form (\ref{aqext}).
The $\Psi$-operations on $\Psi^{k}:X \rightarrow X$ are given by
\[\psi^{k}(m,r)= (k^{n'}m + \nu_{k}r,k^{n}r),\]
for some $\nu_{k} \in \mathbb{Z}$.

$$\Psi^{k}(\Psi^{l}(m,r)) = (k^{n'}l^{n'}m + k^{n'}\nu_{l}r + \nu_{k}l^{n}r,k^{n}l^{n}r),$$
$$\Psi^{l}(\Psi^{k}(m,r)) = (l^{n'}k^{n'}m + l^{n'}\nu_{k}r + \nu_{l}k^{n}r,l^{n}k^{n}r).$$
Since the $\Psi$-operations commute, we get that
$$\nu_{l}r(k^{n'}-k^{n}) = \nu_{k}r(l^{n'}-l^{n}).$$
If $n=n'$ then there is no restriction on the choice of $\nu_{p}$ for $p$ prime. Otherwise we can rearrange the above to get that
$$\nu_{l} = \nu_{k}\frac{(l^{n'}-l^{n})}{(k^{n'}-k^{n})}.$$
By setting $k=2$ we get that for all $l \geq 2$
$$\nu_{l} = \nu_{2}\frac{(l^{n'}-l^{n})}{(2^{n'}-2^{n})}.$$
We can write all the $\nu_{l}$'s as multiples of $\nu_{2}$ since
$$\nu_{l} = \nu_{2}\frac{(l^{n'}-l^{n})}{(2^{n'}-2^{n})} = \nu_{2}\frac{(k^{n'}-k^{n})}{(2^{n'}-2^{n})}\frac{(l^{n'}-l^{n})}{(k^{n'}-k^{n})} = \nu_{k}\frac{(l^{n'}-l^{n})}{(k^{n'}-k^{n}).}$$

Since $\nu_{2}$ is an integer, we get that $\nu_{2} =
\frac{z(2^{n'}-2^{n})}{G_{n,n'}}$ for some integer z.

If we replace the generator $\beta$ by $\beta + N\alpha$, note that
$(\beta + N\alpha)^{2} = h\alpha$, then we have to replace $\nu_{k}$
by $\nu_{k}+N(k^{n'}-k^{n})$. We get that
$$\nu_{k}+N(k^{n'}-k^{n}) =
\nu_{2}\frac{k^{n'}-k^{n}}{2^{n'}-2^{n}} +N(k^{n'}-k^{n}) =
\frac{(\nu_{2}+N(2^{n'}-2^{n}))(k^{n'}-k^{n})}{(2^{n'}-2^{n})}.
$$
So we only have to be concerned with replacing $\nu_{2}$ by
$\nu_{2}+N(2^{n'}-2^{n})$, then our usual formula for $\nu_{k}$
holds. Hence we are replacing $\frac{z(2^{n'}-2^{n})}{G_{n,n'}}$ by
$$\frac{z(2^{n'}-2^{n})}{G_{n,n'}} + N(2^{n'}-2^{n}) = \frac{(z+NG_{n,n'})(2^{n'}-2^{n})}{G_{n,n'}}.$$

This proves theorem \ref{psiextttt}. The isomorphism depends on $n$ and $n'$. If we now
introduce the property that $\Psi^{p}(x) \equiv x^{p}$ mod $p$, we
get that $\nu_{2}r \equiv hr^{2}$ mod $2$ and $\nu_{p}r \equiv 0$
mod $p$ for $p \geq 3$. This proves corollary \ref{spsiext}.

\begin{proposition}
If there exists an extension in
$Ext_{\lambda}(K(S^{2n}),\widetilde{K}(S^{2n'}))$ whose Hopf
invariant is odd, then either $n=n'$ or $min(n,n') \leq
g^{2}_{|n-n'|}$, where $g^{p}_{j}$ denotes the multiplicity of the
prime p in the prime factorisation of the greatest common divisor of
the set of integers $\{(k^{j}-1 ) |\textrm{ } k \in
\mathbb{N}-\{1,qp | \forall q \in \mathbb{N} \}\}.$
\end{proposition}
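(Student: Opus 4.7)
The plan is to reduce this entirely to the classification in Corollary \ref{spsiext} together with a $2$-adic valuation computation of $G_{n,n'}$. Assume a $\lambda$-ring extension representing a class with odd Hopf invariant $h$ exists. If $n=n'$ the conclusion holds automatically, so assume $n\neq n'$. By Corollary \ref{spsiext}, the class corresponds to a pair $(h,\nu)\in\mathbb{Z}\oplus\mathbb{Z}_{G_{n,n'}}$ satisfying the congruence
\[
h\;\equiv\;\nu\,\frac{2^{n}-2^{n'}}{G_{n,n'}}\pmod{2}.
\]
Since $h$ is odd, the integer $\frac{2^{n}-2^{n'}}{G_{n,n'}}$ must itself be odd, and $\nu$ must be odd. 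The whole proposition therefore amounts to the number-theoretic assertion: \emph{if this quotient is odd, then $\min(n,n')\le g^{2}_{|n-n'|}$}.

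Next I would compute $\nu_{2}(G_{n,n'})$. Set $m=\min(n,n')$ and $d=|n-n'|$, so $l^{n}-l^{n'}=l^{m}(l^{d}-1)$ for each $l\ge 2$. The $2$-adic valuation splits into cases: for $l=2$ the factor $2^{d}-1$ is odd, giving $\nu_{2}(2^{n}-2^{n'})=m$; for any other even $l\ge 4$, the same argument yields $\nu_{2}(l^{n}-l^{n'})=m\,\nu_{2}(l)\ge m$; for odd $l\ge 3$, $l^{m}$ is a unit mod $2$ and so $\nu_{2}(l^{n}-l^{n'})=\nu_{2}(l^{d}-1)$. Because $G_{n,n'}$ is a gcd, its $2$-adic valuation is the minimum of these, and since the definition of $g^{2}_{d}$ takes $k$ over precisely $\mathbb{N}\setminus\{1,2q\}$, i.e.\ the odd integers $\ge 3$, one gets
\[
\nu_{2}(G_{n,n'})\;=\;\min\bigl(m,\;g^{2}_{d}\bigr).
\]

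Combining these, $\frac{2^{n}-2^{n'}}{G_{n,n'}}$ is odd if and only if $\nu_{2}(G_{n,n'})=\nu_{2}(2^{n}-2^{n'})=m$, which happens exactly when $m\le g^{2}_{d}$. Thus the assumption that the Hopf invariant is odd forces $\min(n,n')\le g^{2}_{|n-n'|}$, as required.

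The only genuinely delicate point is the $2$-adic computation, and even there the main care is bookkeeping: making sure that the even values $l\ge 4$ really contribute valuation $\ge m$ (so they never beat the $l=2$ term), and that the set $\mathbb{N}\setminus\{1,2q\}$ in the definition of $g^{2}_{d}$ matches the odd $l\ge 3$ appearing in the gcd. Everything else is a direct application of the already-established classification in Corollary \ref{spsiext}.
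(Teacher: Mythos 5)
Your proof is correct and follows essentially the same route as the paper: reduce to the classification in Corollary \ref{spsiext}, observe that an odd Hopf invariant forces $\tfrac{2^{n}-2^{n'}}{G_{n,n'}}$ to be odd, and then use the fact that the $2$-adic valuation of $G_{n,n'}$ equals $\min\bigl(\min(n,n'),\,g^{2}_{|n-n'|}\bigr)$. The only difference is that you actually carry out the valuation computation (splitting into $l=2$, even $l\ge 4$, and odd $l\ge 3$, and matching the odd $l$ with the defining set of $g^{2}_{|n-n'|}$), which the paper merely asserts.
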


\begin{proof}The case when $n=n'$ is clear. Assume that $n \neq n'$, then the special $\Psi$-ring extensions are given by a pair $(h,\nu)$ where $h$ is the Hopf invariant. By \ref{spsiext}, $h$ can only be odd if $2^{n}$ divides $G_{n,n'}$. Assume that $n<n'$, since the other case is analogous. The multiplicity of 2 in the prime factorisation of $G_{n,n'}$ is $n$ if $n \leq g^{2}_{|n-n'|}$ or $g^{2}_{|n-n'|}$ if $g^{2}_{|n-n'|} < n$. It follows that if $n \leq g^{2}_{|n-n'|}$ then $2^{n}$ divides $G_{n,n'}$.
\end{proof}

Note that $g^{2}_{2n-1}=1$ for all $n\in \mathbb{N}$. Since $(k^{2n}-1)=(k^{n}+1)(k^{n}-1)$ it follows that $g^{2}_{2n}=\left\{
                                                                                                                                \begin{array}{ll}
                                                                                                                                  3, & n \textrm{ odd} \\
                                                                                                                                  g^{2}_{n}+1, & n \textrm{ even.}
                                                                                                                                \end{array}
                                                                                                                              \right.
$

\begin{theorem}
If there exists an extension in
$Ext_{\lambda}(K(S^{2n}),\widetilde{K}(S^{2n'}))$ whose Hopf
invariant is odd, then one of the following is satisfied
\begin{enumerate}
  \item $n=n'$,
  \item $n=1$ or $n'=1$,
  \item $n'-n$ is even and either $n=2$ or $n'=2$,
  \item $n'>n\geq 3$ and $n' = n+2^{n-2}b$ for some $b \in \mathbb{N}_{0}$,
  \item $n>n'\geq 3$ and $n = n'+2^{n'-2}b$ for some $b \in \mathbb{N}_{0}$.
\end{enumerate}
\end{theorem}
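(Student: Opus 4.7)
The plan is to apply the preceding proposition, which reduces the theorem to a purely numerical statement: if $n \neq n'$ and $\min(n,n') \leq g^{2}_{|n-n'|}$, then one of (2)--(5) holds. Combined with the recurrence for $g^{2}_{j}$ recorded immediately above the theorem, the argument becomes a short induction followed by a case analysis.

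First I would derive from the recurrence the closed form
\[ g^{2}_{j} = \begin{cases} 1, & j \text{ odd}, \\ \nu_{2}(j) + 2, & j \text{ even}, \end{cases} \]
where $\nu_{2}$ denotes the $2$-adic valuation. The odd case is immediate from $g^{2}_{2n-1} = 1$. For $\nu_{2}(j) = 1$, write $j = 2n$ with $n$ odd and read off $g^{2}_{2n} = 3 = \nu_{2}(j) + 2$. For $\nu_{2}(j) \geq 2$, write $j = 2n$ with $n$ even and apply $g^{2}_{j} = g^{2}_{n} + 1$; since $\nu_{2}(n) = \nu_{2}(j) - 1$, an induction on $\nu_{2}(j)$ completes the formula.

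Then I would carry out a case analysis on $d = |n - n'|$ and $m = \min(n, n')$ under the hypothesis $m \leq g^{2}_{d}$. The case $n = n'$ gives (1). If $d$ is odd, $g^{2}_{d} = 1$ forces $m = 1$, giving (2). If $d$ is even with $m = 2$, the inequality $m \leq \nu_{2}(d) + 2$ holds automatically, giving (3). If $d$ is even with $m \geq 3$, the inequality rearranges to $\nu_{2}(d) \geq m - 2$, i.e., $2^{m-2} \mid d$; writing $d = 2^{m-2} b$ with $b \in \mathbb{N}_{0}$ yields (4) if $n' > n$ and (5) if $n > n'$.

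Once the preceding proposition is in hand, the theorem is essentially a bookkeeping exercise, and the only substantive step is the induction giving the closed form for $g^{2}_{j}$. All of the topological and $\lambda$-ring content has been absorbed into the proposition, so no further obstacles arise at this stage; the hard work was in identifying the correct $2$-adic obstruction and proving the proposition, not in extracting cases (1)--(5) from it.
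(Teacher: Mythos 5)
Your proposal is correct and follows essentially the same route as the paper: invoke the preceding proposition, turn the recurrence for $g^{2}_{j}$ into the closed form $g^{2}_{j}=\nu_{2}(j)+2$ for even $j$ and $1$ for odd $j$ (the paper simply asserts this as ``$2$ plus the multiplicity of $2$ in $|n-n'|$''), and then read off cases (1)--(5) from $\min(n,n')\leq g^{2}_{|n-n'|}$. The only omission is the sub-case $|n-n'|$ even with $\min(n,n')=1$, which your split does not list but which trivially falls under case (2).
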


\begin{proof}
1. is clear. \\2. follows from $g^{2}_{n} \geq 1$ for all
$\mathbb{N}$.\\3. follows from $g^{2}_{2n} \geq 3$ for all $n \in
\mathbb{N}$.
\\4. and 5. follows from $g^{2}_{|n-n'|}$ being 2 plus the multiplicity of 2 in the prime factorisation of $|n-n'|$.
\end{proof}

\begin{corollary}
If there exists an extension in
$Ext_{\lambda}(K(S^{2n}),\widetilde{K}(S^{2(n+k)}))$ whose Hopf
invariant is odd, then one of the following is satisfied
\begin{enumerate}
  \item $k=0$,
  \item $n=1$,
  \item $k$ is even and $n=2$,
  \item $n\geq 3$ and $k = n+2^{n-2}b$ for some $b \in \mathbb{N}_{0}$.
\end{enumerate}
\end{corollary}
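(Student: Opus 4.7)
The plan is to derive the corollary directly from the preceding theorem by specializing to $n' = n + k$ with $k \geq 0$. Since the hypothesis concerns an extension from $K(S^{2n})$ to $\widetilde{K}(S^{2(n+k)})$, we automatically have $n' \geq n$, which rules out case 5 of the theorem (that case requires $n > n' \geq 3$). It then remains to translate each of cases 1--4 of the theorem into a condition on the pair $(n,k)$.

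First, case 1 of the theorem, $n=n'$, is precisely $k=0$, matching case 1 of the corollary. Case 2 says $n=1$ or $n'=1$; since $n \leq n'$, the alternative $n'=1$ forces $n=1$, so both sub-cases collapse to $n=1$, which is case 2 of the corollary. For case 3, the condition that $k = n'-n$ is even and that either $n=2$ or $n'=2$ simplifies under $n \leq n'$: if $n'=2$ then $n \in \{1,2\}$, and $n=1$ is already subsumed by case 2, leaving $n=2$ with $k$ even, which is case 3 of the corollary. Finally, case 4 of the theorem ($n'>n\geq 3$ with $n' = n + 2^{n-2}b$) transcribes directly, in the $(n,k)$ parametrization, to $n\geq 3$ together with $k = 2^{n-2}b$ for some $b \in \mathbb{N}_0$.

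The argument is a purely bookkeeping translation between the parametrizations $(n,n')$ and $(n,k) = (n, n'-n)$, and introduces no new mathematical content beyond the divisibility estimate $v_{2}(n'-n) \geq n-2$ (equivalently $n \leq g^{2}_{|n-n'|}$) already established in the proof of the theorem. The only point requiring attention is a likely typographical slip in the stated corollary: the formula ``$k = n + 2^{n-2}b$'' in case 4 should read ``$k = 2^{n-2}b$,'' since the theorem yields $n' - n = 2^{n-2}b$, and that is what the translation produces. With this correction, the proof is immediate.
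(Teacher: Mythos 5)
Your argument is correct and coincides with the paper's (implicit) proof: the corollary is stated without proof, being exactly the theorem specialized to $n'=n+k$, with the symmetric case 5 of the theorem discarded since $n'\geq n$ and the remaining cases transcribed. You are also right that case 4 as printed is a misprint: substituting $n'=n+k$ into the theorem's case 4 gives $k=2^{n-2}b$, and the printed condition $k=n+2^{n-2}b$ is in fact false as a necessary condition (e.g.\ for $(n,n')=(3,5)$ one has $G_{3,5}=24$, so $2^{3}\mid G_{3,5}$ and an extension with odd Hopf invariant exists, yet $k=2$ is not of the form $3+2b$); note the misprint propagates into the following lemma, whose case 3 ($an=2n+2^{n-2}b$) should correspondingly read $an=n+2^{n-2}b$.
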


\begin{lemma}
If there exists an extension in
$Ext_{\lambda}(K(S^{2n}),\widetilde{K}(S^{2an}))$ for $a \in
\mathbb{N}$ whose Hopf invariant is odd, then one of the following
is satisfied

\begin{enumerate}
  \item $n=1,2$ or $4$,
  \item $n=3$ and $a$ is even,
  \item $n \geq 5$ and $an =2n +  2^{n-2}b$ for some $b \in \mathbb{N}_{0}$.
\end{enumerate}
\end{lemma}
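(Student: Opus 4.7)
The plan is to apply the previous corollary with $n' = an$, so that $k = n' - n = (a-1)n$, and then carry out a case-by-case analysis of the four alternatives given by that corollary. Substituting, the cases become: (a) $(a-1)n = 0$; (b) $n = 1$; (c) $(a-1)n$ even and $n = 2$; and (d) $n \geq 3$ with $(a-1)n = n + 2^{n-2}b$ for some $b \in \mathbb{N}_0$.

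First I would dispatch the easy cases. Case (b) is exactly conclusion 1 of the lemma, since $n = 1$ is among $\{1,2,4\}$. For case (c), note that $k = 2(a-1)$ is automatically even, so no constraint on $a$ is imposed and we land in conclusion 1 via $n = 2$. This handles all low-dimensional possibilities directly.

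Next I would analyze the arithmetic case (d). Rewriting $(a-1)n = n + 2^{n-2}b$ as $(a-2)n = 2^{n-2}b$ and splitting on $n$: for $n = 3$ the equation $3(a-2) = 2b$ forces $a - 2$ to be even and non-negative (since $\gcd(2,3) = 1$ and $b \geq 0$), i.e.\ $a$ is even with $a \geq 2$, recovering conclusion 2; for $n = 4$ the equation reduces to $b = a - 2 \geq 0$, placing $n = 4$ into conclusion 1 for every $a \geq 2$; and for $n \geq 5$ the relation $(a-2)n = 2^{n-2}b$ is literally the condition $an = 2n + 2^{n-2}b$ of conclusion 3. This covers cases (b), (c), (d) cleanly.

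The main obstacle is case (a), i.e.\ $a = 1$, which corresponds to $n' = n$. Here the previous corollary imposes no restriction at all, and corollary \ref{spsiext} shows that $Ext_{\lambda}(K(S^{2n}), \widetilde{K}(S^{2n}))$ contains classes of odd Hopf invariant for every $n$. To land inside conclusion 1 of the present lemma one must invoke an extra argument specific to the $n=n'$ situation: the natural strategy is to observe that any such extension forces, for each prime $p$, the integer $\nu_p$ to be divisible by $p$ while simultaneously satisfying the coherence relations $\nu_{kl} = k^n \nu_l + l^n \nu_k$ of theorem \ref{psiextttt}. Plugging in primes $p = 2, 3, \ldots$ and using the congruence $\Psi^p \equiv (\cdot)^p \pmod{p}$ on a $\lambda$-ring should force $n \in \{1, 2, 4\}$ by the usual $2$-adic valuation argument of Adams--Atiyah. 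This is the delicate step and would be carried out in detail separately, after which combining with cases (b)--(d) completes the proof.
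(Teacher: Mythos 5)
Your overall route---specialize the preceding corollary to $n'=an$, i.e.\ $k=(a-1)n$, and run through its four alternatives---is exactly how the paper intends the lemma to be obtained (the paper states it without a separate proof, as an immediate consequence of that corollary), and your treatment of the cases $n=1$, ($k$ even and $n=2$), and $n\geq 3$ with $(a-1)n=n+2^{n-2}b$ is the intended bookkeeping: for $n=3$ it gives ``$a$ even'', for $n=4$ it lands in conclusion 1, and for $n\geq 5$ it is literally conclusion 3.

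The genuine gap is the case $k=0$, i.e.\ $a=1$, and the way you propose to close it cannot work. As you yourself observe, corollary \ref{spsiext} with $n=n'$ says that $Ext_{\lambda}(K(S^{2n}),\widetilde{K}(S^{2n}))$ consists of tuples $(h,\nu_{2},\nu_{3},\ldots)$ subject only to $h\equiv\nu_{2}\bmod 2$ and $\nu_{p}\equiv 0\bmod p$ for odd $p$; taking $h=\nu_{2}=1$ and $\nu_{p}=0$ otherwise produces an extension of odd Hopf invariant for \emph{every} $n$. Consequently there is no ``extra argument specific to the $n=n'$ situation'' that could force $n\in\{1,2,4\}$: when $n=n'$ the commutation relation $\nu_{l}(k^{n'}-k^{n})=\nu_{k}(l^{n'}-l^{n})$ from theorem \ref{psiextttt} is vacuous, so the Adams--Atiyah $2$-adic valuation argument you invoke has nothing to act on, and the ``delicate step'' you defer is not merely delicate but impossible. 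The honest resolution is that $a=1$ must be treated as a separate alternative, exactly as ``$n=n'$'' appears in the theorem and ``$k=0$'' in the corollary; in other words the lemma only carries content for $a\geq 2$ (which is all the subsequent corollary, the case $a=2$, uses), and a complete proof must either add this alternative or restrict $a$, rather than promise an argument that your own observation about $Ext_{\lambda}(K(S^{2n}),\widetilde{K}(S^{2n}))$ already rules out.
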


\begin{corollary}
If there exists an extension in
$Ext_{\lambda}(K(S^{2n}),\widetilde{K}(S^{4n}))$ whose Hopf
invariant is odd, then $n=1,2$ or $4$.
\end{corollary}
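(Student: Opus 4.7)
The plan is to apply the preceding lemma with $a = 2$ (so that $2an = 4n$), and then rule out the two residual cases by elementary arithmetic, falling back to the preceding theorem when needed. Setting $a = 2$ in the lemma yields three alternatives: (i) $n \in \{1, 2, 4\}$, (ii) $n = 3$ (since $a = 2$ is even), or (iii) $n \geq 5$ with the arithmetic condition on $2^{n-2}$. Case (i) is exactly the conclusion of the corollary, so the work is to eliminate (ii) and (iii).

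To rule out case (ii), I would specialise the preceding theorem to $(n, n') = (3, 6)$ and check that none of its five alternatives is satisfied. Cases 1, 2, 3, 5 fail on inspection (for instance, $n' - n = 3$ is odd, and $n \neq n' \neq 1$). The only remaining possibility, case 4, requires $n' = n + 2^{n-2} b$, i.e.\ $6 = 3 + 2b$, forcing $b = 3/2 \notin \mathbb{N}_0$. Hence no extension in $Ext_\lambda(K(S^{6}), \widetilde{K}(S^{12}))$ of odd Hopf invariant can exist.

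For case (iii), the tight version of the arithmetic condition, inherited from case 4 of the theorem at $n' = 2n$, is $n = 2^{n-2} b$ for some $b \in \mathbb{N}_0$. A short induction shows that $2^{n-2} > n$ for all $n \geq 5$: the base $n = 5$ gives $2^3 = 8 > 5$, and the inductive step follows from $2 \cdot 2^{n-2} = 2^{n-1} > 2n \geq n + 1$. Therefore $b = 0$ forces $n = 0$, a contradiction, while $b \geq 1$ forces $n \geq 2^{n-2} > n$, also a contradiction. Hence case (iii) is vacuous.

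Combining the three reductions, the only possibilities are $n \in \{1, 2, 4\}$. The main difficulty here is not in the corollary itself but in the preceding theorem, whose proof encodes the deep number-theoretic input about the Adams operations $\Psi^k$ acting on $\widetilde{K}(S^{2n})$ (essentially the Adams–Atiyah argument). Once that theorem is granted, the corollary is genuinely arithmetic: the only mildly subtle point is noticing that the weak formulation of case (ii) of the lemma is in fact empty at $a = 2$, which one only detects by tightening to the divisibility statement in case 4 of the theorem.
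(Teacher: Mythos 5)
Your argument is correct, and it is the natural completion of what the paper leaves implicit: the paper states no proof, the corollary being intended as an immediate specialisation of the preceding lemma to $a=2$. The worthwhile difference is that you noticed the lemma, read literally, does \emph{not} close the deduction at $a=2$: its case 2 (``$n=3$ and $a$ even'') admits $a=2$, and its case 3 (``$an=2n+2^{n-2}b$, $b\in\mathbb{N}_{0}$'') is vacuously satisfied with $b=0$ for every $n\geq 5$. You repair both by falling back on the theorem for $Ext_{\lambda}(K(S^{2n}),\widetilde{K}(S^{2n'}))$ at $n'=2n$: case 4 there forces $n=2^{n-2}b$ with $b\geq 1$ (your $b=0$ branch reproduces the contradiction $n=0$, equivalently $n'>n$ fails), which kills $n=3$ (since $6=3+2b$ has no integer solution) and all $n\geq 5$ (since $2^{n-2}>n$), leaving only $n=4$ alongside $n=1,2$ from cases 2 and 3 of the theorem. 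In effect your route derives the corollary directly from the theorem at $n'=2n$, which is cleaner and self-contained, whereas the paper's one-line deduction is only valid if the lemma's cases 2 and 3 are understood in the tighter form you extracted from the theorem (case 2 should in fact exclude $a=2$, and case 3 should carry the condition $(a-1)n=2^{n-2}b$ with $b\geq 1$ rather than the stated one); your write-up both proves the statement and flags that looseness.
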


\begin{corollary}[Adams]
If $f: S^{4n-1} \rightarrow S^{2n}$ is a continuous map whose Hopf invariant is odd, then $n=1,2$ or $4$.
\end{corollary}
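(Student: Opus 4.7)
The plan is to invoke the naturality framework developed in this chapter and then reduce the statement to the previous corollary, which is the substantial input.

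First, I would observe that the hypotheses of the construction of the natural transformations $\tau_{\lambda}$ and $\tau_{\Psi}$ are satisfied with $Y = S^{4n-1}$ and $X = S^{2n}$: indeed $\widetilde{K}(S^{4n-1}) = 0$ and $\widetilde{K}(\Sigma S^{2n}) = \widetilde{K}(S^{2n+1}) = 0$, so the Puppe sequence truncates to the short exact sequence
\[\xymatrix{0 \ar[r] & \widetilde{K}(S^{4n}) \ar[r] & K(C_{f}) \ar[r] & K(S^{2n}) \ar[r] & 0.}\]
Because $K$-theory is naturally a $\lambda$-ring and all the maps involved are maps of $\lambda$-rings, this short exact sequence represents an element of $\mathrm{Ext}_{\lambda}(K(S^{2n}),\widetilde{K}(S^{4n}))$, namely $\tau_{\lambda,2n}(f)$.

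Next, I would unwind the definition of the Hopf invariant given just before Theorem~\ref{psiextttt}. The Hopf invariant of $f$ is, by definition, the Hopf invariant of the associated $\Psi$-ring extension, which is $\tau_{\Psi,2n}(f)$, and this is the underlying $\Psi$-ring extension of $\tau_{\lambda,2n}(f)$. Since the Hopf invariant is computed from the integer $h$ in the ring structure of $K(C_f)$ via $\beta^{2} = h \alpha$, it depends only on the underlying commutative ring extension and is therefore the same whether we view $\tau(f)$ in $\mathrm{Ext}_{\lambda}$ or $\mathrm{Ext}_{\Psi}$. Hence the assumption that $f$ has odd Hopf invariant produces an element of $\mathrm{Ext}_{\lambda}(K(S^{2n}),\widetilde{K}(S^{4n}))$ whose Hopf invariant is odd.

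Finally, I would apply the immediately preceding corollary, which asserts that the existence of such an extension forces $n = 1, 2$ or $4$. This completes the proof. The main conceptual work has already been done: the detailed arithmetic analysis of $\mathrm{Ext}_{\lambda}(K(S^{2n}),\widetilde{K}(S^{2an}))$ via the divisibility conditions $h \equiv \nu (2^{n}-2^{n'})/G_{n,n'} \bmod 2$ and $\nu_{p} \equiv 0 \bmod p$ (for $p$ odd) together with the computation $g^{2}_{2n} = 3$ if $n$ is odd and $g^{2}_{2n} = g^{2}_{n}+1$ if $n$ is even. The only non-routine issue in the present step is verifying that $\tau_{\lambda,2n}$ is well-defined, i.e.\ that the mapping-cone extension really does carry a compatible $\lambda$-ring structure — but this is immediate from the functoriality of $K(-)$ as a functor into $\lambda$-rings, so there is no genuine obstacle.
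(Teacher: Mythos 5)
Your argument is correct and is exactly the route the paper intends: the hypotheses $\widetilde{K}(S^{4n-1})=0$ and $\widetilde{K}(\Sigma S^{2n})=\widetilde{K}(S^{2n+1})=0$ let $\tau$ turn $f$ into an extension of $K(S^{2n})$ by $\widetilde{K}(S^{4n})$, the Hopf invariant (which by the paper's definition depends only on the underlying ring extension) is odd, and the preceding corollary on $\Ext_{\lambda}(K(S^{2n}),\widetilde{K}(S^{4n}))$ forces $n=1,2$ or $4$. The paper leaves this deduction implicit, and your write-up, including the passage between the $\Psi$-ring and $\lambda$-ring extension classes, fills it in correctly.
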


%

\section{Stable Ext groups of spheres}
\begin{proposition}
If $n> k+1$ then $G_{n,n+k} = G_{n+1,n+k+1}$.
\end{proposition}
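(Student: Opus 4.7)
The plan is to reduce the identity $G_{n,n+k}=G_{n+1,n+k+1}$ to a prime-by-prime comparison of $p$-adic valuations, and then to control a single auxiliary quantity using the Lifting-the-Exponent lemma. Write $d_n:=G_{n,n+k}=\gcd\{l^n(l^k-1):l\geq 2\}$ and likewise $d_{n+1}$. The easy direction $d_n\mid d_{n+1}$ is immediate, since each $l^{n+1}(l^k-1)=l\cdot l^n(l^k-1)$ is a multiple of $l^n(l^k-1)$; only the reverse divisibility requires the hypothesis $n>k+1$.

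For a fixed prime $p$ I would partition the range $\{l\geq 2\}$ according to whether $p\mid l$. If $p\mid l$ then $v_p(l^k-1)=0$, so $v_p(l^n(l^k-1))=n\cdot v_p(l)\geq n$ with equality at $l=p$; if $p\nmid l$ then $v_p(l^n(l^k-1))=v_p(l^k-1)$, which is independent of $n$. Introducing $A_p:=\min\{v_p(l^k-1):l\geq 2,\ p\nmid l\}$, this yields the clean formulas $v_p(d_n)=\min(n,A_p)$ and $v_p(d_{n+1})=\min(n+1,A_p)$, which agree precisely when $A_p\leq n$. The proposition thus reduces to showing $A_p\leq n$ for every prime $p$.

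The main work, which I expect to be the principal obstacle, is exhibiting for each $p$ a single $l\geq 2$ coprime to $p$ with $v_p(l^k-1)$ small. For odd $p$ I would take $l$ to be a primitive root modulo $p^2$: if $(p-1)\nmid k$ then already $v_p(l^k-1)=0$, while otherwise Lifting-the-Exponent gives $v_p(l^k-1)=v_p(l^{p-1}-1)+v_p(k/(p-1))=1+v_p(k)$. For $p=2$ take $l=3$: when $k$ is odd the congruence $3\equiv -1\pmod 4$ gives $v_2(3^k-1)=1$, and when $k$ is even the $p=2$ form of LTE yields $v_2(3^k-1)=v_2(2)+v_2(4)+v_2(k)-1=2+v_2(k)$. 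In every case $A_p\leq 2+v_p(k)\leq 2+\log_2 k\leq k+1$.

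Combining these estimates with the hypothesis $n>k+1$ gives $A_p\leq k+1<n$ for every prime $p$, hence $\min(n,A_p)=\min(n+1,A_p)$ for every $p$, proving $d_n=d_{n+1}$. The only delicate point is the LTE calculation at $p=2$, where one must split on the parity of $k$ and use the correct $p=2$ form of the lemma; everything else is direct bookkeeping on $p$-adic valuations.
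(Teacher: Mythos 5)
Your proof is correct and takes essentially the same route as the paper: both arguments reduce the claim to the prime-by-prime identity $v_{p}(G_{n,n+k})=\min(n,\,g^{p}_{k})$ (your $A_{p}$ is exactly the paper's $g^{p}_{k}$) and then check that $g^{p}_{k}\leq k+1<n$ for every prime $p$. The only difference is in how that bound is obtained: where you use a primitive root mod $p^{2}$ together with lifting-the-exponent for odd $p$, and $l=3$ with the $p=2$ form of LTE, the paper gets by with the cruder observations that $p^{n}>2^{k}-1$ (so the witness $l=2$ already forces $v_{p}(G_{n,n+k})=g^{p}_{k}$ for $p>2$) and that $g^{2}_{k}\leq k+1$.
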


\begin{proof}
Let $n>k+1$. We know that $G_{n,n+k} = G_{n+1,n+k+1}$ if and only if the multiplicity of any prime $p$ in the prime factorization of $G_{n,n+k}$ is $g^{p}_{k}$. For all primes $p>2$ we get that $p^{n} > 2^{k}-1$, so the multiplicity of $p$ in the prime factorisation of $G_{n,n+k}$ is $g^{p}_{k}$. We can easily see that $g^{2}_{k} \leq k+1$ for all $k$. It follows that the multiplicity of $2$ in the prime factorisation of $G_{n,n+k}$ is $g^{2}_{k}$.
\end{proof}

\begin{corollary}
If $n> k+1$ then
\[\Ext_{\lambda}(K(S^{2n}),\widetilde{K}(S^{2(n+k)})) \cong
\Ext_{\lambda}(K(S^{2(n+1)}),\widetilde{K}(S^{2(n+k+1)})).\]
\end{corollary}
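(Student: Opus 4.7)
My plan is to deduce the isomorphism directly from Corollary \ref{spsiext} applied to both sides, using the preceding proposition to identify the common value $G := G_{n,n+k} = G_{n+1,n+k+1}$. The argument splits according to whether $k=0$ or $k\geq 1$. In the trivial case $k=0$, both Ext groups fall under the $n=n'$ clause of Corollary \ref{spsiext}, whose description $\{(h,\nu_2,\nu_3,\ldots)\in \mathbb{Z}\oplus\prod_{p}\mathbb{Z} \mid h\equiv\nu_2\ (\mathrm{mod}\ 2),\ \nu_p\equiv 0\ (\mathrm{mod}\ p)\text{ for }p>2\}$ is manifestly independent of $n$, giving the claimed isomorphism at once.

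For the main case $k\geq 1$, both sides fall under the $n\neq n'$ clause, and the task reduces to comparing the two congruence conditions cutting out the subgroups of $\mathbb{Z}\oplus\mathbb{Z}_{G}$. Writing $c := \tfrac{2^{n}-2^{n+k}}{G}$ and $c' := \tfrac{2^{n+1}-2^{n+k+1}}{G} = 2c$, the two sides become $\{(h,\nu)\mid h\equiv \nu c\ (\mathrm{mod}\ 2)\}$ and $\{(h,\nu)\mid h\equiv \nu c'\ (\mathrm{mod}\ 2)\}$ respectively. The plan is to show that $c$ is already even, whence $c\equiv c'\equiv 0\pmod 2$ and both subgroups collapse to $2\mathbb{Z}\oplus\mathbb{Z}_{G}$, yielding the canonical isomorphism.

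To see that $c$ is even, I will compute $2$-adic valuations. Since $1-2^{k}$ is odd for $k\geq 1$, one has $v_2(2^{n}-2^{n+k}) = v_2(2^{n}(1-2^{k})) = n$, while the $2$-part of $G_{n,n+k}$ is exactly $g^{2}_{k}$ by the proof of the preceding proposition. Therefore $v_2(c) = n - g^{2}_{k}$, and I need this to be $\geq 1$. The recursions $g^{2}_{2j-1}=1$ and $g^{2}_{2j}=g^{2}_{j}+1$ (which the text records explicitly) yield by induction the bound $g^{2}_{k}\leq k+1$; combined with the standing hypothesis $n\geq k+2$, this gives $v_2(c)\geq 1$ as required.

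The only step that requires any care is the verification $g^{2}_{k}\leq k+1$; once this inequality is in hand, the rest of the argument is bookkeeping inside Corollary \ref{spsiext}. I therefore do not expect any serious obstacle: the corollary is essentially a direct consequence of the proposition on the next $2$-adic valuation, plus the observation that the hypothesis $n>k+1$ is tight enough to make the mod-$2$ twist in Corollary \ref{spsiext} trivial on both sides.
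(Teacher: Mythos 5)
Your argument is correct and follows the route the paper intends: combine the proposition $G_{n,n+k}=G_{n+1,n+k+1}$ with the description of the $\lambda$-extension groups in Corollary \ref{spsiext}. The paper states this corollary with no proof at all, implicitly treating it as immediate from the equality of the $G$'s; your additional step --- checking that the coefficient $c=(2^{n}-2^{n+k})/G_{n,n+k}$ is even, so that both congruence conditions collapse to $h\equiv 0 \pmod 2$ and both groups become $2\mathbb{Z}\oplus\mathbb{Z}_{G}$ --- is precisely the detail needed to make the deduction rigorous, since if $c$ were odd on one side only the two subgroups of $\mathbb{Z}\oplus\mathbb{Z}_{G}$ would have torsion $\mathbb{Z}_{G/2}$ and $\mathbb{Z}_{G}$ respectively and so could not be isomorphic ($G$ is always even here). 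Your conclusion is also consistent with the paper's table of stable Ext groups, all of the form $2\mathbb{Z}\oplus\mathbb{Z}_{G}$. One small correction: the text's recursion is $g^{2}_{2j}=3$ for $j$ odd and $g^{2}_{2j}=g^{2}_{j}+1$ for $j$ even, not $g^{2}_{2j}=g^{2}_{j}+1$ unconditionally (for instance $g^{2}_{2}=3$, because $8$ divides $k^{2}-1$ for every odd $k$); the bound $g^{2}_{k}\le k+1$ that you need still follows from the correct recursion, and in any case it is asserted explicitly in the proof of the preceding proposition, so you may simply cite it rather than rederive it.
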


The groups $\Ext_{\lambda}(K(S^{2n}),\widetilde{K}(S^{2(n+k)}))$
are independent of $n$ for $n>k+1$, we call these the \textit{stable
Ext groups of spheres} which we denote by $\Ext^{s}_{2k}$.

\begin{proposition}
There are natural transformations
\[\Upsilon_{k}: \pi^{s}_{2k-1} \rightarrow \Ext^{s}_{2k}, \]
where $\pi^{s}_{2k-1}$ denotes the stable homotopy groups of spheres.
\end{proposition}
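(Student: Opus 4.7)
The plan is to assemble $\Upsilon_{k}$ as the colimit of the maps $\tau_{\lambda, m+k}$ from the earlier corollary applied to $X = S^{2m}$, for $m$ ranging over integers with $m > k+1$. First I would note that since $\widetilde{K}(\Sigma S^{2m}) = \widetilde{K}(S^{2m+1}) = 0$ (as $K$-theory of odd spheres vanishes in the reduced sense for this grading), the hypothesis of the corollary of the previous section is satisfied, so for each $m > k+1$ we have a natural transformation
\[
\tau_{\lambda, m+k}: \pi_{2(m+k)-1}(S^{2m}) \longrightarrow \Ext_{\lambda}(K(S^{2m}), \widetilde{K}(S^{2(m+k)})).
\]
By the preceding corollary, the right-hand side is canonically identified with $\Ext^{s}_{2k}$ once $m > k+1$, because $G_{m,m+k}$ stabilises.

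Next I would show compatibility with the suspension maps on both sides. On the homotopy side, the suspension $\Sigma: \pi_{2(m+k)-1}(S^{2m}) \to \pi_{2(m+k)+1}(S^{2m+2})$ is defined by $f \mapsto \Sigma f$. For a map $f: S^{2(m+k)-1} \to S^{2m}$ the mapping cone satisfies $C_{\Sigma f} \simeq \Sigma C_{f}$, so applying $\widetilde{K}$ to the Puppe sequences for $f$ and $\Sigma f$ yields short exact sequences whose middle terms differ precisely by Bott periodicity $\widetilde{K}(\Sigma^{2}Y) \cong \widetilde{K}(Y)$. Under this periodicity isomorphism the $\Psi$- and $\lambda$-operations correspond (since Bott periodicity is multiplication by the Bott element, which is compatible with Adams operations up to the standard grading shift already built into $\Psi^{k}(x) = k^{n}x$ on $\widetilde{K}(S^{2n})$). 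Hence the extensions obtained from $f$ and $\Sigma f$ represent the same class under the stabilisation isomorphism $\Ext_{\lambda}(K(S^{2m}), \widetilde{K}(S^{2(m+k)})) \cong \Ext_{\lambda}(K(S^{2m+2}), \widetilde{K}(S^{2(m+k+1)}))$ of the previous corollary.

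Given this compatibility, the family $\{\tau_{\lambda, m+k}\}_{m > k+1}$ factors through the colimit $\pi^{s}_{2k-1} = \mathrm{colim}_{m} \pi_{2(m+k)-1}(S^{2m})$ and produces a well-defined homomorphism $\Upsilon_{k}: \pi^{s}_{2k-1} \to \Ext^{s}_{2k}$. Naturality in $k$ (with respect to composition with stable maps) follows from the naturality of each $\tau_{\lambda,n}$ already established in the previous section.

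The step I expect to be the main obstacle is verifying that the Bott periodicity isomorphism is compatible with the full $\lambda$-ring (not merely additive or $\Psi$-ring) structure on the extensions arising from the Puppe sequences. Additively and as $\Psi$-rings this is classical, but ensuring that the integer $h$ recording $\beta^{2} = h\alpha$ and the compatibility condition $h \equiv \nu_{2}\frac{2^{m+k}-2^{m}}{G_{m,m+k}} \pmod{2}$ from the previous corollary are preserved under the stabilisation requires a careful naturality argument for the $\lambda$-operations on $K(C_{f})$ versus $K(\Sigma C_{f})$. Once this is in place the rest of the construction is a direct passage to the colimit.
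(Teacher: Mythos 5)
The paper states this proposition with no proof at all, so the only baseline is the construction implicit in the two preceding sections; your proposal is exactly that construction, and its skeleton is sound: $\tau_{\lambda,m+k}$ is available for $X=S^{2m}$ because $\widetilde{K}(S^{2m+1})=0$, the subsystem of even spheres and double suspensions is cofinal in the directed system computing $\pi^{s}_{2k-1}$, and for $m>k+1$ the stabilisation corollary identifies every target with $\Ext^{s}_{2k}$. (Note that it is the double suspension $\Sigma^{2}$ you must use throughout, as your indexing already indicates, since odd spheres do not satisfy the hypothesis $\widetilde{K}(\Sigma X)=0$ needed for $\tau$.)

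The genuine gap is the commutativity of the transition square, and the sentence you use to dispatch it is false as stated: Bott periodicity does not make the $\Psi$- and $\lambda$-operations ``correspond''. Since $\Psi^{l}(b)=lb$ for the Bott class $b$, the isomorphism $\widetilde{K}(C_{f})\cong\widetilde{K}(\Sigma^{2}C_{f})$ given by $x\mapsto b*x$ satisfies $\Psi^{l}(b*x)=l\,(b*\Psi^{l}(x))$: the operations are twisted by a factor of $l$, not preserved. The argument survives only after an explicit computation in the paper's coordinates. Writing $\Psi^{l}(\beta)=l^{m}\beta+\nu_{l}\alpha$ with $\nu_{l}=z(l^{m+k}-l^{m})/G_{m,m+k}$, the suspended extension has $\nu'_{l}=l\nu_{l}=z(l^{m+k+1}-l^{m+1})/G_{m+1,m+k+1}$, so the invariant $z$ modulo $G$ is unchanged, the equality $G_{m,m+k}=G_{m+1,m+k+1}$ being exactly the stability proposition. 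For the Hopf-invariant coordinate, products of reduced classes in the suspension $\Sigma^{2}C_{f}$ vanish, so $h'=0$; this is consistent with a stabilisation isomorphism fixing $h$ only because $h=0$ automatically for an extension arising from a map $S^{2(m+k)-1}\rightarrow S^{2m}$ with $m\neq k$ (the Chern character of $\beta^{2}$ has no component in the top cell unless $2m=m+k$), and $m>k+1$ guarantees $m\neq k$. With these two verifications written out, the transition maps fix the pair $(h,z)$, the $\lambda$-congruence of the corollary is a condition preserved by this identification, and the passage to the colimit is legitimate; without them, the appeal to a ``standard grading shift'' is a gesture at the verification rather than the verification itself.
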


For small $k$ these groups look as follows.

\begin{center}
\begin{tabular}{|l | l | l |}
  \hline
  k & $\pi^{s}_{2k-1}$ & $\Ext^{s}_{2k}$ \\ \hline
  1 & $\mathbb{Z}_{2}$ & $2\mathbb{Z} \oplus \mathbb{Z}_{2}$ \\
  2 & $\mathbb{Z}_{24}\oplus \mathbb{Z}_{3}$ & $2\mathbb{Z}  \oplus \mathbb{Z}_{24}$ \\
  3 & $0$               & $2\mathbb{Z}\oplus \mathbb{Z}_{2}$ \\
  4 & $\mathbb{Z}_{240}$ & $2\mathbb{Z}\oplus \mathbb{Z}_{240}$ \\
  5 & $\mathbb{Z}_{2} \oplus \mathbb{Z}_{2} \oplus \mathbb{Z}_{2}$ & $2\mathbb{Z} \oplus \mathbb{Z}_{2}$ \\
  6 & $\mathbb{Z}_{504}$ & $2\mathbb{Z}\oplus \mathbb{Z}_{504}$ \\
  7 & $\mathbb{Z}_{3}$ & $2\mathbb{Z}\oplus \mathbb{Z}_{2}$ \\
  8 & $\mathbb{Z}_{480}\oplus \mathbb{Z}_{2}$ & $2\mathbb{Z} \oplus \mathbb{Z}_{480}$ \\
  \hline
\end{tabular}
 \end{center}

\appendix

\chapter{Adams Operations}
\label{appa} 
\[ \Psi^{k}(r) = \sum_{i=1}^{k-1}(-1)^{i+1}\lambda^{i}(r)\Psi^{k-i}(r) + (-1)^{k+1}k\lambda^{k}(r) \]

\begin{align*}
  \Psi^{1}(r) =& r
 \\ \Psi^{2}(r) =& r^{2} - 2\lambda^{2}(r)
 \\ \Psi^{3}(r) =& r^{3} - 3 r \lambda^{2}(r) + 3 \lambda^{3}(r)
 \\ \Psi^{4}(r) =& r^{4} - 4 r^{2} \lambda^{2}(r) + 4r \lambda^{3}(r) + 2(\lambda^{2}(r))^{2} - 4\lambda^{4}(r)
 \\ \Psi^{5}(r)=& r^{5} - 5 r^{3} \lambda^{2}(r) + 5r^{2}\lambda^{3}(r) + 5r(\lambda^{2}(r))^{2}-5r\lambda^{4}(r)-5\lambda^{2}(r)\lambda^{3}(r) + 5\lambda^{5}(r)
 \\ \Psi^{6}(r) =& r^{6} - 6 r^{4}\lambda^{2}(r) + 6 r^{3}\lambda^{3}(r) + 9 r^{2}(\lambda^{2})^{2} - 6r^{2}\lambda^{4} - 12r\lambda^{2}(r)\lambda^{3}(r) \\ &+ 6r\lambda^{5}(r) - 2(\lambda^{2}(r))^{3} + 3(\lambda^{3}(r))^{2} + 6\lambda^{2}(r)\lambda^{4}(r) -6\lambda^{6}(r)
 \\ \Psi^{7}(r) =& r^{7} - 7r^{5}\lambda^{2}(r) + 7r^{4}\lambda^{3}(r) + 14r^{3}(\lambda^{2}(r))^{2} - 7r^{3}\lambda^{4}(r) - 21 r^{2}\lambda^{2}(r)\lambda^{3}(r)  \\&+ 7r^{2}\lambda^{5}(r) - 7r(\lambda^{2}(r))^{3} + 7r(\lambda^{3}(r))^{2} + 14r\lambda^{2}(r)\lambda^{4}(r) - 7r\lambda^{6}(r) \\&+ 7(\lambda^{2})^{2}\lambda^{3}(r) - 7\lambda^{3}(r)\lambda^{4}(r) - 7\lambda^{2}(r)\lambda^{5}(r) + 7\lambda^{7}(r)
 \\ \Psi^{8}(r) =& r^{8} - 8r^{6}\lambda^{2}(r) + 8r^{5}\lambda^{3}(r) + 20 r^{4}(\lambda^{2}(r))^{2} - 8r^{4}\lambda^{4}(r) - 32r^{3}\lambda^{2}(r)\lambda^{3}(r) \\ & + 8r^{3}\lambda^{5}(r) - 16r^{2}(\lambda^{2}(r))^{3} + 12r^{2}(\lambda^{3}(r))^{2} + 24r^{2}\lambda^{2}(r)\lambda^{4}(r) - 8r^{2}\lambda^{6}(r) \\&+ 24r(\lambda^{2}(r))^{2}\lambda^{3}(r) -16r\lambda^{3}(r)\lambda^{4}(r) - 16r\lambda^{2}(r)\lambda^{5}(r) + 8r\lambda^{7}(r) + 2(\lambda^{2}(r))^{4} \\&- 8\lambda^{2}(r)(\lambda^{3}(r))^{2} + 4(\lambda^{4}(r))^{2} - 8(\lambda^{2}(r))^{2}\lambda^{4}(r) + 8\lambda^{3}(r)\lambda^{5}(r) \\&+ 8\lambda^{2}(r)\lambda^{6}(r) - 8\lambda^{8}(r)
\end{align*}

\chapter{Universal Polynomials $P_{i}, P_{i,j}$}\label{appb}
For more information on the universal polynomials, refer to the thesis of Hopkinson \cite{hopkins}. He has several results and gives the polynomial $P_{i}$ upto $i=10$, as well as giving several formulas for the polynomial $P_{i,j}$.

\begin{itemize}
  \item $P_{1}(s_{1}; \sigma_{1}) = s_{1} \sigma_{1}$
  \item $P_{2}(s_{1}, s_{2}; \sigma_{1},\sigma_{2}) = s_{1}^{2}\sigma_{2} - 2 s_{2} \sigma_{2} + s_{2} \sigma_{1}^{2}$
  \item $P_{3}(s_{1},s_{2},s_{3}; \sigma_{1},\sigma_{2},\sigma_{3})
  =
  s_{1}^{3}\sigma_{3} + s_{1}s_{2}\sigma_{1}\sigma_{2} -
  3s_{1}s_{2}\sigma_{3} + s_{3}\sigma_{1}^{3} -
  3s_{3}\sigma_{1}\sigma_{2} + 3s_{3}\sigma_{3}$
  \item $P_{4}(s_{1},s_{2},s_{3},s_{4}; \sigma_{1},\sigma_{2},\sigma_{3},\sigma_{4}) = - 2 s_{1}s_{3}\sigma^{2}_{2} + 2 s_{4} \sigma_{2}^{2} + 4 s_{4} \sigma_{1} \sigma_{3} - 4 s_{1}^{2} s_{2} \sigma_{4} - 2 s_{2}^{2} \sigma_{1} \sigma_{3} - 4 s_{4} \sigma_{1}^{2} \sigma_{2} + 4 s_{1} s_{3} \sigma_{4} + s_{1}^{2} s_{2} \sigma_{1} \sigma_{3} + s_{1} s_{3} \sigma_{1}^{2} \sigma_{2} - s_{1} s_{3} \sigma_{1} \sigma_{3} + s_{1}^{4} \sigma_{4} + s_{2}^{2} \sigma_{2}^{2} + 2 s_{2}^{2}\sigma_{4} + s_{4}\sigma_{1}^{4} - 4s_{4} \sigma_{4}$
\end{itemize}

\begin{itemize}
  \item $P_{1,1}(s_{1}) = s_{1}$
  \item $P_{1,j}(s_{1},\ldots,s_{j}) = s_{j}$
  \item $P_{i,1}(s_{1},\ldots, s_{i}) = s_{i}$
  \item $P_{2,j}(s_{1},\ldots,s_{2j}) = \sum_{k=1}^{j-1}(-1)^{k+1}s_{j-k}s_{j+k} + (-1)^{j+1}s_{2j}$
\end{itemize}

Consider the polynomials
\begin{itemize}
\item $P_{2,4}(s_{1},s_{2},s_{3},s_{4},s_{5},s_{6},s_{7},s_{8}) = s_{3}s_{5} - s_{2}s_{6} + s_{1}s_{7} - s_{8}$
\item $P_{4,2}(s_{1},s_{2},s_{3},s_{4},s_{5},s_{6},s_{7},s_{8}) = s_{1}s_{3}s_{4} - 3s_{1}s_{2}s_{5} + s_{1}^{3}s_{5} - s_{4}^{2} + s_{3}s_{5} - s_{1}^{2}s_{6} + s_{1}s_{7} + 2s_{2}s_{6} -s_{8}$
\item $P_{5,2}(s_{1},s_{2},s_{3},s_{4},s_{5},s_{6},s_{7},s_{8},s_{9},s_{10}) = s_{1}^{4}s_{6} + s_{2}s_{4}^{2} + 3s_{1}s_{2}s_{7} + 3 s_{1}s_{3}s_{6} - 4s_{1}^{2}s_{2}s_{6} -2s_{1}s_{4}s_{5} -2s_{2}s_{3}s_{5}+s_{1}^{2}s_{3}s_{5} + s_{10}-s_{3}s_{7}+2s_{5}^{2}-s_{1}^{3}s_{7}-2s_{4}s_{6}+2s_{2}^{2}s_{6}+s_{1}^{2}s_{8}-s_{1}s_{9}-2s_{2}s_{8}$

\end{itemize}
So we can see that in general $P_{i,j} \neq P_{j,i}$.

\chapter{Universal Polynomial Partial Derivatives}\label{appc}

\begin{center}
\begin{tabular}{|l | l | l | l  | l |  }
  \hline
  $k$ & $\frac{\partial P_{1}(r,s)}{\partial \lambda^{k}(r)} $ & $\frac{\partial P_{2}(r,s)}{\partial \lambda^{k}(r)} $ & $\frac{\partial P_{3}(r,s)}{\partial \lambda^{k}(r)} $  & $\frac{\partial P_{4}(r,s)}{\partial \lambda^{k}(r)}$ \\ \hline
  1 & $\Psi^{1}(s)$ & $r(s^{2}-\Psi^{2}(s))$ & $\ddots$ & $\ddots$ \\
  2 & 0 & $\Psi^{2}(s)$ & $r(s^{3}-\Psi^{3}(s))$ & $\ddots$ \\
  3 & 0 & 0 & $\Psi^{3}(s)$ & $r(s^{4}-\Psi^{4}(s))$ \\
  4 & 0 & 0 & 0 & $\Psi^{4}(s)$ \\
   \hline
\end{tabular}
 \end{center}

\begin{conjecture} For all $i \in \mathbb{N}$
\[\frac{\partial P_{i}(r,s)}{\partial \lambda^{i}(r)} = \Psi^{i}(s), \textrm{ } \frac{\partial P_{i+1}(r,s)}{\partial \lambda^{i}(r)} = r(s^{i+1} - \Psi^{i+1}(s)) \]
\end{conjecture}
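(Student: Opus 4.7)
The plan is to use logarithmic differentiation of the defining generating function. Recall that $P_k(r,s)$ is the coefficient of $t^k$ in $\prod_{a,b}(1+\xi_a\eta_b t)$, where the $\xi_a$ (resp.\ $\eta_b$) are Chern roots underlying $\lambda^k(r)=e_k(\xi)$ (resp.\ $\lambda^k(s)=e_k(\eta)$). Setting $F(t):=\sum_{k\geq 0}P_k(r,s)\,t^k$ and taking logarithms term-by-term yields the key identity
\[\log F(t) \;=\; \sum_{k\geq 1}\frac{(-1)^{k+1}}{k}\,\Psi^k(r)\,\Psi^k(s)\,t^k,\]
using $\Psi^k(r)=\sum_a\xi_a^k$ and similarly for $s$.

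The next step is to compute $\partial_i\Psi^k(r)$, where I write $\partial_i:=\partial/\partial\lambda^i(r)$. For this, apply the same logarithmic trick to $\log\lambda_t(r)=\sum_{k\geq 1}\frac{(-1)^{k+1}}{k}\Psi^k(r)\,t^k$ and differentiate: the left-hand side becomes $t^i/\lambda_t(r)$, giving
\[\partial_i\Psi^k(r) \;=\; (-1)^{k+1}\,k\,a_{k-i},\qquad\text{where}\quad \lambda_t(r)^{-1}=\sum_{m\geq 0}a_m t^m,\]
with $a_0=1$, $a_1=-r$, $a_2=r^2-\lambda^2(r)$, and so on (these are the polynomials determined recursively by $\sum_{j}\lambda^j(r)\,a_{m-j}=0$). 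Substituting back into $\partial_i F=F\cdot\partial_i\log F$, the signs and factors of $k$ cancel cleanly and I obtain the compact formula
\[\partial_i F(t) \;=\; F(t)\cdot t^{i}\sum_{m\geq 0}a_m\,\Psi^{m+i}(s)\,t^m.\]

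Extracting the coefficient of $t^k$ from both sides is now mechanical. For $k=i$, only the $m=0$ term contributes and I get $\partial_i P_i(r,s)=P_0(r,s)\cdot a_0\cdot\Psi^i(s)=\Psi^i(s)$, which is the first identity of the conjecture. For $k=i+1$, two pairs contribute and I get
\[\partial_i P_{i+1}(r,s) \;=\; P_1(r,s)\cdot a_0\cdot\Psi^i(s) + P_0(r,s)\cdot a_1\cdot\Psi^{i+1}(s) \;=\; r\bigl(s\,\Psi^i(s)-\Psi^{i+1}(s)\bigr).\]

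The main obstacle is reconciling this output with the printed form $r(s^{i+1}-\Psi^{i+1}(s))$. Since $\Psi^i(s)=s^i$ only for line elements (those whose Chern roots all vanish except one), the two expressions coincide universally only at $i=1$, consistent with the fact that this case alone is verified in the tabulated examples. A direct check against the formula for $P_3$ given in the appendix confirms that $\partial P_3/\partial\lambda^2(r)=rs\,\lambda^2(s)-3r\,\lambda^3(s)$, which agrees with $r(s\,\Psi^2(s)-\Psi^3(s))$ but differs from $r(s^3-\Psi^3(s))$ by a factor of $3$ on the $\lambda^2(s)$ term. Thus the generating-function argument establishes the conjecture, with the second identity corrected to the universally valid form $\partial_i P_{i+1}(r,s)=r\bigl(s\,\Psi^i(s)-\Psi^{i+1}(s)\bigr)$; the essential calculation is short, and the only genuine work is this reconciliation of the precise statement.
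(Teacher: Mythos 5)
There is nothing in the paper to compare your argument with: the statement is labelled a conjecture and the author offers no proof at all, only the table of low-degree cases in Appendix \ref{appc} (and the $P_2$ computations in the $\lambda$-ring-extensions section). Your generating-function computation itself is correct. Writing $F(t)=\sum_k P_k(r,s)t^k$, the identity $\log F(t)=\sum_{k\geq 1}\tfrac{(-1)^{k+1}}{k}\Psi^k(r)\Psi^k(s)t^k$ is right, as is $\partial\Psi^k(r)/\partial\lambda^i(r)=(-1)^{k+1}k\,a_{k-i}$ with $\lambda_t(r)^{-1}=\sum_m a_mt^m$, and extracting the coefficient of $t^i$ does give the first identity $\partial P_i(r,s)/\partial\lambda^i(r)=\Psi^i(s)$ in complete generality.

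But be clear about what you have actually done with the second identity: you have \emph{refuted} the printed claim for $i\geq 2$ and proved a corrected version, namely $\partial P_{i+1}(r,s)/\partial\lambda^i(r)=r\bigl(s\,\Psi^i(s)-\Psi^{i+1}(s)\bigr)$, which coincides with $r(s^{i+1}-\Psi^{i+1}(s))$ only when $i=1$. Your cross-check is decisive and can even be pushed against the paper's own data: differentiating the $P_3$ of Appendix \ref{appb} gives
\[
\frac{\partial P_3}{\partial\lambda^2(r)}=r\,s\,\lambda^2(s)-3r\,\lambda^3(s)=r\bigl(s\Psi^2(s)-\Psi^3(s)\bigr),
\]
whereas $r(s^3-\Psi^3(s))=3rs\lambda^2(s)-3r\lambda^3(s)$; similarly $\partial P_4/\partial\lambda^3(r)=r\bigl(s^2\lambda^2(s)-2(\lambda^2(s))^2-s\lambda^3(s)+4\lambda^4(s)\bigr)=r\bigl(s\Psi^3(s)-\Psi^4(s)\bigr)$, not the table's $r(s^4-\Psi^4(s))$. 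So the $k=2,3$ rows of the table in Appendix \ref{appc} are inconsistent with the explicit polynomials of Appendix \ref{appb}, and the conjecture as stated is false for $i\geq 2$; your one inaccuracy is the remark that only the $i=1$ case is ``verified'' in the tables, since the table does assert (incorrectly) the higher rows. Presented as ``first identity proved, second identity corrected to $r(s\Psi^i(s)-\Psi^{i+1}(s))$ and proved in that form,'' your argument is complete and is strictly more than the paper contains.
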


From the other universal polynomial, we get
\[
\frac{\partial P_{1,n}(r)}{\partial \lambda^{k}(r)} =\left\{
                                                        \begin{array}{ll}
                                                          1 & \hbox{$k=n$} \\
                                                          0 & \hbox{otherwise}
                                                        \end{array}
                                                      \right.
\]

\[
\frac{\partial P_{2,n}(r)}{\partial \lambda^{k}(r)} = \left\{
                                                        \begin{array}{ll}
                                                          0 & \hbox{$k=n$, or $k > 2n$} \\
                                                          (-1)^{k+1}\lambda^{2n-k}(r) & \hbox{otherwise}
                                                        \end{array}
                                                      \right.
\]

\[\frac{\partial P_{i,j}(r)}{\partial \lambda^{ij}(r)}= (-1)^{(i+1)(j+1)}
\]

\begin{center}
\begin{tabular}{|l | l | l |}
  \hline

  k & $\frac{\partial P_{4,2}(r)}{\partial \lambda^{k}(r)} $ & $\frac{\partial P_{5,2}(r)}{\partial \lambda^{k}(r)} $ \\ \hline
  3 & $r\lambda^{4}(r) + \lambda^{5}(r)$ & $\ddots$ \\
  4 & $r\lambda^{3}(r)-2\lambda^{4}(r)$ & $\ddots$ \\
  5 & $\Psi^{3}(r)-2\lambda^{3}(r)$ & $\ddots$ \\
  6 &  $-\Psi^{2}(r)$ & $\Psi^{4}(r) -r\lambda^{3}(r)+2\lambda^{4}(r)$   \\
  7 & $r$ & $-\Psi^{3}(r)+2\lambda^{3}(r)$   \\
  8 & -1 & $\Psi^{2}(r)$ \\
  9 & 0 & $-r$ \\
  10 & 0 & 1 \\ \hline

\end{tabular}
 \end{center}

\bibliography{Bibliography}
\addcontentsline{toc}{chapter}{Bibliography}
\bibliographystyle{abbrvnat} 
\end{document}